\renewcommand{\labelenumi}{(\theenumi)}
\theoremstyle{mythm}
\newtheorem{thm}{Theorem}[section]
\newtheorem{defn}[thm]{Definition}
\newtheorem{prop}[thm]{Proposition}
\newtheorem{lem}[thm]{Lemma}
\newtheorem{cor}[thm]{Corollary}
\theoremstyle{myrem}
\newtheorem{rem}[thm]{Remark}
\DeclareMathOperator{\Ind}{Ind}
\DeclareMathOperator{\Hom}{Hom}
\DeclareMathOperator{\re}{Re}
\DeclareMathOperator{\im}{Im}
\DeclareMathOperator{\supp}{supp}
\DeclareMathOperator{\Ad}{Ad}
\DeclareMathOperator{\ad}{ad}
\DeclareMathOperator{\wt}{wt}
\DeclareMathOperator{\Tr}{Tr}
\DeclareMathOperator{\Ker}{Ker}
\DeclareMathOperator{\Wh}{Wh}
\DeclareMathOperator{\ch}{ch}
\DeclareMathOperator{\Lie}{Lie}
\DeclareMathOperator{\Res}{Res}
\newcommand{\C}{\mathbb{C}}
\newcommand{\Z}{\mathbb{Z}}
\newcommand{\R}{\mathbb{R}}
\newcommand{\hyphen}{\mathchar`-}
\title{Generalized Jacquet modules of parabolic induction}
\author{Noriyuki Abe}
\address{Graduate School of Mathematical Sciences, the University of Tokyo, 3--8--1 Komaba, Meguro-ku, Tokyo 153--8914, Japan.}
\email{abenori@ms.u-tokyo.ac.jp}
\subjclass{Primary 22E46, Secondary 11F70}
\date{}
\begin{document}
\maketitle
\begin{abstract}
In this paper we study a generalization of the Jacquet module of a parabolic induction and construct a filtration on it.
The successive quotient of the filtration is written by using the twisting functor.
\end{abstract}

\section{Introduction}
The Jacquet module of a representation of a semisimple (or reductive) Lie group is introduced by Casselman~\cite{MR562655}.
One of the motivation of considering the Jacquet module is to investigate homomorphisms to principal series representations, which is an important invariant of a representation.

One of the powerful tools to study the Jacquet module of a parabolic induction is the Bruhat filtration~\cite{MR1767896}.
This is a filtration on the Jacquet module defined from the Bruhat decomposition.
Casselman-Hecht-Milicic~\cite{MR1767896} use the Bruhat filtration to determine the dimension of the (moderate-growth) Whittaker model of a principal series representation (another proof for Kostant's result).
In this paper, we study the Bruhat filtration and show that the successive quotient is described using the twisting functor defined by Arkhipov~\cite{MR2074588}.
If a principal series representation has the unique Langlands quotient, then the successive quotient is the induction from the Jacquet module of a smaller group (a Levi part of a parabolic subgroup).
However, in a general case, it becomes ``twisted'' induction, which has the same character as that of an induced representation but has a different module structure.

Moreover, we investigate its generalization, this is related to the Whittaker model.
In \cite{MR562655}, Casselman suggested to generalize the notion of the Jacquet module.
For this generalized Jacquet module, we can also define a Bruhat filtration and the successive quotient of the resulting filtration is described in terms of the generalized twisting functor.

This result gives a strategy to determine all Whittaker models of a parabolic induction.
To determine it, it is sufficient to study the successive quotients and extensions of the filtration.
In a special case, we can carry out these steps.

Now we state our results precisely.
Let $G$ be a connected semisimple Lie group, $G = KA_0N_0$ an Iwasawa decomposition and $P_0 = M_0A_0N_0$ a minimal parabolic subgroup and its Langlands decomposition.
As usual, the complexification of the Lie algebra is denoted by the corresponding German letter (for example, $\mathfrak{g} = \Lie(G)\otimes_\R\C$).
Fix a character $\eta$ of $N_0$.
Then for a representation $V$ of $G$, the generalized Jacquet modules $J'_\eta(V)$ and $J^*_\eta(V)$ are defined as follows.

\begin{defn}\label{defn:Jacquet modules in Introduction}
Let $V$ be a finite-length moderate growth Fr\'echet representation of $G$ (See Casselman~\cite[pp.~391]{MR1013462}).
We define $\mathfrak{g}$-modules $J'_\eta(V)$ and $J^*_\eta(V)$ by
\begin{align*}
J'_\eta(V) & = \left\{v\in V'\Bigm|
\begin{array}{l}
\text{For some $k$ and for all $X\in\mathfrak{n}_0$,}\\
\text{$(X - \eta(X))^kv = 0$}
\end{array}
\right\},\\
J^*_\eta(V) & = \left\{v\in (V_{\textnormal{$K$-finite}})^*\Bigm|
\begin{array}{l}
\text{For some $k$ and for all $X\in\mathfrak{n}_0$,}\\
\text{$(X - \eta(X))^kv = 0$}
\end{array}
\right\},
\end{align*}
where $V'$ is the continuous dual of $V$.
\end{defn}

Let $W$ be the little Weyl group of $G$ and take $w\in W$.
Then the generalized twisting functor $T_{w,\eta}$ is defined as follows.
Let $\overline{\mathfrak{n}_0}$ be the nilradical of the opposite parabolic subalgebra to $\mathfrak{p}_0$ and $e_1,\dots,e_l$ be a basis of $\Ad(w)\overline{\mathfrak{n}}_0\cap \mathfrak{n}_0$ such that each $e_i$ is a root vector with respect to $\mathfrak{h}$ where $\mathfrak{h}$ is a Cartan subalgebra of $\mathfrak{g}$ which contains $\mathfrak{a}_0$.
Moreover, we choose $e_i$ such that $\bigoplus_{i\le j - 1}\C e_i$ is an ideal of $\bigoplus_{i\le j}\C e_i$ for all $j$.
Let $U(\mathfrak{g})$ be the universal enveloping algebra of $\mathfrak{g}$ and $U(\mathfrak{g})_{e_i - \eta(e_i)}$ the localization of $U(\mathfrak{g})$ by a multiplicative set $\{(e_i - \eta(e_i))^n\mid n\in\Z_{>0}\}$.
Put $S_{w,\eta} = (U(\mathfrak{g})_{e_i - \eta(e_i)}/U(\mathfrak{g}))\otimes_{U(\mathfrak{g})}\dotsb\otimes_{U(\mathfrak{g})}(U(\mathfrak{g})_{e_l - \eta(e_l)}/U(\mathfrak{g}))$.
Then $S_{w,\eta}$ is a $\mathfrak{g}$-bimodule.
The twisting functor $T_{w,\eta}$ is defined by $T_{w,\eta}V = S_{w,\eta}\otimes_{U(\mathfrak{g})}(wV)$ where $wV$ is a representation twisted by $w$ (i.e., $Xv = \Ad(w)^{-1}(X)\cdot v$ for $X\in \mathfrak{g}$ and $v\in wV$ where dot means the original action).

Let $P$ be a parabolic subgroup containing $A_0N_0$ and take a Langlands decomposition $P = MAN$ such that $A_0\supset A$.
Define $\rho_0\in\mathfrak{a}_0^*$ by $\rho_0(H) = (1/2)\Tr \ad(H)|_{\mathfrak{n}_0}$.
Let $\rho$ be a restriction of $\rho_0$ on $\mathfrak{a}$.
An element of $\mathfrak{a}^*$ corresponds to a character of $A$.
We denote the corresponding character to $\lambda + \rho$ by $e^{\lambda + \rho}$ for $\lambda\in\mathfrak{a}^*$.
Then for an irreducible representation $\sigma$ of $M$ and $\lambda\in\mathfrak{a}^*$, the parabolic induction $\Ind_P^G(\sigma\otimes e^{\lambda + \rho})$ is defined.
Let $W_M$ be the little Weyl group of $M$.
Define a subset $W(M)$ of $W$ by $W(M) = \{w\in W\mid \text{for all positive restricted root $\alpha$ of $M$, $w(\alpha)$ is positive}\}$.
Then $W(M)$ is a complete representatives of $W/W_M$ and parameterizes $N_0$-orbits in $G/P$.
For $w\in W$, fix a lift in $G$ and denote it by the same letter $w$.
Enumerate $W(M) = \{w_1,\dots,w_r\}$ so that $\bigcup_{j\le i}N_0w_jP/P$ is a closed subset of $G/P$.
Using the $C^\infty$-realization of a parabolic induction, we can regard an element of $J'_\eta(\Ind_P^G(\sigma\otimes e^{\lambda + \rho}))$ as a distribution on $G/P$.
Then the Bruhat filtration $I_i\subset J'_\eta(\Ind_P^G(\sigma\otimes e^{\lambda + \rho}))$ is defined by 
\[
	I_i = \left\{x\in J'_\eta(\Ind_P^G(\sigma\otimes e^{\lambda + \rho}))\biggm| \supp x \subset \bigcup_{j\le i}N_0w_jP\right\}.
\]
Since $w_i\in W(M)$, we have $\Ad(w_i)(\mathfrak{m}\cap \mathfrak{n}_0)\subset \mathfrak{n}_0$.
Hence we can define a character $w_i^{-1}\eta$ of $\mathfrak{m}\cap \mathfrak{n}_0$ by $(w_i^{-1}\eta)(X) = \eta(\Ad(w_i)X)$.
Using this character, we can define an $\mathfrak{m}\oplus\mathfrak{a}$-module $J'_{w_i^{-1}\eta}(\sigma\otimes e^{\lambda + \rho})$.
Then we have the following theorem.
\begin{thm}[Theorem~\ref{thm:succ quot is I'_i}, Theorem~\ref{thm:structure of I_i/I_{i - 1}}]\label{thm:main theorem}
The filtration $\{I_i\}$ has the following properties.
\begin{enumerate}
\item If the character $\eta$ is not unitary, then $J'_\eta(\Ind_P^G(\sigma\otimes e^{\lambda+\rho})) = 0$.
\item Assume that $\eta$ is unitary. The module $I_i/I_{i - 1}$ is nonzero if and only if $\eta$ is trivial on $w_iNw_i^{-1}\cap N_0$ and $J'_{w_i^{-1}\eta}(\sigma\otimes e^{\lambda+\rho})\ne 0$.
\item If $I_i/I_{i - 1} \ne 0$ then $I_i/I_{i - 1} \simeq T_{w_i,\eta}(U(\mathfrak{g})\otimes_{U(\mathfrak{p})}J'_{w_i^{-1}\eta}(\sigma\otimes e^{\lambda+\rho}))$ where $\mathfrak{n}$ acts $J'_{w_i^{-1}\eta}(\sigma\otimes e^{\lambda+\rho})$ trivially.
\end{enumerate}
\end{thm}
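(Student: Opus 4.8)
\emph{Step 0 (overview) and Step 1: reduce to one Bruhat cell.}
The plan is to study the filtration one cell at a time by restricting distributions to the open sets $U_i := (G/P)\setminus\bigcup_{j<i}N_0w_jP$, in each of which $C_i := N_0w_iP/P$ is a closed submanifold (this is exactly what the chosen enumeration of $W(M)$ guarantees, since then $\overline{C_i}\subseteq\bigcup_{j\le i}N_0w_jP$). Using the $C^\infty$-realization, regard $J'_\eta(\Ind_P^G(\sigma\otimes e^{\lambda+\rho}))$ inside the distribution sections of the appropriate bundle on $G/P$. Restriction of distributions to $U_i$ is $\mathfrak{g}$-equivariant, and for $x\in I_i$ one has $x|_{U_i}=0$ precisely when $\supp x\subseteq\bigcup_{j<i}N_0w_jP$, i.e. $x\in I_{i-1}$; hence $I_i/I_{i-1}$ embeds, as a $\mathfrak{g}$-module, into the space of generalized $(\mathfrak{n}_0,\eta)$-eigen distributions on $U_i$ supported on $C_i$. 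I would then check that this embedding is onto: the uniform bound $k$ in the definition of $J'_\eta$ forces such a distribution to have finite transverse order along $C_i$, so it extends across $\bigcup_{j<i}N_0w_jP$ to all of $G/P$, and one checks the extension still lies in the continuous dual. This reduces parts (2)--(3) to computing that space of eigen distributions.

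\emph{Step 2: eigen distributions supported on $C_i$.}
Identify $C_i\cong N_0/(N_0\cap w_iPw_i^{-1})$. Because $w_i\in W(M)$, the complementary unipotent subgroup has Lie algebra $\Ad(w_i)\overline{\mathfrak{n}}_0\cap\mathfrak{n}_0$ --- exactly the span of $e_1,\dots,e_l$ in the definition of $T_{w_i,\eta}$ --- so $C_i$ has dimension $l$, with $N_0$ acting transitively. By the structure theorem for distributions supported on a closed submanifold, such a distribution is an assembly of transverse jets along $C_i$ with distributions on $C_i$; conjugating by $w_i$, the ``attach transverse jets'' operation becomes $U(\mathfrak{g})\otimes_{U(\mathfrak{p})}(-)$ with $\mathfrak{n}$ acting by $0$, which produces the module appearing in part (3). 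For the $(\mathfrak{n}_0,\eta)$-condition, decompose $\mathfrak{n}_0 = (\Ad(w_i)\overline{\mathfrak{n}}_0\cap\mathfrak{n}_0)\oplus(\mathfrak{n}_0\cap\Ad(w_i)\mathfrak{p})$, the latter summand being $\Ad(w_i)(\mathfrak{m}\cap\mathfrak{n}_0)\oplus(\Ad(w_i)\mathfrak{n}\cap\mathfrak{n}_0)$: the first summand acts along $C_i$ by $\eta$-twisted translations, while $\Ad(w_i)\mathfrak{n}\cap\mathfrak{n}_0$ and $\Ad(w_i)(\mathfrak{m}\cap\mathfrak{n}_0)$ act through the fibre. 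Since $\Ad(w_i)\mathfrak{n}\cap\mathfrak{n}_0$ acts on all the relevant jets unipotently, its only possible generalized $\eta$-eigenvalue is $0$, forcing $\eta$ to be trivial on $w_iNw_i^{-1}\cap N_0$; when this holds, untwisting by $e^{\eta}$ and by $w_i$ identifies the remaining ``fibre data'' (a generalized eigenvector for $\Ad(w_i)(\mathfrak{m}\cap\mathfrak{n}_0)$ in the dual of $\sigma\otimes e^{\lambda+\rho}$) with $J'_{w_i^{-1}\eta}(\sigma\otimes e^{\lambda+\rho})$. Together these give part (2).

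\emph{Step 3: recognizing $T_{w_i,\eta}$, and part (1).}
It remains to see that reassembling the data subject to the eigen equation along the directions $e_1,\dots,e_l$ --- filtered so that $\bigoplus_{q\le p-1}\C e_q$ is an ideal in $\bigoplus_{q\le p}\C e_q$ --- produces exactly $S_{w_i,\eta}\otimes_{U(\mathfrak{g})}(-)$. In each $e_p$-direction, a generalized $(e_p,\eta(e_p))$-eigen distribution supported at a point, in the one variable dual to $e_p$, is precisely a copy of $U(\mathfrak{g})_{e_p-\eta(e_p)}/U(\mathfrak{g})$ with its left module structure, on which $e_p-\eta(e_p)$ acts locally nilpotently; the ideal condition on the $e_p$ is what lets the successive one-dimensional localizations be tensored into the bimodule $S_{w_i,\eta}$, and the $w_i$-twist is the one built into the parametrization of $C_i$ (hence the appearance of $wV$ in $T_{w,\eta}V=S_{w,\eta}\otimes_{U(\mathfrak{g})}(wV)$). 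Combining with Step 2 gives $I_i/I_{i-1}\cong T_{w_i,\eta}(U(\mathfrak{g})\otimes_{U(\mathfrak{p})}J'_{w_i^{-1}\eta}(\sigma\otimes e^{\lambda+\rho}))$, which is part (3). Finally, part (1): for $x$ in the continuous dual and $v_0$ in the representation, $g\mapsto\langle x, g\cdot v_0\rangle$ has moderate growth; if $(X-\eta(X))^kx=0$ for all $X\in\mathfrak{n}_0$, this matrix coefficient behaves along $\exp(tX)$ like $e^{t\eta(X)}$ times a polynomial in $t$, so moderate growth forces $\re\eta(X)=0$ for every $X\in\mathfrak{n}_0$; thus the whole Jacquet module vanishes when $\eta$ is not unitary.

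\emph{Main obstacle.}
The heart of the argument is Step 3 together with the module-theoretic bookkeeping of Step 2: matching, as $\mathfrak{g}$-modules and not merely at the level of characters, the geometric jet structure transverse to $C_i$ (cut out by the $(\mathfrak{n}_0,\eta)$-eigen equation) with the algebraic localizations defining $S_{w_i,\eta}$, while keeping precise track of the $w_i$-twist and of the $\rho$-shifts coming from the density bundle. The surjectivity and continuous-dual stability claimed in Step 1 is the other point that needs genuine, though more routine, care.
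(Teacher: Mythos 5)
Your overall plan matches the structure of the paper: restrict to Bruhat cells, classify generalized $(\mathfrak{n}_0,\eta)$-eigen distributions supported on a cell using the structure theorem for distributions on a closed submanifold (Steps 2--3 track the paper's Lemma~\ref{lem:succ quot is sub of I'}, Lemma~\ref{lem:left2right for I'_i}, Lemma~\ref{lem:induction + twisting}), and identify the result with $T_{w_i,\eta}$ applied to the parabolically induced module.  Your argument for part (1), via moderate growth of the matrix coefficient $t\mapsto\langle x,\exp(tX)v\rangle$ satisfying $(\tfrac{d}{dt}+\eta(X))^k\phi=0$, is a valid alternative to the paper's route (the paper instead reduces to principal series via Casselman's subrepresentation theorem and then observes that the would-be cell distributions fail to be tempered when $\eta$ is not unitary).

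The genuine gap is in Step~1, where you claim that the restriction map $I_i/I_{i-1}\to I'_i$ is onto because ``the uniform bound $k$ \ldots forces \ldots finite transverse order along $C_i$, so it extends across $\bigcup_{j<i}N_0w_jP$ to all of $G/P$, and one checks the extension still lies in the continuous dual,'' and you later downgrade this to a point needing ``more routine care.''  This is where the proof actually lives.  Finite transverse order is a property in the directions normal to $O_i$, and it controls the delta-derivative part of the distribution; it says nothing about whether the pairing with test functions on $G/P$ \emph{converges} as one approaches the boundary $\overline{O_i}\setminus O_i$, i.e.\ whether the integral $\int_{w_i\overline{N}w_i^{-1}\cap N_0}u'(\varphi(nw_i))\eta(n)^{-1}f(nw_i)\,dn$ over the noncompact cell makes sense for $\varphi$ not compactly supported in $U_i$.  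That convergence is the Jacquet integral problem, and it genuinely fails for certain $\lambda$.  The paper spends all of Section~\ref{sec:Analytic continuation} on exactly this: it proves convergence for $\operatorname{Re}\lambda$ large (Proposition~\ref{prop:convergence and continuation}(1)), meromorphic continuation via Knapp--Stein intertwining operators (Proposition~\ref{prop:convergence and continuation}(2)), then in Lemma~\ref{lem:meromorphic extension} and Lemma~\ref{lem:holomorphic extension} runs a delicate induction on cells to subtract pole parts supported on lower cells and promote the meromorphic family to a holomorphic one at the point of interest, arriving at surjectivity only in Theorem~\ref{thm:succ quot is I'_i}.  Without an argument of this kind, Steps~2 and~3 only prove that $I_i/I_{i-1}$ embeds as a $\mathfrak{g}$-submodule of $T_{w_i,\eta}(U(\mathfrak{g})\otimes_{U(\mathfrak{p})}J'_{w_i^{-1}\eta}(\sigma\otimes e^{\lambda+\rho}))$, not that it equals it, and the ``if'' half of part~(2) is also unestablished.
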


Under the assumptions that $P$ is a minimal parabolic subgroup, $\sigma$ is the trivial representation, $\Ind_P^G(\sigma\otimes e^{\lambda + \rho})$ has the unique Langlands quotient and $\eta$ is the trivial representation, this theorem is proved in \cite{abe-2006}.
The proof we give in \cite{abe-2006} is algebraic, while we give an analytic and geometric proof in this paper.

For a module $J^*_\eta(\Ind_P^G(\sigma\otimes e^{\lambda + \rho}))$, we have the following theorem.
We define two functors.
For a $U(\mathfrak{g})$-module $V$, put $C(V) = ((V^*)_{\text{$\mathfrak{h}$-finite}})^*$ and $\Gamma_\eta(V) = \{v\in V\mid \text{for some $k$ and for all $X\in\mathfrak{n}_0$, $(X - \eta(X))^kv = 0$}\}$.
\begin{thm}[Theorem~\ref{thm:stucture of J^*(I(sigma,lambda))}]\label{thm:main theorem2}
There exists a filtration $0 = \widetilde{I_0}\subset \widetilde{I_1}\subset\cdots\subset\widetilde{I_r} = J^*_\eta(\Ind_P^G(\sigma\otimes\lambda))$ such that $\widetilde{I_i}/\widetilde{I_{i - 1}}\simeq \Gamma_\eta(C(T_{w_i}(U(\mathfrak{g})\otimes_{U(\mathfrak{p})}J^*(\sigma\otimes e^{\lambda+\rho}))))$ where $\mathfrak{n}$ acts $J^*(\sigma\otimes e^{\lambda+\rho})$ trivially.
\end{thm}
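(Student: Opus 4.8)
The plan is to deduce this from Theorem~\ref{thm:main theorem} by an argument parallel to the one used there: one keeps the same Bruhat filtration coming from the support stratification of $G/P$, but now dualizes algebraically, so that the functors $C$ and $\Gamma_\eta$ appear as the operations translating the continuous-dual statement for $J'_\eta$ into the algebraic-dual statement for $J^*_\eta$. Write $V=\Ind_P^G(\sigma\otimes e^{\lambda+\rho})$ and $X=V_{\textnormal{$K$-finite}}$, so that $J^*_\eta(V)=\Gamma_\eta(X^*)$ by definition. The reduction rests on a comparison lemma, valid for any finite-length moderate-growth Fr\'echet representation: there is a natural isomorphism of $\mathfrak{g}$-modules
\[
  J^*_\eta(V)\;\cong\;\Gamma_\eta\bigl(C(J'_0(V))\bigr),
\]
where $J'_0=J'_\eta$ for the trivial character is the ordinary Jacquet module.

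To prove this lemma I would argue as follows. By Casselman's finiteness theorem each $X/\mathfrak{n}_0^kX$ is finite-dimensional, so $J'_0(V)$ is (a form of) the restricted dual of the $\mathfrak{n}_0$-adic completion $\widehat X=\varprojlim_k X/\mathfrak{n}_0^kX$, and it is $\mathfrak{h}$-locally finite with finite-dimensional generalized weight spaces. A direct computation then identifies $C(J'_0(V))$ with $\widehat X$ --- the functor $C$ simply re-completes the restricted dual with respect to $\mathfrak{h}$-weights --- and Casselman's comparison theorem identifies the $\mathfrak{n}_0$-generalized $\eta$-weight vectors of $\widehat X$ with those of the full algebraic dual $X^*$, giving $\Gamma_\eta(\widehat X)\cong\Gamma_\eta(X^*)=J^*_\eta(V)$. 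The delicate point is this last comparison, together with the identification of $J'_0(V)$ with the restricted dual of $\widehat X$; these are where Casselman's theory does the real work and where one must be careful about the relation between the continuous dual $V'$ and the algebraic dual $X^*$.

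Granting the lemma, I would apply $\Gamma_\eta\circ C$ to the Bruhat filtration $\{I_i\}$ of $J'_0(V)$ produced by Theorem~\ref{thm:main theorem} for the trivial character. First observe that $C$ carries the short exact sequences $0\to I_{i-1}\to I_i\to I_i/I_{i-1}\to0$ to short exact sequences, because each $I_i$ is $\mathfrak{h}$-locally finite with finite-dimensional generalized weight spaces and on such modules $C$ reduces to weight-space-wise double dualization; hence $\{C(I_i)\}$ is a genuine filtration of $C(J'_0(V))=\widehat X$ with $C(I_i)/C(I_{i-1})\cong C(I_i/I_{i-1})$. Next, for the trivial character the triviality hypothesis of Theorem~\ref{thm:main theorem}(2) is automatic and every $I_i/I_{i-1}$ is nonzero (an irreducible Harish--Chandra module has nonzero Jacquet module), while $T_{w_i,0}$ is the untwisted twisting functor $T_{w_i}$, so $I_i/I_{i-1}\cong T_{w_i}(U(\mathfrak{g})\otimes_{U(\mathfrak{p})}J'_0(\sigma\otimes e^{\lambda+\rho}))$ with $\mathfrak{n}$ acting trivially; and the comparison lemma applied on the Levi $MA$ lets one rewrite the Levi factor here as $J^*(\sigma\otimes e^{\lambda+\rho})$. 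Applying the left-exact functor $\Gamma_\eta$ to the sequences above then yields the filtration $\widetilde{I_i}:=\Gamma_\eta(C(I_i))$ of $J^*_\eta(V)$, together with injections
\[
  \widetilde{I_i}/\widetilde{I_{i-1}}\;\hookrightarrow\;\Gamma_\eta\bigl(C(I_i/I_{i-1})\bigr)=\Gamma_\eta\bigl(C(T_{w_i}(U(\mathfrak{g})\otimes_{U(\mathfrak{p})}J^*(\sigma\otimes e^{\lambda+\rho})))\bigr).
\]

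The main obstacle is to upgrade these injections to isomorphisms, i.e.\ to control the failure of right-exactness of $\Gamma_\eta$: the cokernel of $\widetilde{I_i}/\widetilde{I_{i-1}}\to\Gamma_\eta(C(I_i/I_{i-1}))$ injects into $R^1\Gamma_\eta(C(I_{i-1}))$, and one must show this obstruction vanishes. I would establish this geometrically, exploiting that $\{I_i\}$ comes from the support stratification: the required vanishing amounts to a surjectivity statement for the restriction of generalized distributions from $\bigcup_{j\le i}N_0w_jP$ to the closed subset $\bigcup_{j\le i-1}N_0w_jP$, after passage to $\mathfrak{h}$-finite, $\mathfrak{n}_0$-generalized $\eta$-weight vectors --- which is exactly the type of statement that the analysis behind the Bruhat filtration (closedness of $\bigcup_{j\le i}N_0w_jP$ in $G/P$) is designed to provide. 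A module-theoretic alternative is to prove directly that $R^1\Gamma_\eta$ vanishes on each $C(T_{w_i}(U(\mathfrak{g})\otimes_{U(\mathfrak{p})}J^*(\sigma\otimes e^{\lambda+\rho})))$, for instance by showing that the $\mathfrak{n}_0$-generalized $\eta$-weight subspace there is a direct summand. This exactness analysis is where I expect the bulk of the remaining work to lie.
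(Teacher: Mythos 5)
Your plan is essentially the paper's proof. The comparison isomorphism $J^*_\eta(V)\cong\Gamma_\eta(C(J'(V)))$ you set up as the key lemma is exactly what the paper establishes: Proposition~\ref{prop:relation J^*_eta and J_eta} gives $J^*_\eta(V)\simeq\Gamma_\eta(J(V)^*)$, Proposition~\ref{prop:relation J^* and J} together with $D'D'\cong\mathrm{id}$ on $\mathcal{O}'$ identifies $J(V)^*$ with $C(J^*(V))$, and the automatic continuation theorem gives $J^*(V)=J'(V)$; the paper's proof of Theorem~\ref{thm:stucture of J^*(I(sigma,lambda))} is then a single sentence applying $\Gamma_\eta\circ C$ to the Bruhat filtration of $J'(I(\sigma,\lambda))$ with trivial character and quoting Theorem~\ref{thm:structure of I_i/I_{i - 1}}.

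Two caveats on the details. First, identifying $C(J'(V))$ with the full $\mathfrak{n}_0$-adic completion $\widehat{X}$ is not quite accurate: $C(J'(V))=J(V)^*$, the full dual of the $\mathfrak{a}$-finite part of $\widehat{X}$, and the clean route is through $J(V)$, $D'$ and $D'D'\cong\mathrm{id}$ as above. Second, and this is the substantial point, you have correctly located where the real burden of proof lies: $C$ is exact on $\mathcal{O}'$ (it is weight-space-wise double dualization), so $\{C(I_i)\}$ is a filtration with quotients $C(I_i/I_{i-1})$, and the only nonformal step is the surjectivity of $\Gamma_\eta(C(I_i))\to\Gamma_\eta(C(I_i/I_{i-1}))$ --- the vanishing you phrase as an $R^1\Gamma_\eta$ statement. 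The paper's one-line proof does not address this either, so your concern is not a deviation from the paper's method but a correct diagnosis of what a complete write-up must supply; neither of the two routes you propose for it is actually carried out here.
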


We state an application.
The space of Whittaker vectors $\Wh_\eta(D)$ is defined by $\Wh_\eta(D) = \{x\in D\mid \text{$(X - \eta(X))x = 0$ for all $X\in \mathfrak{n}_0$}\}$ for a $U(\mathfrak{g})$-module $D$.
If $V$ is a moderate-growth Fr\'echet representation of $G$, an element of $\Wh_\eta(V')$ corresponds to a moderate-growth homomorphism $V\to \Ind_{N_0}^G\eta$ and an element of $\Wh_\eta((V_{\text{$K$-finite}})^*)$ corresponds to an algebraic homomorphism $V_{\text{$K$-finite}}\to \Ind_{N_0}^G\eta$.
In particular, when $\eta$ is the trivial representation, these correspond to homomorphisms to principal series representations.

Let $\Sigma$ (resp.\ $\Sigma_M$) be the restricted root system for $(G,A_0)$ (resp.\ $(M,M\cap A_0)$), $\Sigma^+$ a positive system of $\Sigma$ corresponding to $N_0$ and $\Pi\subset\Sigma$ the set of simple roots determined by $\Sigma^+$.
Put $\Sigma_M^+ = \Sigma_M\cap \Sigma^+$.
Let $\widetilde{W}$ (resp.\ $\widetilde{W_M}$) be the (complex) Weyl group of $\mathfrak{g}$ (resp.\ $\mathfrak{m}$).
Let $\widetilde{\mu}\in\mathfrak{h}^*$ be the infinitesimal character of $\sigma$.
Let $\Delta$ be the root system of $(\mathfrak{g},\mathfrak{h})$.
Put $\Sigma_\eta^+ = (\sum_{\eta|_{\mathfrak{g}_\beta} \ne 0,\ \beta\in\Pi}\Z\beta)\cap \Sigma^+$.
Fix a $W$-invariant bilinear form $\langle\cdot,\cdot\rangle$ of $\mathfrak{a}_0$.
Using the direct decompositions $(\mathfrak{m}\cap\mathfrak{a}_0)^*\oplus \mathfrak{a}^* = \mathfrak{a}_0^*$ and $\mathfrak{a}_0^*\oplus(\mathfrak{h}\cap \mathfrak{m}_0)^* = \mathfrak{h}^*$, we regard $\mathfrak{a}^*\subset \mathfrak{a}_0^*\subset \mathfrak{h}^*$.
Recall that $\nu\in(\mathfrak{m}\cap\mathfrak{a}_0)^*$ is called an exponent of $\sigma$ if $\nu + \rho_0|_{\mathfrak{m}\cap\mathfrak{a}_0}$ is an $(\mathfrak{m}\cap \mathfrak{a}_0)$-weight of $\sigma/(\mathfrak{m}\cap \mathfrak{n}_0)\sigma$.
We prove the following theorem.
\begin{thm}[Theorem~\ref{thm:dimension Whittaker vectors}, Theorem~\ref{thm:dimension Whittaker vectors, algebraic}]\label{thm:main theorem3, dimension of the Whittaker vectors}
For $\lambda\in\mathfrak{a}^*$ and an irreducible representation $\sigma$ of $M$, the following formulae hold.
\begin{enumerate}
\item Assume that for all $w\in W$ such that $\eta|_{wNw^{-1}\cap N_0} = 1$, the following two conditions hold:
\begin{enumerate}
\item For each exponent $\nu$ of $\sigma$ and $\alpha\in \Sigma^+\setminus w^{-1}(\Sigma^+_M\cup\Sigma_\eta^+)$, we have $2\langle\alpha,\lambda+\nu\rangle/\lvert\alpha\rvert^2\not\in\Z_{\le 0}$.
\item For all $\widetilde{w}\in\widetilde{W}$, we have $\lambda - \widetilde{w}(\lambda + \widetilde{\mu})|_\mathfrak{a}\notin \Z_{\le 0}((\Sigma^+\setminus \Sigma_M^+)\cap w^{-1}\Sigma^+)|_\mathfrak{a}\setminus\{0\}$.
\end{enumerate}
Then we have
\begin{multline*}
	\dim\Wh_\eta((\Ind_P^G(\sigma\otimes e^{\lambda+\rho}))')\\ = \sum_{w\in W(M),\ \eta|_{wNw^{-1}\cap N_0} = 1}\dim \Wh_{w^{-1}\eta}(\sigma').
\end{multline*}
\item 
Assume that for all $\widetilde{w}\in\widetilde{W}\setminus\widetilde{W_M}$ we have $(\lambda + \widetilde{\mu}) - \widetilde{w}(\lambda + \widetilde{\mu}) \not\in\Z\Delta$.
Then we have
\begin{multline*}
	\dim\Wh_\eta((\Ind_P^G(\sigma\otimes e^{\lambda+\rho})_{\text{\normalfont $K$-finite}})^*)\\ = \sum_{w\in W(M)}\dim \Wh_{w^{-1}\eta}((\sigma_{\text{\normalfont $M\cap K$-finite}})^*).
\end{multline*}
\end{enumerate}
\end{thm}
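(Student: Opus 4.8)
The plan is to reduce both formulae to a computation on the generalized Jacquet modules and then feed the Bruhat filtrations of Theorem~\ref{thm:main theorem} and Theorem~\ref{thm:main theorem2} into the left-exact functor $\Wh_\eta(-)$. Since a vector killed by $X-\eta(X)$ for every $X\in\mathfrak{n}_0$ is killed by a power of $X-\eta(X)$, one has $\Wh_\eta(V')=\Wh_\eta(J'_\eta(V))$, $\Wh_\eta((V_{\textnormal{$K$-finite}})^*)=\Wh_\eta(J^*_\eta(V))$, and $\Wh_\eta\circ\Gamma_\eta=\Wh_\eta$. As $\Wh_\eta(-)=\Hom_{\mathfrak{n}_0}(\C_\eta,-)$ is left exact, applying it to $\{I_i\}$ gives exact sequences $0\to\Wh_\eta(I_{i-1})\to\Wh_\eta(I_i)\to\Wh_\eta(I_i/I_{i-1})$, hence $\dim\Wh_\eta(J'_\eta(\Ind_P^G(\sigma\otimes e^{\lambda+\rho})))\le\sum_i\dim\Wh_\eta(I_i/I_{i-1})$, with equality precisely when every induced map $\Wh_\eta(I_i)\to\Wh_\eta(I_i/I_{i-1})$ is onto; likewise for $\{\widetilde I_i\}$.

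Next I would evaluate the subquotient terms. For part (1), Theorem~\ref{thm:main theorem} gives $I_i/I_{i-1}\simeq T_{w_i,\eta}(U(\mathfrak{g})\otimes_{U(\mathfrak{p})}J'_{w_i^{-1}\eta}(\sigma\otimes e^{\lambda+\rho}))$ when $\eta|_{w_iNw_i^{-1}\cap N_0}=1$ and $I_i/I_{i-1}=0$ otherwise, so I would prove a lemma of the form $\Wh_\eta(T_{w,\eta}(U(\mathfrak{g})\otimes_{U(\mathfrak{p})}Y))\simeq\Wh_{w^{-1}\eta}(Y)$ for a $\mathfrak{p}$-module $Y$ with trivial $\mathfrak{n}$-action. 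This should fall out of a PBW analysis of the bimodule $S_{w,\eta}$ together with the factorisation $U(\mathfrak{g})=U(\Ad(w)\overline{\mathfrak{n}_0}\cap\mathfrak{n}_0)\,U(\mathfrak{p})$: the hypothesis $\eta|_{w_iNw_i^{-1}\cap N_0}=1$ is exactly what makes the localisation variables $e_k-\eta(e_k)$ compatible with the Whittaker equations along $\mathfrak{n}_0$. Combined with $\Wh_{w^{-1}\eta}(J'_{w^{-1}\eta}(\sigma\otimes e^{\lambda+\rho}))=\Wh_{w^{-1}\eta}((\sigma\otimes e^{\lambda+\rho})')=\Wh_{w^{-1}\eta}(\sigma')$ --- the $A$-character $e^{\lambda+\rho}$ not affecting the $\mathfrak{m}\cap\mathfrak{n}_0$-action --- this gives $\dim\Wh_\eta(I_i/I_{i-1})=\dim\Wh_{w_i^{-1}\eta}(\sigma')$ for every $i$, both sides vanishing unless $\eta|_{w_iNw_i^{-1}\cap N_0}=1$, so that $\sum_i\dim\Wh_\eta(I_i/I_{i-1})$ is the asserted sum. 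Part (2) is the parallel computation, using the duality properties of $C$ and the untwisted twisting functor $T_{w_i}$ to obtain $\dim\Wh_\eta(\widetilde I_i/\widetilde I_{i-1})=\dim\Wh_{w_i^{-1}\eta}((\sigma_{\textnormal{$M\cap K$-finite}})^*)$.

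Finally I would promote the inequalities to equalities by showing the induced maps are onto, which is where hypotheses (a), (b) of part (1) and the genericity hypothesis of part (2) are spent. Since $T_{w,\eta}$, $T_{w}$ and $C$ preserve the action of $\mathcal{Z}(\mathfrak{g})$, each subquotient carries a generalized infinitesimal character equal to a $\widetilde W$-conjugate of $\lambda+\widetilde\mu$; hypothesis (b) (resp.\ the hypothesis of (2)) says these conjugates are pairwise incongruent modulo $\Z\Delta$ in the directions that actually occur, so $\mathcal{Z}(\mathfrak{g})$ separates the subquotients and, after applying $\Wh_\eta$, $\Wh_\eta(I_r)=\bigoplus_i\Wh_\eta(I_i/I_{i-1})$. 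Hypothesis (a) enters at a finer scale within a single $T_{w_i,\eta}(U(\mathfrak{g})\otimes_{U(\mathfrak{p})}-)$: through the exponents $\nu$ of $\sigma$ and $\lambda$ it forces each $e_k-\eta(e_k)$ to act injectively on $U(\mathfrak{g})\otimes_{U(\mathfrak{p})}J'_{w_i^{-1}\eta}(\sigma\otimes e^{\lambda+\rho})$, so the localisations defining $S_{w_i,\eta}$ create no extra kernel and the lemma above yields exactly $\Wh_{w_i^{-1}\eta}(\sigma')$ rather than something larger. The main obstacle is precisely this control of $\Wh_\eta$ of a twisting functor applied to an induced module --- both the clean identification under (a) and the $\mathcal{Z}(\mathfrak{g})$-separation under (b) --- which I would isolate as a self-contained lemma on $\Wh_\eta\circ T_{w,\eta}\circ(U(\mathfrak{g})\otimes_{U(\mathfrak{p})}-)$, proved by writing $S_{w,\eta}$ in a PBW basis and tracking $\mathfrak{h}$-weights.
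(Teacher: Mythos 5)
Your overall reduction — $\Wh_\eta(V')=\Wh_\eta(J'_\eta(V))$, left exactness of $\Wh_\eta$ applied to the Bruhat filtration, and the wish for a subquotient lemma identifying $\Wh_\eta$ of $T_{w_i,\eta}(U(\mathfrak{g})\otimes_{U(\mathfrak{p})}-)$ — matches the paper's framework, but the two ingredients you have flagged as the crux are handled in ways that do not go through.

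First, the proposed lemma $\Wh_\eta\bigl(T_{w,\eta}(U(\mathfrak{g})\otimes_{U(\mathfrak{p})}Y)\bigr)\simeq\Wh_{w^{-1}\eta}(Y)$ is not unconditional and does not follow from a PBW analysis of $S_{w,\eta}$ alone. The PBW picture of the subquotient $I_i'$ has three factors: powers of $(e_k-\eta(e_k))^{-1}$, the ``normal derivative'' factor $U(\Ad(w_i)\overline{\mathfrak{n}}\cap\overline{\mathfrak{n}_0})$, and $U(\Ad(w_i)\mathfrak{p})$ acting on $\sigma$. The Whittaker equations for $X\in\mathfrak{n}_0$ say nothing directly about the middle factor, and without hypothesis~(b) the subquotient genuinely can have Whittaker vectors with a nontrivial normal-derivative part. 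Ruling those out is exactly Proposition~\ref{prop:Whittaker vectors in a Bruaht cell}, which runs an infinitesimal-character argument through the partial Harish-Chandra homomorphism $\gamma_3\gamma_2\gamma_1$ and uses hypothesis~(b) essentially. So (b) is spent inside the subquotient computation, not on separating the filtration; your attribution of it to a $Z(\mathfrak{g})$-separation argument has the roles inverted.

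Second, the surjectivity of $\Wh_\eta(I_i)\to\Wh_\eta(I_i/I_{i-1})$ in part~(1) cannot be obtained by $Z(\mathfrak{g})$-separation: every subquotient of $J'_\eta(I(\sigma,\lambda))$ shares the same generalized infinitesimal character, since they are all subquotients of a module with a single infinitesimal character. The analogous splitting argument does work in part~(2), but there it is a separation of $\mathfrak{h}$-weights into distinct $\Z\Delta$-cosets, and it requires the subquotients to be $\mathfrak{h}$-semisimple (objects of $\mathcal{O}'$) — neither of which holds in part~(1). In part~(1) the paper instead constructs the required preimage analytically (Lemma~\ref{lem:meromorphic extension} and Proposition~\ref{prop:convergence and continuation}\,(4)): a Whittaker vector on the Bruhat cell is realized as a Jacquet integral depending meromorphically on $\lambda+t\rho$, and hypothesis~(a) guarantees this integral is holomorphic at $t=0$ so that the extension exists. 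Your reading of (a) as injectivity of $e_k-\eta(e_k)$ on the induced module does not match this; the localization is never the issue, and no purely algebraic replacement for the analytic continuation step appears in your outline. This is the missing idea, and without it the argument for part~(1) does not close.
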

In the case that $\sigma$ is finite-dimensional, we have the following theorem, which have been announced by T. Oshima (cf.\ his talk at National University of Singapore, January 11, 2006).
Let $\Delta_M$ be the root system for $(\mathfrak{m}\oplus\mathfrak{a},\mathfrak{h})$ and take a positive system $\Delta_M^+$ compatible with $\Sigma^+_M$.
Put $\widetilde{\rho_M} = (1/2)\sum_{\alpha\in\Delta_M^+}\alpha$.
For subsets $\Theta_1,\Theta_2$ of $\Pi$, put $\Sigma_{\Theta_i} = \Z\Theta_i\cap \Sigma$, $W(\Theta_i) = \{w\in W\mid w(\Theta_i)\subset \Sigma^+\}$, $W_{\Theta_i}$ the Weyl group of $\Sigma_{\Theta_i}$ and $W(\Theta_1,\Theta_2) = \{w\in W(\Theta_1)\cap W(\Theta_2)^{-1}\mid w(\Sigma_{\Theta_1})\cap \Sigma_{\Theta_2} = \emptyset\}$.
The parabolic subgroup $P$ defines a subset of $\Pi$.
We denote this set by $\Theta$.
\begin{thm}\label{thm:main theorem4, dimension of the Whittaker vectors, finite-dimensional case}
Assume that $\sigma$ is an irreducible finite-dimensional representation with highest weight $\widetilde{\nu}$.
Let $\dim_M(\lambda+\widetilde{\nu})$ be the dimension of a finite-dimensional irreducible representation of $M_0A_0$ with highest weight $\lambda+\widetilde{\nu}$.
\begin{enumerate}
\item Let $\widetilde{\nu}$ be the highest weight of $\sigma$.
Assume that for all $w\in W$ such that $\eta|_{wN_0w^{-1}\cap N_0} = 1$ the following two conditions hold:
\begin{enumerate}
\item For all $\alpha\in \Sigma^+\setminus w^{-1}(\Sigma^+_M\cup\Sigma_\eta^+)$ we have $2\langle\alpha,\lambda+w_0\widetilde{\nu}\rangle/\lvert\alpha\rvert^2\not\in\Z_{\le 0}$.
\item For all $\widetilde{w}\in\widetilde{W}$ we have $\lambda - \widetilde{w}(\lambda + \widetilde{\nu} + \widetilde{\rho_M})|_\mathfrak{a}\notin \Z_{\le 0}((\Sigma^+\setminus \Sigma_M^+)\cap w^{-1}\Sigma^+)|_\mathfrak{a}\setminus\{0\}$.
\end{enumerate}
Then we have
\[
	\dim \Wh_\eta(I(\sigma,\lambda)') = \# W(\supp\eta,\Theta)\times(\dim_M(\lambda+\widetilde{\nu}))
\]
\item Assume that for all $\widetilde{w}\in\widetilde{W}\setminus\widetilde{W_M}$, $(\lambda+\widetilde{\nu}) - \widetilde{w}(\lambda+\widetilde{\nu}) \not\in\Delta$. 
Then we have
\begin{multline*}
	\dim\Wh_\eta((I(\sigma,\lambda)_{\text{\normalfont $K$-finite}})^*) \\= \# W(\supp\eta,\Theta)\times \#W_{\supp\eta}\times(\dim_M(\lambda+\widetilde{\nu}))
\end{multline*}
\end{enumerate}
\end{thm}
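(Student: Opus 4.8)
The plan is to obtain Theorem~\ref{thm:main theorem4, dimension of the Whittaker vectors, finite-dimensional case} as the finite-dimensional specialization of Theorem~\ref{thm:main theorem3, dimension of the Whittaker vectors}. For an irreducible finite-dimensional $\sigma$ I need three things: (i) the exponent(s) and the infinitesimal character of $\sigma$, so that the hypotheses of Theorem~\ref{thm:main theorem3, dimension of the Whittaker vectors} become exactly the hypotheses (a), (b) stated here; (ii) an evaluation of $\dim\Wh_{w^{-1}\eta}(\sigma')$, which coincides with $\dim\Wh_{w^{-1}\eta}((\sigma_{M\cap K\text{-finite}})^*)$ since $\sigma_{M\cap K\text{-finite}}=\sigma$ when $\sigma$ is finite dimensional; and (iii) the combinatorial bookkeeping that turns the resulting sum over $W(M)$ into the stated products involving $\#W(\supp\eta,\Theta)$ and $\#W_{\supp\eta}$.

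For step (i): since $\sigma$ is finite dimensional with highest weight $\widetilde\nu$, the twisted coinvariant space $\sigma/(\mathfrak{m}\cap\mathfrak{n}_0)\sigma$ is an irreducible finite-dimensional module for $M_0(M\cap A_0)$ (the ``bottom layer'') of dimension $\dim_M(\lambda+\widetilde\nu)$; in particular $\sigma$ has a single exponent, the one recorded by this bottom layer, which in the notation of the theorem is $w_0\widetilde\nu$, and the infinitesimal character of $\sigma$ is $\widetilde\nu+\widetilde{\rho_M}$. Substituting this exponent and $\widetilde\mu=\widetilde\nu+\widetilde{\rho_M}$ into conditions (1)(a)--(b) and (2) of Theorem~\ref{thm:main theorem3, dimension of the Whittaker vectors} produces precisely conditions (1)(a)--(b) and (2) above.

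For step (ii): one has $\Wh_\psi(\sigma')\cong(\sigma/I_\psi\sigma)^*$, where for a character $\psi$ of $\mathfrak{m}\cap\mathfrak{n}_0$ the symbol $I_\psi$ denotes the left ideal of $U(\mathfrak{m}\cap\mathfrak{n}_0)$ generated by $\{X-\psi(X)\mid X\in\mathfrak{m}\cap\mathfrak{n}_0\}$. If $\psi$ is nontrivial, choose a simple restricted root $\alpha$ of $M$ with $\psi|_{\mathfrak{g}_\alpha}\neq0$; the root vector $e_\alpha$ acts nilpotently on the finite-dimensional module $\sigma$, hence $e_\alpha-\psi(e_\alpha)$ is invertible there and $\sigma/I_\psi\sigma=0$. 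If $\psi$ is trivial, $\sigma/I_\psi\sigma$ is the bottom layer, whose dimension equals that of the top $(\mathfrak{m}\cap\mathfrak{a}_0)$-weight space of $\sigma$ by the $W_M$-invariance of the restricted character of $\sigma$, hence equals $\dim_M(\lambda+\widetilde\nu)$ (the dependence on $\lambda$ being only through the central character of $A$, which does not affect dimensions). Hence every term of the sums in Theorem~\ref{thm:main theorem3, dimension of the Whittaker vectors} equals $\dim_M(\lambda+\widetilde\nu)$ when $(w^{-1}\eta)|_{\mathfrak{m}\cap\mathfrak{n}_0}$ is trivial and $0$ otherwise.

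It remains to carry out step (iii), which I expect to be the main obstacle. Writing $\mathfrak{n}=\bigoplus_{\alpha\in\Sigma^+\setminus\Sigma_M}\mathfrak{g}_\alpha$ and $\mathfrak{m}\cap\mathfrak{n}_0=\bigoplus_{\alpha\in\Sigma_M^+}\mathfrak{g}_\alpha$, and using that $\eta$ is supported only on the simple restricted root spaces $\mathfrak{g}_\beta$ with $\beta\in\supp\eta$, one checks that for $w\in W(M)=W(\Theta)$ the two conditions ``$\eta|_{wN_0w^{-1}\cap N_0}=1$'' and ``$(w^{-1}\eta)|_{\mathfrak{m}\cap\mathfrak{n}_0}$ trivial'' are together equivalent to $w^{-1}(\supp\eta)\subset\Sigma^-$, whereas ``$(w^{-1}\eta)|_{\mathfrak{m}\cap\mathfrak{n}_0}$ trivial'' alone is equivalent to $w^{-1}(\supp\eta)\cap\Sigma_M^+=\emptyset$. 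Then part (1) follows from Theorem~\ref{thm:main theorem3, dimension of the Whittaker vectors}(1) once one shows that $w\mapsto w^{-1}$ (after multiplying by the longest element of $W_{\supp\eta}$) identifies $\{w\in W(\Theta)\mid w^{-1}(\supp\eta)\subset\Sigma^-\}$ with $W(\supp\eta,\Theta)$, and part (2) follows from Theorem~\ref{thm:main theorem3, dimension of the Whittaker vectors}(2) once one shows that $\{w\in W(\Theta)\mid w^{-1}(\supp\eta)\cap\Sigma_M^+=\emptyset\}$ decomposes into $\#W_{\supp\eta}$ pieces each in bijection with the previous set, for instance by factoring every such $w$ through the minimal-length representatives of $W_{\supp\eta}\backslash W$. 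These are purely combinatorial statements about the restricted root system $\Sigma$ and the subsets $\supp\eta,\Theta\subset\Pi$; the representation-theoretic inputs (i) and (ii) are standard properties of finite-dimensional modules.
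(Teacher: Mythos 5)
Your proposal follows the same route as the paper: the result is obtained by specializing Theorem~\ref{thm:main theorem3, dimension of the Whittaker vectors} to finite-dimensional $\sigma$, using that $\mathfrak{m}\cap\mathfrak{n}_0$ acts nilpotently (so $\Wh_{w^{-1}\eta}$ vanishes unless $w^{-1}\eta|_{\mathfrak{m}\cap\mathfrak{n}_0}=0$, in which case Lemma~\ref{lem:whittaker vectors of finite-dimensional representation} gives dimension $\dim_M(\lambda+\widetilde\nu)$), together with the combinatorial count of Lemma~\ref{lem:coparison oshima}. The bijections you sketch in step~(iii) are exactly those constructed in the proof of that lemma, including the twist by the longest element of $W_{\supp\eta}$.
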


We summarize the content of this paper.
In \S\ref{sec:Parabolic induction and Bruhat filtration}, we introduce the Bruhat filtration.
From \S\ref{sec:Parabolic induction and Bruhat filtration} to \S\ref{sec:The module I_i/I_i-1} we study the module $J'_\eta(\Ind_P^G(\sigma\otimes\lambda))$.
In \S\ref{sec:vanishing theorem} we prove the successive quotient is zero under some conditions.
The structure of the successive quotients is investigated in \S\ref{sec:Analytic continuation}.
We give the definition and properties of the generalized twisting functor in \S\ref{sec:Twisting functors} and, in \S\ref{sec:The module I_i/I_i-1} we reveal the relation between the twisting functor and the successive quotient.
We complete the proof of Theorem~\ref{thm:main theorem} in this section.
Theorem~\ref{thm:main theorem2} is proved in \S\ref{sec:the module J^*_eta(I(sigma,lambda))}.
In \S\ref{sec:Whittaker vectors}, the dimension of the space of Whittaker vectors is determined and Theorem~\ref{thm:main theorem3, dimension of the Whittaker vectors} and Theorem~\ref{thm:main theorem4, dimension of the Whittaker vectors, finite-dimensional case} are proved.

\subsection*{Acknowledgments}
The author is grateful to his advisor Hisayosi Matumoto for his advice and support.
He is supported by the Japan Society for the Promotion of Science Research Fellowships for Young Scientists.

\subsection*{List of Symbols}
\listofsymbols

\subsection*{Notation}
Throughout this paper we use the following notation.
As usual we denote the ring of integers, the set of non-negative integers, the set of positive integers, the real number field and the complex number field by $\Z,\Z_{\ge 0},\Z_{> 0},\R$ and $\C$, respectively.
Let $G$ be a connected semisimple Lie group and $\mathfrak{g}$ the complexification of its Lie algebra.
Fix a Cartan involution $\theta$ of $G$ and denote its derivation by the same letter $\theta$.
Let $\mathfrak{g} = \mathfrak{k}\oplus \mathfrak{s}$ be the decomposition of $\mathfrak{g}$ into the $+1$ and $-1$ eigenspaces for $\theta$.
Set $K = \{g\in G\mid \theta(g) = g\}$.
Let $P_0 = M_0A_0N_0$ be a minimal parabolic subgroup and its Langlands decomposition such that $M_0\subset K$ and $\Lie(A_0)\subset \mathfrak{s}$.
Denote the complexification of the Lie algebra of $P_0,M_0,A_0,N_0$ by $\mathfrak{p}_0,\mathfrak{m}_0,\mathfrak{a}_0,\mathfrak{n}_0$, respectively.
Take a parabolic subgroup $P$ which contains $P_0$ and denote its Langlands decomposition by $P = MAN$.
Here we assume $A\subset A_0$.
Let $\mathfrak{p},\mathfrak{m},\mathfrak{a},\mathfrak{n}$ be the complexification of the Lie algebra of $P,M,A,N$.
Put $\overline{P_0} = \theta(P_0)$, $\overline{N_0} = \theta(N_0)$, $\overline{P} = \theta(P)$, $\overline{N} = \theta(N)$, $\overline{\mathfrak{p}_0} = \theta(\mathfrak{p}_0)$, $\overline{\mathfrak{n}_0} = \theta(\mathfrak{n}_0)$, $\overline{\mathfrak{p}} = \theta(\mathfrak{p})$ and $\overline{\mathfrak{n}} = \theta(\mathfrak{n})$.

In general, we denote the dual space $\Hom_\C(V,\C)$ of a $\C$-vector space $V$ by $V^*$.
Let $\Sigma\subset\mathfrak{a}_0^*$ be the restricted root system for $(\mathfrak{g},\mathfrak{a}_0)$ and $\mathfrak{g}_\alpha$ the root space for $\alpha\in\Sigma$.
Then $\sum_{\alpha\in\Sigma}\R\alpha$ is a real form of $\mathfrak{a}_0^*$.
We denote the real part of $\lambda\in\mathfrak{a}_0^*$ with respect to this real form by $\re\lambda$ and the imaginary part by $\im\lambda$.
Let $\Sigma^+$ be the positive system determined by $\mathfrak{n}_0$.
Put $\rho_0 = \sum_{\alpha\in\Sigma^+}(\dim \mathfrak{g}_\alpha/2)\alpha$ and $\rho = \rho_0|_{\mathfrak{a}}$.
The positive system $\Sigma^+$ determines the set of simple roots $\Pi$.
Fix a totally order of $\sum_{\alpha\in\Sigma} \R\alpha$ such that the following conditions hold: (1) If $\alpha > \beta$ and $\gamma \in\sum_{\alpha\in\Sigma} \R\alpha$ then $\alpha + \gamma > \beta + \gamma$. (2) If $\alpha > 0$ and $c$ is a positive real number then $c\alpha > 0$. (3) For all $\alpha \in\Sigma^+$ we have $\alpha > 0$.
Write $W$ for the little Weyl group for $(\mathfrak{g},\mathfrak{a}_0)$, $e$ for the unit element of $W$ and $w_0$ for the longest element of $W$.
For $w\in W$, we fix a representative in $N_K(\mathfrak{a})$ and denote it by the same letter $w$.

Let $\mathfrak{t}_0$ be a Cartan subalgebra of $\mathfrak{m}_0$ and $T_0$ the corresponding Cartan subgroup of $M_0$.
Then $\mathfrak{h} = \mathfrak{t}_0\oplus\mathfrak{a}_0$ is a Cartan subalgebra of $\mathfrak{g}$.
Let $\Delta$ be the root system for $(\mathfrak{g},\mathfrak{h})$ and take a positive system $\Delta^+$ compatible with $\Sigma^+$, i.e., if $\alpha\in\Delta^+$ satisfies that $\alpha|_{\mathfrak{a}_0} \ne 0$ then $\alpha|_{\mathfrak{a}_0}\in \Sigma^+$.
Let $\mathfrak{g}^\mathfrak{h}_\alpha$ be the root space of $\alpha\in\Delta$ and $\widetilde{W}$ the Weyl group of $\Delta$.
Put $\widetilde{\rho} = (1/2)\sum_{\alpha\in\Delta^+}\alpha$.
By the decompositions $(\mathfrak{m}\cap \mathfrak{a}_0)^*\oplus\mathfrak{a}^* = \mathfrak{a}_0^*$ and $\mathfrak{t}_0^*\oplus\mathfrak{a}_0^* = \mathfrak{h}^*$, we always regard $\mathfrak{a}^*\subset\mathfrak{a}_0^*\subset\mathfrak{h}^*$.

We use the same notation for $M$, i.e., $\Sigma_M$ be the restricted root system of $M$, $\Sigma_M^+ = \Sigma_M\cap \Sigma^+$, $W_M$ the little Weyl group of $M$, $\Delta_M$ the root system of $M$, $\Delta_M^+ = \Delta_M\cap \Delta^+$, $\widetilde{W_M}$ the Weyl group of $M$ and $w_{M,0}$ the longest element of $W_M$.

We can define an anti-isomorphism of $U(\mathfrak{g})$ by $X\mapsto -X$ for $X\in \mathfrak{g}$.
We write this anti-isomorphism by $u\mapsto \check{u}$.

For a $\mathfrak{g}$-module $V$ and $g\in G$, we define a $\mathfrak{g}$-module $gV$ as follows: The representation space is $V$ and the action of $X\in\mathfrak{g}$ is $X\cdot v = (\Ad(g)^{-1}X)v$ for $v\in gV$.

For $\xi = (\xi_1,\dots,\xi_l)\in\Z^l$, put $\lvert\xi\rvert = \xi_1 + \dots + \xi_l$.

\section{Parabolic induction and the Bruhat filtration}\label{sec:Parabolic induction and Bruhat filtration}
Fix a character $\eta$ of $\mathfrak{n}_0$ and put $\supp_G \eta = \supp\eta = \{\alpha\in\Pi\mid \eta|_{\mathfrak{g}_\alpha} \ne 0\}$\newsym{$\supp_G\eta = \supp\eta$}.
The character $\eta$ is called \emph{non-degenerate} if $\supp\eta = \Pi$.
We denote the character of $N_0$ whose differential is $\eta$ by the same letter $\eta$.
\begin{defn}\label{defn:Jacquet modules}
Let $V$ be a finite-length moderate growth Fr\'echet representation of $G$ (See Casselman~\cite[pp.~391]{MR1013462}).
We define $\mathfrak{g}$-modules $J'_\eta(V)$ and $J^*_\eta(V)$ by
\begin{align*}
J'_\eta(V) & = \left\{v\in V'\Bigm|
\begin{array}{l}
\text{For some $k$ and for all $X\in\mathfrak{n}_0$,}\\
\text{$(X - \eta(X))^kv = 0$}
\end{array}
\right\},\\
J^*_\eta(V) & = \left\{v\in (V_{\textnormal{$K$-finite}})^*\Bigm|
\begin{array}{l}
\text{For some $k$ and for all $X\in\mathfrak{n}_0$,}\\
\text{$(X - \eta(X))^kv = 0$}
\end{array}
\right\},
\end{align*}
where $V'$ is the continuous dual of $V$.
\end{defn}

Put $J'(V) = J'_0(V)$ and $J^*(V) = J^*_0(V)$ where $0$ is the trivial representation of $\mathfrak{n}_0$.
The module $J^*(V)$ is the (dual of) \emph{Jacquet module} defined by Casselman~\cite{MR562655}.
By the automatic continuation theorem~\cite[Theorem~4.8]{MR727854}, we have $J'(V) = J^*(V)$.
The correspondence $V\mapsto J'_\eta(V)$ and $V\mapsto J^*_\eta(V)$ are functors from the category of $G$-modules to the category of $\mathfrak{g}$-modules.

In this section, we study the module $J'_\eta(V)$ for a parabolic induction $V$.
An element of $\mathfrak{a}^*$ is identified with a character of $A$.
We denote the character of $A$ corresponding to $\lambda + \rho$ by $e^{\lambda + \rho}$ where $\lambda\in\mathfrak{a}^*$.
For an irreducible moderate growth Fr\'echet representation $\sigma$ of $M$ and $\lambda\in\mathfrak{a}^*$, put 
\[
	I(\sigma,\lambda) = C^\infty\hyphen\Ind_P^G(\sigma\otimes e^{\lambda + \rho}).
\]
(For a moderate growth Fr\'echet representation, see Casselman~\cite{MR1013462}.)
The representation $I(\sigma,\lambda)$ has a natural structure of a moderate growth Fr\'echet representation.
\newsym{$I(\sigma,\lambda)$}
Denote its continuous dual by $I(\sigma,\lambda)'$.

Let $\mathcal{L}$ be a vector bundle on $G/P$ attached to the representation $\sigma\otimes e^{\lambda+\rho}$ and $\mathcal{L}'$ be the continuous dual vector bundle of $\mathcal{L}$. \newsym{$\mathcal{L}$}
\begin{rem}\label{rem:identify func on flag and G}
A $C^\infty$-section of $\mathcal{L}$ corresponds to a  $\sigma$-valued $C^\infty$-function $f$ on $G$ such that $f(gman) = \sigma(m)^{-1}e^{-(\lambda+\rho)(\log a)}f(g)$ for $g\in G$, $m\in M$, $a\in A$, $n \in N$.
In particular a $C^\infty$-function on $G/P$ corresponds to a right $P$-invariant $C^\infty$-function on $G$.
We use this identification throughout this paper.
\end{rem}

We use the notation in Appendix~\ref{sec:C^infty-function with values in Frechet space}.
We can regard $J'_\eta(I(\sigma,\lambda))$ as a subspace of $\mathcal{D}'(G/P,\mathcal{L})$.
Set $W(M) = \{w\in W\mid w(\Sigma_M^+)\subset \Sigma^+\}$.\newsym{$W(M)$}\newsym{$r$}
Then it is known that the multiplication map $W(M)\times W_M\to W$ is bijective~\cite[Proposition~5.13]{MR0142696}.
By the Bruhat decomposition, we have
\[
	G/P = \bigsqcup_{w\in W(M)}N_0wP/P.
\]
(Recall that we fix a representative of $w\in W$, see Notation.)
Enumerate $W(M) = \{w_1,\dots,w_r\}$ so that $\bigcup_{j \le i}N_0w_jP/P$ is a closed subset of $G/P$ for each $i$.
Then we can define a submodule $I_i$ of $J'_\eta(I(\sigma,\lambda))$ by
\[
	I_i = \left\{x\in J'_\eta(I(\sigma,\lambda))\Biggm| \supp x\subset \bigcup_{j \le i}N_0w_jP/P\right\}.\newsym{$I_i$}
\]
The filtration $\{I_i\}$ is called the Bruhat filtration~\cite{MR1767896}.
In the rest of this section, we study the module $I_i/I_{i - 1}$.
Put $U_i = w_i\overline{N}P/P$ and $O_i = N_0w_iP/P$.
The subset $U_i$ is an open subset of $G/P$ containing $O_i$ and $U_i\cap O_j = \emptyset$ if $j < i$.\newsym{$U_i$}\newsym{$O_i$}
Hence, the restriction map $\Res_i\colon I_i \to \mathcal{D}(U_i,\mathcal{L})$ induces an injective map $\Res_i\colon I_i/I_{i - 1}\to \mathcal{D}(U_i,\mathcal{L})$.\newsym{$\Res_i$}
Moreover, $\im\Res_i \subset \mathcal{T}_{O_i}(U_i,\mathcal{L})$.
We have $\mathcal{T}_{O_i}(U_i,\mathcal{L}) = U(\Ad(w_i)\overline{\mathfrak{n}}\cap\overline{\mathfrak{n}})\otimes_\C\mathcal{T}(O_i,\mathcal{L}|_{O_i})$ by Proposition~\ref{prop:structure theorem of tempered distributions whose support is contained in submanifold}.

Notice that by a map $n\mapsto nw_iP/P$ we have isomorphisms $w_i\overline{N}w_i^{-1}\simeq U_i$ and $w_i\overline{N}w_i^{-1}\cap N_0\simeq O_i$.
Fix a Haar measure on $w_i\overline{N}w_i^{-1}\cap N_0$.
Then we can define $\delta_i\in \mathcal{D}'(O_i,\mathcal{L}|_{O_i})$ by
\[
	\langle\delta_i,\varphi\rangle = \int_{w_i\overline{N}w_i^{-1}\cap N_0}\varphi(nw_i)dn.
\]
for $\varphi\in C_c^\infty(O_i,\mathcal{L}|_{O_i})$.\newsym{$\delta_i$}
By the exponential map $\Ad(w_i)\Lie(\overline{N})\to w_i\overline{N}w_i^{-1}$ and diffeomorphism $w_i\overline{N}w_i^{-1}\simeq U_i$, $U_i$ has a vector space structure and $O_i$ is a subspace of $U_i$.
Let $\mathcal{P}(O_i)$ be the ring of polynomials on $O_i$ (cf.\ Appendix~\ref{subsec:Distributions on a nilpotent Lie group} or \cite{MR1070979}).\newsym{$\mathcal{P}(O_i)$}
Define a $C^\infty$-function $\eta_i$ on $O_i$ by $\eta_i(nw_iP/P) = \eta(n)$ for $n\in w_i\overline{N}w_i^{-1}\cap N_0$.\newsym{$\eta_i$}
For a $C^\infty$-function $f$ on $O_i$ and $u'\in \sigma'$, we define $f\otimes u'\in C^\infty(O_i,\sigma')$ by $(f\otimes u')(x) = f(x)u'$.
Since $w_i\in W(M)$, $\Ad(w_i)(\mathfrak{m}\cap \mathfrak{n}_0) \subset \mathfrak{n}_0$.
Hence we can define a character $w_i^{-1}\eta$ of $\mathfrak{m}\cap\mathfrak{n}_0$ by $(w_i^{-1}\eta)(X) = \eta(\Ad(w_i)X)$.
Using this character, we can define the Jacquet module $J'_{w_i^{-1}\eta}(\sigma\otimes e^{\lambda + \rho})$ of $MA$-representation $\sigma\otimes e^{\lambda + \rho}$.
This is an $\mathfrak{m}\oplus\mathfrak{a}$-module.
Put
\[
	I'_i = \left\{\sum_{k = 1}^l T_k(((f_k\eta_i^{-1})\otimes u_k')\delta_i)\biggm|
\begin{array}{ll}
T_k\in U(\Ad(w_i)\overline{\mathfrak{n}}\cap \overline{\mathfrak{n}}),& f_k\in \mathcal{P}(O_i),\\
u_k'\in J'_{w_i^{-1}\eta}(\sigma\otimes e^{\lambda+\rho})
\end{array}
\right\}.
\]
The space $I'_i$ is a $U(\mathfrak{g})$-submodule of $\mathcal{D}'(U_i,\mathcal{L})$.
Our aim is to prove that if $i$ satisfies some conditions then $I_i/I_{i - 1} \simeq I'_i$.

\begin{lem}\label{lem:bracket of n and bar_n}
Let $E_1,\dots,E_n$ be a basis of $\Ad(w_i)\overline{\mathfrak{n}}\cap \overline{\mathfrak{n}_0}$ such that each $E_s$ is a restricted root vector for some root (say $\alpha_s$) and $F\in(\Ad(w_i)\overline{\mathfrak{n}}\cap \mathfrak{n}_0)\oplus\Ad(w_i)(\mathfrak{m}\cap\mathfrak{n}_0)$.
(Notice that $\Ad(w_i)(\mathfrak{m}\cap \mathfrak{n}_0) = \Ad(w_i)\mathfrak{m}\cap \mathfrak{n}_0$ since $w_i\in W(M)$, so $(\Ad(w_i)\overline{\mathfrak{n}}\cap \mathfrak{n}_0)\oplus\Ad(w_i)(\mathfrak{m}\cap\mathfrak{n}_0) = \Ad(w_i)(\overline{\mathfrak{n}}\oplus\mathfrak{m})\cap \mathfrak{n}_0$ is a subalgebra of $\mathfrak{g}$.)
For $\xi = (\xi_1,\xi_2,\dots,\xi_n)\in\Z_{\ge 0}^n$, set $E^\xi = E_1^{\xi_1}E_2^{\xi_2}\dotsm E_n^{\xi_n}$.
Then for all $c\in\C$ we have
\begin{multline*}
	[(F - c)^k,E^\xi] \in \left(\sum_{\xi'\in A(\xi)}\C E^{\xi'}\right) U((\Ad(w_i)\overline{\mathfrak{n}}\cap \mathfrak{n}_0)\oplus\Ad(w_i)(\mathfrak{m}\cap\mathfrak{n}_0))
	\\\subset U(\Ad(w_i)(\overline{\mathfrak{n}}\oplus(\mathfrak{m}\cap \mathfrak{n}_0)))
\end{multline*}
where $A(\xi) = \{\xi'\in\Z_{\ge 0}^n\mid \text{$\lvert\xi'\rvert < \lvert\xi\rvert$, or $\lvert\xi'\rvert = \lvert\xi\rvert$ and $\sum \xi'_i\alpha_i < \sum\xi_i\alpha_i$}\}$.\end{lem}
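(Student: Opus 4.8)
The plan is to prove the claim by induction on the length $\lvert\xi\rvert$ of the multi-index, reducing everything to the case $\lvert\xi\rvert = 1$, i.e.\ to computing $[(F-c)^k, E_s]$ for a single root vector $E_s$. The key structural input is the observation already noted in the statement: since $w_i \in W(M)$, the space $\mathfrak{q} := \Ad(w_i)(\overline{\mathfrak{n}}\oplus\mathfrak{m})\cap\mathfrak{n}_0 = (\Ad(w_i)\overline{\mathfrak{n}}\cap\mathfrak{n}_0)\oplus\Ad(w_i)(\mathfrak{m}\cap\mathfrak{n}_0)$ is a subalgebra of $\mathfrak{g}$, and in fact a nilpotent one sitting inside $\mathfrak{n}_0$. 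Note $F \in \mathfrak{q}$. The larger algebra $\mathfrak{r} := \Ad(w_i)(\overline{\mathfrak{n}}\oplus(\mathfrak{m}\cap\mathfrak{n}_0))$ contains both $\mathfrak{q}$ and the root vectors $E_1,\dots,E_n$ (the latter spanning $\Ad(w_i)\overline{\mathfrak{n}}\cap\overline{\mathfrak{n}_0}$, which is the ``opposite'' part). So the whole computation takes place inside $U(\mathfrak{r})$, and the content of the lemma is really a statement about how the root-vector filtration of $U(\mathfrak{r})$ interacts with ad-action by elements of $\mathfrak{q}$.

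First I would reduce the exponent $k$ to $k=1$: by the Leibniz-type identity $[(F-c)^k, Y] = \sum_{j=0}^{k-1}(F-c)^j [F,Y] (F-c)^{k-1-j}$, it suffices to control $[F, E^\xi]$ and then reorganize, using that $\ad(F)$ preserves the relevant spaces. Next, for $[F, E^\xi]$ itself I would induct on $n$ (peeling off $E_n^{\xi_n}$) and then on $\xi_n$, using $[F, E^\xi] = [F,E_1^{\xi_1}\cdots E_{n-1}^{\xi_{n-1}}]E_n^{\xi_n} + E_1^{\xi_1}\cdots E_{n-1}^{\xi_{n-1}}[F, E_n^{\xi_n}]$ and $[F, E_n^{\xi_n}] = \sum_{t=0}^{\xi_n - 1} E_n^t [F,E_n] E_n^{\xi_n - 1 - t}$. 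The base case is $[F, E_s]$ for a single $s$: since $F$ is a sum of restricted root vectors for roots $\beta$ with $\beta > 0$ (as $F \in \mathfrak{q}\subset\mathfrak{n}_0$) — or more precisely $F\in\mathfrak{q}$ which I can decompose into root vectors — the bracket $[F, E_s]$ lies in $\mathfrak{r}$ and decomposes into root vectors for roots of the form $\beta + \alpha_s$. Such a root $\gamma = \beta+\alpha_s$ either lies in $\Ad(w_i)\overline{\mathfrak{n}}\cap\overline{\mathfrak{n}_0}$ (so it is one of the $E$'s, with weight $\gamma = \alpha_s + \beta$, and since $\beta$ is a positive combination of simple roots while the $\alpha_i$ are negative, $\gamma < \alpha_s$ — giving the ``same length, strictly smaller weight'' clause) or it lies in $\mathfrak{q}$ (contributing to the $U(\mathfrak{q})$ factor with no $E$, i.e.\ the ``strictly smaller length'' clause). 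Carrying this bookkeeping through the two nested inductions, one checks that every term produced is of the form $E^{\xi'}\cdot(\text{element of }U(\mathfrak{q}))$ with $\xi'\in A(\xi)$, because each application of $\ad(F)$ either strictly lowers the total $E$-degree or keeps it fixed while strictly lowering the weight $\sum\xi_i\alpha_i$ in the order $<$ on $\sum_{\alpha\in\Sigma}\R\alpha$ (using property (1), translation-invariance of the order, to push weights past each other).

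The main obstacle is the bookkeeping of reordering: after a single bracket one obtains a word in $E_1,\dots,E_n$ and elements of $\mathfrak{q}$ that is not in the PBW-normal-ordered form $E^{\xi'}U(\mathfrak{q})$, and to bring it into that form one must commute $\mathfrak{q}$-elements past $E$-vectors, which again generates lower-order terms; one must verify this process terminates and never escapes $A(\xi)$. The clean way to handle this is to filter $U(\mathfrak{r})$ by the pair (total $E$-degree, weight under $\mathfrak{a}_0$ in the order $<$), observe by the root-space decomposition that $\ad(\mathfrak{q})$ is non-increasing for this filtration and that $\ad(F)$ is \emph{strictly} decreasing on the subspace $\sum_\xi \C E^\xi \cdot U(\mathfrak{q})$, and then the associated-graded argument makes the claim essentially formal. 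I expect the only genuinely delicate point to be confirming that the order $<$ on weights is compatible with this graded structure — i.e.\ that when $\gamma = \alpha_s+\beta$ is again one of the $\alpha_i$'s, we indeed have $\gamma < \alpha_s$; this follows because $\beta$, being a restricted root occurring in $\mathfrak{q}\subset\mathfrak{n}_0$, is a positive sum of simple roots, hence $\beta > 0$, hence $\alpha_s + \beta > \alpha_s$ would be the wrong direction — so one must be careful that $\beta$ actually decreases $\alpha_s$; the resolution is that $F\in\mathfrak{q}$ has root components $\beta$ with $-\beta\in\Sigma^+$ when viewed appropriately, or rather one tracks signs via $\Ad(w_i)$ carefully. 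Modulo pinning down this sign, the argument is a routine PBW/filtration computation.
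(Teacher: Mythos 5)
Your outline follows essentially the paper's own route: reduce to $k=1$ by a Leibniz expansion of $[(F-c)^k,\cdot]$, induct on $\lvert\xi\rvert$, isolate the terms $E_1^{\xi_1}\dotsm E_s^l[F,E_s]E_s^{\xi_s-l-1}\dotsm E_n^{\xi_n}$, and split according to whether $[F,E_s]$ lands back in the transverse span $\Ad(w_i)\overline{\mathfrak{n}}\cap\overline{\mathfrak{n}_0}$ or in the subalgebra $(\Ad(w_i)\overline{\mathfrak{n}}\cap\mathfrak{n}_0)\oplus\Ad(w_i)(\mathfrak{m}\cap\mathfrak{n}_0)$, the latter case being handled by the inductive hypothesis after separating $E^{\xi^{(1)}}E^{\xi^{(2)}}[F,E_s]$ from the inner bracket. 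Up to the exact organization of the sub-inductions, this is the paper's proof, so the strategy is correct.

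The gap is exactly the point you flagged and then waved away. Early on you assert the case ``$[F,E_s]$ is again one of the $E$'s'' produces a multi-index $\xi'$ with $\gamma = \alpha_s+\beta < \alpha_s$, but in your own last paragraph you correctly compute $\alpha_s + \beta > \alpha_s$ (since $F\in\mathfrak{n}_0$, every root component $\beta$ of $F$ lies in $\Sigma^+$), and then appeal to an unspecified ``sign tracking via $\Ad(w_i)$.'' No such fix exists: $F$ is an honest element of $\mathfrak{n}_0$ with no pending twist, so the PBW leading term of $[F,E^\xi]$ sits at weight $\sum\xi_i\alpha_i+\beta$, which is strictly \emph{larger} than $\sum\xi_i\alpha_i$, and the same-length multi-index $\xi'$ that arises therefore fails the printed condition ``$\sum\xi'_i\alpha_i < \sum\xi_i\alpha_i$.'' A minimal counterexample: in $\mathfrak{sl}_3$ with $P=P_0$ and $w_i=s_{\gamma_1}$ ($\gamma_1,\gamma_2$ the simple restricted roots), take $E_1\in\mathfrak{g}_{-\gamma_2}$, $E_2\in\mathfrak{g}_{-\gamma_1-\gamma_2}$ (so $\alpha_1=-\gamma_2$, $\alpha_2=-\gamma_1-\gamma_2$) and $F\in\mathfrak{g}_{\gamma_1}=\Ad(w_i)\overline{\mathfrak{n}_0}\cap\mathfrak{n}_0$. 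Then $[F-c,E^{(0,1)}]=[F,E_2]$ is a nonzero multiple of $E_1=E^{(1,0)}$, while $(1,0)\notin A((0,1))$ as printed, because $\alpha_1 = -\gamma_2 > -\gamma_1-\gamma_2 = \alpha_2$. The honest resolution is that the inequality defining $A(\xi)$ should be reversed to $\sum\xi'_i\alpha_i > \sum\xi_i\alpha_i$; the paper's own proof tacitly assumes this when it asserts without argument that the case $[F,E_s]\in\Ad(w_i)\overline{\mathfrak{n}}\cap\overline{\mathfrak{n}_0}$ ``holds,'' and correspondingly the backward induction in the proof of Lemma~\ref{lem:succ quot is sub of I'} should run on $(\lvert\xi\rvert,-\sum_s\xi_s\alpha_s)$ rather than $(\lvert\xi\rvert,\sum_s\xi_s\alpha_s)$. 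You should have pinned this sign down (and corrected the statement if need be) rather than postponing it to a nonexistent convention.
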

\begin{proof}
We may assume $k = 1$.
We prove the lemma by induction on $\lvert\xi\rvert$.
We have
\[
	[F - c,E^\xi] = [F,E^\xi] = \sum_{s = 1}^n\sum_{l = 0}^{\xi_s - 1}E_1^{\xi_1}\dotsm E_{s - 1}^{\xi_{s - 1}} E_s^l [F,E_s] E_s^{\xi_s - l - 1} E_{s + 1}^{\xi_{s + 1}}\dotsm E_n^{\xi_n}.
\]
Hence, it is sufficient to prove
\begin{multline*}
	E_1^{\xi_1}\dotsm E_{s - 1}^{\xi_{s - 1}} E_s^l [F,E_s] E_s^{\xi_s - l - 1} E_{s + 1}^{\xi_{s + 1}}\dotsm E_n^{\xi_n}\\
	 \in \left(\sum_{\xi'\in A(\xi)}\C E^{\xi'}\right) U((\Ad(w_i)\overline{\mathfrak{n}}\cap \mathfrak{n}_0)\oplus\Ad(w_i)(\mathfrak{m}\cap\mathfrak{n}_0)).
\end{multline*}
We may assume that $F$ is a restricted root vector.
If $[F,E_s]\in \Ad(w_i)\overline{\mathfrak{n}}\cap \overline{\mathfrak{n}_0}$ then the claim hold.
Assume that $[F,E_s]\in (\Ad(w_i)\overline{\mathfrak{n}}\cap \mathfrak{n}_0)\oplus\Ad(w_i)(\mathfrak{m}\cap\mathfrak{n}_0)$.
Put $\xi^{(1)} = (\xi_1,\dots,\xi_{s - 1},l,0,\dots,0)\in\Z^n$ and $\xi^{(2)} = (0,\dots,0,\xi_s - l - 1,\xi_{s + 1},\dots,\xi_n)\in\Z^n$.
Using inductive hypothesis, we have
\begin{align*}
& E^{\xi^{(1)}}\bigl[ [F,E_s],E^{\xi^{(2)}}\bigr] \\
& \in E^{\xi^{(1)}}\left(\sum_{\xi'\in A(\xi^{(2)})}\C E^{\xi'}\right)U((\Ad(w_i)\overline{\mathfrak{n}}\cap \mathfrak{n}_0)\oplus\Ad(w_i)(\mathfrak{m}\cap\mathfrak{n}_0))\\
& \subset \left(\sum_{\xi'\in A(\xi^{(1)} + \xi^{(2)})}\C E^{\xi'}\right)U((\Ad(w_i)\overline{\mathfrak{n}}\cap \mathfrak{n}_0)\oplus\Ad(w_i)(\mathfrak{m}\cap\mathfrak{n}_0))\\
& \subset \left(\sum_{\xi'\in A(\xi)}\C E^{\xi'}\right)U((\Ad(w_i)\overline{\mathfrak{n}}\cap \mathfrak{n}_0)\oplus\Ad(w_i)(\mathfrak{m}\cap\mathfrak{n}_0))
\end{align*}
On the other hand, we have
\begin{multline*}
E^{\xi^{(1)}}E^{\xi^{(2)}}[F,E_s]\in \left(\sum_{\lvert\xi'\rvert \le \lvert\xi^{(1)} +\xi^{(2)}\rvert}\C E^{\xi'}\right)[F,E_s]\\
\subset \left(\sum_{\lvert\xi'\rvert \le \lvert\xi^{(1)} +\xi^{(2)}\rvert}\C E^{\xi'}\right)(\Ad(w_i)\overline{\mathfrak{n}}\cap \mathfrak{n}_0)\oplus\Ad(w_i)(\mathfrak{m}\cap\mathfrak{n}_0).
\end{multline*}
Since $\lvert\xi^{(1)} + \xi^{(2)}\rvert = \lvert\xi\rvert - 1 < \lvert\xi\rvert$, we get the lemma.
\end{proof}

Let $X$ be an element of the normalizer of $\Ad(w_i)\overline{\mathfrak{n}}\cap\mathfrak{n}_0$ in $\mathfrak{g}$.
For $f\in C^\infty(O_i)$ we define $D_i(X)f\in C^\infty(O_i)$ by
\[
	(D_i(X)f)(nw_i) = \left.\frac{d}{dt}f(\exp(-tX)n\exp(tX)w_i)\right|_{t = 0}
\]
where $n \in w_i\overline{N}w_i^{-1}\cap N_0$.\newsym{$D_i(X)$}\label{symbol:D_i}

\begin{lem}\label{lem:no delta part}
Fix $f\in C^\infty(O_i)$, $u'\in (\sigma\otimes e^{\lambda+\rho})'$ and $X\in\mathfrak{g}$.
\begin{enumerate}
\item If $X\in \mathfrak{a}_0$, then $X$ normalizes $\Ad(w_i)\overline{\mathfrak{n}}\cap \mathfrak{n}_0$ and we have
\begin{multline*}
X((f\otimes u')\delta_i) = ((D_i(X)f)\otimes u')\delta_i + (f\otimes((\Ad(w_i)^{-1}X)u'))\delta_i\\ + (w_i\rho_0 - \rho_0)(X)(f\otimes u')\delta_i.
\end{multline*}
\item If $X\in \Ad(w_i)(\mathfrak{m}\cap \mathfrak{n}_0)$ or $X\in\mathfrak{m}_0$, then $X$ normalizes $\Ad(w_i)\overline{\mathfrak{n}}\cap \mathfrak{n}_0$ and we have 
\[
X((f\otimes u')\delta_i) = ((D_i(X)f)\otimes u')\delta_i + (((\Ad(w_i)^{-1}X)u')\otimes f) \delta_i.
\]
\end{enumerate}
\end{lem}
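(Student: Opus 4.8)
The plan is to trivialize $\mathcal{L}$ over the open cell $U_i$, rewrite the assertion as an explicit first-order differential-operator identity on $w_i\overline{N}w_i^{-1}$, and then restrict everything to $O_i$. Using the diffeomorphism $w_i\overline{N}w_i^{-1}\simeq U_i$, $n\mapsto nw_iP/P$, together with Remark~\ref{rem:identify func on flag and G}, one identifies sections of $\mathcal{L}|_{U_i}$ with $\sigma$-valued functions on $w_i\overline{N}w_i^{-1}$ and, via the appendix (cf.\ Proposition~\ref{prop:structure theorem of tempered distributions whose support is contained in submanifold}), elements of $\mathcal{D}'(U_i,\mathcal{L})$ with $\sigma'$-valued distributions on $w_i\overline{N}w_i^{-1}$; under this identification $\delta_i$ becomes integration of the relevant coefficient over $O_i=w_i\overline{N}w_i^{-1}\cap N_0$ against the fixed Haar measure. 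Writing $\overline{\mathfrak{n}}^{w_i}=\Ad(w_i)\overline{\mathfrak{n}}$ and $\mathfrak{p}^{w_i}=\Ad(w_i)\mathfrak{p}$, so $\mathfrak{g}=\overline{\mathfrak{n}}^{w_i}\oplus\mathfrak{p}^{w_i}$, one reads off from the definition of the $G$-action that $X\in\mathfrak{g}$ acts on functions by a first-order operator $\xi_X+c_X$, where $\xi_X$ is the vector field whose value at $m$ is, in the left trivialization of $T(w_i\overline{N}w_i^{-1})$, the $\overline{\mathfrak{n}}^{w_i}$-component of $-\Ad(m)^{-1}X$, and $c_X(m)=d(\sigma\otimes e^{\lambda+\rho})\bigl(\Ad(w_i)^{-1}((\Ad(m)^{-1}X)_{\mathfrak{p}^{w_i}})\bigr)$. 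On distributions, $X$ then acts by the formal transpose of $\xi_X+c_X$ with respect to the fixed Haar measure, which introduces the divergence of $\xi_X$ (a term computed from the trace of $\ad X$).

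Next I would check the normalization claims and, crucially, that no ``delta part'' appears. For $X\in\mathfrak{a}_0$ or $X\in\mathfrak{m}_0$, $X$ normalizes every restricted root space, hence the sum $\overline{\mathfrak{n}}^{w_i}\cap\mathfrak{n}_0=\Lie(O_i)$; for $X\in\Ad(w_i)(\mathfrak{m}\cap\mathfrak{n}_0)$ one combines $[\mathfrak{m},\overline{\mathfrak{n}}]\subset\overline{\mathfrak{n}}$ with $\Ad(w_i)(\mathfrak{m}\cap\mathfrak{n}_0)\subset\mathfrak{n}_0$ (which holds because $w_i\in W(M)$) and $[\mathfrak{n}_0,\mathfrak{n}_0]\subset\mathfrak{n}_0$. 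The key point is that for $m\in O_i$ and $X$ of any of these three types one has $\Ad(m)^{-1}X-X\in\Lie(O_i)$: expand $\Ad(m)^{-1}=\exp(-\ad\log m)$ with $\log m\in\Lie(O_i)$, a subalgebra normalized by $X$. Since moreover $X\in\mathfrak{p}^{w_i}$ in all three cases, we get for $m\in O_i$ that $(\Ad(m)^{-1}X)_{\overline{\mathfrak{n}}^{w_i}}=\Ad(m)^{-1}X-X\in\Lie(O_i)$ and $(\Ad(m)^{-1}X)_{\mathfrak{p}^{w_i}}=X$. Hence $\xi_X$ is tangent to $O_i$ along $O_i$, so applying it via the transpose produces no transverse derivative, i.e.\ no factor from $U(\Ad(w_i)\overline{\mathfrak{n}}\cap\overline{\mathfrak{n}})$; and $c_X|_{O_i}$ is the \emph{constant} $d(\sigma\otimes e^{\lambda+\rho})(\Ad(w_i)^{-1}X)$, which in case (2) reduces to $d\sigma(\Ad(w_i)^{-1}X)$ because there $\Ad(w_i)^{-1}X\in\mathfrak{m}$.

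Assembling the answer: carrying out the transpose by integrating by parts along $O_i$, the $\xi_X$-part yields $((\xi_X f)\otimes u')\delta_i+(\operatorname{div}_\mu\xi_X)(f\otimes u')\delta_i$, and identifying the flow of $\xi_X|_{O_i}$ with the conjugation $n\mapsto\exp(-tX)n\exp(tX)$ gives $\xi_X f=D_i(X)f$; the $c_X$-part yields the term involving the action of $\Ad(w_i)^{-1}X$ on $u'$. It remains to compute $\operatorname{div}_\mu\xi_X=-\Tr(\ad X|_{\Lie(O_i)})$. In case (2) this vanishes: for $X\in\Ad(w_i)(\mathfrak{m}\cap\mathfrak{n}_0)$ because $\ad X$ is nilpotent on $\Lie(O_i)$, and for $X\in\mathfrak{m}_0$ because $M_0$ is compact, so $\ad X$ is conjugate to a skew-symmetric operator on the real form of $\Lie(O_i)$. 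In case (1), using $\Lie(O_i)=\bigoplus_{\gamma\in\Sigma^+\cap(-w_i\Sigma^+)}\mathfrak{g}_\gamma$ and $\Tr(\ad X|_{\mathfrak{g}_\gamma})=(\dim\mathfrak{g}_\gamma)\gamma(X)$, one obtains $\Tr(\ad X|_{\Lie(O_i)})=\sum_{\gamma\in\Sigma^+\cap(-w_i\Sigma^+)}(\dim\mathfrak{g}_\gamma)\gamma(X)=-(w_i\rho_0-\rho_0)(X)$, so $\operatorname{div}_\mu\xi_X=(w_i\rho_0-\rho_0)(X)$, which produces exactly the stated $\rho$-shift term.

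I expect the main difficulty to be bookkeeping rather than anything conceptual: pinning down the precise normalizations — of the distribution pairing on $\mathcal{D}'(U_i,\mathcal{L})$, of the dual bundle $\mathcal{L}'$, and of the affine coordinates on $w_i\overline{N}w_i^{-1}$ — so that the several $\rho$-type contributions (from $e^{\lambda+\rho}$ in the induction, from the Jacobian of $\exp\colon\overline{\mathfrak{n}}^{w_i}\to w_i\overline{N}w_i^{-1}$, and from $\operatorname{div}_\mu\xi_X$) combine to precisely $(w_i\rho_0-\rho_0)(X)$ in case (1) and to nothing in case (2). Proposition~\ref{prop:structure theorem of tempered distributions whose support is contained in submanifold} and the appendix on distributions on nilpotent Lie groups are what make the reduction to this scalar computation on $O_i$ rigorous.
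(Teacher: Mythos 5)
Your proof is correct and is essentially the paper's argument in differential-geometric dress: the paper's direct change of variables $n\mapsto g_t n g_t^{-1}$ inside the integral defining $\delta_i$, with the Jacobian $D(t)=\lvert\det(\Ad(g_t)^{-1}\vert_{\Ad(w_i)\overline{\mathfrak{n}}\cap\mathfrak{n}_0})\rvert$, is exactly your flow of $\xi_X$ with its divergence, and the splitting of $\exp(tX)nw_i$ as $(\,g_tng_t^{-1}\,)\,w_i\,(w_i^{-1}g_tw_i)$ encodes the same decomposition $\Ad(m)^{-1}X=(\Ad(m)^{-1}X-X)+X$ with components in $\Ad(w_i)\overline{\mathfrak{n}}\cap\mathfrak{n}_0$ and $\Ad(w_i)\mathfrak{p}$. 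The case analysis for the Jacobian/divergence (linearity of the $\rho$-shift for $\mathfrak{a}_0$, unitarity for $\mathfrak{m}_0$ via compactness, nilpotence for $\Ad(w_i)(\mathfrak{m}\cap\mathfrak{n}_0)$) is identical in both.
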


\begin{proof}
Let $X$ be as in the lemma.
First we prove that $X$ normalizes $\Ad(w_i)\overline{\mathfrak{n}}\cap \mathfrak{n}_0$.
If $X\in \mathfrak{m}_0 + \mathfrak{a}_0$, then $X$ normalizes each restricted root space.
Hence, $X$ normalizes $\Ad(w_i)\overline{\mathfrak{n}}\cap \mathfrak{n}_0$.
If $X\in \Ad(w_i)(\mathfrak{m}\cap \mathfrak{n}_0)$, then $X\in \mathfrak{n}_0$ since $w_i\in W(M)$.
Hence, $X$ normalizes $\mathfrak{n}_0$.
Since $\mathfrak{m}$ normalizes $\overline{\mathfrak{n}}$, $X$ normalizes $\Ad(w_i)\overline{\mathfrak{n}}$.

Put $g_t = \exp(tX)$.
Take $\varphi\in C_c^\infty(U_i,\mathcal{L})$ and we regard $\varphi$ as a $\sigma$-valued $C^\infty$-function on $w_i\overline{N}P$ (Remark~\ref{rem:identify func on flag and G}).
Since $w_ig_tw_i^{-1}\in P$, we have $\varphi(xw_ig_tw_i^{-1}) = \sigma(w_ig_tw_i^{-1})^{-1}\varphi(x)$.
Put $D(t) = \lvert\det(\Ad(g_t)^{-1}|_{\Ad(w_i)\overline{\mathfrak{n}}\cap \mathfrak{n}_0})\rvert$.
Then
\begin{align*}
& \langle X((f\otimes u')\delta_i),\varphi\rangle = \langle (f\otimes u')\delta_i ,-X\varphi\rangle\\
& = \frac{d}{dt}\left.\int_{w_i\overline{N}w_i^{-1}\cap N_0}u'(\varphi(g_tnw_i))f(nw_i)dn\right|_{t = 0}\\
& = \frac{d}{dt}\left.\int_{w_i\overline{N}w_i^{-1}\cap N_0}u'(\varphi((g_tng_t^{-1})w_i(w_i^{-1}g_tw_i)))f(nw_i)dn\right|_{t = 0}\\
& = \frac{d}{dt}\left.\int_{w_i\overline{N}w_i^{-1}\cap N_0}u'(\sigma(w_i^{-1}g_tw_i)^{-1}\varphi((g_tng_t^{-1})w_i))f(nw_i)dn\right|_{t = 0}\\
& = \frac{d}{dt}\left.\int_{w_i\overline{N}w_i^{-1}\cap N_0}u'(\sigma(w_i^{-1}g_tw_i)^{-1}\varphi(nw_i))f(g_t^{-1}ng_tw_i)D(t) dn\right|_{t = 0}\\
& = \frac{d}{dt}\left.\int_{w_i\overline{N}w_i^{-1}\cap N_0}((w_i^{-1}g_tw_i)u')(\varphi(nw_i))f(g_t^{-1}ng_tw_i)D(t) dn\right|_{t = 0}
\end{align*}
This implies
\begin{multline*}
X((f\otimes u')\delta_i)
= ((D_i(X)f)\otimes u')\delta_i + (f\otimes((\Ad(w_i)^{-1}X)u'))\delta_i\\
+ \left.\frac{d}{dt}\lvert\det(\Ad(g_t)^{-1}|_{\Ad(w_i)\overline{\mathfrak{n}}\cap \mathfrak{n}})\rvert\right|_{t = 0}((f\otimes u')\delta_i)
\end{multline*}

(1)
Assume that $X\in \mathfrak{a}_0$.
Since $w_i\in W(M)$, we have $w_i\overline{N}w_i^{-1}\cap N_0 = w_i\overline{N}_0w_i^{-1}\cap N_0$.
This implies that $\det(\Ad(g_t)^{-1}|_{\Ad(w_i)\overline{\mathfrak{n}}\cap \mathfrak{n}_0}) = e^{t(w_i\rho_0 - \rho_0)(X)}$.

(2)
First assume that $X\in\mathfrak{m}_0$.
Since $g\mapsto \det(\Ad(g)^{-1}|_{\Ad(w_i)\overline{\mathfrak{n}}\cap \mathfrak{n}_0})$ is $1$-dimensional representation, it is unitary since $M_0$ is compact.
Hence we have $\lvert\det(\Ad(g_t)^{-1}|_{\Ad(w_i)\overline{\mathfrak{n}}\cap \mathfrak{n}_0})\rvert = 1$.
Next assume that $X\in (\Ad(w_i)\mathfrak{m}\cap \mathfrak{n}_0)$.
Then $\ad(X)$ is nilpotent.
Hence, $\Ad(g_t) - 1$ is nilpotent.
This implies $\det(\Ad(g_t)^{-1}|_{\Ad(w_i)\overline{\mathfrak{n}}\cap \mathfrak{n}_0}) = 1$.
\end{proof}

\begin{lem}\label{lem:succ quot is sub of I'}
Let $x\in\mathcal{T}_{O_i}(U_i,\mathcal{L})$.
Assume that for all $X\in \Ad(w_i)\overline{\mathfrak{p}}\cap \mathfrak{n}_0$ there exists a positive integer $k$ such that $(X - \eta(X))^kx = 0$.
Then $x\in I_i'$.
In particular we have $\im\Res_i\subset I_i'$.
\end{lem}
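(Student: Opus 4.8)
The plan is to reduce the statement---via the structure theorem for tempered distributions supported along a submanifold (Proposition~\ref{prop:structure theorem of tempered distributions whose support is contained in submanifold})---to two facts about $\mathcal{T}(O_i,\mathcal{L}|_{O_i})$, and then to peel off the pieces of $x$ one at a time. Throughout I use that
\[
\Ad(w_i)\overline{\mathfrak{p}}\cap\mathfrak{n}_0=(\Ad(w_i)\overline{\mathfrak{n}}\cap\mathfrak{n}_0)\oplus\Ad(w_i)(\mathfrak{m}\cap\mathfrak{n}_0),
\]
valid because $\Ad(w_i)\mathfrak{a}\subset\mathfrak{a}_0$ meets $\mathfrak{n}_0$ trivially and $w_i\in W(M)$. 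The first summand is $\Lie(O_i)$; a computation as in the proof of Lemma~\ref{lem:no delta part}, using left-invariance of the Haar measure on $O_i$, shows that it acts on $\mathcal{T}(O_i,\mathcal{L}|_{O_i})\cong\mathcal{T}(O_i)\otimes(\sigma\otimes e^{\lambda+\rho})'$ through a right-invariant vector field $\widetilde X$ on the $\mathcal{T}(O_i)$-factor, and $(\widetilde X-\eta(X))\eta_i^{-1}=0$ since $\eta$ is a character of $N_0$. The second summand acts through Lemma~\ref{lem:no delta part}(2); since $\eta$ is a character of $N_0$ one checks $D_i(\Ad(w_i)G)\eta_i=0$ for $G\in\mathfrak{m}\cap\mathfrak{n}_0$, so that $F-\eta(F)$ (with $F=\Ad(w_i)G$) acts on $(\mathcal{P}(O_i)\eta_i^{-1}\otimes(\sigma\otimes e^{\lambda+\rho})')\delta_i$ as $D_i(F)\otimes 1+1\otimes(G-(w_i^{-1}\eta)(G))$, with $D_i(F)$ locally nilpotent on $\mathcal{P}(O_i)$ (because $\Ad(\exp tF)$ is unipotent on $O_i$). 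In particular $U(\Ad(w_i)\overline{\mathfrak{p}}\cap\mathfrak{n}_0)$ preserves $\mathcal{T}(O_i,\mathcal{L}|_{O_i})$ inside $\mathcal{T}_{O_i}(U_i,\mathcal{L})$. The two facts are: \textup{(A)} every element of $I_i'$ is a generalized $\eta$-eigenvector for $\Ad(w_i)\overline{\mathfrak{p}}\cap\mathfrak{n}_0$; \textup{(B)} every generalized $\eta$-eigenvector in $\mathcal{T}(O_i,\mathcal{L}|_{O_i})$ lies in $I_i'$.

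For \textup{(A)}, since being a generalized $\eta$-eigenvector is stable under finite sums it suffices to treat $z=E^\xi w$ with $w=(f\eta_i^{-1}\otimes u')\delta_i$, $f\in\mathcal{P}(O_i)$, $u'\in J'_{w_i^{-1}\eta}(\sigma\otimes e^{\lambda+\rho})$ and $E^\xi$ a monomial in the basis of Lemma~\ref{lem:bracket of n and bar_n}. First $w$ is killed by a power of $X-\eta(X)$ for each $X\in\Lie(O_i)$ (the left-regular representation of the unipotent group $O_i$ is locally finite and acts by unipotent operators on $\mathcal{P}(O_i)$, so $\widetilde X^k f=0$ eventually) and by a power of $F-\eta(F)$ for each $F\in\Ad(w_i)(\mathfrak{m}\cap\mathfrak{n}_0)$ (expand $D_i(F)\otimes1+1\otimes(G-(w_i^{-1}\eta)(G))$ binomially and use local nilpotence of $D_i(F)$ on $\mathcal{P}(O_i)$ and of $G-(w_i^{-1}\eta)(G)$ on $J'_{w_i^{-1}\eta}(\sigma\otimes e^{\lambda+\rho})$). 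Then for any $F\in\Ad(w_i)\overline{\mathfrak{p}}\cap\mathfrak{n}_0$, the identity $a^Nb=\sum_j\binom Nj((\ad a)^jb)a^{N-j}$ applied to $a=F-\eta(F)$, $b=E^\xi$ gives $(F-\eta(F))^N z=\sum_j\binom Nj((\ad F)^jE^\xi)(F-\eta(F))^{N-j}w$; by Lemma~\ref{lem:bracket of n and bar_n} the $(\ad F)^jE^\xi$ lie in $\bigl(\sum_{\xi'\in A(\xi)}\C E^{\xi'}\bigr)U(\Ad(w_i)\overline{\mathfrak{p}}\cap\mathfrak{n}_0)$ for $j\ge1$ and vanish for $j$ large (the adjoint action of $\mathfrak{g}$ on $U(\mathfrak{g})$ is locally finite and $F\in\mathfrak{n}_0$ acts nilpotently), so the sum vanishes for $N$ large.

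For \textup{(B)}, let $y\in\mathcal{T}(O_i,\mathcal{L}|_{O_i})$ be a generalized $\eta$-eigenvector. Since $\Lie(O_i)$ acts only on the $\mathcal{T}(O_i)$-factor, the hypothesis for $\Lie(O_i)$, the description of generalized eigendistributions on a nilpotent Lie group (Appendix~\ref{subsec:Distributions on a nilpotent Lie group}; cf.\ \cite{MR1070979}) and $(\widetilde X-\eta(X))\eta_i^{-1}=0$ force $y\in(\mathcal{P}(O_i)\eta_i^{-1}\delta_i)\otimes(\sigma\otimes e^{\lambda+\rho})'$; write $y=\sum_{\alpha=1}^{s}(f_\alpha\eta_i^{-1}\otimes u_\alpha')\delta_i$ with $s$ minimal, so that $\{f_\alpha\}$ and $\{u_\alpha'\}$ are each linearly independent. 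Now fix $G\in\mathfrak{m}\cap\mathfrak{n}_0$ and let $V\subset\mathcal{P}(O_i)$ be the $D_i(\Ad(w_i)G)$-stable subspace generated by the $f_\alpha$ (finite-dimensional by local nilpotence). Extend $\{f_\alpha\}$ to a basis of $V$, expand $(D_i(\Ad(w_i)G)\otimes1+1\otimes(G-(w_i^{-1}\eta)(G)))^N y=0$ binomially and compare coefficients of the $f_\alpha$: this expresses, for all large $N$, the vector $(G-(w_i^{-1}\eta)(G))^Nu_\alpha'$ as a combination with coefficients polynomial in $N$ of the fixed vectors $(G-(w_i^{-1}\eta)(G))^{N-j}u_\beta'$, $1\le j<\dim V$; since the polynomials $N\mapsto\binom Nj$ are linearly independent, letting $N\to\infty$ forces $(G-(w_i^{-1}\eta)(G))^Mu_\alpha'=0$ for some $M$. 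As $G$ varies over $\mathfrak{m}\cap\mathfrak{n}_0$ and the $u_\alpha'$ are fixed, $u_\alpha'\in J'_{w_i^{-1}\eta}(\sigma\otimes e^{\lambda+\rho})$, so $y\in I_i'$.

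Granting \textup{(A)} and \textup{(B)}: write $x=\sum_\xi E^\xi x_\xi$ uniquely as a finite sum with $x_\xi\in\mathcal{T}(O_i,\mathcal{L}|_{O_i})$ by Proposition~\ref{prop:structure theorem of tempered distributions whose support is contained in submanifold}, and induct downward on $\#\{\xi\mid x_\xi\neq0\}$. For $\xi_0$ maximal among these for the partial order $\xi'<\xi\iff\xi'\in A(\xi)$, and any $F\in\Ad(w_i)\overline{\mathfrak{p}}\cap\mathfrak{n}_0$, the identity $a^Nb=\sum_j\binom Nj((\ad a)^jb)a^{N-j}$ together with Lemma~\ref{lem:bracket of n and bar_n} (which puts $(\ad F)^jE^\xi$ into $\bigl(\sum_{\xi'\in A(\xi)}\C E^{\xi'}\bigr)U(\Ad(w_i)\overline{\mathfrak{p}}\cap\mathfrak{n}_0)$ for $j\ge1$, while that enveloping algebra preserves $\mathcal{T}(O_i,\mathcal{L}|_{O_i})$) shows that the $E^{\xi_0}$-coefficient of $(F-\eta(F))^k x$ in the structure decomposition is $(F-\eta(F))^k x_{\xi_0}$; hence $x_{\xi_0}$ is a generalized $\eta$-eigenvector in $\mathcal{T}(O_i,\mathcal{L}|_{O_i})$ and lies in $I_i'$ by \textup{(B)}. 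By \textup{(A)}, $E^{\xi_0}x_{\xi_0}\in I_i'$ is again a generalized $\eta$-eigenvector, so $x-E^{\xi_0}x_{\xi_0}$ is one whose structure decomposition is supported on strictly fewer indices; by induction $x-E^{\xi_0}x_{\xi_0}\in I_i'$, whence $x\in I_i'$. The final assertion follows since $\Ad(w_i)\overline{\mathfrak{p}}\cap\mathfrak{n}_0$ integrates to a subgroup of $w_i\overline{P}w_i^{-1}$ preserving $U_i=w_i\overline{N}P/P=w_i\overline{P}P/P$, so $\Res_i$ is equivariant for it and every element of $J'_\eta(I(\sigma,\lambda))$ restricts to a distribution on $U_i$ satisfying the hypothesis. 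I expect the main obstacle to be \textup{(B)}, and within it the final extraction: the hypothesis only controls the combined operator $D_i(F)\otimes1+1\otimes(G-(w_i^{-1}\eta)(G))$, and one must disentangle the locally nilpotent polynomial part from the $\sigma'$-part to see that each $u_\alpha'$ separately lies in the Jacquet module---this is where the finite-dimensional reduction and the $N\to\infty$ (Vandermonde-type) argument are essential; a lesser subtlety is pinning down the twist $\eta_i^{-1}$ (not $\eta_i$) in the action on $\delta_i$.
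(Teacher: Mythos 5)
Your overall strategy is sound and closely parallels the paper's: both use Proposition~\ref{prop:structure theorem of tempered distributions whose support is contained in submanifold} to write $x=\sum_\xi E^\xi x_\xi$, use Lemma~\ref{lem:bracket of n and bar_n} to isolate the ``top'' coefficient $x_{\xi_0}$, then reduce to showing (i) $x_{\xi_0}$ is a polynomial-times-$\eta_i^{-1}$ tensored with a functional, and (ii) that functional lies in $J'_{w_i^{-1}\eta}(\sigma\otimes e^{\lambda+\rho})$. The reorganization into your (A)+(B) plus peeling off pieces one at a time is a valid variant of the paper's pair of nested backward inductions (on $(|\xi|,\sum\xi_s\alpha_s)$ and then additionally on $-\wt f_{\xi,l}$), and (A) is essentially Lemma~\ref{lem:acts nilp} restricted to the subalgebra.

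There is, however, a genuine gap in the final extraction step of (B), which you yourself flag as the main obstacle. Having reduced to $y=\sum_\alpha (f_\alpha\eta_i^{-1}\otimes u'_\alpha)\delta_i$ and the identity
\[
\sum_\alpha\sum_{j\ge 0}\binom Nj\bigl(D_i(\Ad(w_i)G)^j f_\alpha\bigr)\eta_i^{-1}\otimes (G-(w_i^{-1}\eta)(G))^{N-j}u'_\alpha=0
\]
for all large $N$, ``comparing coefficients of the $f_\alpha$'' in an arbitrary extension of $\{f_\alpha\}$ to a basis of $V$ expresses $(G-(w_i^{-1}\eta)(G))^Nu'_\alpha$ in terms of $(G-(w_i^{-1}\eta)(G))^{N-j}u'_\beta$ with $j\ge 1$; but these right-hand vectors still depend on $N$, they are not ``fixed,'' and the proposed appeal to linear independence of $N\mapsto\binom Nj$ together with ``letting $N\to\infty$'' has no meaning in a bare vector space. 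As written, the step does not establish that any $u'_\alpha$ is killed by a power of $G-(w_i^{-1}\eta)(G)$. The repair is to choose the auxiliary basis compatibly with the nilpotent operator $D_i(\Ad(w_i)G)$ on the finite-dimensional space $V$, i.e.\ a flag basis $g_1,\dots,g_d$ with $D_i(\Ad(w_i)G)g_k\in\mathrm{span}\{g_1,\dots,g_{k-1}\}$; then the $g_d$-coefficient of the displayed identity is exactly $(G-(w_i^{-1}\eta)(G))^N v_d$, which must vanish, and one descends through the flag by induction. This is in essence what the paper does, except that it gets the flag for free by taking the $f_{\xi,l}$ to be $\mathfrak{a}_0$-weight vectors for $D_i$ and inducting downward on $-\wt f_{\xi,l}$ (the weight lattice of $\mathcal{P}(O_i)$ is bounded above so the induction terminates). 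A second, minor, issue: your (A) as argued establishes the generalized-eigenvector property of $w=(f\eta_i^{-1}\otimes u')\delta_i$ only for $F$ in the two direct summands $\Ad(w_i)\overline{\mathfrak{n}}\cap\mathfrak{n}_0$ and $\Ad(w_i)(\mathfrak{m}\cap\mathfrak{n}_0)$, yet the last line of the (A) paragraph quantifies over arbitrary $F\in\Ad(w_i)\overline{\mathfrak{p}}\cap\mathfrak{n}_0$; this does not follow for free from the two special cases. It is harmless here because the remainder of the argument (and the paper's proof) only ever needs the hypothesis for $F$ in the two summands, but the quantifier in (A) should be restricted accordingly, or one should argue as in Lemma~\ref{lem:acts nilp} via Engel's theorem on a finite-dimensional auxiliary module.
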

\begin{proof}
Let $E_s$ and $\alpha_s$ be as in Lemma~\ref{lem:bracket of n and bar_n}.
For $\xi = (\xi_1,\xi_2,\dots,\xi_n)\in\Z_{\ge 0}^n$, set $E^\xi = E_1^{\xi_1}E_2^{\xi_2}\dots E_n^{\xi_n}$.
Since $x\in\mathcal{T}_{O_i}(U_i,\mathcal{L}) = U(\Ad(w_i)\overline{\mathfrak{n}}\cap \mathfrak{n}_0)\otimes \mathcal{T}(O_i,\mathcal{L})$, there exist $x_\xi\in \mathcal{T}(O_i,\mathcal{L})$ such that $x = \sum_\xi E^\xi x_\xi$ (finite sum).

First we prove $x_\xi\in (\mathcal{P}(O_i)\eta_i^{-1}\otimes(\sigma\otimes e^{\lambda+\rho})')\delta_i$ by backward induction on the lexicological order of $(\lvert\xi\rvert,\sum_s \xi_s\alpha_s)$.
Fix a nonzero element $F\in\Ad(w_i)\overline{\mathfrak{n}}\cap\mathfrak{n}_0$.
Then $(F - \eta(F))^kx = \sum_\xi [(F - \eta(F))^k,E^\xi] (x_\xi) + \sum_\xi E^\xi((F - \eta(F))^k x_\xi)$.
Assume that $(F - \eta(F))^kx = 0$.
Define the set $A(\xi)$ as in Lemma~\ref{lem:bracket of n and bar_n}.
By Lemma~\ref{lem:bracket of n and bar_n}, we have
\begin{multline*}
\sum_\xi E^\xi ((F - \eta(F))^kx_\xi) = -\sum_\xi [(F - \eta(F))^k,E^\xi](x_\xi)\\
\in \sum_\xi \left(\sum_{\xi'\in A(\xi)}\C E^{\xi'}\right) U((\Ad(w_i)\overline{\mathfrak{n}}\cap \mathfrak{n})\oplus\Ad(w_i)(\mathfrak{m}\cap\mathfrak{n}_0))(x_{\xi}).
\end{multline*}
Put $B(\xi) = \{\xi'\mid \text{$\lvert\xi'\rvert > \lvert\xi\rvert$ or $\lvert\xi'\rvert = \lvert\xi\rvert$ and $\sum \xi'_s\alpha_s > \sum \xi_s\alpha_s$} \}$.
Notice that $U((\Ad(w_i)\overline{\mathfrak{n}}\cap \mathfrak{n})\oplus\Ad(w_i)(\mathfrak{m}\cap\mathfrak{n}_0))(x_{\xi})\in \mathcal{T}(O_i,\mathcal{L})$.
Since $\mathcal{T}_{O_i}(U_i,\mathcal{L}) = U(\Ad(w_i)\overline{\mathfrak{n}}\cap\overline{\mathfrak{n}})\otimes_\C\mathcal{T}(O_i,\mathcal{L}|_{O_i})$, we have
\[
	(F - \eta(F))^kx_\xi \in \sum_{\xi'\in B(\xi)}U((\Ad(w_i)\overline{\mathfrak{n}}\cap \mathfrak{n})\oplus\Ad(w_i)(\mathfrak{m}\cap\mathfrak{n}_0))(x_{\xi'}).
\]
By inductive hypothesis, $x_{\xi'}\in (\mathcal{P}(O_i)\eta_i^{-1}\otimes (\sigma\otimes e^{\lambda+\rho})')\delta_i$ for all $\xi'\in B(\xi)$.
Hence we have $(F - \eta(F))^kx_{\xi}\in (\mathcal{P}(O_i)\eta_i^{-1}\otimes (\sigma\otimes e^{\lambda+\rho})')\delta_i$.
Therefore $x_{\xi}\in(\mathcal{P}(O_i)\eta_i^{-1}\otimes(\sigma\otimes e^{\lambda+\rho})')\delta_i$ by Corollary~\ref{cor:polynomial by some power of n}.

Hence, we can write $x = \sum_{\xi}E^\xi \sum_l (f_{\xi,l}\eta_i^{-1}\otimes u_{\xi,l}')\delta_i$ (finite sum), where $f_{\xi,l}\in \mathcal{P}(O_i)$ and $u_{\xi,l}'\in (\sigma\otimes e^{\lambda+\rho})'$.
Moreover, we can assume that $f_{\xi,l}$ is an $\mathfrak{a}_0$-weight vector with respect to $D_i$ and $\{f_{\xi,l}\}_l$ is lineally independent for each $\xi$.
We prove $u_{\xi,l}'\in J'_{w_i^{-1}\eta}(\sigma\otimes e^{\lambda+\rho})$.
Take $F\in \mathfrak{n}_0\cap \mathfrak{m}$.
By Lemma~\ref{lem:no delta part}, we have
\begin{multline*}
(\Ad(w_i)F - \eta(\Ad(w_i)F))^kx \\
= \sum_{\xi,l}[(\Ad(w_i)F - \eta(\Ad(w_i)F))^k,E^\xi]((f_{\xi,l}\eta_i^{-1}\otimes u_{xi,l}')\delta_i)\\
+ \sum_{\xi}E^\xi\sum_{p = 0}^k\binom{k}{p}(((D_i(\Ad(w_i)F))^{k - p}(f_{\xi,l})\eta_i^{-1})\otimes\\(F - \eta(\Ad(w_i)F))^p(u_{\xi,l}'))\delta_i.
\end{multline*}
Now we prove $u_{\xi,l}'\in J'_{w_i^{-1}\eta}(\sigma\otimes e^{\lambda+\rho})$ by backward induction on the lexicological order of $(\lvert\xi\rvert,\sum \xi_s\alpha_s,-\wt f_{\xi,l})$ where $\wt f_{\xi,l}$ is an $\mathfrak{a}_0$-weight of $f_{\xi,l}$ with respect to $D_i$.
Take $k$ such that $(\Ad(w_i)F - \eta(\Ad(w_i)F))^k x = 0$.
Then we have
\begin{multline*}
f_{\xi,l}\otimes (F - \eta(\Ad(w_i)F))^k(u'_{\xi,l})\delta_i\\ \in
\sum_{\eta\in B(\xi),l}U((\Ad(w_i)\overline{\mathfrak{n}}\cap \mathfrak{n}_0)\oplus\Ad(w_i)(\mathfrak{m}\cap\mathfrak{n}_0))((f_{\eta,l}\eta_i^{-1}\otimes u_{\eta,l}')\delta_i)\\
+ \sum_{\wt f_{\eta,l'} < \wt f_{\xi,l}}\sum_p(((D_i(\Ad(w_i)F))^pf_{\eta,l'}\eta_i^{-1})\otimes(U(\C F)u_{\eta,l'}'))\delta_i.
\end{multline*}
By inductive hypothesis, we have $(F - \eta(F))^ku_{\xi,l}'\in J'_{w_i^{-1}\eta}(\sigma\otimes e^{\lambda+\rho})$.
This implies that $u_{\xi,l}'\in J'_{w_i^{-1}\eta}(\sigma\otimes e^{\lambda+\rho})$.
\end{proof}

In fact, we have $\im\Res_i = I'_i$ under some conditions.
This is proved in Section~\ref{sec:Analytic continuation}.

\section{Vanishing theorem}\label{sec:vanishing theorem}
In this section, we fix $i \in \{1,2,\dots,r\}$ and a basis $\{e_1,e_2,\dots,e_l\}$ of $\Ad(w_i)\overline{\mathfrak{n}}\cap\mathfrak{n}_0$.
Here we assume that each $e_i$ is a restricted root vector and denote its root by $\alpha_i$.

By the decomposition 
\begin{multline*}
N_0/[N_0,N_0] \simeq ((w_i\overline{P}w_i^{-1}\cap N_0)/(w_i\overline{P}w_i^{-1}\cap [N_0,N_0])) \\\times ((w_iNw_i^{-1}\cap N_0)/(w_iNw_i^{-1}\cap [N_0,N_0]))
\end{multline*}
where $[\cdot,\cdot]$ is the commutator group, we can define a character $\eta'$ of $N_0$ by $\eta'(n) = \eta(n)$ for $n\in w_i\overline{P}w_i^{-1}\cap N_0$ and $\eta'(n) = 1$ for $n\in w_iNw_i^{-1}\cap N_0$.

\begin{lem}\label{lem:acts nilp}
Let $X\in \mathfrak{n}_0$.
Then for all $x\in I_i'$ there exists a positive integer $k$ such that $(X - \eta'(X))^kx = 0$.
\end{lem}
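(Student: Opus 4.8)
The plan is to reduce to the generators of $\mathfrak{n}_0$ and exploit the explicit description of elements of $I'_i$. First I would recall that $I'_i$ is spanned by elements of the form $T(((f\eta_i^{-1})\otimes u')\delta_i)$ with $T\in U(\Ad(w_i)\overline{\mathfrak{n}}\cap\overline{\mathfrak{n}})$, $f\in\mathcal{P}(O_i)$, and $u'\in J'_{w_i^{-1}\eta}(\sigma\otimes e^{\lambda+\rho})$. Since $\mathfrak{n}_0$ is spanned by restricted root vectors, it suffices (after an easy Leibniz/binomial argument, because a product of locally nilpotent commuting-modulo-lower-terms operators is locally nilpotent) to show that each restricted root vector $X\in\mathfrak{n}_0$ acts locally nilpotently on a single generator $T(((f\eta_i^{-1})\otimes u')\delta_i)$, with $X-\eta'(X)$ if $X\in w_i\overline{P}w_i^{-1}\cap\mathfrak{n}_0$ and with $X$ (i.e.\ $\eta'(X)=0$) if $X\in w_iNw_i^{-1}\cap\mathfrak{n}_0$.

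The key computational step is to move $X$ past $T$ using Lemma~\ref{lem:bracket of n and bar_n}: writing $X$ as an element $F$ of $(\Ad(w_i)\overline{\mathfrak{n}}\cap\mathfrak{n}_0)\oplus\Ad(w_i)(\mathfrak{m}\cap\mathfrak{n}_0)$ when it lies there, the bracket $[(X-c)^k,E^\xi]$ stays in a sum over the strictly-smaller index set $A(\xi)$ times $U(\Ad(w_i)(\overline{\mathfrak{n}}\oplus(\mathfrak{m}\cap\mathfrak{n}_0)))$, so an induction on the monomial order in $T$ reduces everything to the action of $\mathfrak{n}_0$-elements directly on $((f\eta_i^{-1})\otimes u')\delta_i$. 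There one applies Lemma~\ref{lem:no delta part} (its cases cover $\mathfrak{m}_0$, $\mathfrak{a}_0$, and $\Ad(w_i)(\mathfrak{m}\cap\mathfrak{n}_0)$) together with the fact that $D_i(X)$ acts locally nilpotently on $\mathcal{P}(O_i)$ (it is a polynomial vector field lowering a suitable degree/weight, cf.\ the appendix on distributions on nilpotent groups) and that $(\Ad(w_i)^{-1}X)$ acts locally nilpotently on $u'\in J'_{w_i^{-1}\eta}(\sigma\otimes e^{\lambda+\rho})$ by definition of that generalized Jacquet module — here the eigenvalue is exactly $(w_i^{-1}\eta)(\Ad(w_i)^{-1}X)=\eta(X)$, which is why $\eta'(X)=\eta(X)$ in the $w_i\overline{P}w_i^{-1}\cap\mathfrak{n}_0$ case. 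When $X\in w_iNw_i^{-1}\cap\mathfrak{n}_0$ instead, one checks that $X$ annihilates $\delta_i$ up to lower-order terms (it is tangent to $O_i$ trivially, or rather $\Ad(w_i)^{-1}X\in\mathfrak{n}$ acts by a scalar $e^{\lambda+\rho}$ piece that is absorbed), giving eigenvalue $0$, i.e.\ $\eta'(X)=1$.

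The main obstacle I anticipate is bookkeeping: one must track simultaneously the $U(\Ad(w_i)\overline{\mathfrak{n}}\cap\overline{\mathfrak{n}})$-part (handled by Lemma~\ref{lem:bracket of n and bar_n}), the polynomial part in $\mathcal{P}(O_i)$ under $D_i(X)$, and the $\sigma'$-part under $\Ad(w_i)^{-1}X$, and verify that the three local nilpotencies can be combined into a single $k$ via the binomial expansion of $(X-\eta'(X))^k$ — the cross terms must all eventually vanish, which follows because each factor is locally nilpotent and they commute modulo operators that strictly decrease one of the inductive parameters. A secondary subtlety is that a restricted root vector $X\in\mathfrak{n}_0$ need not lie in $(\Ad(w_i)\overline{\mathfrak{n}}\cap\mathfrak{n}_0)\oplus\Ad(w_i)(\mathfrak{m}\cap\mathfrak{n}_0)\oplus(w_iNw_i^{-1}\cap\mathfrak{n}_0)$ as a single summand, but it does decompose as such a sum under $N_0=(w_i\overline{P}w_i^{-1}\cap N_0)(w_iNw_i^{-1}\cap N_0)$ (equivalently $\mathfrak{n}_0=(\Ad(w_i)\overline{\mathfrak{p}}\cap\mathfrak{n}_0)\oplus(\Ad(w_i)\mathfrak{n}\cap\mathfrak{n}_0)$), and since local nilpotency is stable under sums of commuting operators this causes no real trouble once the two pieces are handled separately. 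This is exactly the decomposition defining $\eta'$, so the statement falls out.
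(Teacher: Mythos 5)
Your proposal has the right intuition (everything in sight is locally nilpotent, so the generalized eigenvalue must be $\eta'$), but as a proof plan it has two genuine gaps, and it misses the two devices that make the paper's argument go through.

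First, the reduction to generators $\delta_i(1,f\eta_i^{-1},u')$: you propose to commute $X$ past $T\in U(\Ad(w_i)\overline{\mathfrak{n}}\cap\overline{\mathfrak{n}})$ via Lemma~\ref{lem:bracket of n and bar_n}. But that lemma is stated only for $F\in(\Ad(w_i)\overline{\mathfrak{n}}\cap\mathfrak{n}_0)\oplus\Ad(w_i)(\mathfrak{m}\cap\mathfrak{n}_0)$, which is a proper subspace of $\mathfrak{n}_0$; when $X$ lies in the third piece $\Ad(w_i)\mathfrak{n}\cap\mathfrak{n}_0$ you have no commutation estimate, and the brackets $[X,E_s]$ can land anywhere in $\mathfrak{g}$, so one would need (and prove) a new variant of that lemma. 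The paper sidesteps this entirely with a one-line observation: since $\ad(\mathfrak{n}_0)$ acts nilpotently on $\mathfrak{g}$, the set of $x\in I'_i$ satisfying the conclusion is $\mathfrak{g}$-stable, hence one may assume $x=\delta_i(1,f\eta_i^{-1},u')$ from the start. Your route, even if repaired, re-derives a weaker version of this stability statement by hand.

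Second, the base case: you invoke Lemma~\ref{lem:no delta part}, but that lemma covers $X\in\mathfrak{a}_0$, $X\in\mathfrak{m}_0$, and $X\in\Ad(w_i)(\mathfrak{m}\cap\mathfrak{n}_0)$, none of which (except the last) intersects $\mathfrak{n}_0$; it says nothing about the restricted root vectors in $\Ad(w_i)\overline{\mathfrak{n}}\cap\mathfrak{n}_0$ or $\Ad(w_i)\mathfrak{n}\cap\mathfrak{n}_0$, which are the bulk of $\mathfrak{n}_0$. The correct ingredient is Lemma~\ref{lem:caluculation of Xdelta(1,f,u)}, whose formula $(X-\eta'(X))\delta_i(1,f\eta_i^{-1},u')=\delta_i(1,L_X(f)\eta_i^{-1},u')+\sum\delta_i(1,ff_\mathbf{k}\eta_i^{-1},(\Ad(w_i)^{-1}(\ad(e)^\mathbf{k}X)-\eta'(\ad(e)^\mathbf{k}X))u')$ is what lets the induction run, because the second sum strictly decreases a parameter attached to $u'$. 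Your plan also does not name that parameter: the paper sets $V=U(\Ad(w_i)^{-1}\mathfrak{n}_0\cap\mathfrak{n}_0)u'$ (finite-dimensional since $\mathfrak{n}$ acts trivially), filters $V\otimes(-w_i^{-1}\eta')$ by Engel's theorem, and inducts on $\dim V$; the polynomial degree takes care of the $L_X$ contribution, and the $V$-filtration makes the cross-terms you worry about strictly drop. Without naming this finite-dimensional $V$ and the Engel filtration, ``the cross terms must all eventually vanish'' is an assertion, not an argument. Finally, a small slip: for the Lie-algebra character $\eta'$ and $X\in\Ad(w_i)\mathfrak{n}\cap\mathfrak{n}_0$ the eigenvalue is $\eta'(X)=0$, not $1$.
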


To prove this lemma, we prepare some notation.
For $X\in \Ad(w_i)\overline{\mathfrak{n}}\cap \mathfrak{n}_0$, we define a differential operator $R_i'(X)$ on $O_i$ by
\[
	(R'_i(X)\varphi)(nw_iP/P) = \left.\frac{d}{dt}\varphi(n\exp(tX)w_iP/P)\right|_{t = 0}
\]
where $n\in w_i\overline{N}w_i^{-1}\cap N_0$.\newsym{$R'_i(X)$}
(Recall that $w_i\overline{N}w_i^{-1}\cap N_0\simeq O_i$ by the map $n\mapsto nw_iP/P$.)

For $X\in \mathfrak{g}$, we define a differential operator $\widetilde{R}_i(X)$ on $w_i\overline{N}P$ by the same way, i.e., for a $C^\infty$-function $\varphi$ on $w_i\overline{N}P$, put
\[
	(\widetilde{R}_i(X)\varphi)(pw_i) = \left.\frac{d}{dt}\varphi(p\exp(tX)w_i)\right|_{t = 0}
\]
for $p \in w_i\overline{N}Pw_i^{-1}$.\newsym{$\widetilde{R}_i(X)$}
Notice that even if $\varphi$ is right $P$-invariant, $\widetilde{R}_i(X)\varphi$ is not right $P$-invariant in general.

Since $R'_i$ (resp.\ $\widetilde{R}_i$) is a Lie algebra homomorphism, we can define a differential operator $R'_i(T)$ (resp.\ $\widetilde{R}_i(T)$) for $T\in U(\Ad(w_i)\overline{\mathfrak{n}}\cap \mathfrak{n}_0)$ (resp.\ $T\in U(\mathfrak{g})$) as usual.
For $T\in U(\mathfrak{g})$, $f\in C^\infty(O_i)$ and $u'\in (\sigma\otimes(\lambda + \rho))'$, we define $\delta_i(T,f,u')\in \mathcal{D}'_{O_i}(U_i,\mathcal{L})$\newsym{$\delta_i(T,f,u')$} by 
\[
	\langle \delta_i(T,f,u'),\varphi\rangle = 
	\int_{w_i\overline{N}w_i^{-1}\cap N_0} f(nw_i)u'((\widetilde{R}_i(T)\varphi)(nw_i))dn
\]
where $\varphi\in C^\infty_c(U_i,\mathcal{L})$ and we regard $\varphi$ as a function on $w_i\overline{N}P$ (Remark~\ref{rem:identify func on flag and G}).
The following lemma is easy to prove.
\begin{lem}\label{lem:fundamental properties of delta_i}
We have the following properties.
\begin{enumerate}
\item For $X\in \Ad(w_i)\overline{\mathfrak{n}}\cap\mathfrak{n}_0$, $\delta_i(XT,f,u') = \delta_i(T,R'_i(-X)(f),u')$.
\item For $X\in \Ad(w_i)\mathfrak{p}$, $\delta_i(TX,f,u') = \delta_i(T,f,\Ad(w_i)^{-1}Xu')$.
\item The map $C^\infty(O_i)\otimes_{U(\Ad(w_i)\overline{\mathfrak{n}}\cap \mathfrak{n})} U(\mathfrak{g})\otimes_{U(\Ad(w_i)\mathfrak{p})} w_i(\sigma\otimes e^{\lambda+\rho})'\to \mathcal{D}'(U_i,O_i,\mathcal{L})$ defined by $f\otimes T\otimes u'\mapsto \delta_i(T,f,u')$ is injective.
\end{enumerate}
\end{lem}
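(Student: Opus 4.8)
The plan is to obtain (1) and (2) by directly unwinding the definitions of $\delta_i$, $R'_i$ and $\widetilde{R}_i$, and then to deduce (3) from (1), (2) together with the Poincar\'e--Birkhoff--Witt theorem and the structure theorem for distributions supported on a submanifold. For (1), I would write $\widetilde{R}_i(XT) = \widetilde{R}_i(X)\widetilde{R}_i(T)$ and use that, since $X\in\Ad(w_i)\overline{\mathfrak{n}}\cap\mathfrak{n}_0$, the curve $\exp(tX)$ stays inside $w_i\overline{N}w_i^{-1}\cap N_0$. Thus for $\varphi\in C_c^\infty(U_i,\mathcal{L})$ the scalar function $n\mapsto u'((\widetilde{R}_i(T)\varphi)(nw_i))$ on the unimodular nilpotent group $w_i\overline{N}w_i^{-1}\cap N_0$ gets differentiated in the group direction when we apply $\widetilde{R}_i(X)$ and then pair with the fixed continuous functional $u'$ (which commutes with differentiation); integrating by parts against the bi-invariant Haar measure transfers this derivative onto $f(nw_i)$, that is onto $R'_i(X)f$, with a sign, giving $\delta_i(XT,f,u') = \delta_i(T,R'_i(-X)f,u')$. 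For (2), I would write $\widetilde{R}_i(TX) = \widetilde{R}_i(T)\widetilde{R}_i(X)$ and $p\exp(tX)w_i = pw_i\exp(t\Ad(w_i)^{-1}X)$ with $\Ad(w_i)^{-1}X\in\mathfrak{p}$; by the transformation rule of sections of $\mathcal{L}$ under the right $P$-action (Remark~\ref{rem:identify func on flag and G}), $\widetilde{R}_i(X)$ acts on a section as $-d(\sigma\otimes e^{\lambda+\rho})(\Ad(w_i)^{-1}X)$ applied fibrewise to the values, and this fibrewise operation commutes with the differential operator $\widetilde{R}_i(T)$; transposing it through $u'$ turns it into the action of $\Ad(w_i)^{-1}X$ on $u'$ regarded as an element of the $\mathfrak{p}$-module $w_i(\sigma\otimes e^{\lambda+\rho})'$ (the two sign reversals, one from the $P$-action on sections and one from passing to the dual, cancel), giving $\delta_i(TX,f,u') = \delta_i(T,f,\Ad(w_i)^{-1}Xu')$.

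For (3), the structural input is the decomposition $\mathfrak{g} = (\Ad(w_i)\overline{\mathfrak{n}}\cap\mathfrak{n}_0)\oplus(\Ad(w_i)\overline{\mathfrak{n}}\cap\overline{\mathfrak{n}_0})\oplus\Ad(w_i)\mathfrak{p}$: here $\mathfrak{g} = \Ad(w_i)(\overline{\mathfrak{n}}\oplus\mathfrak{p})$, and $\Ad(w_i)\overline{\mathfrak{n}}$ splits as the first two summands because each of the finitely many restricted root spaces it contains lies either in $\mathfrak{n}_0$ or in $\overline{\mathfrak{n}_0}$. By the Poincar\'e--Birkhoff--Witt theorem this identifies $U(\mathfrak{g})$, as a $(U(\Ad(w_i)\overline{\mathfrak{n}}\cap\mathfrak{n}_0),U(\Ad(w_i)\mathfrak{p}))$-bimodule, with $U(\Ad(w_i)\overline{\mathfrak{n}}\cap\mathfrak{n}_0)\otimes_\C U(\Ad(w_i)\overline{\mathfrak{n}}\cap\overline{\mathfrak{n}_0})\otimes_\C U(\Ad(w_i)\mathfrak{p})$, with $U(\mathfrak{g})$ free both as a left $U(\Ad(w_i)\overline{\mathfrak{n}}\cap\mathfrak{n}_0)$-module and as a right $U(\Ad(w_i)\mathfrak{p})$-module; hence the source of the map is identified with $C^\infty(O_i)\otimes_\C U(\Ad(w_i)\overline{\mathfrak{n}}\cap\overline{\mathfrak{n}_0})\otimes_\C w_i(\sigma\otimes e^{\lambda+\rho})'$. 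By (1) and (2) the map $\delta_i$ is well defined on the relative tensor product and, under this identification, becomes $f\otimes T\otimes u'\mapsto\delta_i(T,f,u')$ with $T$ ranging over a fixed PBW basis of $U(\Ad(w_i)\overline{\mathfrak{n}}\cap\overline{\mathfrak{n}_0})$. In the exponential chart $U_i\simeq\Ad(w_i)\overline{\mathfrak{n}}$ with $O_i\simeq\Ad(w_i)\overline{\mathfrak{n}}\cap\mathfrak{n}_0$, these operators $\widetilde{R}_i(T)$ are exactly the transversal derivatives and $\delta_i(T,f,u') = \widetilde{R}_i(T)((f\otimes u')\delta_i)$; so the uniqueness part of the structure theorem for distributions with support in $O_i$ (cf.\ Proposition~\ref{prop:structure theorem of tempered distributions whose support is contained in submanifold}), together with the evident injectivity of $f\otimes u'\mapsto(f\otimes u')\delta_i$, shows these images are linearly independent, hence the map is injective.

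All three steps are routine, as the wording of the lemma suggests; the only point needing a little care is the bookkeeping in (3), where one must check that after moving the $\Ad(w_i)\overline{\mathfrak{n}}\cap\mathfrak{n}_0$-part of $T$ into $f$ and the $\Ad(w_i)\mathfrak{p}$-part into $u'$ no relations survive beyond those already imposed by the relative tensor product — which is precisely what PBW freeness and the uniqueness clause of the structure theorem furnish. I expect this to be the (modest) main obstacle.
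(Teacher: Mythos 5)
Your computations for (1) and (2) are correct: unwinding $\widetilde{R}_i(XT)=\widetilde{R}_i(X)\widetilde{R}_i(T)$ and integrating by parts over the unimodular group $w_i\overline{N}w_i^{-1}\cap N_0$ gives (1), and the identity $(\widetilde{R}_i(X)\varphi)(pw_i)=-d(\sigma\otimes e^{\lambda+\rho})(\Ad(w_i)^{-1}X)\varphi(pw_i)$ for $X\in\Ad(w_i)\mathfrak{p}$ (from $\exp(tX)w_i=w_i\exp(t\Ad(w_i)^{-1}X)$ and the equivariance of sections of $\mathcal{L}$), combined with the sign flip in passing to the dual module, gives (2); these are exactly the module relations needed for $\delta_i$ to descend to the relative tensor product. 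The paper gives no proof, calling the lemma easy, and your route to (3) — PBW on the triangular decomposition $\mathfrak{g}=(\Ad(w_i)\overline{\mathfrak{n}}\cap\mathfrak{n}_0)\oplus(\Ad(w_i)\overline{\mathfrak{n}}\cap\overline{\mathfrak{n}_0})\oplus\Ad(w_i)\mathfrak{p}$ followed by Proposition~\ref{prop:structure theorem of tempered distributions whose support is contained in submanifold} — is the intended one and works. One small imprecision worth tidying: the equation ``$\delta_i(T,f,u')=\widetilde{R}_i(T)((f\otimes u')\delta_i)$'' is not the distributional action of the differential operator $\widetilde{R}_i(T)$ in the sense of Proposition~\ref{prop:structure theorem of tempered distributions whose support is contained in submanifold}; it is its transpose, i.e.\ $\langle\delta_i(T,f,u'),\varphi\rangle=\langle(f\otimes u')\delta_i,\widetilde{R}_i(T)\varphi|_{O_i}\rangle$. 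Since the transpose of a transversal monomial $E^\alpha$ agrees with $(-1)^{|\alpha|}E^\alpha$ up to operators of lower transversal order, the PBW filtration and the uniqueness clause of the structure theorem still yield injectivity by downward induction on transversal degree; but the transpose should be acknowledged rather than elided.
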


\begin{lem}\label{lem:left2right}
Let $\{e_i\}$ be a basis of $\Ad(w_i)\overline{\mathfrak{n}}\cap \mathfrak{n}_0$ such that $e_i$ is a restricted root vector, $\alpha_i$ the restricted root for $e_i$, $T,T'\in U(\mathfrak{g})$, $f\in C^\infty(O_i)$ and $u'\in (\sigma\otimes e^{\lambda + \rho})'$.
Then we have
\begin{multline*}
T\delta_i(T',f,u') \\= 
\sum_{(k_1,\dots,k_l)\in \Z_{\ge 0}^l}\delta_i\left((\ad(e_l)^{k_l}\dotsm \ad(e_1)^{k_1}T)T',f\prod_{s = 1}^l\frac{(-x_s)^{k_s}}{k_s!},u'\right),
\end{multline*}
where $x_i$ is a polynomial on $O_i$ given by $\exp(a_1e_1)\dotsm \exp(a_le_l)w_iP/P\mapsto a_i$.
(Notice that the right hand side is a finite sum since $\ad(e_i)$ is nilpotent.)
\end{lem}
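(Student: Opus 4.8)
The plan is to reduce to the case $T \in \mathfrak{g}$ by a multiplicativity/induction argument and then compute directly. First I would observe that the asserted identity is linear in $T$ and, once it holds for $T_1$ and $T_2$, one can check it holds for $T_1T_2$: applying the formula twice and using that $\ad(e_l)^{k_l}\cdots\ad(e_1)^{k_1}$ applied to a product expands by a Leibniz rule, the two nested sums collapse to the single sum in the statement (the binomial coefficients appearing from $\prod_s(-x_s)^{k_s}/k_s!$ matching those from the Leibniz expansion of $\ad(e_i)^{k_i}(T_1T_2)$). So by the PBW basis it suffices to treat $T = X \in \mathfrak{g}$, and there $\ad(e_l)^{k_l}\cdots\ad(e_1)^{k_1}X$ vanishes unless we are differentiating a single bracket, which makes the right-hand side a manageable explicit expression.

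For $T = X \in \mathfrak{g}$, I would unwind the definition of $\delta_i(T',f,u')$: pairing $X\delta_i(T',f,u')$ with a test function $\varphi \in C_c^\infty(U_i,\mathcal{L})$ gives $-\langle \delta_i(T',f,u'), X\varphi\rangle$, i.e. an integral over $w_i\overline{N}w_i^{-1}\cap N_0$ of $f(nw_i)\,u'((\widetilde R_i(T')(X\varphi))(nw_i))\,dn$. The key point is to move the left action of $X$ (which is $\widetilde R_i$ evaluated at the adjoint orbit, coming from $-X\varphi$ acting on the left) past the coordinates: writing $n = \exp(a_1 e_1)\cdots\exp(a_l e_l)$ and differentiating $\varphi(\exp(-tX)\,n\,\exp(s\,T')\,w_i)$, one rewrites $\exp(-tX)\exp(a_1e_1)\cdots\exp(a_le_l)$ by commuting $\exp(-tX)$ to the right through each $\exp(a_je_j)$. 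Each such commutation produces, to first order in $t$, the shift of $a_j$ by $-t$ times the appropriate $\ad(e_j)$-derivative acting on $X$, which is exactly where the factors $(-x_s)^{k_s}/k_s!$ and the iterated brackets $\ad(e_l)^{k_l}\cdots\ad(e_1)^{k_1}X$ enter; the nilpotency of each $\ad(e_i)$ (they are restricted root vectors in the nilpotent algebra $\Ad(w_i)\overline{\mathfrak{n}}\cap\mathfrak{n}_0$, and $X$ lives in a finite-dimensional $\ad(e_i)$-locally-nilpotent module) guarantees the sum over $(k_1,\dots,k_l)$ is finite. After this change of variables the integrand is re-expressed precisely as $\sum_{(k_1,\dots,k_l)} f\cdot\prod_s \frac{(-x_s)^{k_s}}{k_s!}$ paired against $\widetilde R_i$ of $(\ad(e_l)^{k_l}\cdots\ad(e_1)^{k_1}X)T'$, which is the pairing of $\varphi$ with the right-hand side.

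The main obstacle I anticipate is the bookkeeping in the commutation step: one must carefully track that commuting $\exp(-tX)$ rightward through $\exp(a_1e_1)\cdots\exp(a_le_l)$ produces the brackets in the stated order $\ad(e_l)^{k_l}\cdots\ad(e_1)^{k_1}$ (not the reverse) and with the correct polynomial coefficients $\prod_s(-x_s)^{k_s}/k_s!$, including the signs coming from $\exp(-tX)$ versus $\exp(s\,T')$ and from the orientation of $\widetilde R_i$. The cleanest route is to expand $\exp(-tX)\exp(a e) = \exp(ae)\exp(-t\,\Ad(\exp(-ae))X) = \exp(ae)\exp\!\big(-t\sum_{k\ge 0}\frac{(-a)^k}{k!}\ad(e)^k X\big)$ for a single generator, iterate over the $l$ factors, collect the first-order term in $t$, and compare with the left action $\widetilde R_i$; I would invoke Lemma~\ref{lem:fundamental properties of delta_i}(1) to interpret the polynomial-coefficient shifts as the operators $R_i'(-e_i)$ if a slicker formulation is wanted, but the direct computation above already yields the claim.
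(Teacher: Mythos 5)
Your proposal is correct and takes essentially the same route as the paper: compute $\langle T\delta_i(T',f,u'),\varphi\rangle$, move the left action through $n(a)=\exp(a_1e_1)\dotsm\exp(a_le_l)$ via $\exp(tT)n(a)=n(a)\exp(t\Ad(n(a))^{-1}T)$, and expand $\Ad(n(a))^{-1}=e^{-a_l\ad(e_l)}\dotsm e^{-a_1\ad(e_1)}$ to produce exactly the coefficients $\prod_s(-a_s)^{k_s}/k_s!$ and the ordered brackets $\ad(e_l)^{k_l}\dotsm\ad(e_1)^{k_1}$. The one place you go beyond the paper is the explicit PBW/Leibniz reduction from $T\in U(\mathfrak{g})$ to $T\in\mathfrak{g}$: the paper only carries out the exponential computation for $T\in\mathfrak{g}$ and then asserts the lemma, so your multiplicativity check (with the binomial coefficients from $\ad(e_s)^{k_s}(T_1T_2)$ matching the split of $(-x_s)^{k_s}/k_s!$ into $(-x_s)^{k_s'}/k_s'!\cdot(-x_s)^{k_s''}/k_s''!$) is a welcome patch of a small gap rather than a different method.
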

\begin{proof}
We remark that by a map $(a_1,\dots,a_l)\mapsto \exp(a_1e_1)\dotsm \exp(a_le_l)$, we have a diffeomorphism $\R^l\simeq w_i\overline{N}w_i^{-1}\cap N_0$ and a Haar measure of $w_i\overline{N}w_i^{-1}\cap N_0$ corresponds to the Euclidean measure of $\R^l$.
Take $\varphi\in C_c^\infty(w_i\overline{N}P,\sigma\otimes e^{\lambda+\rho})$.
Put $n(a) = \exp(a_1e_1)\dotsm \exp(a_le_l)$ for $a = (a_1,\dots,a_l)$.
Recall the definition of $\check{T}$ from Notation.
For $T\in \mathfrak{g}$, we have
\begin{align*}
&\langle T\delta_i(T',f,u'),\varphi\rangle\\
& = \int_{\R^l}u'((\check{T}\widetilde{R}_i(T')\varphi)(n(a)w_i))f(n(a)w_i)da\\
& = \left.\frac{d}{dt}\int_{\R^l}u'(\widetilde{R}_i(T')\varphi)(\exp(tT)n(a)w_i))f(n(a)w_i)da\right|_{t = 0}\\
& = \left.\frac{d}{dt}\int_{\R^l}u'((\widetilde{R}_i(T')\varphi)(n(a)\exp(t\Ad(n(a))^{-1}T)w_i))f(n(a)w_i)da\right|_{t = 0}.
\end{align*}
The formula
\begin{align*}
\Ad(n(a))^{-1}T & = e^{-\ad(a_le_l)}\dotsm e^{-\ad(a_1e_1)}T\\
& = \sum_{(k_1,\dots,k_l)\in \Z_{\ge 0}^l}\frac{(-a_1)^{k_1}}{k_1!}\dotsm \frac{(-a_l)^{k_l}}{k_l!}\ad(e_l)^{k_l}\dotsm \ad(e_1)^{k_1}T
\end{align*}
gives the lemma.
\end{proof}

For $\mathbf{k} = (k_1,\dots,k_l)$, we denote an operator $\ad(e_l)^{k_l}\dotsm \ad(e_1)^{k_1}$ on $\mathfrak{g}$ by $\ad(e)^{\mathbf{k}}$ and a function $((-x_1)^{k_1}/k_1!)\dotsm ((-x_l)^{k_l}/k_l!)\in\mathcal{P}(O_i)$  by $f_\mathbf{k}$.

\begin{lem}\label{lem:diff vanish}
Let $\mathbf{k} = (k_1,\dots,k_l)\in\Z_{\ge 0}^l$ and $X\in \mathfrak{n}_0$.
Assume that $\ad(e)^\mathbf{k}X \in \Ad(w_i)\overline{\mathfrak{n}}\cap \mathfrak{n}_0$.
Then we have $R'_i(\ad(e)^\mathbf{k}X)f_\mathbf{k} = 0$.
\end{lem}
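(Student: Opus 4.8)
The plan is to unwind both sides of the asserted identity into honest statements about polynomial functions on $O_i \simeq w_i\overline{N}w_i^{-1}\cap N_0 \simeq \R^l$ via the coordinates $(a_1,\dots,a_l)$, $n(a) = \exp(a_1e_1)\dotsm\exp(a_le_l)$, in which $x_s(n(a)w_iP/P) = a_s$ and $f_\mathbf{k}$ is a monomial. First I would observe that the hypothesis $\ad(e)^\mathbf{k}X \in \Ad(w_i)\overline{\mathfrak{n}}\cap\mathfrak{n}_0$ makes $R'_i(\ad(e)^\mathbf{k}X)$ a genuine first-order differential operator on $O_i$, given by the right action of a fixed vector $Y := \ad(e)^\mathbf{k}X \in \Ad(w_i)\overline{\mathfrak{n}}\cap\mathfrak{n}_0$: by definition $(R'_i(Y)\varphi)(nw_iP/P) = \frac{d}{dt}\varphi(n\exp(tY)w_iP/P)|_{t=0}$. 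So the claim $R'_i(Y)f_\mathbf{k} = 0$ is equivalent to saying that the monomial $f_\mathbf{k}$ is annihilated by this particular invariant vector field on the nilpotent group $w_i\overline{N}w_i^{-1}\cap N_0$.

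Next I would identify which vector field that is, using the nilpotency. Since $X \in \mathfrak{n}_0$, the key point is a weight/grading argument: $\ad(e)^\mathbf{k}X$ has restricted weight $\alpha_X + \sum_s k_s\alpha_s$ (where $\alpha_X$ is whatever combination of weights occurs in $X$ if $X$ is a root vector — one reduces to that case by linearity), and because $Y$ lies in $\Ad(w_i)\overline{\mathfrak{n}}\cap\mathfrak{n}_0$ while the $e_s$ span all of $\Ad(w_i)\overline{\mathfrak{n}}\cap\mathfrak{n}_0$, $Y$ is a linear combination of the $e_j$'s of strictly higher "total degree" $\sum k_s$ than... — more precisely, $Y$ lies in the span of those $e_j$ whose weight equals that of $Y$. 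The decisive observation is that in the coordinates $(a_1,\dots,a_l)$ the right-invariant vector field dual to $e_j$, acting on a function pulled back from $O_i$, is $\partial/\partial a_j$ plus lower-order corrections that raise the $a$-degree — and $f_\mathbf{k}$, being the monomial $\prod (-a_s)^{k_s}/k_s!$, once differentiated by $\partial/\partial a_j$ and re-expanded, has each surviving term of total degree $\le |\mathbf{k}| - 1$; meanwhile $Y = \ad(e)^\mathbf{k}X$ has weight forcing it to act through monomials of $a$-degree $> $ something incompatible. I would make this precise by the standard fact that for a nilpotent group the exponential coordinates are polynomial and the invariant vector field for $e_j$ is $\partial_{a_j} + \sum_{m>j} p_m(a)\partial_{a_m}$ with $p_m$ polynomial of positive degree; combined with the weight bookkeeping this kills $R'_i(Y)f_\mathbf{k}$ on degree grounds.

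The main obstacle I anticipate is making the weight/degree comparison airtight: one must show that whenever $\ad(e)^\mathbf{k}X$ actually lands in $\Ad(w_i)\overline{\mathfrak{n}}\cap\mathfrak{n}_0$ (as opposed to the $\mathfrak{n}_0$-part or the $\Ad(w_i)\mathfrak{m}$-part, which is the generic situation), the combinatorics of which $e_j$'s can appear in $Y$ forces $R'_i(Y)$ to shift $a$-degree by enough that it annihilates the specific monomial $f_\mathbf{k} = \prod_s(-a_s)^{k_s}/k_s!$. Concretely I would argue: if $k_s > 0$ then $e_s$ has been "used up" in forming $Y$, so $Y$'s weight is $\sum_{s}k_s\alpha_s + \alpha_X$ with $|\mathbf{k}|\ge 1$; any $e_j$ occurring in the expansion of $Y$ then has a weight that is a nontrivial positive sum, and the corresponding coordinate derivative $R'_i(e_j)$, applied to $f_\mathbf{k}$, produces only terms whose monomials still contain a positive power of some $x_s$ with $k_s>0$ in a pattern that cannot vanish at the base point — so in fact the cleanest route is to evaluate everything at $n(a)w_iP/P$ and check $R'_i(Y)f_\mathbf{k}$ is identically zero as a polynomial, which reduces, after the degree bound, to verifying that the top-degree part of $R'_i(Y)f_\mathbf{k}$ is $\big(\partial/\partial(\text{weight direction})\big)$ applied to a monomial not involving that direction, hence $0$. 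Once the grading is set up this is a short computation; the bulk of the work is bookkeeping the weights of $\Ad(w_i)\overline{\mathfrak{n}}\cap\mathfrak{n}_0$ versus $\Ad(w_i)(\mathfrak{m}\cap\mathfrak{n}_0)$ and $\Ad(w_i)\overline{\mathfrak{n}}\cap\overline{\mathfrak{n}_0}$, exactly as in Lemma~\ref{lem:bracket of n and bar_n}.
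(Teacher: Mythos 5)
You have the right first move---compare $\mathfrak{a}_0$-weights with respect to $D_i$---but the plan as sketched does not close and in places the reasoning is incorrect. The paper's actual argument is a two-line weight computation: $f_\mathbf{k}$ has $D_i$-weight $-\sum_s k_s\alpha_s$, the operator $R'_i(Y)$ for a root vector $Y$ of restricted root $\beta$ shifts $D_i$-weight by $+\beta$, so $R'_i(\ad(e)^\mathbf{k}X)f_\mathbf{k}$ would have weight $\alpha + \sum_s k_s\alpha_s - \sum_s k_s\alpha_s = \alpha$, where $\alpha\in\Sigma^+$ is the root of $X$. But every $D_i$-weight occurring in $\mathcal{P}(O_i)$ lies in $\{\sum_{\beta\in\Sigma^+}b_\beta\beta : b_\beta\in\Z_{\le 0}\}$ (each coordinate $x_s$ has weight $-\alpha_s$), and the positive root $\alpha$ is not in that cone; so the function is $0$. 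That is the whole proof. Note in particular that nothing about degree, the structure constants of the nilpotent group, or which $e_j$'s occur in $Y$ enters.

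The specific gap in your write-up is the degree bookkeeping. You claim that the right-invariant vector field for $e_j$ is $\partial/\partial a_j$ plus corrections, and that after differentiating $f_\mathbf{k}$ ``each surviving term has total degree $\le |\mathbf{k}|-1$.'' The correction terms $p_m(a)\partial_{a_m}$ have $p_m$ of positive degree, so they \emph{raise} degree; $R'_i(Y)f_\mathbf{k}$ is not a priori of degree $\le|\mathbf{k}|-1$, and the degree argument alone does not give vanishing. Your later sentence about ``the top-degree part of $R'_i(Y)f_\mathbf{k}$'' addresses only the leading $\partial_{a_j}$ part and leaves the corrections unhandled. You do invoke ``weight bookkeeping,'' and you correctly compute the weight $\alpha_X+\sum k_s\alpha_s$ of $\ad(e)^\mathbf{k}X$, but you never state the one sentence that finishes the proof---that the image would have to have weight $\alpha_X\in\Sigma^+$, which cannot occur among polynomial weights. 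Without isolating that, the plan remains a collection of suggestive observations rather than a proof; with it, everything else you wrote becomes unnecessary.
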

\begin{proof}
We may assume that $X$ is a restricted root vector and denote its restricted root by $\alpha$.
We consider an $\mathfrak{a}_0$-weight with respect to $D_i$.
An $\mathfrak{a}_0$-weight of $f_\mathbf{k}$ is $-\sum_s k_s\alpha_s$.
This implies that $R'_i(\ad(e)^\mathbf{k}X)f_\mathbf{k}$ has an $\mathfrak{a}_0$-weight $\alpha$.
However, $\mathcal{P}(O_i)$ has a decomposition into the direct sum of $\mathfrak{a}_0$-weight spaces and its weight belongs to $\{\sum_{\beta\in\Sigma^+}b_\beta\beta\mid b_\beta\in\Z_{\le 0}\}$.
Hence, we have $R'_i(\ad(e)^\mathbf{k}X)f_\mathbf{k} = 0$.
\end{proof}


For $f\in\mathcal{P}(O_i)$ and $X\in\mathfrak{n}_0$ we define $L_X(f)$\newsym{$L_X$} by
\[
	L_X(f)(nw_i) = \left.\frac{d}{dt}f(\exp(-tX)nw_i)\right|_{t = 0}.
\]

\begin{lem}\label{lem:caluculation of Xdelta(1,f,u)}
Let $X\in \mathfrak{n}_0$ be a restricted root vector.
For $f\in\mathcal{P}(O_i)$ and $u'\in J'_{w_i^{-1}\eta}(\sigma\otimes e^{\lambda+\rho})$, we have
\begin{multline*}
	(X - \eta'(X))\delta_i(1,f\eta_i^{-1},u') = \delta_i(1,L_X(f)\eta_i^{-1},u') \\
	+ \sum_{\ad(e)^\mathbf{k}X\in \Ad(w_i)\mathfrak{n}_0\cap \mathfrak{n}_0}\delta_i(1,ff_{\mathbf{k}}\eta_i^{-1},(\Ad(w_i)^{-1}(\ad(e)^{\mathbf{k}}X) - \eta'(\ad(e)^{\mathbf{k}}X))u').
\end{multline*}
(Again the sum of the right hand side is finite.)
\end{lem}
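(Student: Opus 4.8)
The plan is to start from Lemma~\ref{lem:left2right} applied with $T=X$, $T'=1$, and with $f\eta_i^{-1}$ in place of $f$. Since $X$ is a restricted root vector and each $\ad(e_s)$ is nilpotent, this produces the finite sum
\[
	X\delta_i(1,f\eta_i^{-1},u') = \sum_{\mathbf{k}\in\Z_{\ge 0}^l}\delta_i\bigl(\ad(e)^\mathbf{k}X,\ ff_\mathbf{k}\eta_i^{-1},\ u'\bigr).
\]
Because $\mathfrak{n}_0$ is a subalgebra containing $X$ and $e_1,\dots,e_l$, each $\ad(e)^\mathbf{k}X$ lies in $\mathfrak{n}_0$, and being zero or a restricted root vector it lies in exactly one of the two complementary subspaces in the decomposition $\mathfrak{n}_0 = (\Ad(w_i)\overline{\mathfrak{n}}\cap\mathfrak{n}_0)\oplus(\Ad(w_i)\mathfrak{p}\cap\mathfrak{n}_0)$; here $\Ad(w_i)\mathfrak{p}\cap\mathfrak{n}_0 = \Ad(w_i)\mathfrak{n}_0\cap\mathfrak{n}_0$ because $w_i\in W(M)$. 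I would split the sum into these two families of indices $\mathbf{k}$ and process each with a different part of Lemma~\ref{lem:fundamental properties of delta_i}.

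For $\mathbf{k}$ with $\ad(e)^\mathbf{k}X\in\Ad(w_i)\mathfrak{n}_0\cap\mathfrak{n}_0\subset\Ad(w_i)\mathfrak{p}$, Lemma~\ref{lem:fundamental properties of delta_i}(2) moves $\ad(e)^\mathbf{k}X$ into the last argument, giving $\delta_i(1,ff_\mathbf{k}\eta_i^{-1},\Ad(w_i)^{-1}(\ad(e)^\mathbf{k}X)u')$. For $\mathbf{k}$ with $\ad(e)^\mathbf{k}X\in\Ad(w_i)\overline{\mathfrak{n}}\cap\mathfrak{n}_0$, Lemma~\ref{lem:fundamental properties of delta_i}(1) rewrites the term as $\delta_i(1,R'_i(-\ad(e)^\mathbf{k}X)(ff_\mathbf{k}\eta_i^{-1}),u')$; expanding the first-order operator $R'_i(-\ad(e)^\mathbf{k}X)$ by Leibniz, the summand in which the derivative hits $f_\mathbf{k}$ vanishes by Lemma~\ref{lem:diff vanish}, and the summand in which it hits $\eta_i^{-1}$ equals $\eta(\ad(e)^\mathbf{k}X)\,ff_\mathbf{k}\eta_i^{-1}$ (a one-line computation from $\eta$ being a character of $N_0$), so this term becomes $\delta_i(1,(R'_i(-\ad(e)^\mathbf{k}X)f)f_\mathbf{k}\eta_i^{-1},u') + \eta(\ad(e)^\mathbf{k}X)\,\delta_i(1,ff_\mathbf{k}\eta_i^{-1},u')$.

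Two observations then assemble the answer. First, if $\mathbf{k}\ne 0$ then $\ad(e)^\mathbf{k}X$ is a root vector for a non-simple positive root, so $\eta(\ad(e)^\mathbf{k}X) = \eta'(\ad(e)^\mathbf{k}X) = 0$ (any character of $\mathfrak{n}_0$ kills $[\mathfrak{n}_0,\mathfrak{n}_0]$); while for $\mathbf{k}=0$ one has $\ad(e)^0X = X$, $f_0 = 1$, and $\eta(X) = \eta'(X)$ since $X$ lies either in $\Ad(w_i)\overline{\mathfrak{p}}\cap\mathfrak{n}_0$ or in $\Ad(w_i)\mathfrak{p}\cap\mathfrak{n}_0$. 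Hence the scalar contributions are nonzero only at $\mathbf{k}=0$, where that index belongs to exactly one of the two families, and they amount to a single copy of $\eta'(X)\,\delta_i(1,f\eta_i^{-1},u')$, which I would transpose to the left-hand side; at the same time this lets one rewrite each $\Ad(w_i)\mathfrak{n}_0\cap\mathfrak{n}_0$-term in the required form $\delta_i(1,ff_\mathbf{k}\eta_i^{-1},(\Ad(w_i)^{-1}(\ad(e)^\mathbf{k}X) - \eta'(\ad(e)^\mathbf{k}X))u')$. Second, and this is the only genuinely geometric step, the leftover $\Ad(w_i)\overline{\mathfrak{n}}\cap\mathfrak{n}_0$-terms reassemble to $\delta_i(1,L_X(f)\eta_i^{-1},u')$: this rests on the pointwise identity $L_X(f) = \sum_{\mathbf{k}}f_\mathbf{k}\,R'_i(-\ad(e)^\mathbf{k}X)f$, obtained by writing $\exp(-tX)n = n\exp(-t\,\Ad(n)^{-1}X)$, expanding $\Ad(n)^{-1}X = \sum_{\mathbf{k}}f_\mathbf{k}(nw_i)\,\ad(e)^\mathbf{k}X$ as in the proof of Lemma~\ref{lem:left2right}, and noting that the components of $\Ad(n)^{-1}X$ lying in $\Ad(w_i)\mathfrak{p}$ act trivially on $f$ by right $P$-invariance of $f$, so that only the $\Ad(w_i)\overline{\mathfrak{n}}\cap\mathfrak{n}_0$-components survive. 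I expect this bridge between the left-translation operator $L_X$ and the orbit-intrinsic operators $R'_i$ to be the main point requiring care; everything else is the two applications of Lemma~\ref{lem:fundamental properties of delta_i}, one application of Lemma~\ref{lem:diff vanish}, and elementary bookkeeping with simple versus non-simple roots.
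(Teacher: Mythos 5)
Your proof is correct and takes essentially the same route as the paper's: apply Lemma~\ref{lem:left2right}, split the $\mathbf{k}$-sum according to whether $\ad(e)^{\mathbf{k}}X$ lands in $\Ad(w_i)\overline{\mathfrak{n}}\cap\mathfrak{n}_0$ or $\Ad(w_i)\mathfrak{n}_0\cap\mathfrak{n}_0$, process the first family with Lemma~\ref{lem:fundamental properties of delta_i}(1), Leibniz, and Lemma~\ref{lem:diff vanish}, the second with Lemma~\ref{lem:fundamental properties of delta_i}(2), and absorb the surviving scalar using the fact that $\eta'$ kills $[\mathfrak{n}_0,\mathfrak{n}_0]$. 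The one place where you differ presentationally is the bridge identity $L_X(f)=\sum_{\mathbf{k}}f_{\mathbf{k}}\,R'_i(-\ad(e)^{\mathbf{k}}X)f$: the paper establishes it by introducing the explicit right-$\overline{N_0}$-invariant extension $\widetilde{f}$ to $U_i$, computing $L_X(\widetilde{f})=\sum_{\mathbf{k}}\widetilde{R}_i(-\ad(e)^{\mathbf{k}}X)(\widetilde{f})\widetilde{f_{\mathbf{k}}}$, and observing that $\widetilde{R}_i(Y)\widetilde{f}=0$ when $Y\in\Ad(w_i)\mathfrak{n}_0\cap\mathfrak{n}_0$ before restricting to $O_i$, whereas you argue pointwise directly on $O_i$ using that $\exp(-tY)$ with $\Ad(w_i)^{-1}Y\in\mathfrak{p}$ moves $nw_iP/P$ trivially; these are two phrasings of the same fact, and your version is fine provided one records that the sum in the displayed identity runs only over $\mathbf{k}$ with $\ad(e)^{\mathbf{k}}X\in\Ad(w_i)\overline{\mathfrak{n}}\cap\mathfrak{n}_0$, since $R'_i$ is only defined there.
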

\begin{proof}
We have
\[
	X\delta_i(1,f\eta_i^{-1},u')
	= \sum_{\mathbf{k}\in\Z_{\ge 0}^l}\delta_i(\ad(e)^\mathbf{k}X,ff_\mathbf{k}\eta_i^{-1},u').
\]
by Lemma~\ref{lem:left2right}.
Since $\ad(e)^\mathbf{k}X$ belongs to $\mathfrak{n}_0$ and is a restricted root vector, we have either $\ad(e)^\mathbf{k}X\in \Ad(w_i)\overline{\mathfrak{n}_0}\cap \mathfrak{n}_0$ or $\ad(e)^\mathbf{k}X\in \Ad(w_i)\mathfrak{n}_0\cap \mathfrak{n}_0$.
Recall that $\Ad(w_i)\overline{\mathfrak{n}_0}\cap \mathfrak{n}_0 = \Ad(w_i)\overline{\mathfrak{n}}\cap \mathfrak{n}_0$ since $w_i\in W(M)$.
Assume that $\ad(e)^\mathbf{k}X\in \Ad(w_i)\overline{\mathfrak{n}}\cap\mathfrak{n}_0$.
By the definition of $\eta_i$ and $\eta'$, we have $R'_i(-\ad(e)^\mathbf{k}X)(\eta_i^{-1}) = \eta(\ad(e)^\mathbf{k}X)\eta_i^{-1} = \eta'(\ad(e)^\mathbf{k}X)\eta_i^{-1}$.
Hence, using Lemma~\ref{lem:diff vanish},
\begin{align*}
&\delta_i(\ad(e)^\mathbf{k}X,ff_\mathbf{k}\eta_i^{-1},u') \\
&= \delta_i(1,R'_i(-\ad(e)^\mathbf{k}X)(ff_\mathbf{k}\eta_i^{-1}),u')\\
&= \delta_i(1,R'_i(-\ad(e)^\mathbf{k}X)(f)f_\mathbf{k}\eta_i^{-1},u') + \eta'(\ad(e)^\mathbf{k}X)\delta_i(1,ff_\mathbf{k}\eta_i^{-1},u').
\end{align*}
Next assume that $\ad(e)^{\mathbf{k}}X\in \Ad(w_i)\mathfrak{n}_0\cap \mathfrak{n}_0$.
For $h\in\mathcal{P}(O_i)$, define $\widetilde{h}\in\mathcal{P}(U_i)$ by  $\widetilde{h}(nn_0w_iP) = h(nw_iP)$ for $n\in w_i\overline{N}w_i^{-1}\cap N_0$ and $n_0\in w_i\overline{N}w_i^{-1}\cap \overline{N_0}$.
Then we have $\widetilde{R'_i(Y)h} = \widetilde{R}_i(Y)\widetilde{h}$ for all $Y\in \Ad(w_i)\overline{\mathfrak{n}_0}\cap \mathfrak{n}_0$.
Since $\widetilde{f}(pnw_i) = \widetilde{f}(pw_i)$ for $p\in w_i\overline{N}Pw_i^{-1}$ and $n\in w_iN_0w_i^{-1}\cap N_0$, we have $\widetilde{R}_i(-\ad(e)^{\mathbf{k}}X)(\widetilde{f}) = 0$.
Hence we have
\begin{multline*}
	\delta_i(\ad(e)^\mathbf{k}X,ff_\mathbf{k}\eta_i^{-1},u') = \delta_i(1,ff_\mathbf{k}\eta_i^{-1},\Ad(w_i)^{-1}(\ad(e)^{\mathbf{k}}X)u')\\
	= \delta_i(1,R'_i(-\ad(e)^\mathbf{k}X)(\widetilde{f})|_{O_i}f_\mathbf{k}\eta_i^{-1},u') + \\ \delta_i(1,ff_\mathbf{k}\eta_i^{-1},\Ad(w_i)^{-1}(\ad(e)^{\mathbf{k}}X)u').
\end{multline*}
By the same calculation as the proof of Lemma~\ref{lem:left2right}, we have
\[
	\widetilde{L_X(f)} = L_X(\widetilde{f}) = \sum_{\mathbf{k}\in\Z_{\ge 0}^l}\widetilde{R}_i(-\ad(e)^\mathbf{k}X)(\widetilde{f})\widetilde{f_\mathbf{k}}.
\]
Hence
\begin{align*}
\sum_{\mathbf{k}\in\Z_{\ge 0}^l}\delta_i(1,\widetilde{R}_i(-\ad(e)^\mathbf{k}X)(\widetilde{f})|_{O_i}f_\mathbf{k}\eta_i^{-1},u') &= \delta_i(1,\widetilde{L_X(f)}|_{O_i}\eta_i^{-1},u')\\ & = \delta_i(1,L_X(f)\eta_i^{-1},u').
\end{align*}
These imply that
\begin{multline*}
(X - \eta'(X))\delta_i(1,f\eta_i^{-1},u') = \delta_i(1,L_X(f)\eta_i^{-1},u')\\
+ \sum_{\ad(e)^\mathbf{k}X\in \Ad(w_i)\mathfrak{n}_0\cap \mathfrak{n}_0}\delta_i(1,ff_\mathbf{k}\eta_i^{-1},\Ad(w_i)^{-1}(\ad(e)^{\mathbf{k}}X)u')\\
+ \sum_{\ad(e)^\mathbf{k}X\in \Ad(w_i)\overline{\mathfrak{n}}\cap \mathfrak{n}_0}\eta'(\ad(e)^\mathbf{k}X)\delta_i(1,ff_\mathbf{k}\eta_i^{-1},u')\\
- \eta'(X)\delta_i(1,f\eta_i^{-1},u').
\end{multline*}
Since $\eta'$ is a character, if $\mathbf{k}\ne (0,\dots,0)$ then $\eta'(\ad(e)^\mathbf{k}X) = 0$.
Hence we have
\[
	\sum_{\mathbf{k}\in\Z_{\ge 0}^n}\eta'(\ad(e)^\mathbf{k}X)\delta_i(1,ff_\mathbf{k}\eta_i^{-1},u')
	=
	\eta'(X)\delta_i(1,f\eta_i^{-1},u').
\]
This implies
\begin{multline*}
\left(\sum_{\ad(e)^\mathbf{k}X\in \Ad(w_i)\overline{\mathfrak{n}}\cap \mathfrak{n}_0}\eta'(\ad(e)^\mathbf{k}X)\delta_i(1,ff_\mathbf{k}\eta_i^{-1},u')\right)
- \eta'(X)\delta_i(1,f\eta_i^{-1},u')\\
= \sum_{\ad(e)^\mathbf{k}X\in\Ad(w_i)\mathfrak{n}_0\cap \mathfrak{n}_0}\eta'(\ad(e)^\mathbf{k})\delta_i(1,ff_\mathbf{k}\eta_i^{-1},u').
\end{multline*}
We get the lemma.
\end{proof}

\begin{proof}[Proof of Lemma~\ref{lem:acts nilp}]
Since $\ad(\mathfrak{n}_0)$ acts $\mathfrak{g}$ nilpotently, the subspace 
\[
	\{x\in I'_i\mid \text{for some $k$ and for all $X\in \mathfrak{n}_0$, $(X - \eta(X))^kx = 0$}\}
\]
is $\mathfrak{g}$-stable.
Hence we may assume that $x = ((f\eta_i^{-1})\otimes u')\delta_i = \delta_i(1,f\eta_i^{-1},u')$ for some $f\in\mathcal{P}(O_i)$ and $u'\in J'_{w_i^{-1}\eta}(\sigma\otimes e^{\lambda+\rho})$.

Set $V = U(\Ad(w_i)^{-1}\mathfrak{n}_0\cap \mathfrak{n}_0)u'$ where $\mathfrak{n}$ acts $J'_{w_i^{-1}\eta}(\sigma\otimes e^{\lambda+\rho})$ trivially.
Then $V$ is finite-dimensional.
By applying Engel's theorem for $V\otimes (-w_i^{-1}\eta')$, there exists a filtration $0 = V_0 \subset V_1\subset\cdots\subset V_p = V$ such that $(V_s/V_{s - 1})\otimes (-w_i^{-1}\eta'|_{\Ad(w_i)^{-1}\mathfrak{n}_0\cap\mathfrak{n}_0})$ is the trivial representation of $\Ad(w_i)^{-1}\mathfrak{n}_0\cap\mathfrak{n}_0$.
Then we have $V_s/V_{s - 1}\simeq w_i^{-1}\eta'|_{\Ad(w_i)^{-1}\mathfrak{n}_0\cap\mathfrak{n}_0}$ for all $s = 1,2,\dots,p$.
We prove the lemma by induction on $p = \dim V$.

We may assume that $X$ is a restricted root vector.
By Lemma~\ref{lem:caluculation of Xdelta(1,f,u)}, we have
\begin{multline*}
	(X - \eta'(X))\delta_i(1,f\eta_i^{-1},u')\in \delta_i(1,L_X(f)\eta_i^{-1},u')\\ + \sum_{h\in\mathcal{P}(O_i),\ v'\in V_{p - 1}}\delta_i(1,h\eta_i^{-1},v').
\end{multline*}
Since $f$ is a polynomial, there exists a positive integer $c$ such that $(L_X)^c(f) = 0$.
Then $(X - \eta'(X))^c\delta_i(1,f\eta_i^{-1},u') \in \sum_{h\in\mathcal{P}(O_i), v'\in V_{p - 1}}\delta_i(1,h\eta_i^{-1},v')$.
By inductive hypothesis the lemma is proved.
\end{proof}

From the lemma, we get the following vanishing theorem.
Recall that we define the character $w_i^{-1}\eta$ of $\mathfrak{m}\cap \mathfrak{n}_0$ by $(w_i^{-1}\eta)(X) = \eta(\Ad(w_i)X)$.
\begin{lem}\label{lem:vanishing lemma}
Assume that $I_i/I_{i - 1} \ne 0$.
Then the following conditions hold.
\begin{enumerate}
\item The character $\eta$ is unitary.
\item The character $\eta$ is zero on $\Ad(w_i)\mathfrak{n}\cap\mathfrak{n}_0$.
\item The module $J'_{w_i^{-1}\eta}(\sigma\otimes e^{\lambda+\rho})$ is not zero.
\end{enumerate}
\end{lem}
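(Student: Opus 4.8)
The plan is to extract each of the three assertions from the already-established structure of $I'_i$ together with Lemma~\ref{lem:acts nilp} and Lemma~\ref{lem:succ quot is sub of I'}. Recall that $\operatorname{im}\Res_i \subset I'_i$ (Lemma~\ref{lem:succ quot is sub of I'}), and $\Res_i\colon I_i/I_{i-1}\to \mathcal{D}(U_i,\mathcal{L})$ is injective; so if $I_i/I_{i-1}\ne 0$ then $I'_i\ne 0$, and moreover every element of $I_i/I_{i-1}$, viewed inside $I'_i$, satisfies: for some $k$ and all $X\in\mathfrak{n}_0$, $(X-\eta(X))^k x=0$. But by Lemma~\ref{lem:acts nilp}, the same element also satisfies $(X-\eta'(X))^{k'}x=0$ for some $k'$. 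Comparing the two nilpotency conditions for a fixed $X$ forces $\eta(X)=\eta'(X)$ as generalized eigenvalues (a nonzero vector cannot be killed by powers of two operators with distinct scalar parts unless those scalars agree). Taking $X\in \Ad(w_i)\mathfrak{n}\cap\mathfrak{n}_0$, where $\eta'$ vanishes by its definition, yields $\eta(X)=0$; this is assertion~(2).

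For assertion~(3): since $I'_i\ne 0$, pick a nonzero element $\sum_k T_k(((f_k\eta_i^{-1})\otimes u_k')\delta_i)$ with $u_k'\in J'_{w_i^{-1}\eta}(\sigma\otimes e^{\lambda+\rho})$. By the injectivity statement in Lemma~\ref{lem:fundamental properties of delta_i}(3) (which identifies the span of such expressions with a tensor product over $U(\mathfrak{g})$ involving $w_i(\sigma\otimes e^{\lambda+\rho})'$ tensored against $J'_{w_i^{-1}\eta}(\sigma\otimes e^{\lambda+\rho})$ in the last slot), the expression can be nonzero only if some $u_k'\ne 0$, hence $J'_{w_i^{-1}\eta}(\sigma\otimes e^{\lambda+\rho})\ne 0$. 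So assertion~(3) is immediate once (2) and the description of $I'_i$ are in hand.

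For assertion~(1): I would argue that $\eta$ must be unitary, i.e.\ $\operatorname{Re}\eta=0$. The cleanest route is via the $\mathfrak{a}_0$-action. By Lemma~\ref{lem:no delta part}(1), an element $H\in\mathfrak{a}_0$ acts on $(f\otimes u')\delta_i$ through $D_i(H)$ on the polynomial factor, through $\Ad(w_i)^{-1}H$ on $u'$, plus the scalar $(w_i\rho_0-\rho_0)(H)$. On the other hand, for $X\in\mathfrak{n}_0$ a restricted root vector for a simple root $\beta$, one has $[H,X]=\beta(H)X$, and conjugating the nilpotency relation $(X-\eta(X))^kx=0$ by $\exp(tH)$ shows $\eta(\Ad(\exp tH)X)=e^{t\beta(H)}\eta(X)$ must again be a generalized eigenvalue of $X$ on the (finite-dimensional, by the structure of $I'_i$ under the relevant subalgebra) space spanned by the $\mathfrak{a}_0$-translates of $x$; boundedness of this orbit as $t\to\pm\infty$ forces $\operatorname{Re}\beta(H)\cdot\operatorname{Re}\eta(X)=0$ for all $H$, hence $\eta(X)$ is purely imaginary whenever it is nonzero. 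Running this over all simple roots in $\operatorname{supp}\eta$ gives that $\eta$ is unitary.

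The main obstacle I anticipate is assertion~(1): turning the qualitative ``$\eta$ unitary'' into a rigorous argument requires controlling the $\mathfrak{a}_0$-orbit of a putative nonzero element of $I_i/I_{i-1}$ and showing it stays within a finite-dimensional space on which $X$ acts with a single generalized eigenvalue, so that the eigenvalue $e^{t\beta(H)}\eta(X)$ cannot blow up. This is where the explicit polynomial-times-Jacquet-module description of $I'_i$, together with Lemma~\ref{lem:no delta part} and Lemma~\ref{lem:acts nilp}, must be combined carefully; assertions~(2) and~(3), by contrast, are essentially bookkeeping on top of Lemmas~\ref{lem:succ quot is sub of I'}, \ref{lem:acts nilp}, and \ref{lem:fundamental properties of delta_i}.
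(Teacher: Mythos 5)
Your arguments for parts (2) and (3) are essentially the ones in the paper: you compare the two generalized-eigenvalue conditions coming from the definition of $J'_\eta$ and from Lemma~\ref{lem:acts nilp} to deduce $\eta = \eta'$, hence $\eta$ vanishes on $\Ad(w_i)\mathfrak{n}\cap\mathfrak{n}_0$; and you read off (3) from the explicit description of $I'_i$ via Lemma~\ref{lem:succ quot is sub of I'} (the paper simply calls this ``clear''). Those parts are fine.

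For part (1), however, your proposed argument via $\mathfrak{a}_0$-conjugation has a real gap which you already sense. When you conjugate the relation $(X-\eta(X))^k x = 0$ by $\exp(tH)$, you obtain that $\exp(tH)x$ is killed by $(X - e^{-t\beta(H)}\eta(X))^k$, i.e.\ the translate lands in $J'_{\eta_t}(V)$ for the rescaled character $\eta_t$, not in $J'_\eta(V)$. There is no reason for the translates $\exp(tH)x$ (a one-parameter family in $V'$, not the $U(\mathfrak{a}_0)$-span of $x$) to remain in a fixed finite-dimensional space on which $X$ has a single generalized eigenvalue, and ``boundedness as $t\to\pm\infty$'' is not established anywhere; without that, the claimed constraint $\operatorname{Re}\beta(H)\cdot\operatorname{Re}\eta(X)=0$ does not follow. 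The paper's proof of (1) runs along a different and complete route: one first reduces to showing $J'_\eta(V)=0$ for irreducible $V$ when $\eta$ is non-unitary, passes to a principal series $\Ind_{P_0}^G(\sigma_0\otimes e^{\lambda_0+\rho_0})$ via Casselman's subrepresentation theorem and exactness of $J'_\eta$, and then applies the Bruhat filtration of that principal series. For each cell, either $\eta$ is non-trivial on $w_iN_0w_i^{-1}\cap N_0$ and (2) kills the quotient, or $\eta$ is non-unitary on $w_i\overline{N_0}w_i^{-1}\cap N_0$, in which case $\eta_i^{-1}$ grows exponentially on $O_i$ and the candidate elements of $I'_i$ fail to be tempered, while $\operatorname{im}\Res_i\subset\mathcal{T}_{O_i}(U_i,\mathcal{L})$ consists of tempered distributions. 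The temperedness obstruction is the missing ingredient in your proposal; replacing the orbit-boundedness heuristic with this would close the gap.
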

\begin{proof}
(2)
By Lemma~\ref{lem:acts nilp} and the definition of $J'_\eta$, if $I_i/I_{i - 1} \ne 0$ then $\eta = \eta'$.
By the definition of $\eta'$, $\eta = \eta'$ is equivalent to $\eta|_{\Ad(w_i)\mathfrak{n}\cap\mathfrak{n}_0} = 0$.

(3)
This is clear from Lemma~\ref{lem:succ quot is sub of I'}.

(1)
It is sufficient to prove that if $\eta$ is not unitary then $J'_\eta(V) = 0$ for all irreducible representation $V$ of $G$.
By Casselman's subrepresentation theorem, $V$ is a subrepresentation of some principal series representation.
Since $J'_\eta$ is an exact functor, we may assume $V$ is a principal series representation $\Ind_{P_0}^G(\sigma_0\otimes e^{\lambda_0 + \rho_0})$.

Take the Bruhat filtration $\{I_i\}$ of $J'_\eta(V)$.
We prove $I_i/I_{i - 1} = 0$ for all $i$.
By (2), if $\eta$ is non-trivial on $w_iN_0w_i^{-1}\cap N_0$ then $I_i/I_{i - 1} = 0$.
Hence we may assume that $\eta$ is not unitary on $w_i\overline{N_0}w_i^{-1}\cap N_0$.
In this case, an nonzero element of $I_i'$ is not tempered.
Hence $I_i/I_{i - 1} = 0$.
\end{proof}

\begin{rem}
In the next section it is proved that the conditions of Lemma~\ref{lem:vanishing lemma} is also sufficient (Theorem~\ref{thm:succ quot is I'_i}).
\end{rem}

\begin{defn}[Whittaker vectors]\label{defn:Whittaker vectors}
Let $V$ be a $U(\mathfrak{n}_0)$-module.
We define a vector space $\Wh_\eta(V)$\newsym{$\Wh_\eta(V)$} by 
\[
	\Wh_\eta(V) = \{v\in V\mid \text{for all $X\in\mathfrak{n}_0$ we have $Xv = \eta(X)v$}\}.
\]
An element of $\Wh_\eta(V)$ is called a \emph{Whittaker vector}.
\end{defn}

\begin{lem}\label{lem:Whittaker vector, no differential part}
Assume that $\eta|_{\Ad(w_i)\mathfrak{n}\cap \mathfrak{n}_0} = 0$.
Then we have 
\begin{multline*}
\Wh_\eta\left(\left\{\sum_s(f_s\eta_i^{-1}\otimes u'_s)\delta_i\mid f_s\in\mathcal{P}(O_i),\ u'_s\in J'_{w_i^{-1}\eta}(\sigma\otimes e^{\lambda+\rho})\right\}\right) \\
= \{(\eta_i^{-1}\otimes u')\delta_i\mid u'_s\in \Wh_{w_i^{-1}\eta}(\sigma\otimes e^{\lambda+\rho})\}.
\end{multline*}
\end{lem}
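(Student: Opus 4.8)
The plan is to reduce the whole statement to the formula of Lemma~\ref{lem:caluculation of Xdelta(1,f,u)}, applied to two kinds of root directions. Write $\delta_i(1,g,u')$ for $(g\otimes u')\delta_i$, so that the module in the statement is the image of the map $\Phi\colon g\otimes u'\mapsto\delta_i(1,g\eta_i^{-1},u')$ from $\mathcal{P}(O_i)\otimes_\C J'_{w_i^{-1}\eta}(\sigma\otimes e^{\lambda+\rho})$ into $\mathcal{D}'(U_i,\mathcal{L})$; specializing Lemma~\ref{lem:fundamental properties of delta_i}(3) to the case where the $U(\mathfrak{g})$-factor is $1$ shows that $\Phi$ is injective, and I will use this freely --- in particular: if the $u'_j$ are linearly independent then $\sum_j\delta_i(1,g_j\eta_i^{-1},u'_j)=0$ forces $g_j=0$ for all $j$, and if the $g_j$ are linearly independent it forces $u'_j=0$ for all $j$. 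The hypothesis $\eta|_{\Ad(w_i)\mathfrak{n}\cap\mathfrak{n}_0}=0$ says exactly that $\eta=\eta'$, and recall that $\mathfrak{n}$ acts on $J'_{w_i^{-1}\eta}(\sigma\otimes e^{\lambda+\rho})$ trivially; these two facts are what make the correction terms in Lemma~\ref{lem:caluculation of Xdelta(1,f,u)} manageable, and handling those terms is the only real issue.

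First I would pin down the polynomial part. The subspace $\Ad(w_i)\overline{\mathfrak{n}}\cap\mathfrak{n}_0=\Ad(w_i)\overline{\mathfrak{n}_0}\cap\mathfrak{n}_0$ (recall $w_i\in W(M)$) is a subalgebra of $\mathfrak{n}_0$ containing $e_1,\dots,e_l$, so $\ad(e)^{\mathbf{k}}X$ lies in it whenever $X$ does; moreover it meets $\Ad(w_i)\mathfrak{n}_0\cap\mathfrak{n}_0$ only in $\Ad(w_i)(\overline{\mathfrak{n}_0}\cap\mathfrak{n}_0)\cap\mathfrak{n}_0=0$. Hence, for a restricted root vector $X\in\Ad(w_i)\overline{\mathfrak{n}}\cap\mathfrak{n}_0$, the correction sum in Lemma~\ref{lem:caluculation of Xdelta(1,f,u)} is empty and (using $\eta=\eta'$) $(X-\eta(X))\delta_i(1,f\eta_i^{-1},u')=\delta_i(1,L_X(f)\eta_i^{-1},u')$ for all $f\in\mathcal{P}(O_i)$ and $u'\in J'_{w_i^{-1}\eta}(\sigma\otimes e^{\lambda+\rho})$. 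Now take a Whittaker vector $x=\sum_j\delta_i(1,f_j\eta_i^{-1},u'_j)$ with the $u'_j$ linearly independent. The identity above and injectivity of $\Phi$ give $L_X(f_j)=0$ for all $j$ and all such $X$, hence for all $X\in\Ad(w_i)\overline{\mathfrak{n}}\cap\mathfrak{n}_0$. Since this is the complexified Lie algebra of the connected group $w_i\overline{N}w_i^{-1}\cap N_0$, which acts transitively on $O_i$ by left translation, each $f_j$ is constant; so $x=\delta_i(1,\eta_i^{-1},u')$ with $u':=\sum_j f_j u'_j\in J'_{w_i^{-1}\eta}(\sigma\otimes e^{\lambda+\rho})$.

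It remains to decide when $\delta_i(1,\eta_i^{-1},u')$ is a Whittaker vector, and I claim this holds exactly when $u'\in\Wh_{w_i^{-1}\eta}(\sigma\otimes e^{\lambda+\rho})$, which yields both inclusions. Let $X\in\mathfrak{n}_0$ be a restricted root vector; by Lemma~\ref{lem:caluculation of Xdelta(1,f,u)} with $f=1$ (so $L_X(1)=0$) one has $(X-\eta(X))\delta_i(1,\eta_i^{-1},u')=\sum_{\mathbf{k}}\delta_i(1,f_{\mathbf{k}}\eta_i^{-1},v'_{\mathbf{k}})$, where $\mathbf{k}$ runs over the indices with $Z_{\mathbf{k}}:=\ad(e)^{\mathbf{k}}X\in\Ad(w_i)\mathfrak{n}_0\cap\mathfrak{n}_0$ and $v'_{\mathbf{k}}=(\Ad(w_i)^{-1}Z_{\mathbf{k}}-\eta(Z_{\mathbf{k}}))u'$. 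Each $Z_{\mathbf{k}}$ is a root vector, so $\Ad(w_i)^{-1}Z_{\mathbf{k}}\in\mathfrak{n}_0=(\mathfrak{m}\cap\mathfrak{n}_0)\oplus\mathfrak{n}$ lies wholly in $\mathfrak{n}$ --- in which case $v'_{\mathbf{k}}=0$, since $\mathfrak{n}\cdot u'=0$ and $\eta(Z_{\mathbf{k}})=0$ (as $Z_{\mathbf{k}}\in\Ad(w_i)\mathfrak{n}\cap\mathfrak{n}_0$) --- or wholly in $\mathfrak{m}\cap\mathfrak{n}_0$, in which case $v'_{\mathbf{k}}=(Y_{\mathbf{k}}-(w_i^{-1}\eta)(Y_{\mathbf{k}}))u'$ with $Y_{\mathbf{k}}:=\Ad(w_i)^{-1}Z_{\mathbf{k}}$. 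If $u'\in\Wh_{w_i^{-1}\eta}(\sigma\otimes e^{\lambda+\rho})$ then all $v'_{\mathbf{k}}$ vanish, so $\delta_i(1,\eta_i^{-1},u')$ is a Whittaker vector (and it lies in the module because $\Wh_{w_i^{-1}\eta}\subset J'_{w_i^{-1}\eta}$); this gives $\supseteq$. Conversely, if $\delta_i(1,\eta_i^{-1},u')$ is a Whittaker vector, fix a root vector $Y\in\mathfrak{m}\cap\mathfrak{n}_0$ and put $X:=\Ad(w_i)Y\in\mathfrak{n}_0$ (a root vector, as $w_i\in W(M)$): then $\mathbf{k}=(0,\dots,0)$ occurs in the sum, with $f_{(0,\dots,0)}=1$ and $v'_{(0,\dots,0)}=(Y-(w_i^{-1}\eta)(Y))u'$, whereas the remaining $f_{\mathbf{k}}$ are distinct non-constant monomials, so $\{f_{\mathbf{k}}\}$ is linearly independent; injectivity of $\Phi$ now forces every $v'_{\mathbf{k}}=0$, in particular $(Y-(w_i^{-1}\eta)(Y))u'=0$, and as $Y$ ranges over root vectors this gives $u'\in\Wh_{w_i^{-1}\eta}(\sigma\otimes e^{\lambda+\rho})$. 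The main obstacle throughout is exactly the bookkeeping of the correction terms: trivializing them completely in the $\Ad(w_i)\overline{\mathfrak{n}}\cap\mathfrak{n}_0$-directions, and, in the $\Ad(w_i)(\mathfrak{m}\cap\mathfrak{n}_0)$-directions, isolating the unique constant-polynomial contribution from the higher-degree ones --- and both of these genuinely use $\eta=\eta'$ together with the triviality of the $\mathfrak{n}$-action.
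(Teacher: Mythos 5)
Your proof is correct and follows essentially the same route as the paper's: both arguments apply Lemma~\ref{lem:caluculation of Xdelta(1,f,u)} first with $X\in\Ad(w_i)\overline{\mathfrak{n}}\cap\mathfrak{n}_0$ (where the correction sum disappears) to force the polynomial factors to be constant, and then with $X\in\Ad(w_i)(\mathfrak{m}\cap\mathfrak{n}_0)$ (isolating the $\mathbf{k}=0$ constant monomial from the higher-degree ones via injectivity of the $\delta_i$-map) to force $u'\in\Wh_{w_i^{-1}\eta}$. You simply spell out the reverse containment and the case analysis on $\ad(e)^{\mathbf{k}}X$ more explicitly than the paper, which dispatches the $\supseteq$ direction in a single sentence; the mathematical content is identical.
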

\begin{proof}
By the assumption, we have $\eta = \eta'$.
Hence the right hand side is a subspace of the left hand side by Lemma~\ref{lem:caluculation of Xdelta(1,f,u)}.

Take $x = \sum_s (f_s\eta_i^{-1}\otimes u'_s) = \sum_s \delta(1,f_s\eta_i^{-1},u'_s)\in \Wh_\eta(I'_i)$.
We assume that $\{u'_s\}$ is linearly independent.
Take $X\in \Ad(w_i)\overline{\mathfrak{n}}\cap \mathfrak{n}_0$.
Since $\ad(e)^\mathbf{k}X\in \Ad(w_i)\overline{\mathfrak{n}}\cap \mathfrak{n}_0$ for all $\mathbf{k}\in\Z_{\ge 0}^l$, we have $\sum_s\delta_i(1,L_X(f_s)\eta_i^{-1},u'_s) = 0$ by Lemma~\ref{lem:caluculation of Xdelta(1,f,u)}.
Hence $L_X(f_s) = 0$.
This implies $f_s\in \C$.

From the above argument, $x = \delta(1,\eta_i^{-1},u')$ for some $u'\in J'_{w_i^{-1}\eta}(\sigma\otimes e^{\lambda+\rho})$.
Take $X\in \Ad(w_i)\mathfrak{m}\cap \mathfrak{n}_0$.
By Lemma~\ref{lem:caluculation of Xdelta(1,f,u)}, we have
\[
	\delta_i(1,\eta_i^{-1},(\Ad(w_i)^{-1}X - \eta(X))u')\in \sum_{\mathbf{k}\ne 0,\ u_\mathbf{k}\in J'_{w_i^{-1}\eta}(\sigma\otimes e^{\lambda+\rho})}\delta_i(1,f_\mathbf{k}\eta_i^{-1},u_{\mathbf{k}}).
\]
If $\mathbf{k}\ne 0$ then the degree of $f_\mathbf{k}$ is greater than $0$.
So the left hand side must be $0$.
Hence we have $(\Ad(w_i)^{-1}X - \eta(X))u' = 0$.
We have the lemma.
\end{proof}

The following lemma is well-known, but we give a proof for the readers (cf.\ Casselman-Hecht-Milicic~\cite{MR1767896}, Yamashita~\cite{MR849220}).
\begin{lem}\label{lem:Whittaker vectors in non-degenerate case}
Assume that $\supp\eta = \Pi$.
Let $x\in \Wh_\eta(I(\sigma,\lambda)')$.
Then there exists $u'\in \Wh_{w_r^{-1}\eta}((\sigma\otimes e^{\lambda+\rho})')$ such that $x = (\eta_r^{-1}\otimes u')\delta_r$.
\end{lem}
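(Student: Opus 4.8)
The plan is to exploit the non-degeneracy of $\eta$ to show that the Bruhat filtration of $J'_\eta(I(\sigma,\lambda))$ is trivial below its top step, so that $x$ is concentrated on the open orbit $O_r = N_0w_rP/P$, and then to identify $u'$ by means of Lemma~\ref{lem:Whittaker vector, no differential part}. Note first that a Whittaker vector is a generalized Whittaker vector with $k = 1$, so $x\in\Wh_\eta(I(\sigma,\lambda)')\subset J'_\eta(I(\sigma,\lambda)) = I_r$; it therefore suffices to show that $I_j/I_{j-1} = 0$ for every $j < r$ and to determine $\Wh_\eta(I_r/I_{r-1})$.

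The crux is the claim that $I_j/I_{j-1} = 0$ for $j < r$: a non-degenerate $\eta$ pushes everything onto the big cell. Suppose $I_j/I_{j-1}\ne 0$. By Lemma~\ref{lem:vanishing lemma}, $\eta$ vanishes on $\Ad(w_j)\mathfrak n\cap\mathfrak n_0$, which is the sum of the restricted root spaces $\mathfrak g_\gamma$ with $\gamma\in R_j := w_j(\Sigma^+\setminus\Sigma_M^+)\cap\Sigma^+$. Since $\supp\eta = \Pi$, the character $\eta$ is nonzero on every simple root space, so $R_j\cap\Pi = \emptyset$. This forces $R_j = \emptyset$: otherwise take $\gamma\in R_j$ of minimal height and write $\gamma = \beta + \gamma'$ with $\beta\in\Pi$ and $\gamma'\in\Sigma^+$. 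Let $c(\delta)$ be the sum of the coordinates of $\delta$ on $\Pi\setminus\Theta$ ($\Theta$ the simple roots of $M$); then $c$ is additive and, for a root $\delta$, $c(\delta) > 0\iff\delta\in\Sigma^+\setminus\Sigma_M^+$. As $c(w_j^{-1}\gamma) > 0$ and $w_j^{-1}\gamma = w_j^{-1}\beta + w_j^{-1}\gamma'$, either $c(w_j^{-1}\gamma') > 0$, so that $\gamma'\in w_j(\Sigma^+\setminus\Sigma_M^+)\cap\Sigma^+ = R_j$ has strictly smaller height (contradicting minimality), or $c(w_j^{-1}\beta) = c(w_j^{-1}\gamma) - c(w_j^{-1}\gamma') > 0$, so that $\beta\in R_j\cap\Pi = \emptyset$ (a contradiction). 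Hence $w_j(\Sigma^+\setminus\Sigma_M^+)\subset\Sigma^-$, so $\Ad(w_j)\overline{\mathfrak n}\subset\mathfrak n_0$, $w_j\overline Nw_j^{-1}\subset N_0$, and $U_j = w_j\overline NP/P = O_j$ is open; as $O_r$ is the unique open $N_0$-orbit this gives $j = r$, contradicting $j < r$. Consequently $I_{r-1} = 0$ and $\Res_r$ is injective on $I_r$.

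For the top piece, recall $U_r = O_r$ (the open orbit is disjoint from every $O_j$ with $j < r$), so $O_r$ has no transverse directions in $U_r$; in particular $\Ad(w_r)\overline{\mathfrak n}\subset\mathfrak n_0$, whence $\Ad(w_r)\overline{\mathfrak n}\cap\overline{\mathfrak n_0} = 0$ and $\Ad(w_r)\mathfrak n\cap\mathfrak n_0 = 0$. Thus $I'_r$ is exactly the space $\{\sum_s(f_s\eta_r^{-1}\otimes u'_s)\delta_r\mid f_s\in\mathcal P(O_r),\ u'_s\in J'_{w_r^{-1}\eta}(\sigma\otimes e^{\lambda+\rho})\}$ occurring in Lemma~\ref{lem:Whittaker vector, no differential part}, whose hypothesis $\eta|_{\Ad(w_r)\mathfrak n\cap\mathfrak n_0} = 0$ holds trivially. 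Since $x\in J'_\eta(I(\sigma,\lambda))$, Lemma~\ref{lem:succ quot is sub of I'} gives $\Res_r(x)\in I'_r$; and $\Res_r(x)$ is again killed by every $X - \eta(X)$, $X\in\mathfrak n_0$, because the restriction map is $\mathfrak g$-equivariant. Lemma~\ref{lem:Whittaker vector, no differential part} then yields $\Res_r(x) = (\eta_r^{-1}\otimes u')\delta_r$ for some $u'\in\Wh_{w_r^{-1}\eta}((\sigma\otimes e^{\lambda+\rho})')$, and the injectivity of $\Res_r$ on $I_r$ gives $x = (\eta_r^{-1}\otimes u')\delta_r$ (for $x = 0$ one takes $u' = 0$).

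The step I expect to be the real obstacle is the degeneration $I_j/I_{j-1} = 0$ for $j < r$ — equivalently, the combinatorial fact that a non-degenerate character can only be carried by the open Bruhat cell. The root-coefficient bookkeeping above is the analogue, in this moderate-growth Fr\'echet setting, of the classical input behind Kostant's multiplicity-one theorem and the Casselman--Hecht--Milicic computation of Whittaker models of principal series; once it is in place, the rest of the argument is formal and relies only on Lemmas~\ref{lem:succ quot is sub of I'}, \ref{lem:vanishing lemma} and \ref{lem:Whittaker vector, no differential part}. (If preferred, this degeneration can be isolated as a corollary of Lemma~\ref{lem:vanishing lemma}.)
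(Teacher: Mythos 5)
Your proof is correct and follows essentially the same route as the paper's: reduce to the open cell by showing $I_j/I_{j-1}=0$ for $j<r$ via Lemma~\ref{lem:vanishing lemma}, then identify the Whittaker vector on $O_r$ via Lemma~\ref{lem:Whittaker vector, no differential part}. The only divergence is the combinatorial step producing a simple root in $w_j(\Sigma^+\setminus\Sigma_M^+)\cap\Sigma^+$ for $j<r$: the paper gets it in one line from the fact that $w_jw_{M,0}$ is not the longest element of $W$ (so some simple $\alpha$ satisfies $s_\alpha w_jw_{M,0}>w_jw_{M,0}$, whence $\alpha\in w_jw_{M,0}\Sigma^+\cap\Sigma^+ = w_j(\Sigma^+\setminus\Sigma_M^+)\cap\Sigma^+$), whereas you argue contrapositively by a height-minimality argument with the coefficient functional $c$ — both establish the same fact.
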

Recall that $r = \# W(M) = \#(W/W_M)$.

\begin{proof}
Assume that $i < r$.
Then $w_iw_{M,0}$ is not the longest element of $W$.
There exists a simple root $\alpha\in\Pi$ such that $s_\alpha w_iw_{M,0} > w_iw_{M,0}$.
This means that $w_iw_{M,0}\Sigma^+\cap \Sigma^+ = s_\alpha(s_\alpha w_iw_{M,0}\Sigma^+\cap \Sigma^+)\cup \{\alpha\}$.
The left hand side is $w_i(\Sigma^+\setminus\Sigma_M^+)\cap \Sigma^+$.
Hence, $\eta$ is not trivial on $\Ad(w_i)\mathfrak{n}\cap\mathfrak{n}_0$.
By Lemma~\ref{lem:vanishing lemma}, $I_i/I_{i - 1} = 0$.
This implies that $J'_\eta(I(\sigma,\lambda))\subset I_r'$.
There exists a polynomial $f_s\in\mathcal{P}(X_r)$ and $u_s'\in J'_{w_r^{-1}\eta}(\sigma\otimes e^{\lambda+\rho})$ such that $x = \sum_s((f_s\eta_r^{-1})\otimes u'_s)\delta_r$.
By Lemma~\ref{lem:Whittaker vector, no differential part}, we have the lemma.
\end{proof}

\section{Analytic continuation}\label{sec:Analytic continuation}
The aim of this section is to prove that $\im\Res_i = I'_i$ if $I_i/I_{i - 1} \ne 0$.

Let $P_\eta$ be the parabolic subgroup corresponding to $\supp\eta \subset \Pi$  containing $P_0$ and $P_\eta = M_\eta A_\eta N_\eta$ its Langlands decomposition such that $A_\eta\subset A_0$.\newsym{$P_\eta = M_\eta A_\eta N_\eta$}
Denote the complexification of the Lie algebra of $P_\eta$, $M_\eta$, $A_\eta$, $N_\eta$ by $\mathfrak{p}_\eta$, $\mathfrak{m}_\eta$, $\mathfrak{a}_\eta$, $\mathfrak{n}_\eta$, respectively.\newsym{$\mathfrak{p}_\eta = \mathfrak{m}_\eta\oplus\mathfrak{a}_\eta\oplus\mathfrak{n}_\eta$}
Put $\mathfrak{l}_\eta = \mathfrak{m}_\eta\oplus\mathfrak{a}_\eta$, $\overline{N_\eta} = \theta(N_\eta)$ and $\overline{\mathfrak{n}_\eta} = \theta(\mathfrak{n}_\eta)$.\newsym{$\mathfrak{l}_\eta$}\newsym{$\overline{N_\eta}$}\newsym{$\overline{\mathfrak{n}_\eta}$}
Set $\Sigma^+_\eta = \{\sum_{\alpha\in\supp\eta}n_\alpha\alpha\in\Sigma^+\mid n_\alpha\in\Z_{\ge 0}\}$ and $\Sigma^-_\eta = -\Sigma^+_\eta$.\newsym{$\Sigma^+_\eta,\Sigma^-_\eta$}
The same notation will be used for $M$ with suffix $M$, i.e., $P_{M,\eta} = M_{M,\eta}A_{M,\eta}N_{M,\eta}$ is the parabolic subgroup of $M$ containing $M\cap P_0$ corresponding to $\supp\eta\cap\Sigma_M^+$, $\mathfrak{p}_{\mathfrak{m},\eta} = \mathfrak{m}_{\mathfrak{m},\eta}\oplus\mathfrak{a}_{\mathfrak{m},\eta}\oplus\mathfrak{n}_{\mathfrak{m},\eta}$ is a complexification of the Lie algebra of $P_{M,\eta} = M_{M,\eta}A_{M,\eta}N_{M,\eta}$.%
\newsym{$P_{M,\eta} = M_{M,\eta}A_{M,\eta}N_{M,\eta}$}\newsym{$\mathfrak{p}_{\mathfrak{m},\eta} = \mathfrak{m}_{\mathfrak{m},\eta}\oplus\mathfrak{a}_{\mathfrak{m},\eta}\oplus\mathfrak{n}_{\mathfrak{m},\eta}$}

For $w\in W$, there is an open dense subset $w\overline{N}P/P$ of $G/P$ and it is diffeomorphic to $\overline{N}$.
Then for $w,w'\in W$, there exists a map $\Phi_{w,w'}$\newsym{$\Phi_{w,w'}$} from some open dense subset $U\subset\overline{N}$ to $\overline{N}$ such that $w\overline{n}P/P = w'\Phi_{w,w'}(\overline{n})P/P$ for $\overline{n}\in U$.
The map $\Phi_{w,w'}$ is a rational function.
%

Since the exponential map $\exp\colon\Lie(\overline{N})\to \overline{N}$ is diffeomorphism, $\overline{N}$ has a structure of a vector space.
\begin{lem}\label{lem:property of e^rho}
\begin{enumerate}
\item The map $\overline{N}\to \R$ defined by $\overline{n}\mapsto e^{8\rho(H(\overline{n}))}$ is a polynomial.
\item For all $\overline{n}\in\overline{N}$ we have $e^{8\rho(H(\overline{n}))} \ge 1$.
\item Take $H_0\in \mathfrak{a}$ such that $\alpha(H_0) = -1$ for all $\alpha\in \Pi\setminus\Sigma_M$.
There exists a continuous function $Q(\overline{n})\ge 0$ on $\overline{N}$ such that the following conditions hold:
(a) The function $Q$ vanishes only at the unit element.
(b) $e^{8\rho(H(\overline{n}))} \ge Q(\overline{n})$.
(c) $Q(\exp(tH_0)\overline{n}\exp(-tH_0)) \ge e^{8t}Q(\overline{n})$ for $t\in \R_{>0}$ and $\overline{n}\in\overline{N}$.
\end{enumerate}
\end{lem}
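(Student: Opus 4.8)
The plan is to realize $e^{8\rho(H(\overline{n}))}$ inside a finite–dimensional representation and to read off all three assertions from that realization. Put $d=\dim\mathfrak{n}$ and let $G$ act on $V=\Lambda^{d}\mathfrak{g}$ by $\Lambda^{d}\Ad$. Since $\mathfrak{n}$ is the nilradical of $\mathfrak{p}$ it is $\Ad(\mathfrak{p})$‑stable, so the line $\Lambda^{d}\mathfrak{n}\subset V$ is $P$‑stable; a weight argument (adding a nonzero restricted root changes the $\mathfrak{a}_{0}$‑weight) shows $\mathfrak{n}_{0}$ annihilates it, so it is a highest‑weight line on which $\exp H$ ($H\in\mathfrak{a}$) acts by $e^{2\rho(H)}$ and $M$ acts by the character $\det\Ad(\cdot)|_{\mathfrak{n}}$, which is unitary because $M$ is reductive with compact center. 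I would fix a generator $\omega$ of this line and a Hermitian norm $\|\cdot\|$ on $V$ that is $K$‑invariant and has mutually orthogonal $\mathfrak{a}_{0}$‑weight spaces (the $d$‑th exterior power of $-B(\cdot,\theta\cdot)$, $B$ the Killing form). Writing $\overline{n}=k\exp(H(\overline{n}))mn'$ with $k\in K$, $m\in M$, $n'\in N$ and using $\Ad(n')\omega=\omega$, this yields the identity
\[
 e^{4\rho(H(\overline{n}))}=\frac{\|\Lambda^{d}\Ad(\overline{n})\,\omega\|^{2}}{\|\omega\|^{2}},\qquad e^{8\rho(H(\overline{n}))}=\bigl(e^{4\rho(H(\overline{n}))}\bigr)^{2},
\]
on which everything below rests.

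Parts (1) and (2) are then immediate. Since $\overline{n}=\exp Y$ with $Y\in\overline{\mathfrak{n}}$ ad‑nilpotent, $\Lambda^{d}\Ad(\overline{n})\omega=\sum_{k\ge 0}\frac1{k!}(\ad Y)^{k}\omega$ is a \emph{finite} sum, depending polynomially on the coordinates of $Y$; hence $\|\Lambda^{d}\Ad(\overline{n})\omega\|^{2}$, and so $e^{8\rho(H(\overline{n}))}$, is a polynomial on $\overline{N}$, proving (1). For (2), each $(\ad Y)^{k}\omega$ with $k\ge 1$ lies in $\mathfrak{a}_{0}$‑weight spaces strictly below the weight of $\omega$ (because $\ad Y$ strictly lowers the $\mathfrak{a}_{0}$‑weight), hence is orthogonal to $\omega$; thus $\|\Lambda^{d}\Ad(\overline{n})\omega\|^{2}\ge\|\omega\|^{2}$, with equality exactly when $\Lambda^{d}\Ad(\overline{n})$ fixes $\omega$, i.e.\ when $\overline{n}$ normalizes $\mathfrak{n}$, i.e.\ (as $N_{G}(\mathfrak{n})\cap\overline{N}=\{e\}$) only when $\overline{n}=e$; so $e^{8\rho(H(\overline{n}))}\ge 1$ with equality only at the identity.

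For (3) I would grade $\mathfrak{g}=\bigoplus_{k}\mathfrak{g}^{(k)}$ by $\ad(H_{0})$‑eigenvalue, so that $\overline{\mathfrak{n}}=\bigoplus_{k\ge 1}\mathfrak{g}^{(k)}$ and $\overline{n}\mapsto\overline{n}_{s}:=\exp(sH_{0})\overline{n}\exp(-sH_{0})$ scales the $\mathfrak{g}^{(k)}$‑coordinate of $\overline{n}$ by $e^{sk}$. Decomposing $\Lambda^{d}\Ad(\overline{n})\omega=\sum_{j}v_{j}(\overline{n})$ into $\ad(H_{0})$‑eigenvectors (the lowest one is $v_{2\rho(H_{0})}=\omega$, since $\ad Y$ strictly raises the $\ad(H_{0})$‑degree) gives
\[
 e^{4\rho(H(\overline{n}_{s}))}=\sum_{j}e^{\,2s(j-2\rho(H_{0}))}\,\frac{\|v_{j}(\overline{n})\|^{2}}{\|\omega\|^{2}},
\]
with all coefficients $\ge 0$ and the $j=2\rho(H_{0})$ term equal to $1$. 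The heart of the matter — and the step I expect to be hardest — is the root‑theoretic claim that for $\overline{n}\ne e$ some nonzero $v_{j}(\overline{n})$ has $j-2\rho(H_{0})\ge 2$. Granting it, the sum above is $\ge e^{4s}\|v_{j_{\max}}(\overline{n})\|^{2}/\|\omega\|^{2}$ for $s\ge 0$, hence
\[
 e^{8\rho(H(\overline{n}_{s}))}\ge\kappa(\overline{n})\,e^{8s}\quad(s\ge 0),\qquad\kappa(\overline{n}):=\bigl(\|v_{j_{\max}}(\overline{n})\|^{2}/\|\omega\|^{2}\bigr)^{2}>0 .
\]
To prove the claim I would split on whether $Y=\log\overline{n}$ has a nonzero component $Y_{k}$ in some $\mathfrak{g}^{(k)}$ with $k\ge 2$ — in which case $\ad(Y_{k})\omega\ne 0$, because $[\mathfrak{g}^{(k)}\cap\overline{\mathfrak{n}},\mathfrak{g}_{\gamma}]$ meets $\mathfrak{h}$ nontrivially for the appropriate $\gamma\in\Sigma^{+}\setminus\Sigma_{M}^{+}$ and the resulting wedge sits in a summand of $\Lambda^{d}\mathfrak{g}$ not cancelled by the other terms — or else $Y\in\mathfrak{g}^{(1)}$ and one checks $(\ad Y)^{2}\omega\ne 0$ by a similar, slightly longer $\mathfrak{sl}_{2}$‑style computation; the numerology ``$8=4\times 2$'' is forced here, with $SL_{2}$ as the extremal case.

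Finally I would set $Q(\overline{n}):=\inf_{s\ge 0}e^{-8s}e^{8\rho(H(\overline{n}_{s}))}$. By (1) and (2) this is nonnegative, and it is continuous (a routine verification, using precisely that $e^{-8s}e^{8\rho(H(\overline{n}_{s}))}$ does not tend to $0$ as $s\to\infty$, which is the estimate just obtained). Taking $s=0$ gives $Q(\overline{n})\le e^{8\rho(H(\overline{n}))}$, which is (b); the estimate gives $e^{-8s}e^{8\rho(H(\overline{n}_{s}))}\ge\kappa(\overline{n})>0$ for all $s\ge 0$ when $\overline{n}\ne e$, whereas $Q(e)=\inf_{s\ge 0}e^{-8s}=0$, which is (a); and, since $(\overline{n}_{t})_{s}=\overline{n}_{s+t}$, the substitution $u=s+t$ yields
\begin{multline*}
 Q(\overline{n}_{t})=\inf_{s\ge 0}e^{-8s}e^{8\rho(H(\overline{n}_{s+t}))}=e^{8t}\inf_{u\ge t}e^{-8u}e^{8\rho(H(\overline{n}_{u}))}\\ \ge e^{8t}\inf_{u\ge 0}e^{-8u}e^{8\rho(H(\overline{n}_{u}))}=e^{8t}Q(\overline{n})
\end{multline*}
for $t>0$, which is (c).
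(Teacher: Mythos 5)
Your proofs of (1) and (2) are correct and essentially parallel the paper's, merely substituting $\Lambda^{d}\mathfrak{g}$ (with the $P$-stable line $\Lambda^{d}\mathfrak{n}$) for the paper's irreducible $V_{4\rho}$; both yield the polynomial identity and the bound $\ge 1$ by orthogonality of weight spaces.

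Part (3) is where you depart from the paper, and where there is a real gap. The paper takes $Q(\overline{n})=\sum_{w\in W(M)\setminus\{e\}}\lVert Q_{4w\rho}(\overline{n})\rVert^{2}$, a finite sum of squared norms of specific extreme-weight components of $\overline{n}v_{4\rho}$; then (c) follows from the elementary Weyl-group fact $(w\rho-\rho)(H_{0})\ge 1$, continuity is automatic, and non-vanishing for $\overline{n}\ne e$ comes from the Bruhat decomposition $\overline{n}=w\overline{n}'m'a'n'$ with $w\ne e$, which forces $Q_{4w\rho}(\overline{n})\ne 0$. You instead set $Q(\overline{n})=\inf_{s\ge 0}e^{-8s}e^{8\rho(H(\overline{n}_{s}))}$, which makes (b) and (c) immediate but transfers all the content into showing $Q(\overline{n})>0$ for $\overline{n}\ne e$. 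That is exactly your ``root-theoretic claim'' that $\Lambda^{d}\Ad(\overline{n})\omega$ has a nonzero component of $\ad(H_{0})$-degree $\ge j_{0}+2$. Your treatment of the case where $Y=\log\overline{n}$ has a component in some $\mathfrak{g}^{(k)}$, $k\ge 2$, is fine (the top-degree component of the exponential is $\frac{1}{K!}(\ad Y_{k_{\max}})^{K}\omega$ and survives), but the case $Y\in\mathfrak{g}^{(1)}$ reduces to the assertion that $(\ad Y)^{2}\omega\ne 0$ for every nonzero $Y\in\mathfrak{g}^{(1)}\cap\overline{\mathfrak{n}}$, and this you only gesture at (``a similar, slightly longer $\mathfrak{sl}_{2}$-style computation''). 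It is true — indeed it follows from the paper's Bruhat-cell argument, since the weight-$2w\rho$ component of $\Lambda^{d}\Ad(\overline{n})\omega$ is nonzero for $\overline{n}$ in the cell of $w$ and sits in degree $j_{0}+2(w\rho-\rho)(H_{0})\ge j_{0}+2$ — but a self-contained proof is not a formality and is the crux of the lemma, so leaving it unproved is a genuine gap. A secondary issue: the continuity of your infimum-defined $Q$ does hold (because $e^{-8s}e^{8\rho(H(\overline{n}_{s}))}$ is a Laurent polynomial in $e^{s}$ with nonnegative coefficients and constant coefficient equal to the degree-$(j_{0}+2)$ contribution), but that argument leans on the same positivity claim, so calling it ``routine'' understates what is being used. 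The paper's choice of $Q$ avoids both subtleties at once.
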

\begin{proof}
By Knapp~\cite[Proposition~7.19]{MR1880691}, there exists an irreducible finite-dimensional $V_{4\rho}$ of $\mathfrak{g}$ with the highest weight $4\rho\in \mathfrak{a}_0^*\subset \mathfrak{h}^*$.
Let $v_{4\rho}\in V_{4\rho}$ be a highest weight vector and $v_{-4\rho}^*\in V_{4\rho}^*$ the lowest weight vector of $V_{4\rho}^*$.
Then $\C v_{4\rho}$ is a $1$-dimensional unitary representation of $M$.
Take $\overline{n}\in \overline{N}$ and decompose $\overline{n} = kan$ where $k\in K$, $a\in A_0$ and $n\in N_0$.

First we prove (1).
We have $\theta(\overline{n})^{-1}\overline{n} = \theta(n)^{-1}a^2n$.
Hence
\begin{align*}
\langle \theta(\overline{n})^{-1}\overline{n}v_{4\rho},v^*_{-4\rho}\rangle
& = \langle \theta(n)^{-1}a^2nv_{4\rho},v^*_{-4\rho}\rangle\\
& = \langle a^2nv_{4\rho},\theta(n)v^*_{-4\rho}\rangle\\
& = e^{8\rho(H(\overline{n}))}\langle v_{4\rho},v^*_{-4\rho}\rangle.
\end{align*}
The left hand side is a polynomial.

Next we prove (2) and (3).
Fix a compact real form of $\mathfrak{g}$ containing $\Lie(K)$ and take an inner product on $V_{4\rho}$ which is invariant under this compact real form.
We normalize an inner product $||\cdot||$ so that $||v_{4\rho}|| = 1$.
Then we have $||\overline{n}v_{4\rho}|| = ||kanv_{4\rho}|| = ||av_{4\rho}|| = e^{4\rho(H(\overline{n}))}||v_{4\rho}|| = e^{4\rho(H(\overline{n}))}$.
For $\nu\in\mathfrak{h}^*$ let $Q_\nu(\overline{n})\in V_{4\rho}$ be the $\nu$-weight vector such that $\overline{n}v_{4\rho} = \sum_\nu Q_{\nu}(\overline{n})$.
Then we have $e^{8\rho(H(\overline{n}))} = \sum_\nu ||Q_\nu(\overline{n})||^2$.
Since $Q_{4\rho}(\overline{n}) = v_{4\rho}$, we have $e^{8\rho(H(\overline{n}))} \ge 1$.

Put $Q(\overline{n}) = \sum_{w\in W(M)\setminus\{e\}}||Q_{4w\rho}(\overline{n})||^2$.
Assume that $\overline{n} \ne e$.
Then there exist $w\in W(M)\setminus\{e\}$, $m'\in M$, $a'\in A$, $n'\in N$ and $\overline{n}'\in \overline{N}$ such that $\overline{n} = w\overline{n}'m'a'n'$.
Let $v_{-4w\rho}^*\in V_{4\rho}^*$ be a weight vector with $\mathfrak{h}$-weight $-4w\rho$.
Then we have
\begin{multline*}
	||Q_{4w\rho}(\overline{n})||
	= \lvert\langle \overline{n}v_{4\rho},v_{-4w\rho}^*\rangle\rvert
	= \lvert\langle w\overline{n}'m'a'n'v_{4\rho},v_{-4w\rho}^*\rangle\rvert\\
	= \lvert\langle a'v_{4\rho},w^{-1}v^*_{-4w\rho}\rangle\rvert
	= e^{4\rho(\log a')}\lvert \langle v_{4\rho},w^{-1}v_{-4w\rho}^*\rangle\rvert
	\ne 0.
\end{multline*}
Hence, if $\overline{n}\in \overline{N}\setminus\{e\}$ then $Q(\overline{n})\ne 0$.

Let $t$ be a positive real number.
Using $Q_\nu(\exp(tH_0)\overline{n}\exp(-tH_0)) = e^{t(\nu - 4\rho)(H_0)}Q_\nu(\overline{n})$, we have
\[
Q(\exp(tH_0)\overline{n}\exp(-tH_0))
= \sum_{w\in W(M)\setminus\{e\}} e^{8t(w\rho - \rho)(H_0)}\lvert Q_{4w\rho_0}(\overline{n})\rvert^2.
\]
Since $(w\rho - \rho)(H_0) \ge 1$ for $w\in W(M)\setminus\{e\}$, we get the lemma.
\end{proof}

\begin{rem}\label{rem:asymptotic at infinity of e^rho}
The condition Lemma~\ref{lem:property of e^rho} (3) implies that $\lim_{\overline{n}\to \infty}Q(\overline{n}) = \infty$.
The proof is the following.
Take $H_0$ as in Lemma~\ref{lem:property of e^rho}.
Let $\{e_1,\dots,e_l\}$ be a basis of $\overline{\mathfrak{n}}$.
Here, we assume that each $e_i$ is a restricted root vector and denote its root by $\alpha_i$.
Any $\overline{n}\in \overline{N}$ can be written as $\overline{n} = \exp(\sum_{i = 1}^l a_ie_i)$ where $a_i\in \R$.
Put $r(\overline{n}) = \sum_{i = 1}^l\lvert a_i\rvert^{-1/\alpha_i(H_0)}$.
Set $C = \min_{r(\overline{n}) = 1}Q(\overline{n})$.
Since $Q(\overline{n}) > 0$ if $\overline{n}$ is not the unit element, $C > 0$.
Then we have $Q(\overline{n})\ge Cr(\overline{n})^8$ if $r(\overline{n}) > 1$.
If $\overline{n}\to \infty$ then $r(\overline{n})\to\infty$.
Hence, $Q(\overline{n})\to\infty$.
\end{rem}

\begin{lem}\label{lem:extension of polynomials}
Let $f$ be a polynomial on $\overline{N}$.
There exists a positive integer $k$ and a $C^\infty$-function $h$ on $G/P$ such that $h(w_i\overline{n}P/P) = e^{-k\rho(H(\overline{n}))}f(\overline{n})$ for all $\overline{n}\in\overline{N}$.
\end{lem}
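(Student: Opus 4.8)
The plan is to produce the function $h$ on $G/P$ by multiplying $f$ by a sufficiently high power of the polynomial $\overline{n}\mapsto e^{8\rho(H(\overline{n}))}$ from Lemma~\ref{lem:property of e^rho}(1) and checking that the resulting function, a priori defined only on the open dense cell $w_i\overline{N}P/P$, extends smoothly across the complement. Concretely, for a positive integer $m$ set $g(\overline{n}) = e^{-(8m+1)\rho(H(\overline{n}))}f(\overline{n})$ on $\overline{N}$; we want to choose $m$ so large that the function $x\mapsto g(\text{the }\overline{N}\text{-coordinate of }w_i^{-1}x)$ defined on $w_i\overline{N}P/P$ extends to a $C^\infty$-function $h$ on all of $G/P$, and then $k = 8m+1$ does the job.

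The key steps are as follows. First I would recall that $G/P$ is a disjoint union of the cells $w\overline{N}P/P$, $w\in W$ (equivalently, of the $N_0$-orbits $O_j$, via the Bruhat decomposition already used above), so the complement of the big cell $w_i\overline{N}P/P$ is a finite union of submanifolds of strictly smaller dimension; it suffices to check smooth extendability in a neighbourhood of each such smaller cell, and by homogeneity it is enough to control the behaviour of $g$ as $\overline{n}\to\infty$ along appropriate one-parameter directions. Second, using the transition maps $\Phi_{w,w'}$ — which are rational functions — one rewrites, in a chart centred at a boundary point, the expression $e^{-(8m+1)\rho(H(\overline{n}))}f(\overline{n})$ in the new coordinates; the factor $f$ becomes a rational function whose denominator is a power of a polynomial vanishing on the relevant boundary stratum, while by Lemma~\ref{lem:property of e^rho}(1),(2) the factor $e^{-8m\rho(H(\overline{n}))}$ is the reciprocal of a positive polynomial, hence globally bounded, and picks up in the new chart a vanishing of high order along the boundary. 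Third, one checks that for $m$ large the order of vanishing supplied by $e^{-8m\rho(H(\overline{n}))}$ (which grows linearly in $m$, as is visible from the weight computation $Q_\nu(\exp(tH_0)\overline{n}\exp(-tH_0)) = e^{t(\nu-4\rho)(H_0)}Q_\nu(\overline{n})$ in the proof of Lemma~\ref{lem:property of e^rho} and Remark~\ref{rem:asymptotic at infinity of e^rho}) dominates the fixed order of the pole coming from $f$ and from the rational transition map, and moreover that this vanishing persists after applying finitely many derivatives. Since there are only finitely many boundary strata and, for each, only finitely many derivatives need be checked to get $C^\infty$ (indeed any finite order of differentiability on a compact manifold patched together gives the claim, or one argues directly that the extension is smooth because the bad factor tends to $0$ together with all its derivatives), a single $m$ works for all of them simultaneously; set $k = 8m+1$ and let $h$ be the common smooth extension.

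The main obstacle I anticipate is the bookkeeping in the second and third steps: one must make the estimate "power of $e^{\rho(H(\overline{n}))}$ kills the pole of $f$, with all derivatives" precise and uniform across the finitely many strata. The cleanest way is probably to avoid explicit chart computations and instead argue asymptotically: pick $H_0\in\mathfrak{a}$ as in Lemma~\ref{lem:property of e^rho}(3), note that along the flow $\overline{n}\mapsto\exp(tH_0)\overline{n}\exp(-tH_0)$ the function $f$ grows at most polynomially in $e^{t}$ while $e^{-8m\rho(H(\overline{n}))}$ decays like $e^{-ct}$ with $c\to\infty$ as $m\to\infty$ (again by Lemma~\ref{lem:property of e^rho}(3), since $e^{8\rho(H(\overline{n}))}\ge Q(\overline{n})$ and $Q$ is suitably controlled by the flow), so that for $m$ large $g$ and all its derivatives tend to $0$ at the boundary; smoothness of the extension by $0$ across a submanifold then follows from a standard criterion. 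A subtlety worth flagging is that $f$ was given as a polynomial on $\overline{N}$ for a \emph{fixed} $w_i$, so one should state the extension relative to the chart $\overline{n}\mapsto w_i\overline{n}P/P$, exactly as in the statement; no change of $w_i$ is needed.
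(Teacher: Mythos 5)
Your first paragraph and the first two of your ``key steps'' essentially recover the paper's argument: extend $e^{-8C\rho(H(\cdot))}f$ chart-by-chart using the rational transition maps $\Phi_{w_i,w}$, and use Lemma~\ref{lem:property of e^rho} together with Remark~\ref{rem:asymptotic at infinity of e^rho} to choose $C$ so that the expression tends to $0$ at infinity in $\overline{N}$. At that point the paper is already done: since $e^{8\rho(H(\cdot))}$ is a polynomial $\ge 1$ and $\Phi_{w_i,w}$ is rational, the pullback $\widetilde{f}\circ\Phi_{w_i,w}$ is a \emph{rational} function on $\overline{N}$, and the decay at infinity shows it has no real pole; a rational function without poles is automatically $C^\infty$, with no separate control of derivatives required. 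The cover $\bigcup_{w\in W(M)} w\overline{N}P/P = G/P$ then gives the global extension.

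Where you diverge is in worrying that ``this vanishing persists after applying finitely many derivatives,'' and then in proposing a replacement asymptotic argument along the flow $\overline{n}\mapsto\exp(tH_0)\overline{n}\exp(-tH_0)$, invoking a ``standard criterion'' for smooth extension by $0$ across a submanifold. That worry and the replacement are unnecessary and, as written, not rigorous: the extension-by-zero criterion requires all derivatives to tend to $0$ as one approaches the boundary \emph{in every direction}, and you have only set up estimates along a single one-parameter flow; making this uniform across the boundary strata is exactly the kind of bookkeeping that the rationality observation lets one bypass. In short, keep the chart-by-chart argument but lean on the algebraic fact that $\widetilde{f}\circ\Phi_{w_i,w}$ is a bounded rational function, hence pole-free and $C^\infty$; drop the analytic detour.

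A small correction to your setup: $G/P$ is not a disjoint union of the sets $w\overline{N}P/P$ over $w\in W$; these are overlapping open dense charts. The disjoint (Bruhat) decomposition is $G/P = \bigsqcup_{w\in W(M)} N_0wP/P$, and the relevant cover used in the proof is $\bigcup_{w\in W(M)} w\overline{N}P/P = G/P$.
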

\begin{proof}
By Lemma~\ref{lem:property of e^rho} and Remark~\ref{rem:asymptotic at infinity of e^rho}, we can choose a positive integer $C$ such that $e^{-8C\rho(H(\overline{n}))}f(\overline{n}) \to 0$ when $\overline{n}\to \infty$.
Let $\widetilde{f}$ be a function on $U_i$ defined by $\widetilde{f}(w_i\overline{n}P/P) = e^{-8C\rho(H(\overline{n}))}f(\overline{n})$ for $\overline{n}\in \overline{N}$.
We prove that $\widetilde{f}$ can be extended to $G/P$.
Take $w\in W(M)$.
Then $\widetilde{f}$ is defined in a subset of $w\overline{N}P/P$.
Using a diffeomorphism $\overline{N}\simeq w\overline{N}P/P$, $\widetilde{f}$ defines a rational function $\widetilde{f}\circ \Phi_{w_i,w}$ defined on an open dense subset of $\overline{N}$.
By the condition of $C$, the function $\widetilde{f}\circ \Phi_{w_i,w}$ has no pole.
Hence, $\widetilde{f}$ defines a $C^\infty$-function on $w\overline{N}P/P$.
Since $\bigcup_{w\in W(M)}w\overline{N}P/P = G/P$, the lemma follows.
\end{proof}

Define $\kappa\colon G\to K$ and $H\colon G\to \Lie(A_0)$ by $g\in \kappa(g)\exp H(g) N_0$.\newsym{$\kappa$}\newsym{$H$}
Recall that for a representation $V$ of $\mathfrak{g}$, $\nu\in \mathfrak{a}_0^*$ is called an exponent of $V$ if $\nu + \rho_0|_{\mathfrak{m}\cap\mathfrak{a}_0}$ is an $\mathfrak{a}_0$-weight of $V/\mathfrak{n}_0V$.

\begin{prop}\label{prop:convergence and continuation}
Let $\varphi$ be a $\sigma$-valued function on $K$ which satisfies $\varphi(km) = \sigma(m)^{-1}\varphi(k)$ for all $k\in K$ and $m\in M\cap K$.
We define $\varphi_\lambda\in I(\sigma,\lambda)$ by $\varphi_\lambda(kman) = e^{-(\lambda + \rho)(\log a)}\sigma(m)^{-1}\varphi(k)$ for $k\in K$, $m\in M$, $a\in A$ and $n\in N$.
For $u'\in J'_{w_i^{-1}\eta}(\sigma\otimes e^{\lambda+\rho})$ and $f\in\mathcal{P}(O_i)$, put $I_{f,u'}(\varphi_\lambda) = \int_{w_i\overline{N}w_i^{-1}\cap N_0}u'(\varphi_\lambda(nw_i))\eta(n)^{-1}f(nw_i)dn$. (If $\supp\varphi\subset K\cap w_i\overline{N}P$ then the integral converges.)
\begin{enumerate}
\item If $\langle\alpha,\re\lambda\rangle$ is sufficiently large for each $\alpha\in\Sigma^+\setminus\Sigma^+_M$ then the integral $I_{f,u'}(\varphi_\lambda)$ absolutely converges.
\item As a function of $\lambda$, the integral $I_{f,u'}(\varphi_\lambda)$ has a meromorphic continuation to $\mathfrak{a}^*$.
\item If $\supp\eta = \Pi$ and $i = r$ then $I_{f,u'}(\varphi_\lambda)$ is holomorphic for all $\lambda\in\mathfrak{a}^*$.
\item Let $\nu$ be an exponent of $\sigma$ and $u'\in \Wh_{w_i^{-1}\eta}((\sigma\otimes e^{\lambda+\rho})')$.
If $2\langle\alpha,\lambda+\nu\rangle/\lvert\alpha\rvert^2\not\in \Z_{\le 0}$ for all $\alpha\in \Sigma^+\setminus w_i^{-1}(\Sigma^+\cup\Sigma^-_{\eta})$ then $I_{1,u'}(\varphi_\mu)$ is holomorphic at $\mu = \lambda$.
\end{enumerate}
\end{prop}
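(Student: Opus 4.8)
The plan is to establish the four assertions in turn; the last three all rest on rewriting the integrand, after passing to the coordinate $\overline{n}\in\overline{N}$, as a product of complex powers of polynomials against a fixed amplitude. Throughout I use the inclusion $\mathfrak{a}^*\subset\mathfrak{a}_0^*$ set up in the Notation.

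\emph{For (1).} I would substitute $nw_i = w_i\overline{n}$ with $\overline{n} = \Ad(w_i)^{-1}n$, turning $f(nw_i)$ into a polynomial on the subgroup $\overline{N}_i := \Ad(w_i)^{-1}(w_i\overline{N}w_i^{-1}\cap N_0)\subset\overline{N}$ and the Haar measure into the Euclidean measure. Since $w_i$ is represented in $K$, writing $\overline{n} = k\,m\,e^{H}\,n'$ in the $KMAN$ picture one gets $\varphi_\lambda(nw_i) = \varphi_\lambda(w_i\overline{n}) = e^{-(\lambda+\rho)(H(\overline{n}))}\sigma(m)^{-1}\varphi(w_ik)$ with $k$ staying in a fixed compact subset of $K$ and $H(\overline{n})$ as in the $KA_0N_0$ decomposition; hence $\lvert u'(\varphi_\lambda(nw_i))\rvert\le e^{-(\re\lambda+\rho)(H(\overline{n}))}\lvert u'(\sigma(m)^{-1}\varphi(w_ik))\rvert$. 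By moderate growth of $\sigma$ together with continuity of $u'$, the last factor is bounded by a constant times a seminorm of $\varphi$ times some power of $e^{8\rho(H(\overline{n}))}$ (the norm of $m$ being controlled by such a power). By Lemma~\ref{lem:property of e^rho}(1) the function $e^{8\rho(H(\overline{n}))}$ is a polynomial which is $\ge 1$ and, by Remark~\ref{rem:asymptotic at infinity of e^rho}, tends to $\infty$ at infinity; so the whole integrand is dominated by a polynomial in $\overline{n}$ times $e^{-(\re\lambda+\rho)(H(\overline{n}))}$, and expanding $\lambda+\rho$ along the fundamental weights shows that for $\langle\alpha,\re\lambda\rangle$ large for all $\alpha\in\Sigma^+\setminus\Sigma^+_M$ this exponential beats any polynomial on $\overline{N}_i$, whence absolute convergence.

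\emph{For (2).} As the estimate in (1) is uniform for $\varphi$ in a bounded family and for $\lambda$ in a compact subset of the region of convergence, and the continued functional will be continuous in $\varphi$, I would reduce to $\varphi$ in a finite sum of $K$-types; combining this with induction in stages and Casselman's subrepresentation theorem reduces further to the case $P = P_0$ and $\sigma$ finite-dimensional (alternatively one invokes the Casselman--Wallach analytic continuation of Jacquet-type integrals directly). In this situation $\overline{n}\mapsto u'(\varphi_\lambda(w_i\overline{n}))$ equals $e^{-(\lambda+\rho)(H(\overline{n}))}$ times a rational function on $\overline{N}$ whose denominator is a power of the polynomial $e^{8\rho(H(\overline{n}))}$; writing $e^{-(\lambda+\rho)(H(\overline{n}))} = \prod_{\alpha}q_\alpha(\overline{n})^{-c_\alpha(\lambda)}$ with each $q_\alpha$ a polynomial (the analogue of Lemma~\ref{lem:property of e^rho}(1) for the fundamental weight attached to $\alpha\in\Pi\setminus\Sigma_M$) and $c_\alpha$ affine-linear in $\lambda$, and folding in the polynomial $f$ and the character $\eta(\overline{n})^{-1}$, the quantity $I_{f,u'}(\varphi_\lambda)$ becomes an integral over $\overline{N}_i\cong\R^l$ of a product of complex powers of polynomials against a fixed rational function and $\eta(\overline{n})^{-1}$. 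Meromorphic continuation in the exponents, hence in $\lambda$, then follows from the standard theory of such integrals (Bernstein's theorem on $\lvert P\rvert^{s}$, or an iterated reduction to rank-one Jacquet integrals evaluated by beta/Gamma integrals), which moreover confines the poles to finitely many translates of the hyperplanes $c_\alpha(\lambda)\in\Z_{\le 0}$; rewriting these via the bilinear form gives hyperplanes of the shape $2\langle\alpha,\lambda+\nu\rangle/\lvert\alpha\rvert^2\in\Z_{\le 0}$ with $\nu$ an exponent of $\sigma$.

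\emph{For (3) and (4).} When $\supp\eta = \Pi$ and $i = r$ is the open Bruhat cell, I would prove holomorphy of $I_{f,u'}(\varphi_\lambda)$ for all $\lambda$ directly, by repeatedly integrating by parts against the vector fields $R'_r(X)$ with $X$ in simple root spaces: non-degeneracy of $\eta$ means each such direction carries a nonvanishing oscillatory factor, so each integration by parts lowers the effective polynomial order of the amplitude without producing boundary terms, and after enough steps one bounds the integral, locally uniformly in $\lambda\in\mathfrak{a}^*$, by an absolutely convergent integral; holomorphy then follows as in (1). For (4) I would return to the pole localization of (2), now with $f = 1$ and $u'\in\Wh_{w_i^{-1}\eta}$: in the rank-one reduction a root direction $\gamma$ of the integration domain with $w_i\gamma\in\Sigma^-_\eta$ again contributes an entire factor by the oscillatory estimate just described, while the remaining directions are exactly those $\gamma\in\Sigma^+\setminus w_i^{-1}(\Sigma^+\cup\Sigma^-_\eta)$ (using that $w_i\gamma\in\Sigma^-$ on the domain), and each such direction is a rank-one Jacquet integral with poles precisely along $2\langle\alpha,\lambda+\nu\rangle/\lvert\alpha\rvert^2\in\Z_{\le 0}$ for exponents $\nu$ of $\sigma$ — taking $f = 1$ keeps the amplitude of degree $0$ and $u'$ a genuine Whittaker vector keeps these pole contributions minimal; the hypothesis of (4) rules them all out, so $I_{1,u'}(\varphi_\mu)$ is holomorphic at $\mu = \lambda$.

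The main obstacle is the meromorphic continuation in (2) together with the precise pole localization needed in (4): one must simultaneously handle the non-compactness of $M$ — so that the amplitude is unbounded and becomes merely rational only after reducing to $K$-finite data (or after induction in stages) — and the twist by $\eta$, keeping track of exactly which root directions do and do not produce poles. The convergence estimate (1) and the oscillatory argument for (3) are comparatively routine once the $\overline{N}$-picture and Lemma~\ref{lem:property of e^rho} are in place.
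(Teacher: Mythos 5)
Your plan is recognizable but takes a genuinely different route for the continuation and pole-localization, and the crucial step there is asserted rather than proved.

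For (1) your direct domination using Lemma~\ref{lem:property of e^rho} and Remark~\ref{rem:asymptotic at infinity of e^rho} works; the paper instead first reduces to $f=1$ by factoring $f(nw_i) = e^{C\rho(H(nw_i))} h(nw_i)$ via Lemma~\ref{lem:extension of polynomials}, giving $I_{f,u'}(\varphi_\lambda) = I_{1,u'}((\varphi h)_{\lambda - C\rho})$, and then quotes the standard $f=1$ convergence. For (3), your integration-by-parts-against-$R'_r(X)$ argument is a legitimate alternative; the paper instead dualizes the subrepresentation theorem to reduce to $P$ minimal, and then cites Jacquet's analytic continuation (and uses the explicit form of $u'$ from Lemma~\ref{lem:Whittaker vectors in non-degenerate case}). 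These are honest differences in approach and either works.

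For (2) and (4) there is a real gap. The paper's engine is the factorization $w_i = w'w''$ with $w'\in W_{M_\eta}$ and $w''\in W(M_\eta)^{-1}$, which splits the $w_i\overline{N}w_i^{-1}\cap N_0$-integral into an inner integral over $w''\overline{N_0}(w'')^{-1}\cap N_0$ — a Knapp--Stein intertwining operator $A(\sigma,\lambda)\colon I(\sigma,\lambda)\to \Ind_{P'}^G(\cdots)$, whose meromorphic continuation and pole hyperplanes are known — followed by an outer integral over $w'\overline{N_0}(w')^{-1}\cap N_0\subset M_\eta$, which is a non-degenerate Jacquet integral inside $M_\eta$ and hence entire by (3). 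The identity $(w'')^{-1}\Sigma^-\cap\Sigma^+ = \Sigma^+\setminus w_i^{-1}(\Sigma^+\cup\Sigma^-_\eta)$ is then a direct root-combinatorial computation and is exactly what localizes the poles as required in (4). In your proposal this identity is simply asserted ("the remaining directions are exactly \ldots") without any mechanism that singles out which root directions survive; your Bernstein/rank-one reduction, besides not obviously handling the simultaneous presence of the oscillatory $\eta$-factor and the complex powers, does not by itself produce the sharp set $\Sigma^+\setminus w_i^{-1}(\Sigma^+\cup\Sigma^-_\eta)$, and your side remark "$w_i\gamma\in\Sigma^-$ on the domain" is also inconsistent with the root normalization (the roots of $\mathfrak{n}_0\cap\Ad(w_i)\overline{\mathfrak{n}}$ satisfy $\gamma\in\Sigma^+$ and $w_i^{-1}\gamma\in\Sigma^-$). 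To make your argument close, you would need to supply an explicit factorization of the domain of integration that separates the $\eta$-non-degenerate directions from the rest — which is precisely the $w'w''$ decomposition the paper uses.
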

\begin{proof}
First we prove (1).
If $f = 1$ then this is a well-known result.
For a general $f$, extends $f$ to a function on $w_i\overline{N}P/P$ by $f(w_inn') = f(w_in)$ for $n\in w_i\overline{N}w_i^{-1}\cap N_0$ and $n'\in w_i\overline{N}w_i^{-1}\cap \overline{N_0}$.
Then by Lemma~\ref{lem:extension of polynomials} there exists a positive number $C$ such that $\overline{n}\mapsto e^{-C\rho(H(\overline{n}))}f(w_i\overline{n})$ extends to a function $h$ on $G/P$.
Since 
\[
	I_{f,u'}(\varphi_\lambda) = \int_{w_i\overline{N}w_i^{-1}\cap N_0}u'(\varphi(\kappa(nw))e^{-(\lambda+\rho)(H(nw_r))}f(nw_r)\eta(n)^{-1}dn,
\]
we have $I_{f,u'}(\varphi_\lambda) = I_{1,u'}((\varphi h)_{\lambda - C\rho})$.

We prove (3).
By dualizing Casselman's subrepresentation theorem, there exist a representation $\sigma_0$ of $M_0$ and $\lambda_0\in\mathfrak{a}_0^*$ such that $\sigma$ is a quotient of $\Ind_{M\cap P_0}^{M}(\sigma_0\otimes e^{\lambda_0})$.
Then we may regard $u'\in J'_{w_r^{-1}\eta}(\Ind_{M\cap P_0}^M(\sigma_0\otimes e^{\lambda_0}))$.
By the proof of Lemma~\ref{lem:Whittaker vectors in non-degenerate case}, there exist a polynomial $f_0$ on $(M\cap N_0)w_{M,0}(M\cap P_0)/(M\cap P_0)$ and $u'_0\in (\sigma_0\otimes e^{\lambda_0})'$ such that $u'$ is given by
\[
	\varphi_0\mapsto \int_{M\cap N_0}u'_0(\varphi_0(n_0w_{M,0}))f_0(n_0w_{M,0})\eta(n_0)^{-1}dn_0
\]
Let $\pi\colon \Ind_{P_0}^G(\sigma_0\otimes e^{\lambda + \lambda_0 + \rho})\to I(\sigma,\lambda)$ be the map induced from the quotient map $\Ind_{M\cap P_0}^{M}(\sigma_0\otimes e^{\lambda_0})\to \sigma$.
Take $\widetilde{\varphi}\colon K\to \sigma_0$ which satisfies $\widetilde{\varphi}(km) = \sigma_0^{-1}(m)\widetilde{\varphi}(k)\ (k\in K,\ m\in M_0)$ and $\pi(\widetilde{\varphi}_{\lambda + \lambda_0}) = \varphi_\lambda$.
Define a polynomial $\widetilde{f}\in \mathcal{P}(w_iw_{M,0}\overline{N_0}P_0/P_0)$ by 
\[
	\widetilde{f}(w_iw_{M,0}nn_0P_0/P_0) = f(w_inP/P)f_0(w_{M,0}n_0(M\cap P_0)/(M\cap P_0))
\]
for $n\in \overline{N}$ and $n_0\in M\cap \overline{N_0}$. (Notice that $w_{M,0}(M\cap \overline{N_0}) = (M\cap N_0)w_{M,0}$.)
Then we have
\begin{multline*}
	I_{f,u'}(\varphi_\lambda)\\ = \int_{w_iw_{M,0}\overline{N_0}(w_iw_{M,0})^{-1}\cap N_0}u_0'(\widetilde{\varphi}(nw_iw_{M,0}))\widetilde{f}(w_iw_{M,0}nP_0/P_0)\eta(n)^{-1}dn.
\end{multline*}
Hence, we may assume that $P$ is minimal.
By the same argument in (1), we may assume $f = 1$.
If $f = 1$ then this integral is known as a Jacquet integral and the analytic continuation is well-known~\cite{MR0271275}.

We prove (2) and (4).
By the same argument in (1), we may assume that $f = 1$.
Take $w'\in W_{M_\eta}$ and $w''\in W(M_\eta)^{-1}$ such that $w_i = w'w''$.
Then we have $w_i\overline{N}w_i^{-1}\cap N_0 = (w'\overline{N_0}(w')^{-1}\cap N_0)w'(w''\overline{N_0}(w'')^{-1}\cap N_0)(w')^{-1}$.
The condition $w'\in W_{M_\eta}$ implies that $w'(\Sigma^+\setminus\Sigma^+_\eta) = \Sigma^+\setminus\Sigma^+_\eta$.
Hence, $\supp\eta \cap w'\Sigma^+ = \supp\eta\cap w'\Sigma_\eta^+$.
This implies 
\begin{multline*}
	\supp\eta\cap w'(w''\Sigma^-\cap \Sigma^+) = \supp\eta\cap w_i\Sigma^-\cap w'\Sigma^+\\ = \supp\eta\cap w_i\Sigma^-\cap w_i(w'')^{-1}\Sigma_\eta^+\subset \supp\eta\cap w_i\Sigma^-\cap w_i\Sigma^+ = \emptyset,
\end{multline*}
i.e., $\eta$ is trivial on $w'(w''\overline{N_0}(w'')^{-1}\cap N_0)(w')^{-1}$.
Hence, we have
\[
	I_{1,u'}(\varphi)
	= \int_{w'\overline{N_0}(w')^{-1}\cap N_0}\int_{w''\overline{N_0}(w'')^{-1}\cap N_0}u'(\varphi(n_1w'n_2w''))\eta(n_1)^{-1}dn_2dn_1.
\]
Put $P' = (w''P(w'')^{-1}\cap M_\eta)N_\eta$.
By the definition of $W(M_\eta)$, we have $w''N_0(w'')^{-1}\supset N_0\cap M_\eta$, this implies that $P'$ (resp.\ $w''P(w'')^{-1}\cap M_\eta$) is a parabolic subgroup of $G$ (resp.\ $M_\eta$).
Define a $G$-module homomorphism $A(\sigma,\lambda)\colon I(\sigma,\lambda)\to \Ind_{P'}^G(w''(\sigma)\otimes e^{w''\lambda+\rho})$ by
\[
	(A(\sigma,\lambda)\varphi)(x) = \int_{w''\overline{N_0}(w'')^{-1}\cap N_0}\varphi(xnw'')dn.
\]
By a result of Knapp and Stein~\cite{MR582703}, this homomorphism has a meromorphic continuation.
We have
\[
	I_{1,u'}(\varphi) = \int_{w'\overline{N_0}(w')^{-1}\cap N_0}u'((A(\sigma,\lambda)\varphi)(nw'))\eta(n)^{-1}dn.
\]
Notice that $w'\overline{N_0}(w')^{-1}\cap N_0\subset M_\eta$.
Hence we get (2) by (3).

To prove (4), we calculate $(w'')^{-1}\Sigma^-\cap\Sigma^+$.
Since $(w'')^{-1}\in W(M_\eta)$, we have $(w'')^{-1}\Sigma_\eta^-\subset \Sigma^-$.
Hence $(w'')^{-1}\Sigma_\eta^-\cap \Sigma^+ = \emptyset$.
Then
\begin{align*}
(w'')^{-1}\Sigma^-\cap \Sigma^+ & = (w'')^{-1}(\Sigma^-\setminus\Sigma_\eta^-)\cap \Sigma^+\\
& = (w'')^{-1}(w')^{-1}(\Sigma^-\setminus\Sigma_\eta^-)\cap\Sigma^+\\
& = w_i^{-1}(\Sigma^-\setminus\Sigma_\eta^-)\cap\Sigma^+\\
& = \Sigma^+\setminus w_i^{-1}(\Sigma^+\cup\Sigma_\eta^-).
\end{align*}
Hence we have $2\langle\alpha,\lambda+\nu\rangle/\lvert\alpha\rvert^2\not\in\Z_{\ge 0}$ for all $\alpha\in(w'')^{-1}\Sigma^-\cap\Sigma^+$.
By an argument of Knapp and Stein~\cite{MR582703}, $A(\sigma,\mu)$ is holomorphic at $\mu = \lambda$ if $\lambda$ satisfies the conditions of (4).
Hence we get (4).
\end{proof}

In the rest of this section, we denote the Bruhat filtration $I_i\subset J'(I(\sigma,\lambda))$ by $I_i(\lambda)$.
The following result is a corollary of Proposition~\ref{prop:convergence and continuation}.
\begin{lem}\label{lem:meromorphic extension}
Let $x\in I_i'$.
Then there exists a distribution $x_t\in I_i(\lambda + t\rho)$ with meromorphic parameter $t$ such that $x_t|_{U_i}$ is a distribution with holomorphic parameter $t$ and $(x|_{U_i})|_{t = 0} = x$.

Moreover, if $Tx = 0$ for $T\in U(\mathfrak{g})$, then $Tx_t = 0$.
\end{lem}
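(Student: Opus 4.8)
The plan is to reduce $x$ to the generators of $I_i'$ and to continue each of them analytically by means of Proposition~\ref{prop:convergence and continuation}. Write $x=\sum_{k}T_k\bigl(((f_k\eta_i^{-1})\otimes u_k')\delta_i\bigr)$ with $T_k\in U(\Ad(w_i)\overline{\mathfrak{n}}\cap\overline{\mathfrak{n}})$, $f_k\in\mathcal{P}(O_i)$, and $u_k'\in J'_{w_i^{-1}\eta}(\sigma\otimes e^{\lambda+\rho})$; note that the last space is unchanged if $\lambda$ is replaced by $\lambda+t\rho$. Since the $\mathfrak{g}$-action on distributions is local and the action of $U(\Ad(w_i)\overline{\mathfrak{n}})$ on $U_i=w_i\overline{N}P/P$, read in the chart $w_i\overline{N}\cong U_i$, is independent of the inducing parameter (left translation by $\exp(\Ad(w_i)\overline{\mathfrak{n}})$ stays inside $w_i\overline{N}$), it suffices to build the family for a single generator $((f\eta_i^{-1})\otimes u')\delta_i$ and afterwards apply the $T_k$ and sum.

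First I would fix such a generator. For $\re t$ large, so that $\langle\alpha,\re\lambda\rangle+(\re t)\langle\alpha,\rho\rangle$ is large for every $\alpha\in\Sigma^+\setminus\Sigma_M^+$, Proposition~\ref{prop:convergence and continuation}(1) shows that $\varphi\mapsto I_{f,u'}(\varphi_{\lambda+t\rho})$ is an absolutely convergent integral; it therefore defines a continuous functional $y_t$ on $I(\sigma,\lambda+t\rho)$ supported in the closure $\overline{O_i}$ of $\{nw_iP/P\mid n\in w_i\overline{N}w_i^{-1}\cap N_0\}$, whose restriction to $U_i$ equals $((f\eta_i^{-1})\otimes u')\delta_i$, a formula in which $t$ does not appear. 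The proof of Proposition~\ref{prop:convergence and continuation} realizes the meromorphic continuation of $I_{f,u'}(\varphi_\mu)$ by composing a fixed continuous functional with the Knapp--Stein intertwining operators $A(\sigma,\mu)$, which form a meromorphic family of continuous maps; hence $y_t$ extends to a meromorphic family of distributions on $G/P$, and $y_t|_{U_i}$ stays holomorphic — in fact equal to the constant family $((f\eta_i^{-1})\otimes u')\delta_i$ — because the pairing on $U_i$ is parameter-free. Setting $x_t=\sum_k T_k y_{k,t}$ then yields a meromorphic family with $x_t|_{U_i}=x$ for all $t$ and $\supp x_t\subset\overline{O_i}\subset\bigcup_{j\le i}N_0w_jP/P$.

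It remains to see $x_t\in I_i(\lambda+t\rho)$, i.e. $x_t\in J'_\eta(I(\sigma,\lambda+t\rho))$; here we are in the relevant case $I_i/I_{i-1}\ne0$, so $\eta=\eta'$ by Lemma~\ref{lem:vanishing lemma}(2). For $\re t$ large, Lemma~\ref{lem:acts nilp} gives, for every $X\in\mathfrak{n}_0$, an integer $k$ with $(X-\eta(X))^k y_t=0$, and the proof of that lemma bounds $k$ only in terms of the $\mathcal{P}(O_i)$- and $U(\Ad(w_i)^{-1}\mathfrak{n}_0\cap\mathfrak{n}_0)u'$-data, hence independently of $t$. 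Membership in $J'_\eta$ and the support condition are closed conditions, stable under passing to the Laurent coefficients of a meromorphic family; so they persist for every $t$, and therefore $x_t\in I_i(\lambda+t\rho)$ for all $t$, with $(x_t|_{U_i})|_{t=0}=x$ by the previous paragraph.

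Finally, suppose $Tx=0$. Then $Tx_t$ is again a meromorphic family supported in $\overline{O_i}$; on $U_i$, where both $x_t|_{U_i}\equiv x$ and — for the operators that matter in the applications, namely powers of $X-\eta(X)$ with $X\in\mathfrak{n}_0$ — the action of $T$ is parameter-free, one gets $(Tx_t)|_{U_i}=Tx=0$, so $Tx_t$ is supported in $\overline{O_i}\setminus O_i\subset\bigcup_{j<i}N_0w_jP/P$; combining this with the uniform nilpotency of Lemma~\ref{lem:acts nilp} and induction on the number of strata forces $Tx_t=0$ identically. The step I expect to be the real work is the second paragraph: upgrading the pointwise-in-$\varphi$ continuation of Proposition~\ref{prop:convergence and continuation} to a bona fide meromorphic family of distributions on $G/P$ with poles only along the ``lower'' strata, and checking that the support and nilpotency properties genuinely survive at $t=0$.
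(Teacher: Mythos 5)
The paper itself gives no argument for this lemma beyond the remark that it is ``a corollary of Proposition~\ref{prop:convergence and continuation},'' so there is no written proof to compare against; I will evaluate your argument on its own terms. Your first two paragraphs capture the right idea: decompose $x$ into the generating $\delta_i(T_k,f_k\eta_i^{-1},u_k')$ pieces, realize each piece for $\re t\gg0$ by the absolutely convergent integral of Proposition~\ref{prop:convergence and continuation}(1), observe that the $U_i$-restriction (in the $w_i\overline N$-chart, where the $U(\Ad(w_i)\overline{\mathfrak{n}}\cap\overline{\mathfrak{n}})$-action is genuinely $t$-independent) is the constant family, continue meromorphically via the Knapp--Stein operators, and then sum over $k$. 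You also correctly flag that promoting the pointwise-in-$\varphi$ continuation to a meromorphic family of distributions is the real analytic content. All of that is sound.

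The genuine gap is in the step from ``an identity on $U_i$'' to ``an identity on all of $G/P$,'' and it affects both your verification that $x_t\in J'_\eta$ and, more seriously, the ``Moreover'' part. Lemma~\ref{lem:acts nilp} is a statement about elements of $I_i'\subset\mathcal{D}'(U_i,\mathcal{L})$; applying it to $y_t|_{U_i}$ yields $\bigl((X-\eta(X))^ky_t\bigr)|_{U_i}=0$, not $(X-\eta(X))^ky_t=0$ in $\mathcal{D}'(G/P,\mathcal{L})$. The missing ingredient is that for $\re t\gg0$ the defining integral converges absolutely, so the integration by parts underlying Lemmas~\ref{lem:no delta part}--\ref{lem:caluculation of Xdelta(1,f,u)} is valid against \emph{all} test sections on $G/P$ with no boundary contributions; that is what makes the identities of Section~\ref{sec:vanishing theorem} hold globally, and it is exactly this that you should then propagate by analytic continuation. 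Your closing argument for ``$Tx_t=0$'' does not supply this: from $(Tx_t)|_{U_i}=0$ you only get $\supp Tx_t\subset\bigcup_{j<i}N_0w_jP/P$, and nothing about ``uniform nilpotency'' or ``induction on the number of strata'' forces a distribution supported on lower strata to vanish --- $I_{i-1}(\lambda+t\rho)$ is generally far from zero. The correct route is the same absolute-convergence/integration-by-parts observation: for $\re t\gg0$, compute $\langle Tx_t,\varphi\rangle$ directly from the integral using $\check T$, note it reproduces the same formula that yields $Tx=0$ now against arbitrary $\varphi\in C^\infty(G/P,\mathcal{L})$, so $Tx_t=0$ outright on $G/P$, and then continue.

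Two smaller points worth recording. First, the ``Moreover'' clause as printed (for $T\in U(\mathfrak{g})$) cannot be literally true: take $T=H-c$ with $H\in\mathfrak{a}$ and $c$ the $\mathfrak{a}$-eigenvalue of $x$ at $t=0$; since the eigenvalue shifts linearly in $t$ when $\lambda\mapsto\lambda+t\rho$, one gets $Tx_t=t\rho(\cdot)x_t\ne0$. Your implicit restriction to $T$ built from $\mathfrak{n}_0$ --- for which the $\delta_i$-formulas involve only $\Ad(w_i)^{-1}\mathfrak{n}_0\subset\mathfrak{p}$, which has trivial $\mathfrak{a}$-component, so the action is honestly $t$-independent --- is the correct reading and is all the paper ever uses. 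Second, you invoke $I_i/I_{i-1}\ne0$ (to get $\eta=\eta'$) without its being a hypothesis; this is harmless in practice, since otherwise the target space $I_i(\lambda+t\rho)$ is zero and the integral of Proposition~\ref{prop:convergence and continuation} would not converge anyway, but it should be stated.
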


Let $C^\infty(K,\sigma)$ be the space of $\sigma$-valued $C^\infty$-functions.
For $X\in \mathfrak{g}$ and $\lambda\in\mathfrak{a}^*$, we define an operator $D(X,\lambda)$ on $C^\infty(K,\sigma)$ as follows.
For $\varphi\in C^\infty(K,\sigma)$, 
\begin{multline*}
	(D(X,\lambda)\varphi)(k)\\ = \frac{d}{dt}\left.(\sigma\otimes e^{\lambda+\rho})(\exp (-H(\exp(-tX)k)))\varphi(\kappa(\exp(-tX)k))\right|_{t = 0}.\newsym{$D(X,\lambda)$}
\end{multline*}
If we regard $I(\sigma,\lambda)$ as a subspace of $C^\infty(K,\sigma)$, $(X\varphi)(k) = (D(X,\lambda)\varphi)(k)$ for $\varphi\in I(\sigma,\lambda)$.
It is easy to see that for some $D_1$ and $D_2$ we have $D(X,\lambda + t\rho) = D_1 + tD_2$ for all $t\in\C$.

\begin{lem}\label{lem:holomorphic extension}
Assume that the conditions of Lemma~\ref{lem:vanishing lemma} (1)--(3) hold.
For $x\in I'_i$ there exists a distribution $x_t\in I_i(\lambda + t\rho)$ with holomorphic parameter $t$ defined near $t = 0$ such that $x_0 = x$ on $U_i$.
\end{lem}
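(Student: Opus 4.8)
The plan is to start from the meromorphic family produced by Lemma~\ref{lem:meromorphic extension} and to clear its pole at $t=0$ by an induction on $i$, peeling off contributions supported on the lower Bruhat cells. Write $Z_j=\bigcup_{k\le j}N_0w_kP/P$; then $Z_j$ is closed, $Z_{i-1}\cap U_i=\emptyset$, and $Z_i\setminus U_i=Z_{i-1}$, because $U_i$ contains $N_0w_iP/P$ and misses every $N_0w_kP/P$ with $k<i$. We argue by induction on $i$, the case $i=1$ being immediate since $I_0=0$ forces the leading coefficient of any pole to vanish.

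Fix $x\in I'_i$; if condition~(3) of Lemma~\ref{lem:vanishing lemma} fails then $I'_i=0$ and there is nothing to prove, so assume all three conditions hold. By condition~(2) we have $\eta=\eta'$, so Lemma~\ref{lem:acts nilp} furnishes a single $k$ with $(X-\eta(X))^kx=0$ for every $X\in\mathfrak n_0$. By Lemma~\ref{lem:meromorphic extension} there is $x_t\in I_i(\lambda+t\rho)$, meromorphic near $t=0$, with $x_t|_{U_i}$ holomorphic, $(x_t|_{U_i})|_{t=0}=x$, and, applying the last assertion of that lemma to the fixed element $(X-\eta(X))^k\in U(\mathfrak g)$, also $(X-\eta(X))^kx_t=0$ for all $t$ and all $X\in\mathfrak n_0$. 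Now induct on the order $N$ of the pole of $x_t$ at $t=0$; if $N\le 0$ we are done.

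Suppose $N\ge 1$ and let $y$ be the leading Laurent coefficient, so that $t^Nx_t$ is holomorphic near $t=0$ with value $y$. Since $x_t|_{U_i}$ is holomorphic, $y|_{U_i}=0$, and since $\supp(t^Nx_t)\subset Z_i$ for all $t$ this containment passes to the limit, so $\supp y\subset Z_i\setminus U_i=Z_{i-1}$. For any $X\in\mathfrak g$ the action on $I(\sigma,\lambda+t\rho)$, hence the dual action on distributions, is affine in $t$; letting $t\to 0$ in $(X-\eta(X))^k(t^Nx_t)=0$ gives $(X-\eta(X))^ky=0$ in $J'_\eta(I(\sigma,\lambda))$ for all $X\in\mathfrak n_0$. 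Hence $y\in I_{i-1}(\lambda)$. Its restriction to $U_{i-1}$ lies in $I'_{i-1}$ by Lemma~\ref{lem:succ quot is sub of I'}; if the conditions of Lemma~\ref{lem:vanishing lemma} hold for $i-1$, the lemma for $i-1$ provides a holomorphic family $z_t\in I_{i-1}(\lambda+t\rho)$ with $z_0|_{U_{i-1}}=y|_{U_{i-1}}$, and otherwise $I_{i-1}/I_{i-2}=0$, so $y\in I_{i-2}(\lambda)$ and we put $z_t=0$. In either case $y-z_0\in I_{i-2}(\lambda)$, and repeating this step for the cells $i-2,i-3,\dots,1$ (using the lemma for the smaller indices, or $I_j/I_{j-1}=0$ when its hypotheses fail there) produces a holomorphic family $\tilde y_t\in I_{i-1}(\lambda+t\rho)$ with $\tilde y_0=y$. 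Replacing $x_t$ by $x_t-t^{-N}\tilde y_t\in I_i(\lambda+t\rho)$ leaves $x_t|_{U_i}$ unchanged, because $\tilde y_t$ is supported in $Z_{i-1}$, which is disjoint from $U_i$, and lowers the pole order to at most $N-1$; the induction on $N$ then gives a holomorphic family, which is the assertion.

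I expect the work to be concentrated in the second and third paragraphs. One needs the nilpotency degree of $\mathfrak n_0$ on $x$ to be uniform in $X$, so that a single element of $U(\mathfrak g)$ annihilates $x$ and the last assertion of Lemma~\ref{lem:meromorphic extension} transports this to $x_t$; one needs the support identity $Z_i\setminus U_i=Z_{i-1}$ together with the affine dependence of the $\mathfrak g$-action on $t$ to conclude that the leading Laurent coefficient lies in $I_{i-1}(\lambda)$; and one uses that conditions~(1)--(3) of Lemma~\ref{lem:vanishing lemma} do not involve $\lambda$, so that the same $w_1,\dots,w_r$ and the same vanishing pattern govern $I_i(\lambda+t\rho)$ for all $t$, making the induction on $i$ legitimate. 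With these in hand, the two nested inductions---on $i$ and on the pole order---are routine bookkeeping.
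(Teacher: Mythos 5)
Your proof is correct and follows essentially the same strategy as the paper's: start from the meromorphic family of Lemma~\ref{lem:meromorphic extension}, run an outer induction on $i$ and an inner induction on the pole order, show the leading Laurent coefficient lies in $I_{i-1}(\lambda)$ using that $y|_{U_i}=0$ together with the nilpotency relation, build a holomorphic family over the lower cells by iterating the inductive hypothesis, and subtract it to lower the pole order. The paper phrases the nilpotency input via Lemma~\ref{lem:acts nilp} and the affine decomposition $D(E,\lambda+t\rho)=E_0+tE_1$, while you invoke the ``Moreover'' clause of Lemma~\ref{lem:meromorphic extension} directly, but these amount to the same thing.
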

\begin{proof}
First we remark that $\eta = \eta'$ in Lemma~\ref{lem:acts nilp} by the condition (2) of Lemma~\ref{lem:vanishing lemma}.

We prove by induction on $i$.
If $i = 1$, then $x\in I'_1$.
Take a distribution $x_t\in I_1(\lambda + t\rho)$ as in Lemma~\ref{lem:meromorphic extension}.
Then $x_t|_{U_1}$ is holomorphic with respect to the parameter $t$.
Since $\supp x_t\subset X_1$, $x_t|_{(G/P)\setminus X_1}$ is holomorphic with respect to the parameter $t$.
Hence $x_t$ is holomorphic with respect to the parameter $t$ on $U_1\cup ((G/P)\setminus X_1) = G/P$.
We have the lemma.

Assume that $i > 1$.
First we prove the following claim: for $y\in I_{i - 1}$, there exists a distribution $y_t\in I_{i - 1}(\lambda + t\rho)$ with holomorphic parameter $t$ defined near $t = 0$ such that $y_0 = y$.
Using inductive hypothesis to $y|_{U_{i - 1}}$, there exists a distribution $y_t^{(i - 1)}\in I_{i - 1}(\lambda + t\rho)$ with holomorphic parameter $t$ defined near $t = 0$ such that $y_0^{(i - 1)} = y$ on $U_{i - 1}$.
Since the supports of both sides are contained in $\bigcup_{j\le i - 1}N_0w_jP/P$, we have $y_0^{(i - 1)} = y$ on $\bigcup_{j\ge i - 1}N_0w_jP/P$.
Using inductive hypothesis to $(y - y_0^{(i - 1)})|_{U_{i - 2}}$, there exists a distribution $y_t^{(i - 2)}\in I_{i - 2}(\lambda + t\rho)$ with holomorphic parameter $t$ defined near $t = 0$ such that $y_0^{(i - 2)} = y - y_0^{(i - 1)}$ on $U_{i - 2}$.
Since the supports of both sides are contained in $\bigcup_{j\le i - 2}N_0w_jP/P$, we have $y_0^{(i - 1)} + y_0^{(i - 2)} = y$ on $\bigcup_{j\ge i - 2}N_0w_jP/P$.
Iterating this argument, for $j = 1,\dots,i - 1$ there exists a distribution $y_t^{(j)}\in I_j(\lambda + t\rho)$ with holomorphic parameter $t$ defined near $t = 0$ such that $y = y_0^{(1)} + \dots + y_0^{(i - 1)}$.
Hence we get the claim.

Now we prove the lemma.
By Lemma~\ref{lem:meromorphic extension}, there exists a distribution $x'_t\in I_i(\lambda + t\rho)$ with meromorphic parameter $t$ such that $x'_t|_{U_i}$ is holomorphic and $(x'_t|_{U_i})|_{t = 0} = x$.
Let $x'_t = \sum_{s = -p}^\infty x^{(s)}t^s$ be the Laurent series of $x'_t$.
Now we prove the following claim: if there exists a distribution $x'_t = \sum_{s = -p}^\infty x^{(s)}t^s\in I_i(\lambda + t\rho)$ with meromorphic parameter $t$ defined near $t = 0$ such that $x'_t|_{U_i}$ is holomorphic and $(x'_t|_{U_i})|_{t = 0} = x$, then there exists $x_t\in I(\lambda)$ with holomorphic parameter $t$ defined near $t = 0$ such that $x_0|_{U_i} = x$.
We prove the claim by induction on $p$.

If $p = 0$, we have nothing to prove.
Assume $p > 0$.
Take $E\in \mathfrak{n}_0$ and define differential operators $E_0$ and $E_1$ by $D(E,\lambda + t\rho) = E_0 + tE_1$.
By Lemma~\ref{lem:acts nilp}, there exists a positive integer $k$ such that $(E_0 + tE_1 - \eta(E))^k x'_t = 0$.
Hence, we have $(E_0 - \eta(E))^kx^{(-p)} = 0$.
Since $x_t|_{U_i}$ is holomorphic, we have $\supp x^{(-p)}\subset \bigcup_{j < i}N_0w_jP/P$.
Hence we have $x^{(-p)}\in I_{i - 1}$.
By the claim stated in the third paragraph of this proof, there exists $x''_t\in I_{i - 1}(\lambda + t\rho)$ with holomorphic parameter $t$ defined near $t = 0$ such that $x''_0 = x^{(-p)}$.
Using inductive hypothesis for $x_t' - t^{-p}x''_t$, we get the claim and the claim implies the lemma.
\end{proof}

\begin{thm}\label{thm:succ quot is I'_i}
\begin{enumerate}
\item The module $I_i/I_{i - 1}$ is non-zero if and only if the conditions of Lemma~\ref{lem:vanishing lemma} (1)--(3) hold.
\item If $I_i/I_{i - 1} \ne 0$ then we have $I_i/I_{i - 1} \simeq I'_i$.
\end{enumerate}
\end{thm}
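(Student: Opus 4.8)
The plan is to deduce Theorem~\ref{thm:succ quot is I'_i} by assembling the results already proved in this section. Recall that $\Res_i$ induces an injective $\mathfrak{g}$-module map $I_i/I_{i-1}\hookrightarrow \mathcal{D}(U_i,\mathcal{L})$ (injectivity because $U_i\cap O_j=\emptyset$ for $j<i$) and that $\im\Res_i\subset I'_i$ by Lemma~\ref{lem:succ quot is sub of I'}. So it suffices to show that, \emph{under the conditions of Lemma~\ref{lem:vanishing lemma}~(1)--(3)}, the resulting map $\Res_i\colon I_i/I_{i-1}\to I'_i$ is surjective and $I'_i\neq 0$. Granting this: $\Res_i$ becomes a $\mathfrak{g}$-module isomorphism, which is (2); the conditions then force $I_i/I_{i-1}\simeq I'_i\neq 0$; and conversely, if $I_i/I_{i-1}\neq 0$ the conditions hold by Lemma~\ref{lem:vanishing lemma}. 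Together these give (1).

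For surjectivity I would argue as follows. Let $x\in I'_i$. The hypotheses of Lemma~\ref{lem:holomorphic extension} are exactly the conditions of Lemma~\ref{lem:vanishing lemma}~(1)--(3), so that lemma supplies a distribution $x_t\in I_i(\lambda+t\rho)$ with holomorphic parameter $t$ near $t=0$ such that $x_0=x$ on $U_i$. Specializing at $t=0$ gives $x_0\in I_i(\lambda)=I_i$, and by construction $\Res_i(x_0+I_{i-1})=x_0|_{U_i}=x$. Hence $\Res_i$ is onto $I'_i$, and combined with the injectivity noted above it is an isomorphism $I_i/I_{i-1}\simeq I'_i$.

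It remains to see $I'_i\neq 0$ under the same conditions: by condition~(3) choose $0\neq u'\in J'_{w_i^{-1}\eta}(\sigma\otimes e^{\lambda+\rho})$; then $(\eta_i^{-1}\otimes u')\delta_i=\delta_i(1,\eta_i^{-1},u')$ is an element of $I'_i$ (take $T=1$, $f=1$ in the definition of $I'_i$) which is nonzero by the injectivity statement in Lemma~\ref{lem:fundamental properties of delta_i}~(3). This completes (2) and the ``if'' direction of (1); the ``only if'' direction of (1) is precisely Lemma~\ref{lem:vanishing lemma}.

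The one point that needs a word of care is the specialization $t\mapsto 0$: one must know that both the support condition and the generalized $\mathfrak{n}_0$-eigenvector condition defining $J'_\eta$ persist in the holomorphic limit, so that $x_0$ really lies in the Bruhat piece $I_i$. But this is already built into the conclusion ``$x_t\in I_i(\lambda+t\rho)$'' of Lemma~\ref{lem:holomorphic extension}, so no extra argument is needed at this stage. All the genuine difficulty — the absolute convergence and the meromorphic/holomorphic continuation of the relevant Jacquet-type integrals — is handled in Proposition~\ref{prop:convergence and continuation} and Lemmas~\ref{lem:meromorphic extension} and \ref{lem:holomorphic extension}, which the present proof simply invokes.
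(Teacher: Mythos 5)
Your proof is correct and follows essentially the same route as the paper: both reduce surjectivity of $\Res_i$ onto $I'_i$ to the holomorphic-continuation statement of Lemma~\ref{lem:holomorphic extension}, then combine with the injectivity from Lemma~\ref{lem:succ quot is sub of I'} and the ``only if'' direction from Lemma~\ref{lem:vanishing lemma}. You are slightly more explicit than the paper in spelling out why $\Res_i$ is injective on $I_i/I_{i-1}$ and why $I'_i\neq 0$ under condition~(3), but these are exactly the details the paper's terse proof leaves implicit, not a different argument.
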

\begin{proof}
Assume that the conditions of Lemma~\ref{lem:vanishing lemma} (1)--(3) hold.
We prove that the restriction map $\Res_i\colon I_i\to I'_i$ is surjective.

For $x\in I'_i$, take $x_t\in I_i(\lambda + t\rho)$ as in Lemma~\ref{lem:holomorphic extension}.
Then we have $\Res_i(x_0) = (x_0)|_{U_i} = x$.
Hence $\Res_i$ is surjective.
\end{proof}

\section{Twisting functors}\label{sec:Twisting functors}
Arkhipov defined the \emph{twisting functor} for $\widetilde{w}\in\widetilde{W}$~\cite{MR2074588}.
In this section, we define a modification of the twisting functor.

Let $\mathfrak{g}^\mathfrak{h}_\alpha$ be the root space of $\alpha\in\Delta$.\newsym{$\mathfrak{g}^\mathfrak{h}_\alpha$}
Set $\mathfrak{u}_0 = \bigoplus_{\alpha\in \Delta^+}\mathfrak{g}^\mathfrak{h}_\alpha$\newsym{$\mathfrak{u}_0$}, $\overline{\mathfrak{u}_0} = \bigoplus_{\alpha\in \Delta^+}\mathfrak{g}^\mathfrak{h}_{-\alpha}$\newsym{$\overline{\mathfrak{u}_0}$} and $\mathfrak{u}_{0,{\widetilde{w}}} = \Ad(\widetilde{w})\overline{\mathfrak{u}_0}\cap \mathfrak{u}_0$.\newsym{$\mathfrak{u}_{0,{\widetilde{w}}}$}
Let $\psi$ be a character of $\mathfrak{u}_{0,{\widetilde{w}}}$.
Put $S_{\widetilde{w},\psi} = U(\mathfrak{g})\otimes_{U(\mathfrak{u}_{0,{\widetilde{w}}})}((U(\mathfrak{u}_{0,{\widetilde{w}}})^*)_{\text{$\mathfrak{h}$-finite}}\otimes_\C\psi)$.\newsym{$S_{\widetilde{w},\psi}$}
This is a right $U(\mathfrak{u}_{0,{\widetilde{w}}})$-module and left $U(\mathfrak{g})$-module.
We define a $U(\mathfrak{g})$-bimodule structure on $S_{\widetilde{w},\psi}$ in the following way.
Let $\{e_1,\dots,e_l\}$ be a basis of $\mathfrak{u}_{0,{\widetilde{w}}}$ such that each $e_i$ is a root vector and $\bigoplus_{s\le t - 1}\C e_s$ is an ideal of $\bigoplus_{s\le t}\C e_s$ for each $t = 1,2,\dots,l$.
Notice that a multiplicative set $\{(e_k - \psi(e_k))^n\mid n\in\Z_{\ge 0}\}$ satisfies the Ore condition for $k = 1,2,\dots,l$.
Then we can consider the localization of $U(\mathfrak{g})$ by $\{(e_k - \psi(e_k))^n\mid n\in\Z_{\ge 0}\}$.
We denote the resulting algebra by $U(\mathfrak{g})_{e_k - \psi(e_k)}$.
Put $S_{e_k - \psi(e_k)} = U(\mathfrak{g})_{e_k - \psi(e_k)}/U(\mathfrak{g})$.
Then $S_{e_k - \psi(e_k)}$ is a $U(\mathfrak{g})$-bimodule.\newsym{$S_{e_k - \eta(e_k)}$}

\begin{prop}\label{prop:1-dim decomposition of S_w}
As a right $U(\mathfrak{u}_{0,w})$-module and left $U(\mathfrak{g})$-module, we have $S_{\widetilde{w},\psi} \simeq S_{e_1 - \psi(e_1)}\otimes_{U(\mathfrak{g})}S_{e_2 - \psi(e_2)}\otimes_{U(\mathfrak{g})}\dots\otimes_{U(\mathfrak{g})}S_{e_l - \psi(e_l)}$.
Moreover, the $U(\mathfrak{g})$-bimodule structure induced from this isomorphism is independent of a choice of $e_i$.
\end{prop}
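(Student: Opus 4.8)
The plan is to prove both assertions simultaneously by induction on $l = \dim\mathfrak u_{0,\widetilde w}$, using the flag of ideals $0 = \mathfrak n_0\subset\mathfrak n_1\subset\cdots\subset\mathfrak n_l = \mathfrak u_{0,\widetilde w}$ with $\mathfrak n_k = \bigoplus_{s\le k}\C e_s$ that is built into the choice of basis. Set $\mathfrak n = \mathfrak u_{0,\widetilde w}$ and $c_k = \psi(e_k)$. The case $l=0$ is trivial, and $l=1$ is the heart of the matter in miniature: for $\mathfrak n = \C e$ the $\mathfrak h$-finite dual $(\C[e]^*)_{\text{$\mathfrak h$-finite}}$ has the graded-dual basis $\delta_0,\delta_1,\dots$ of $1,e,e^2,\dots$, and in $(\C[e]^*)_{\text{$\mathfrak h$-finite}}\otimes_\C\psi$ one computes $\delta_n\cdot e = \delta_{n-1} + c\,\delta_n$, which is precisely the (left and right) $\C[e]$-action on $\C[e]_{e-c}/\C[e]$ with basis $(e-c)^{-1},(e-c)^{-2},\dots$ under $\delta_n\leftrightarrow (e-c)^{-(n+1)}$. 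Hence $(\C[e]^*)_{\text{$\mathfrak h$-finite}}\otimes_\C\psi\cong \C[e]_{e-c}/\C[e]$ as $\C[e]$-bimodules; since $U(\mathfrak g)$ is free, hence flat, over $U(\C e)$ by PBW and the Ore set generated by $e-c$ is two-sided, tensoring over $U(\C e)$ gives $S_{\widetilde w,\psi} = U(\mathfrak g)\otimes_{U(\C e)}((\C[e]^*)_{\text{$\mathfrak h$-finite}}\otimes\psi)\cong U(\mathfrak g)_{e-c}/U(\mathfrak g) = S_{e-\psi(e)}$, and the right-hand side is already a $U(\mathfrak g)$-bimodule, which also settles independence in this case.

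For the inductive step I would use that $\mathfrak n_{l-1}$ is an ideal of $\mathfrak n$, so $\ad(e_l)$ preserves $\mathfrak n_{l-1}$ and PBW gives $U(\mathfrak n) = U(\mathfrak n_{l-1})\otimes_\C\C[e_l]$. Dualizing and taking $\mathfrak h$-finite parts---legitimate because every $\mathfrak h$-weight occurring lies in the pointed cone spanned by the finitely many roots of $\mathfrak n$---yields $(U(\mathfrak n)^*)_{\text{$\mathfrak h$-finite}}\cong (U(\mathfrak n_{l-1})^*)_{\text{$\mathfrak h$-finite}}\otimes_\C(\C[e_l]^*)_{\text{$\mathfrak h$-finite}}$, and tracing the right $U(\mathfrak n)$-action through this identification (via $e_l u' = u' e_l + [e_l,u']$ with $[e_l,u']\in U(\mathfrak n_{l-1})$) shows that $\mathfrak n_{l-1}$ acts only on the first factor, whereas $e_l$ acts by a coadjoint-type action on the first factor plus its shift action on the second, the $\psi$-twist contributing only the scalars $c_k$. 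Combining this with the rank-one identification in the $e_l$-slot and the cancellation $M\otimes_{U(\mathfrak g)}(U(\mathfrak g)\otimes_{U(\C e_l)}N) = M\otimes_{U(\C e_l)}N$, one reduces the desired factorization $S_{\widetilde w,\psi}\cong (U(\mathfrak g)\otimes_{U(\mathfrak n_{l-1})}D')\otimes_{U(\mathfrak g)}S_{e_l-\psi(e_l)}$, where $D' = (U(\mathfrak n_{l-1})^*)_{\text{$\mathfrak h$-finite}}\otimes\psi|_{\mathfrak n_{l-1}}$, to the compatibility of the two computations of the $e_l$-action, with flatness of $U(\mathfrak g)$ over $U(\mathfrak n_{l-1})$ giving bijectivity. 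The inductive hypothesis then identifies $U(\mathfrak g)\otimes_{U(\mathfrak n_{l-1})}D'$ with $S_{e_1-\psi(e_1)}\otimes_{U(\mathfrak g)}\cdots\otimes_{U(\mathfrak g)}S_{e_{l-1}-\psi(e_{l-1})}$, proving the isomorphism of the proposition at the level of left $U(\mathfrak g)$- and right $U(\mathfrak n)$-modules.

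The hard part will be independence of the choice of $e_i$: the transported bimodule structure is determined once one knows the right $U(\mathfrak g)$-action extending the intrinsic right $U(\mathfrak n)$-action, and this extension is what could a priori depend on the flag. I would pin it down by proving, via the same ideal-filtration induction (the rank-one case being the coincidence of the left and right Ore localizations $U(\mathfrak g)_{e-c}/U(\mathfrak g)$), a left-right symmetric description $U(\mathfrak g)\otimes_{U(\mathfrak n)}((U(\mathfrak n)^*)_{\text{$\mathfrak h$-finite}}\otimes\psi)\cong ((U(\mathfrak n)^*)_{\text{$\mathfrak h$-finite}}\otimes\psi)\otimes_{U(\mathfrak n)}U(\mathfrak g)$ of $U(\mathfrak g)$-bimodules: the left action is manifest and intrinsic from the first presentation, the right action from the second, so the bimodule structure is canonical, and the isomorphism constructed above---which already matches the left $U(\mathfrak g)$- and right $U(\mathfrak n)$-actions---is forced to match this canonical right $U(\mathfrak g)$-action too. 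Alternatively, and more in keeping with the computational style here, one reduces to comparing two ordered bases differing by an elementary move---rescaling some $e_k$, which changes nothing since $S_{\lambda e-\psi(\lambda e)} = S_{e-\psi(e)}$, or transposing two adjacent basis vectors permitted by the flag---and for the latter checks $S_{e_k-\psi(e_k)}\otimes_{U(\mathfrak g)}S_{e_{k+1}-\psi(e_{k+1})}\cong S_{e_{k+1}-\psi(e_{k+1})}\otimes_{U(\mathfrak g)}S_{e_k-\psi(e_k)}$ as $U(\mathfrak g)$-bimodules, using the localization description and $[e_k,e_{k+1}]\in\mathfrak n_{k-1}$. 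Either way the genuine content is the canonicity of the right $U(\mathfrak g)$-action; everything else is bookkeeping with PBW, flatness, and Ore localization.
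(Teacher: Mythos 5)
The paper omits the proof entirely, writing only that it is ``similar to that of Arkhipov~\cite[Theorem~2.1.6]{MR2074588}.'' Your proposal is a sound reconstruction along exactly those lines: the rank-one identification of $(\C[e]^*)_{\text{$\mathfrak{h}$-finite}}\otimes\psi$ with $\C[e]_{e-c}/\C[e]$, followed by induction on the ideal flag using PBW factorization of $U(\mathfrak{u}_{0,\widetilde{w}})$ and flatness of $U(\mathfrak{g})$ over $U(\mathfrak{u}_{0,\widetilde{w}})$, and the independence argument via either the intrinsic left/right symmetric description of the Ore localization or adjacent transpositions compatible with the flag, is precisely the structure of Arkhipov's argument (which handles the untwisted case $\psi=0$; the twist contributes only the scalar shifts $c_k$, as you note). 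The only place where you hedge is the independence step, and your two proposed routes are both viable; the second (elementary moves, using $[e_k,e_{k+1}]\in\mathfrak{n}_{k-1}$ to compare $S_{e_k-\psi(e_k)}\otimes_{U(\mathfrak{g})}S_{e_{k+1}-\psi(e_{k+1})}$ with its transpose) is closest to what one finds in Arkhipov and Andersen--Lauritzen.
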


The proof of this proposition is similar to that of Arkhipov~\cite[Thoerem~2.1.6]{MR2074588}.
We omit it.
An element of the right hand side is written as a sum of a form $(e_1 - \eta(e_1))^{-(k_1 + 1)}\otimes\dots\otimes (e_l - \eta(e_l))^{-(k_l + 1)}T$ for $T\in U(\mathfrak{g})$.
We denote this element by $(e_1 - \eta(e_1))^{-(k_1 + 1)}\dotsm (e_l - \eta(e_l))^{-(k_l + 1)}T$ for short.

Proposition~\ref{prop:1-dim decomposition of S_w} gives the $U(\mathfrak{g})$-bimodule structure of $S_{\widetilde{w},\psi}$.
For a $U(\mathfrak{g})$-module $V$, we define a $U(\mathfrak{g})$-module $T_{\widetilde{w},\psi}V$ by $T_{\widetilde{w},\psi}V = S_{\widetilde{w},\psi}\otimes_{U(\mathfrak{g})}(\widetilde{w}V)$.
(Recall that $\widetilde{w}V$ is a $\mathfrak{g}$-module twisted by $\widetilde{w}$. See Notation.)
This gives the twisting functor $T_{\widetilde{w},\psi}$.\newsym{$T_{\widetilde{w},\psi}$}
If $\psi$ is the trivial representation, $T_{\widetilde{w},\psi}$ is the twisting functor defined by Arkhipov.
We put $T_{\widetilde{w}} = T_{\widetilde{w},0}$ where $0$ is the trivial representation.

The restriction map gives a map $N_K(\mathfrak{h})/Z_K(\mathfrak{h})\to W$ and its kernel is isomorphic to $N_{M_0}(\mathfrak{t}_0)/Z_{M_0}(\mathfrak{t}_0)$ (Recall that $\mathfrak{t}_0$ is a Cartan subalgebra of $\mathfrak{m}_0$).
The last group is isomorphic to $\widetilde{W_{M_0}}$.

\begin{lem}\label{lem:good lift for W}
Let $w\in W$.
Then there exists $\iota(w)\in N_K(\mathfrak{h})$ such that $\Ad(\iota(w))|_{\mathfrak{a}_0} = w$ and $\Ad(\iota(w))(\Delta_{M_0}^+) = \Delta_{M_0}^+$.
\end{lem}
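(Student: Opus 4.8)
The plan is to first produce \emph{some} representative of $w$ inside $N_K(\mathfrak{h})$ (not merely inside $N_K(\mathfrak{a}_0)$), and then correct it by an element of $\widetilde{W_{M_0}}$ so that it becomes dominant for $\Delta_{M_0}^+$; since $\widetilde{W_{M_0}}$ lies in the kernel of the restriction map $N_K(\mathfrak{h})/Z_K(\mathfrak{h})\to W$, this correction will not disturb the action on $\mathfrak{a}_0$.

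For the first step I would choose a representative $\dot{w}\in N_K(\mathfrak{a}_0)$ of $w$. Because $\dot w\in K$, $\Ad(\dot w)$ commutes with $\theta$, hence preserves $\mathfrak{s}$ and therefore $\mathfrak{a}_0=\mathfrak{h}\cap\mathfrak{s}$; it also preserves $Z_{\mathfrak g}(\mathfrak{a}_0)$ and, being an isometry of the Killing form, preserves its orthogonal complement $\mathfrak{m}_0$ of $\mathfrak{a}_0$ in $Z_{\mathfrak g}(\mathfrak{a}_0)$. Thus $\Ad(\dot w)\mathfrak{t}_0$ is again a Cartan subalgebra of $\mathfrak{m}_0$. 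Using that all Cartan subalgebras of $\mathfrak{m}_0$ are conjugate under $M_0^{\circ}$ (equivalently, $\dot wT_0\dot w^{-1}$ and $T_0$ are conjugate maximal tori of $M_0$), pick $m\in M_0^{\circ}$ with $\Ad(m)\Ad(\dot w)\mathfrak{t}_0=\mathfrak{t}_0$, and set $n_1=m\dot w$. Then $n_1$ normalizes $\mathfrak{a}_0$ and $\mathfrak{t}_0$, hence $\mathfrak{h}$, so $n_1\in N_K(\mathfrak{h})$; and since $m\in M_0=Z_K(\mathfrak{a}_0)$ we get $\Ad(n_1)|_{\mathfrak{a}_0}=\Ad(\dot w)|_{\mathfrak{a}_0}=w$. (Incidentally this shows $N_K(\mathfrak{h})/Z_K(\mathfrak{h})\to W$ is surjective.)

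For the second step, observe that $\Ad(n_1)$ normalizes $\mathfrak{m}_0$ and $\mathfrak{t}_0$, hence permutes $\Delta_{M_0}$, so $\Ad(n_1)(\Delta_{M_0}^+)$ is a positive system of $\Delta_{M_0}$. Since the Weyl group $\widetilde{W_{M_0}}$ acts simply transitively on the positive systems of $\Delta_{M_0}$, there is $\sigma\in\widetilde{W_{M_0}}$ with $\sigma\bigl(\Ad(n_1)(\Delta_{M_0}^+)\bigr)=\Delta_{M_0}^+$. By the isomorphism $N_{M_0}(\mathfrak{t}_0)/Z_{M_0}(\mathfrak{t}_0)\simeq\widetilde{W_{M_0}}$ recalled just before the lemma, choose $n_2\in N_{M_0}(\mathfrak{t}_0)$ acting on $\mathfrak{t}_0$ as $\sigma$. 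As $n_2\in M_0$ centralizes $\mathfrak{a}_0$ and normalizes $\mathfrak{t}_0$, we have $n_2\in N_K(\mathfrak{h})$ and $\Ad(n_2)|_{\mathfrak{a}_0}=\id$. Putting $\iota(w)=n_2n_1$ then gives $\iota(w)\in N_K(\mathfrak{h})$, $\Ad(\iota(w))|_{\mathfrak{a}_0}=w$, and $\Ad(\iota(w))(\Delta_{M_0}^+)=\sigma\bigl(\Ad(n_1)(\Delta_{M_0}^+)\bigr)=\Delta_{M_0}^+$.

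The substantive point is the first step: producing a lift that already normalizes $\mathfrak{h}$, which rests on the conjugacy of Cartan subalgebras of $\mathfrak{m}_0$ under $M_0^{\circ}$ (and on $M_0$ having finitely many components, so that one may work with $M_0^{\circ}$). The correction step is then purely combinatorial — the simple transitivity of $\widetilde{W_{M_0}}$ on positive systems — and the fact that it leaves the $\mathfrak{a}_0$-action untouched is automatic because $\widetilde{W_{M_0}}$ is realized inside $M_0=Z_K(\mathfrak{a}_0)$, i.e.\ in the kernel of $N_K(\mathfrak{h})/Z_K(\mathfrak{h})\to W$.
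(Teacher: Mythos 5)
Your proof is correct and follows essentially the same two-step strategy as the paper: first adjust a representative in $N_K(\mathfrak{a}_0)$ by an element of $M_0$ (using conjugacy of Cartan subalgebras of $\mathfrak{m}_0$) so that it normalizes $\mathfrak{t}_0$ and hence $\mathfrak{h}$, then correct by an element of $N_{M_0}(\mathfrak{t}_0)$ to restore $\Delta_{M_0}^+$, noting that both corrections lie in $M_0=Z_K(\mathfrak{a}_0)$ and so do not change the action on $\mathfrak{a}_0$. The only cosmetic difference is that you multiply the correcting factors on the left while the paper multiplies them on the right.
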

\begin{proof}
Since $W\simeq N_K(\mathfrak{a}_0)/Z_K(\mathfrak{a}_{0})$, there exists $k\in N_K(\mathfrak{a}_{0})$ such that $\Ad(k)|_{\mathfrak{a}_0} = w$.
Then $k$ normalizes $M_0$.
Hence, there exists $m\in M_0$ such that $km$ normalizes $T_0$.
This implies $km\in N_K(A_0T_0)$.
Take $w'\in N_{M_0}(\mathfrak{t}_0)$ such that $\Ad(kmw')(\Delta^+_{M_0}) = \Delta^+_{M_0}$ and put $\iota(w) = kmw'$.
Then $\iota(w)$ satisfies the conditions of the lemma.
\end{proof}

The map $\iota$ gives an injective map $W\to N_K(\mathfrak{h})/Z_K(\mathfrak{h})$.
Since the group $N_K(\mathfrak{h})/Z_K(\mathfrak{h})$ can be regard as a subgroup of $\widetilde{W}$, we can regard $W$ as a subgroup of $\widetilde{W}$.
Hence, we can define the twisting functor $T_{w,\psi}$ for $w\in W$ and the character $\psi$ of $\Ad(w)\overline{\mathfrak{n}_0}\cap \mathfrak{n}_0$.
For a simplicity, we write $w$ instead of $\iota(w)$. (We regard $W$ as a subgroup of $\widetilde{W}$ by $\iota$.)

\begin{prop}\label{prop:decomposition of T_ww'}
Let $w,w'\in W$ and $\psi$ a character of $\Ad(ww')\overline{\mathfrak{n}_0}\cap \mathfrak{n}_0$.
Assume that $\ell(w) + \ell(w') = \ell(ww')$ where $\ell(w)$ is the length of $w\in W$.
Then we have $T_{w,\psi}T_{w',w^{-1}\psi} = T_{ww',\psi}$.
\end{prop}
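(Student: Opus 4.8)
The plan is to work with the defining realization $T_{\widetilde w,\psi}V=S_{\widetilde w,\psi}\otimes_{U(\mathfrak{g})}(\widetilde wV)$, to peel off a "soft'' reduction to a statement about bimodules, and then to reduce to the case in which the second element is a simple reflection, where the relevant combinatorics of restricted roots becomes manageable. \textbf{Step 1 (reduction to a bimodule isomorphism).} By definition $T_{w,\psi}T_{w',w^{-1}\psi}V=S_{w,\psi}\otimes_{U(\mathfrak{g})}w\bigl(S_{w',w^{-1}\psi}\otimes_{U(\mathfrak{g})}(w'V)\bigr)$. For any $U(\mathfrak{g})$-bimodule $B$ and $U(\mathfrak{g})$-module $M$ there is a natural isomorphism $w(B\otimes_{U(\mathfrak{g})}M)\cong B^{(w)}\otimes_{U(\mathfrak{g})}(wM)$, where $B^{(w)}$ denotes $B$ with \emph{both} the left and the right action twisted by $\Ad(w)^{-1}$ (a direct check of the balancing relation); moreover $w(w'V)=(ww')V$. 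Using associativity of $\otimes_{U(\mathfrak{g})}$ this gives, naturally in $V$, $T_{w,\psi}T_{w',w^{-1}\psi}V\cong\bigl(S_{w,\psi}\otimes_{U(\mathfrak{g})}(S_{w',w^{-1}\psi})^{(w)}\bigr)\otimes_{U(\mathfrak{g})}(ww'V)$. Since $T_{ww',\psi}V=S_{ww',\psi}\otimes_{U(\mathfrak{g})}(ww'V)$, it suffices to prove the $U(\mathfrak{g})$-bimodule isomorphism $S_{w,\psi}\otimes_{U(\mathfrak{g})}(S_{w',w^{-1}\psi})^{(w)}\cong S_{ww',\psi}$.

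\textbf{Step 2 (reduction to a simple reflection).} I argue by induction on $\ell(w')$; the case $\ell(w')=0$ is trivial since $T_{e,\cdot}$ is the identity functor. If $\ell(w')\ge1$, write $w'=w''s$ with $s$ a simple reflection and $\ell(w')=\ell(w'')+1$; then also $\ell(ww'')=\ell(w)+\ell(w'')$ and $\ell((ww'')s)=\ell(ww'')+1$. Granting the proposition whenever the second element is a simple reflection, and tracking twists of characters via $(xy)^{-1}\psi=y^{-1}(x^{-1}\psi)$, we get $T_{w',w^{-1}\psi}=T_{w'',w^{-1}\psi}T_{s,\,(w'')^{-1}w^{-1}\psi}$ and, by the induction hypothesis applied to $w''$, $T_{w,\psi}T_{w'',w^{-1}\psi}=T_{ww'',\psi}$; hence $T_{w,\psi}T_{w',w^{-1}\psi}=T_{ww'',\psi}T_{s,\,(ww'')^{-1}\psi}=T_{(ww'')s,\psi}=T_{ww',\psi}$, again by the simple-reflection case. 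So it is enough to treat $w'=s$.

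\textbf{Step 3 (the simple-reflection case).} Assume $\ell(ws)=\ell(w)+1$ and let $\alpha$ be the simple restricted root defining $s$. Write $N(v)=\{\beta\in\Sigma^+\mid v^{-1}\beta\in\Sigma^-\}$, so that (using the lift $\iota(v)$ of Lemma~\ref{lem:good lift for W}, which preserves $\Delta_{M_0}^+$) $\Ad(v)\overline{\mathfrak{n}_0}\cap\mathfrak{n}_0=\bigoplus_{\beta\in N(v)}\mathfrak{g}_\beta$ for $v\in W$. The length hypothesis gives $N(ws)=N(w)\sqcup wN(s)$, where $N(s)$ consists of the positive roots proportional to $\alpha$; in particular $\Ad(ws)\overline{\mathfrak{n}_0}\cap\mathfrak{n}_0=(\Ad(w)\overline{\mathfrak{n}_0}\cap\mathfrak{n}_0)\oplus\Ad(w)(\Ad(s)\overline{\mathfrak{n}_0}\cap\mathfrak{n}_0)$ as vector spaces, and $\Ad(w)\overline{\mathfrak{n}_0}\cap\mathfrak{n}_0$ is an \emph{ideal} of $\Ad(ws)\overline{\mathfrak{n}_0}\cap\mathfrak{n}_0$: if $\beta\in N(w)$, $\gamma\in wN(s)$ and $\beta+\gamma\in\Sigma$, then $(ws)^{-1}(\beta+\gamma)$ is a sum of $(ws)^{-1}\beta\in\Sigma^-$ (as $\beta\in N(w)\subseteq N(ws)$) and a negative multiple of $\alpha$, hence lies in $\Sigma^-$, so $\beta+\gamma\in N(ws)$, and it is not proportional to $w\alpha$ (else $\beta\in wN(s)$, contradicting $N(w)\cap wN(s)=\emptyset$), so $\beta+\gamma\in N(w)$. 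Now pick an ordered basis $e_1,\dots,e_p$ of $\Ad(w)\overline{\mathfrak{n}_0}\cap\mathfrak{n}_0$ by $\mathfrak{h}$-root vectors with $\bigoplus_{s'\le t-1}\C e_{s'}$ an ideal of $\bigoplus_{s'\le t}\C e_{s'}$, and complete it by $\mathfrak{h}$-root vectors $e_{p+j}=\Ad(w)f_j$, where $f_1,\dots,f_q$ is the analogous basis of $\Ad(s)\overline{\mathfrak{n}_0}\cap\mathfrak{n}_0$; by the previous sentence $e_1,\dots,e_{p+q}$ is an ideal-chain ordered basis of $\Ad(ws)\overline{\mathfrak{n}_0}\cap\mathfrak{n}_0$. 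Since $\Ad(w)$ is an automorphism of $U(\mathfrak{g})$ sending $f_j-(w^{-1}\psi)(f_j)$ to $e_{p+j}-\psi(e_{p+j})$, it induces bimodule isomorphisms $(S_{f_j-(w^{-1}\psi)(f_j)})^{(w)}\cong S_{e_{p+j}-\psi(e_{p+j})}$, and since $(-)^{(w)}$ commutes with $\otimes_{U(\mathfrak{g})}$, Proposition~\ref{prop:1-dim decomposition of S_w} (applied to $s$, to $w$, and finally to $ws$ with the basis $e_1,\dots,e_{p+q}$) yields
\begin{align*}
S_{w,\psi}\otimes_{U(\mathfrak{g})}(S_{s,w^{-1}\psi})^{(w)}
&\cong S_{e_1-\psi(e_1)}\otimes_{U(\mathfrak{g})}\dots\otimes_{U(\mathfrak{g})}S_{e_{p+q}-\psi(e_{p+q})}\\
&\cong S_{ws,\psi}
\end{align*}
as $U(\mathfrak{g})$-bimodules. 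By Step~1 this is the desired $T_{w,\psi}T_{s,w^{-1}\psi}=T_{ws,\psi}$, and Step~2 then completes the proof.

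\textbf{Main obstacle.} The whole argument hinges on the combinatorial fact in Step~3 that $\Ad(w)\overline{\mathfrak{n}_0}\cap\mathfrak{n}_0$ is an ideal of $\Ad(ws)\overline{\mathfrak{n}_0}\cap\mathfrak{n}_0$, which is exactly what lets one apply Proposition~\ref{prop:1-dim decomposition of S_w} to a concatenated basis. This is also the reason for first reducing to simple reflections: for a general $w'$ with $\ell(ww')=\ell(w)+\ell(w')$, neither $\Ad(w)\overline{\mathfrak{n}_0}\cap\mathfrak{n}_0$ nor $\Ad(w)(\Ad(w')\overline{\mathfrak{n}_0}\cap\mathfrak{n}_0)$ need be an ideal of $\Ad(ww')\overline{\mathfrak{n}_0}\cap\mathfrak{n}_0$, so a naive concatenation is not an ideal-chain basis. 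The bookkeeping in Step~1 (which side of each bimodule carries the twist, naturality in $V$) and the restriction-of-character identities in Step~2 also require care but are routine.
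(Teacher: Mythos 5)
Your proof is correct, and it takes a genuinely more self-contained route than the paper's. The paper's proof converts the length-additivity hypothesis over the restricted Weyl group $W$ into length-additivity over the full Weyl group $\widetilde{W}$ (using that $\iota(w)$ preserves $\Delta_{M_0}^+$, so $\Delta^+\cap w\Delta^- = \Delta_0^+\cap w\Delta_0^-$), and then cites Andersen--Lauritzen~\cite[Remark~6.1(ii)]{MR1985191} for the fact that length-additive products of twisting functors compose; the extension to the $\psi$-twisted version $T_{w,\psi}$ is left implicit as following ``from the construction.'' You instead prove the underlying bimodule isomorphism $S_{w,\psi}\otimes_{U(\mathfrak{g})}(S_{w',w^{-1}\psi})^{(w)}\cong S_{ww',\psi}$ directly: Step~1's twist-through-tensor naturality, the induction on $\ell(w')$ down to a simple reflection, and the observation that $\mathfrak{u}_{0,w}$ is an \emph{ideal} of $\mathfrak{u}_{0,ws}$ when $\ell(ws)=\ell(w)+1$ let you concatenate ideal-chain bases and apply Proposition~\ref{prop:1-dim decomposition of S_w} to each of $S_{w,\psi}$, $(S_{s,w^{-1}\psi})^{(w)}$, and $S_{ws,\psi}$. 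This is more work but it makes explicit the handling of the character $\psi$ that the paper defers to a citation, and it is exactly what one needs to adapt Andersen--Lauritzen's argument to the $\eta$-modified functor. One small imprecision: in the ideality argument, if $\beta\in N(w)$, $\gamma\in wN(s)$ and $\beta+\gamma$ were proportional to $w\alpha$, then $\beta$ would be a \emph{negative} multiple of $w\alpha$; since $\ell(ws)>\ell(w)$ gives $w\alpha\in\Sigma^+$, this would force $\beta\in\Sigma^-$, contradicting $\beta\in\Sigma^+$. That is the actual contradiction, rather than ``$\beta\in wN(s)$'' as you wrote (one gets $\beta\in -wN(s)$). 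The conclusion and the rest of the argument stand.
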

\begin{proof}
By the assumption, we have $\Sigma^+\cap ww'\Sigma^- = (\Sigma^+\cap w\Sigma^-)\cup w(\Sigma^+\cap w'\Sigma^-)$.
Put $\Delta_0^\pm = \Delta^\pm\setminus\Delta_{M_0}^\pm$.
Then we have $\Delta_0^+\cap ww'\Delta_0^- = (\Delta_0^+\cap w\Delta_0^-)\cup w(\Delta_0^+\cap w'\Delta_0^-)$.
Since $w\Delta_{M_0}^\pm = \Delta_{M_0}^\pm$, we have $\Delta_0^+\cap w\Delta_0^- = \Delta^+\cap w\Delta^-$.
Hence, $\Delta^+\cap ww'\Delta^- = (\Delta^+\cap w\Delta^-)\cup w(\Delta^+\cap w'\Delta^-)$.
This implies that $\widetilde{\ell}(w) + \widetilde{\ell}(w') = \widetilde{\ell}(ww')$ where $\widetilde{\ell}(w)$ is the length of $w$ as an element of $\widetilde{W}$.
Hence, the proposition follows from the construction of the twisting functor (See Andersen and Lauritzen~\cite[Remark~6.1 (ii)]{MR1985191}).
\end{proof}

\begin{lem}\label{lem:Xe^-k}
Let $e$ be a nilpotent element of $\mathfrak{g}$, $X\in \mathfrak{g}$ and $k\in\Z_{\ge 0}$.
For $c\in \C$ we have the following equation in $U(\mathfrak{g})_{e - c}$.
\[
	X(e - c)^{-(k + 1)} = \sum_{n = 0}^\infty \binom{n + k}{k}(e - c)^{-(n + k + 1)}\ad (e)^n(X).
\]
\end{lem}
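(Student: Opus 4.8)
The plan is to establish the formula first for $k=0$ by an induction on the $\ad(e)$-nilpotency of $X$, and then to pass to arbitrary $k$ by an induction on $k$ that only uses the $k=0$ case together with the hockey-stick identity for binomial coefficients.

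\textbf{Preliminaries.} Since $e$ is nilpotent and $\mathfrak{g}$ is finite-dimensional, $\ad(e)$ is a nilpotent endomorphism of $\mathfrak{g}$, so $\ad(e)^n(X)=0$ for $n\gg 0$ and every sum occurring in the statement is finite and makes sense in $U(\mathfrak{g})_{e-c}$. Throughout I use that left (resp.\ right) multiplication by $e-c$ is a bijection of $U(\mathfrak{g})_{e-c}$, so an identity between two elements of $U(\mathfrak{g})_{e-c}$ may be checked after multiplying it on the left by $e-c$; and I use the relation $(e-c)X = X(e-c) + \ad(e)(X)$ in $U(\mathfrak{g})$.

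\textbf{Step 1: the case $k=0$.} I claim that $X(e-c)^{-1} = \sum_{n\ge 0}(e-c)^{-(n+1)}\ad(e)^n(X)$ for all $X\in\mathfrak{g}$, proved by induction on the least $N$ with $\ad(e)^N(X)=0$. If $N\le 1$ then $X$ commutes with $e-c$ and both sides equal $(e-c)^{-1}X$. For the inductive step, multiply the claimed identity on the left by $e-c$. Using $(e-c)X = X(e-c) + \ad(e)(X)$, the left-hand side becomes $X + \ad(e)(X)(e-c)^{-1}$. The right-hand side becomes $\sum_{n\ge 0}(e-c)^{-n}\ad(e)^n(X) = X + \sum_{n\ge 0}(e-c)^{-(n+1)}\ad(e)^n(\ad(e)(X))$, and since $\ad(e)(X)$ has strictly smaller $\ad(e)$-nilpotency the inductive hypothesis identifies the last sum with $\ad(e)(X)(e-c)^{-1}$. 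The two sides agree, so the claim follows.

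\textbf{Step 2: induction on $k$.} For exponent $k$ the asserted formula reads $X(e-c)^{-k} = \sum_{m\ge 0}\binom{m+k-1}{k-1}(e-c)^{-(m+k)}\ad(e)^m(X)$, with $k=0$ covered by Step 1. Assuming it for $k$, write $X(e-c)^{-(k+1)} = \bigl(X(e-c)^{-k}\bigr)(e-c)^{-1}$ and insert the inductive hypothesis, then apply Step 1 to each factor $\ad(e)^m(X)(e-c)^{-1} = \sum_{p\ge 0}(e-c)^{-(p+1)}\ad(e)^{m+p}(X)$. Collecting the terms with $n=m+p$ fixed, the coefficient of $(e-c)^{-(n+k+1)}\ad(e)^n(X)$ equals $\sum_{m=0}^{n}\binom{m+k-1}{k-1} = \binom{n+k}{k}$ by the hockey-stick identity, which is exactly the claimed formula.

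\textbf{Main difficulty.} There is no genuine obstacle here; the computation is routine. The one point that requires care is the order of the non-commuting factors in $U(\mathfrak{g})_{e-c}$ — in particular that $\ad(e)^n(X)$ does not commute with $(e-c)^{-1}$, which is why Step 1 must be re-invoked inside the induction on $k$ rather than simply pushing powers of $(e-c)^{-1}$ past the Lie algebra elements — together with the bookkeeping of the binomial coefficients.
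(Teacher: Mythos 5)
Your argument is correct and follows essentially the same route as the paper: induction on $k$, splitting off a single factor of $(e-c)^{-1}$, feeding the resulting double sum through the hockey-stick identity $\sum_{m=0}^{n}\binom{m+k-1}{k-1}=\binom{n+k}{k}$. The only differences are cosmetic (you peel the rightmost $(e-c)^{-1}$ while the paper peels the leftmost) and that you supply a proof of the base case $X(e-c)^{-1}=\sum_{n}(e-c)^{-(n+1)}\ad(e)^n(X)$ by induction on $\ad(e)$-nilpotency, which the paper simply declares ``well-known''; there is also a small indexing slip where you write ``with $k=0$ covered by Step~1'' although in your reparametrized formula Step~1 is the case $k=1$, but this does not affect the substance of the argument.
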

\begin{proof}
We prove the lemma by induction on $k$.
If $k = 0$, then the lemma is well-known.
Assume that $k > 0$.
Then we have
\begin{align*}
X(e - c)^{-(k + 1)} & = \sum_{k_0 = 0}^\infty (e - c)^{-(k_0 + 1)}\ad(e)^{k_0}(X) (e - c)^{-k}\\
& = \sum_{k_0 = 0}^\infty \sum_{k_1 = 0}^\infty \binom{k_1 + k - 1}{k - 1}(e - c)^{-(k_0 + k_1 + k + 1)}\ad(e)^{k_0 + k_1}(X)\\
& = \sum_{n = 0}^\infty \sum_{l' = 0}^n \binom{l' + k - 1}{k - 1}(e - c)^{-(n + k + 1)}\ad(e)^n(X)\\
& = \sum_{n = 0}^\infty \binom{n + k}{k}(e - c)^{-(n + k + 1)}\ad(e)^n(X).
\end{align*}
This proves the lemma.
\end{proof}

\section{The module $I_i/I_{i - 1}$}\label{sec:The module I_i/I_i-1}
Put $J_i = U(\mathfrak{g})\otimes_{U(\mathfrak{p})}J'_{w_i^{-1}\eta}(\sigma\otimes e^{\lambda+\rho})$\newsym{$J_i$}, where $\mathfrak{n}$ acts $J'_{w_i^{-1}\eta}(\sigma\otimes e^{\lambda+\rho})$ trivially.
In this section, we prove the following theorem.
\begin{thm}\label{thm:structure of I_i/I_{i - 1}}
Assume that $I_i/I_{i - 1} \ne 0$.
Then we have $I_i/I_{i - 1} \simeq T_{w_i,\eta}J_i$.
\end{thm}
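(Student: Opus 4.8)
The plan is to identify $I_i/I_{i-1}$ with $I'_i$ (by Theorem~\ref{thm:succ quot is I'_i}, since we assume $I_i/I_{i-1}\ne0$) and then exhibit an explicit $\mathfrak{g}$-module isomorphism $I'_i\simeq T_{w_i,\eta}J_i$. Recall from Section~\ref{sec:Parabolic induction and Bruhat filtration} that $I'_i$ is spanned by elements $T((f\eta_i^{-1}\otimes u')\delta_i)=\delta_i(T,f\eta_i^{-1},u')$ with $T\in U(\Ad(w_i)\overline{\mathfrak n}\cap\overline{\mathfrak n})$, $f\in\mathcal P(O_i)$, $u'\in J'_{w_i^{-1}\eta}(\sigma\otimes e^{\lambda+\rho})$, and that by Lemma~\ref{lem:fundamental properties of delta_i}(3) the map $f\otimes T\otimes u'\mapsto\delta_i(T,f,u')$ realizes $I'_i$ (after absorbing $\eta_i^{-1}$) as a free module built from $\mathcal P(O_i)$, $U(\mathfrak g)$ and $w_i(\sigma\otimes e^{\lambda+\rho})'$. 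On the other hand, $T_{w_i,\eta}J_i=S_{w_i,\eta}\otimes_{U(\mathfrak g)}(w_iJ_i)$, and by Proposition~\ref{prop:1-dim decomposition of S_w} the bimodule $S_{w_i,\eta}$ is $S_{e_1-\eta(e_1)}\otimes_{U(\mathfrak g)}\dots\otimes_{U(\mathfrak g)}S_{e_l-\eta(e_l)}$ for a basis $e_1,\dots,e_l$ of $\Ad(w_i)\overline{\mathfrak n}\cap\mathfrak n_0$; a PBW-type argument shows $S_{w_i,\eta}$ is free as a right $U(\mathfrak g)$-module on the monomials $(e_1-\eta(e_1))^{-(k_1+1)}\dotsm(e_l-\eta(e_l))^{-(k_l+1)}$, so $T_{w_i,\eta}J_i$ has underlying space $\bigl(\bigoplus_{\mathbf k}\C\,\overline{e}^{\,\mathbf k}\bigr)\otimes_\C (w_iJ_i)$.

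The key step is to write down the correspondence and check it is $\mathfrak g$-linear. The natural guess, matching the index set $\mathbf k=(k_1,\dots,k_l)$ of negative powers against monomials in the coordinates $x_1,\dots,x_l$ on $O_i$ introduced in Lemma~\ref{lem:left2right}, is to send
\[
(e_1-\eta(e_1))^{-(k_1+1)}\dotsm(e_l-\eta(e_l))^{-(k_l+1)}\otimes(T\otimes u')
\ \longmapsto\ \delta_i\!\left(T,\;\left(\textstyle\prod_s \frac{x_s^{k_s}}{k_s!}\right)\eta_i^{-1},\,u'\right),
\]
extended $\C$-linearly, where on the right $u'$ is viewed in $J'_{w_i^{-1}\eta}(\sigma\otimes e^{\lambda+\rho})$ and $T$ ranges over a PBW complement so that the assignment is well-defined on $T_{w_i,\eta}J_i=S_{w_i,\eta}\otimes_{U(\mathfrak g)}(w_iJ_i)$. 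Bijectivity on underlying vector spaces is immediate from the two freeness statements above together with Lemma~\ref{lem:fundamental properties of delta_i}(3). The content is compatibility with the $\mathfrak g$-action. Here I would split into the generating cases: for $X\in\Ad(w_i)\overline{\mathfrak n}\cap\overline{\mathfrak n}$ the action is by left multiplication by $T$ on both sides and is essentially formal; for $X\in\mathfrak n_0$ a restricted root vector, the left action on $S_{w_i,\eta}$ is computed by Lemma~\ref{lem:Xe^-k} (expanding $X(e-c)^{-(k+1)}$ as a series in $(e-c)^{-(n+k+1)}\ad(e)^n(X)$), while the action on $I'_i$ is computed by Lemma~\ref{lem:left2right} and Lemma~\ref{lem:caluculation of Xdelta(1,f,u)} — and the combinatorial shape of the two expansions (binomial coefficients $\binom{n+k}{k}$ against differentiation/multiplication by the polynomials $f_{\mathbf k}$) is designed to match; one must also handle the $\Ad(w_i)(\mathfrak m\cap\mathfrak n_0)$ and $\Ad(w_i)\overline{\mathfrak n}\cap\mathfrak n_0$ pieces of $\mathfrak n_0$, where Lemma~\ref{lem:caluculation of Xdelta(1,f,u)} produces precisely the terms $(\Ad(w_i)^{-1}(\ad(e)^{\mathbf k}X)-\eta'(\ad(e)^{\mathbf k}X))u'$ that correspond under the twist $w_i(\cdot)$ to the $\mathfrak g$-action on $w_iJ_i$; and finally $X\in\Ad(w_i)(\mathfrak m\oplus\mathfrak a)$, using Lemma~\ref{lem:no delta part} to see that the $\mathfrak a_0$-weight shift $w_i\rho_0-\rho_0$ and the action on $u'$ reproduce the action on $w_i(U(\mathfrak g)\otimes_{U(\mathfrak p)}J'_{w_i^{-1}\eta}(\sigma\otimes e^{\lambda+\rho}))$ with $\mathfrak n$ acting trivially.

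The main obstacle I anticipate is bookkeeping in the $\mathfrak n_0$-case: reconciling the series from Lemma~\ref{lem:Xe^-k} on the twisting-functor side with the finite sum of $\delta_i$-terms produced by Lemma~\ref{lem:left2right} and Lemma~\ref{lem:caluculation of Xdelta(1,f,u)} on the distribution side, keeping careful track of signs ($\check{\,\cdot\,}$), of the distinction between the left and right regular actions $R'_i$, $\widetilde R_i$, and of which summands land in $\Ad(w_i)\overline{\mathfrak n}\cap\mathfrak n_0$ versus $\Ad(w_i)\mathfrak n\cap\mathfrak n_0$ (the latter killed because $\eta$ is trivial there, by Lemma~\ref{lem:vanishing lemma}(2), and because the corresponding coordinate-function $\widetilde f$ is constant along that direction). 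Once the generators are checked, $\mathfrak g$-linearity follows since $U(\mathfrak g)$ is generated by $\mathfrak g$, and together with the bijectivity already established this gives the isomorphism $I_i/I_{i-1}\simeq I'_i\simeq T_{w_i,\eta}J_i$.
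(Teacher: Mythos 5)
Your outline matches the paper's proof closely --- identify $I_i/I_{i-1}$ with $I'_i$ via Theorem~\ref{thm:succ quot is I'_i}, expand $S_{w_i,\eta}$ on the monomials $(e-\eta(e))^{-(\mathbf k+\mathbf 1)}$ via Lemma~\ref{lem:induction + twisting}, and match the actions using Lemma~\ref{lem:Xe^-k} against Lemma~\ref{lem:left2right} --- but the explicit map you propose has a sign error that would make the equivariance check fail. You send $(e-\eta(e))^{-(\mathbf k+\mathbf 1)}\otimes T\otimes u'$ to $\delta_i\bigl(T,\prod_s (x_s^{k_s}/k_s!)\eta_i^{-1},u'\bigr)$, but the correct target is $\delta_i(T,f_{\mathbf k}\eta_i^{-1},u')$ with $f_{\mathbf k}=\prod_s\bigl((-x_s)^{k_s}/k_s!\bigr)$ as defined after Lemma~\ref{lem:left2right}. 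The distinction matters: Lemma~\ref{lem:left2right} gives $X\delta_i(T,f,u')=\sum_{\mathbf p}\delta_i\bigl((\ad(e)^{\mathbf p}X)T,f f_{\mathbf p},u'\bigr)$ with the factor $f_{\mathbf p}$ involving $(-x_s)^{p_s}$, while Lemma~\ref{lem:Xe^-k} gives $X(e-\eta(e))^{-(\mathbf k+\mathbf 1)}=\sum_{\mathbf p}\prod_s\binom{p_s+k_s}{k_s}(e-\eta(e))^{-(\mathbf k+\mathbf p+\mathbf 1)}\ad(e)^{\mathbf p}X$, and the binomial coefficients cancel exactly against the factorials in $f_{\mathbf k+\mathbf p}$; your version introduces a stray $(-1)^{\lvert\mathbf p\rvert}$ on one side. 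You flag ``keeping careful track of signs'' as your main worry, and that worry is warranted --- the map as written is not $\mathfrak g$-linear, though the fix is just to insert the signs.

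A second, smaller point: your case split over $X$ in $\Ad(w_i)\overline{\mathfrak n}\cap\overline{\mathfrak n}$, $\mathfrak n_0$, $\Ad(w_i)(\mathfrak m\cap\mathfrak n_0)$, and $\Ad(w_i)(\mathfrak m\oplus\mathfrak a)$, with appeals to Lemma~\ref{lem:caluculation of Xdelta(1,f,u)} and Lemma~\ref{lem:no delta part}, is more machinery than needed and also slightly mismatched: those two lemmas treat only $\delta_i(1,f,u')$ (i.e.\ $T=1$), so using them for general $\delta_i(T,f,u')$ requires an extra reduction. The paper avoids the case split entirely: Lemma~\ref{lem:Xe^-k} and Lemma~\ref{lem:left2right} both hold for arbitrary $X\in\mathfrak g$, so the equivariance of $\Phi$ is a single uniform computation, and the vector-space bijectivity follows from Lemma~\ref{lem:fundamental properties of delta_i}(3) and Lemma~\ref{lem:induction + twisting} as you say. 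With the sign corrected and the case analysis collapsed into the uniform check, your argument becomes the paper's.
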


Notice that $\mathfrak{u}_{0,{w_i}} = \Ad(w_i)\overline{\mathfrak{n}}\cap \mathfrak{n}_0$ since $w_i(\Delta_{M}^+) \subset \Delta^+$.
In this section fix $i\in\{1,\dots,l\}$ and a basis $\{e_1,e_2,\dots,e_l\}$ of $\mathfrak{u}_{0,w_i}$ such that each vector $e_i$ is a root vector and $\bigoplus_{s\le t-1}\C e_s$ is an ideal of $\bigoplus_{s\le t}\C e_s$.
Let $\alpha_s$ be the restricted root with respect to $e_s$.
As in Section~\ref{sec:vanishing theorem}, for $\mathbf{k} = (k_1,\dots,k_l)\in\Z_{\ge 0}^l$ we denote $\ad(e_l)^{k_l}\dotsm \ad(e_1)^{k_1}$ by $\ad(e)^\mathbf{k}$ and $((-x_1)^{k_1}/k_1!)\dotsm((-x_l)^{k_l}/k_l!)$ by $f_\mathbf{k}$.

\begin{lem}\label{lem:left2right for I'_i}
We have
\[
	I'_i = \left\{\sum_{s = 1}^t \delta_i(T_s,f_s\eta_i^{-1},u'_s)\Bigm|
	\begin{array}{ll}
	T_s\in U(\Ad(w_i)\overline{\mathfrak{n}}\cap \mathfrak{n}_0),&
	f_s\in \mathcal{P}(O_i),\\
	u'_s\in J'_{w_i^{-1}\eta}(\sigma\otimes(\lambda + \rho))
	\end{array}
	\right\}.
\]
\end{lem}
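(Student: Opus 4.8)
Write $\mathcal A=\Ad(w_i)\overline{\mathfrak n}\cap\mathfrak n_0$, a subalgebra of $\mathfrak g$ with basis $\{e_1,\dots,e_l\}$, each $e_s$ a restricted root vector with root $\alpha_s\in\Sigma^+$; thus $\mathfrak a_0$ acts on $\mathcal A$ by $\ad$ with strictly positive weights. Let $\mathcal S$ denote the set on the right-hand side of the asserted identity; the plan is to prove $I_i'=\mathcal S$ by reading Lemma~\ref{lem:left2right} (with $T'=1$) in the two directions. I will use repeatedly that $((f\eta_i^{-1})\otimes u')\delta_i=\delta_i(1,f\eta_i^{-1},u')$ (noted in the proof of Lemma~\ref{lem:acts nilp}), that $\mathcal P(O_i)$ is a ring so $f\cdot f_{\mathbf k}\in\mathcal P(O_i)$, and that each $\ad(e_s)$ is locally nilpotent on $U(\mathfrak g)$ and preserves $U(\mathcal A)$ (being a derivation sending $\mathcal A$ into $\mathcal A$), so that $\ad(e)^{\mathbf k}T\in U(\mathcal A)$ for $T\in U(\mathcal A)$ and only finitely many $\mathbf k$ contribute to any sum over $\mathbf k$.

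For $I_i'\subseteq\mathcal S$: a generator $T_k\big(((f_k\eta_i^{-1})\otimes u_k')\delta_i\big)=T_k\,\delta_i(1,f_k\eta_i^{-1},u_k')$ is, by Lemma~\ref{lem:left2right}, the finite sum $\sum_{\mathbf k}\delta_i\big(\ad(e)^{\mathbf k}T_k,\,f_kf_{\mathbf k}\eta_i^{-1},\,u_k'\big)$, and each term lies in $\mathcal S$ since $\ad(e)^{\mathbf k}T_k\in U(\mathcal A)$, $f_kf_{\mathbf k}\in\mathcal P(O_i)$ and $u_k'\in J'_{w_i^{-1}\eta}(\sigma\otimes e^{\lambda+\rho})$.

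For $\mathcal S\subseteq I_i'$ one must show $\delta_i(T,f\eta_i^{-1},u')\in I_i'$ for all $T\in U(\mathcal A)$, $f\in\mathcal P(O_i)$, $u'\in J'_{w_i^{-1}\eta}(\sigma\otimes e^{\lambda+\rho})$; decomposing $T$ into $\mathfrak a_0$-weight components one may take $T$ homogeneous of weight $\mu$. Let $M_T\subset U(\mathfrak g)$ be the $\ad(\mathcal A)$-submodule generated by $T$; since $\ad$ preserves the degree filtration of $U(\mathfrak g)$, $M_T$ is finite-dimensional, it is $\mathfrak a_0$-graded, $M_T=\bigoplus_{\nu\in X}M_{T,\nu}$ with $X$ finite, and $\ad(e_s)$ sends $M_{T,\nu}$ into $M_{T,\nu+\alpha_s}$. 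Isolating the $\mathbf k=0$ term of Lemma~\ref{lem:left2right} gives, for weight-homogeneous $S$,
\[
\delta_i(S,g\eta_i^{-1},u')=S\,\delta_i(1,g\eta_i^{-1},u')-\sum_{\mathbf k\neq 0}\delta_i\big(\ad(e)^{\mathbf k}S,\,gf_{\mathbf k}\eta_i^{-1},\,u'\big),
\]
and I prove, by downward induction on $\nu\in X$, that $\delta_i(S,g\eta_i^{-1},u')\in I_i'$ for all $S\in M_{T,\nu}$, $g\in\mathcal P(O_i)$ and $u'$. The first term on the right lies in $I_i'$ by definition (here $S\in U(\mathcal A)$); each correction term has $\ad(e)^{\mathbf k}S\in M_T$ homogeneous of weight $\nu+\sum_s k_s\alpha_s>\nu$, so it is either $0$ (if that weight is not in $X$) or an element of $M_{T,\nu'}$ for some $\nu'\in X$ with $\nu'>\nu$, to which the inductive hypothesis applies; when $\nu$ is maximal in $X$ every correction term vanishes. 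Taking $S=T$, $\nu=\mu$, $g=f$ finishes the proof. The only delicate point is this induction — checking that the terms produced by Lemma~\ref{lem:left2right} move strictly upward in a finite weight poset so the recursion terminates — which is the same triangularity mechanism already used in the proof of Lemma~\ref{lem:succ quot is sub of I'}.
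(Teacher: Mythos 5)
The lemma as printed is not self-consistent, and the way you resolve the ambiguity introduces a gap in the forward inclusion. You set $\mathcal A=\Ad(w_i)\overline{\mathfrak n}\cap\mathfrak n_0$, read the right-hand side as having $T_s\in U(\mathcal A)$ (which is what the displayed formula says), and then implicitly also take $T_k\in U(\mathcal A)$ in the definition of $I'_i$, so that $\ad(e)^{\mathbf k}T_k\in U(\mathcal A)$. But in the paper's definition of $I'_i$, $T_k$ ranges over $U(\Ad(w_i)\overline{\mathfrak n}\cap\overline{\mathfrak n})$ (this should be read as the \emph{transverse} subalgebra $\Ad(w_i)\overline{\mathfrak n}\cap\overline{\mathfrak n_0}$ — it is what Proposition~\ref{prop:structure theorem of tempered distributions whose support is contained in submanifold} supplies and what the proof of Lemma~\ref{lem:succ quot is sub of I'} uses). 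That transverse subalgebra is \emph{not} $\ad(\mathcal A)$-stable: a bracket of a positive-root element of $\mathcal A$ with a negative-root element of $\Ad(w_i)\overline{\mathfrak n}\cap\overline{\mathfrak n_0}$ can land in either half of $\Ad(w_i)\overline{\mathfrak n}$. So $\ad(e)^{\mathbf k}T_k$ only lies in $U(\Ad(w_i)\overline{\mathfrak n})$, not $U(\mathcal A)$, and the claim that each term of the expansion ``lies in $\mathcal S$'' is unjustified as stated. Moreover, if one took $T_s\in U(\mathcal A)$ literally on the right-hand side, then by Lemma~\ref{lem:fundamental properties of delta_i}~(1) every $\delta_i(T_s,f_s\eta_i^{-1},u'_s)$ collapses to a $\delta_i(1,g\eta_i^{-1},u'_s)$, so $\mathcal S$ would have no transverse derivatives and could not equal $I'_i$. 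The proof of Theorem~\ref{thm:structure of I_i/I_{i - 1}} makes clear that the intended $T_s$ is arbitrary in $U(\mathfrak g)$ (equivalently, after using the $R'_i$ and $\Ad(w_i)^{-1}$ reductions from Lemma~\ref{lem:fundamental properties of delta_i}, in $U(\Ad(w_i)\overline{\mathfrak n}\cap\overline{\mathfrak n_0})$). With that reading, $\ad(e)^{\mathbf k}T_k\in U(\Ad(w_i)\overline{\mathfrak n})\subset U(\mathfrak g)$ suffices, and the forward inclusion is immediate; you need to make that adjustment.

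Your reverse inclusion is a genuinely different argument from the paper's. The paper writes down the exact inverse expansion: setting $f'_{\mathbf k}=(x_1^{k_1}/k_1!)\dotsm(x_l^{k_l}/k_l!)$ (same calculation as Lemma~\ref{lem:left2right}, but with the Campbell–Hausdorff exponential inverted by dropping the minus signs), one has
\[
\delta_i(T,f,u')=\sum_{\mathbf k}\bigl(\ad(e)^{\mathbf k}T\bigr)\bigl(((ff'_{\mathbf k})\otimes u')\delta_i\bigr),
\]
which directly exhibits $\delta_i(T,f,u')$ as a $U(\mathfrak g)$-combination of elements of $I'_i$ and finishes in one line. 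You instead isolate the $\mathbf k=0$ term and run a downward induction on the $\mathfrak a_0$-weight inside the finite $\ad(\mathcal A)$-module $M_T$, using that the correction terms strictly raise the weight. Both are correct; the paper's is shorter, and incidentally needs only the $U(\mathfrak g)$-stability of $I'_i$ that you also use (your aside ``here $S\in U(\mathcal A)$'' is neither needed nor, per the above, accurate once the right subalgebra for $T_k$ is in play — invoke the $U(\mathfrak g)$-module structure of $I'_i$ instead). Your induction is a perfectly valid, if heavier, alternative once the algebra identifications are fixed.
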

\begin{proof}
By Lemma~\ref{lem:left2right}, we have
\[
	T((f\otimes u')\delta_i) = 
	\sum_{\mathbf{k}\in\Z_{\ge 0}^l}\delta_i(\ad(e)^\mathbf{k}T,ff_\mathbf{k},u')
\]
for $T\in U(\mathfrak{g})$, $f\in \mathcal{P}(O_i)\eta_i^{-1}$ and $u'\in \sigma'$.
Hence, the left hand side is a subset of the right hand side.
Define $f_\mathbf{k}'\in\mathcal{P}(O_i)$ by $f_\mathbf{k}' = (x_1^{k_1}/k_1!)\dotsm(x_l^{k_l}/k_l!)$.
By the similar calculation of Lemma~\ref{lem:left2right}, we have
\[
	\delta_i(T,f,u') = 
	\sum_{\mathbf{k}\in\Z_{\ge 0}^l}(\ad(e)^{\mathbf{k}}T)(((ff_\mathbf{k}')\otimes u')\delta_i).
\]
This implies that the right hand side is contained in the left hand side.
\end{proof}

By the definition of the twisting functor and Poincar\"e-Birkhoff-Witt theorem, we have the following lemma.
For $\mathbf{k} = (k_1,\dots,k_l)\in\Z^l$ put $(e - \eta(e))^\mathbf{k} = (e_1 - \eta(e_1))^{k_1}\dotsm (e_l - \eta(e_l))^{k_l}\in S_{w,\eta}$.
Set $\mathbf{1} = (1,\dots,1)\in\Z^l$.
\begin{lem}\label{lem:induction + twisting}
Let $V$ be a $\mathfrak{p}$-module.
Then we have a $\C$-vector space isomorphism
\begin{multline*}
\left(\sum_{\mathbf{k}\in\Z_{\ge 0}^l}\C (e - \eta(e))^{-(\mathbf{k} + \mathbf{1})}\right)\otimes_{U(\Ad(w_i)\overline{\mathfrak{n}}\cap\mathfrak{n}_0)}U(\mathfrak{g})\otimes_{U(\Ad(w_i)\mathfrak{p})}w_iV\\
\simeq
T_{w_i}(U(\mathfrak{g})\otimes_{U(\mathfrak{p})}V)
\end{multline*}
given by $E\otimes T\otimes v\mapsto ET\otimes (1\otimes v)$.
(Notice that $ET\in S_{w_i,0}$.)
\end{lem}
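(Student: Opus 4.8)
The plan is to unwind both sides by the Poincaré–Birkhoff–Witt theorem and to recognise the displayed map as the resulting straightening isomorphism. Write $\mathfrak{u}=\Ad(w_i)\overline{\mathfrak{n}}\cap\mathfrak{n}_0=\mathfrak{u}_{0,w_i}$, so that $\{e_1,\dots,e_l\}$ is a basis of $\mathfrak{u}$. Since $\mathfrak{g}=\overline{\mathfrak{n}}\oplus\mathfrak{p}$ and $\overline{\mathfrak{n}}\cap\mathfrak{p}=0$, we have $\mathfrak{g}=\Ad(w_i)\overline{\mathfrak{n}}\oplus\Ad(w_i)\mathfrak{p}$ and $\mathfrak{u}\cap\Ad(w_i)\mathfrak{p}=0$, so I may fix a vector-space decomposition $\mathfrak{g}=\mathfrak{u}\oplus\mathfrak{u}'\oplus\Ad(w_i)\mathfrak{p}$ with $\mathfrak{u}\oplus\mathfrak{u}'=\Ad(w_i)\overline{\mathfrak{n}}$. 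Set $\mathcal{E}=\sum_{\mathbf{k}\in\Z_{\ge 0}^l}\C(e-\eta(e))^{-(\mathbf{k}+\mathbf{1})}\subset S_{w_i,\eta}$; by Lemma~\ref{lem:Xe^-k} together with the hypothesis that $\bigoplus_{s\le t-1}\C e_s$ is an ideal of $\bigoplus_{s\le t}\C e_s$ for each $t$, $\mathcal{E}$ is closed under left and right multiplication by $U(\mathfrak{u})$, i.e.\ it is a $U(\mathfrak{u})$-subbimodule of $S_{w_i,\eta}$, and by Proposition~\ref{prop:1-dim decomposition of S_w} the module $S_{w_i,\eta}$ is identified, as a left $U(\mathfrak{g})$-module, with $U(\mathfrak{g})\otimes_{U(\mathfrak{u})}\mathcal{E}$ (here $(e-\eta(e))^{-(\mathbf{k}+\mathbf{1})}T$ always means the element of $S_{w_i,\eta}$ fixed in the notation after Proposition~\ref{prop:1-dim decomposition of S_w}).

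First I would rewrite the right-hand side. The standard isomorphism for an induced module twisted by a group element gives $w_i(U(\mathfrak{g})\otimes_{U(\mathfrak{p})}V)\simeq U(\mathfrak{g})\otimes_{U(\Ad(w_i)\mathfrak{p})}w_iV$ (check it on $1\otimes v$ and on the $\mathfrak{g}$-action, using the definition of the $w_i$-twist), hence $T_{w_i,\eta}(U(\mathfrak{g})\otimes_{U(\mathfrak{p})}V)=S_{w_i,\eta}\otimes_{U(\mathfrak{g})}w_i(U(\mathfrak{g})\otimes_{U(\mathfrak{p})}V)\simeq S_{w_i,\eta}\otimes_{U(\Ad(w_i)\mathfrak{p})}w_iV$. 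Ordering a basis of $\mathfrak{g}$ with $\mathfrak{u}$-vectors first and $\Ad(w_i)\mathfrak{p}$-vectors last, PBW gives $U(\mathfrak{g})\simeq U(\mathfrak{u})\otimes_\C U(\mathfrak{u}')\otimes_\C U(\Ad(w_i)\mathfrak{p})$ as a $(U(\mathfrak{u}),U(\Ad(w_i)\mathfrak{p}))$-bimodule. Substituting this into $S_{w_i,\eta}\otimes_{U(\Ad(w_i)\mathfrak{p})}w_iV$ and into the left-hand side, both sides acquire the common $\C$-vector space model $\mathcal{E}\otimes_\C U(\mathfrak{u}')\otimes_\C V$, with bases indexed by $(\mathbf{k},u',v)$, where $\mathbf{k}\in\Z_{\ge 0}^l$, $u'$ runs over a PBW basis of $U(\mathfrak{u}')$, and $v$ over a basis of $V$.

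It remains to check that the map $E\otimes T\otimes v\mapsto ET\otimes(1\otimes v)$ is well defined and, under these identifications, is unitriangular. Well-definedness needs two verifications: compatibility with the $\otimes_{U(\mathfrak{u})}$ relation is immediate from associativity of the $U(\mathfrak{g})$-bimodule $S_{w_i,\eta}$; compatibility with the $\otimes_{U(\Ad(w_i)\mathfrak{p})}$ relation reduces to the identity $Y\cdot(1\otimes v)=1\otimes Yv$ in $w_i(U(\mathfrak{g})\otimes_{U(\mathfrak{p})}V)$ for $Y\in\Ad(w_i)\mathfrak{p}$, which follows from the definition of the $w_i$-twist and from $\Ad(w_i)^{-1}(Y)\in\mathfrak{p}$. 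For bijectivity, write a general $T\in U(\mathfrak{g})$ in PBW form $T=uu'y$ with $u\in U(\mathfrak{u})$, $u'\in U(\mathfrak{u}')$, $y\in U(\Ad(w_i)\mathfrak{p})$; the two tensor relations absorb $u$ into $E$ on the left and $y$ into $v$ on the right, so the image of the basis vector $(e-\eta(e))^{-(\mathbf{k}+\mathbf{1})}\otimes u'\otimes v$ is $(e-\eta(e))^{-(\mathbf{k}+\mathbf{1})}u'\otimes(1\otimes v)$, which by Lemma~\ref{lem:Xe^-k} equals the corresponding basis vector of the model plus terms whose $\mathbf{k}$ has strictly larger total size $\lvert\mathbf{k}\rvert$ (and lower-order $u'$). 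Filtering $\mathcal{E}\otimes_\C U(\mathfrak{u}')\otimes_\C V$ by $\lvert\mathbf{k}\rvert$ and the degree of $u'$, the map is unitriangular, hence a $\C$-linear isomorphism.

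The only genuine work is the bookkeeping in this last step: arranging the filtration so that the corrections produced by Lemma~\ref{lem:Xe^-k} — which are present precisely because $\mathfrak{u}$ is not an ideal of $\mathfrak{g}$ and the $e_s$ only form a chain of ideals among themselves — land uniformly in strictly higher filtration degree on both sides at once. Once this is set up, the lemma follows directly from Proposition~\ref{prop:1-dim decomposition of S_w} and the definition of the twisting functor.
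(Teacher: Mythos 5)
Your proof is correct and follows exactly the one-line approach the paper itself indicates (``by the definition of the twisting functor and Poincar\'e--Birkhoff--Witt''): twist the induced module, decompose $U(\mathfrak{g})$ via PBW with respect to $\mathfrak{u}\oplus\mathfrak{u}'\oplus\Ad(w_i)\mathfrak{p}$, and compare the resulting models $\mathcal{E}\otimes U(\mathfrak{u}')\otimes V$ on both sides. Note that you (sensibly) prove the statement with $T_{w_i,\eta}$ and $S_{w_i,\eta}$, whereas the lemma as printed writes $T_{w_i}$ and claims ``$ET\in S_{w_i,0}$''; since the lemma is applied with $\eta$ in the proof of Theorem~\ref{thm:structure of I_i/I_{i - 1}} and with $\eta=0$ in Lemma~\ref{lem:e^{-1}-part}, the missing subscript appears to be a typographical slip in the paper, and your reading is the intended one.
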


\begin{proof}[Proof of Theorem~\ref{thm:structure of I_i/I_{i - 1}}]
By Lemma~\ref{lem:left2right for I'_i}, we have an isomorphism as a vector space,
\[
	I'_i \simeq 
	\mathcal{P}(O_i)
	\otimes_{U(\Ad(w_i)\overline{\mathfrak{n}}\cap \mathfrak{n}_0)} U(\mathfrak{g})
	\otimes_{U(\Ad(w_i)\mathfrak{p})} w_iJ'_{w_i^{-1}\eta}(\sigma\otimes(\lambda + \rho))
\]
given by $\delta_i(T,f,u')\mapsto f\otimes T\otimes u'$.

Notice that $\mathfrak{u}_{0,w_i} = \Ad(w_i)\overline{\mathfrak{n}}\cap \mathfrak{n}_0$ since $w_i\in W(M)$.
By Lemma~\ref{lem:induction + twisting}, we have
\begin{multline*}
T_{w_i,\eta}(J_i) \simeq 
\left(\sum_{\mathbf{k}\in\Z_{\ge 0}^l} \C (e - \eta(e))^{-(\mathbf{k}+\mathbf{1})}\right)
\otimes_{U(\Ad(w_i)\overline{\mathfrak{n}}\cap \mathfrak{n}_0)} U(\mathfrak{g})\\
\otimes_{U(\Ad(w_i)\mathfrak{p})} w_iJ'_{w_i^{-1}\eta}(\sigma\otimes(\lambda + \rho)).
\end{multline*}
Here we remark $\sum_{\mathbf{k}\in\Z_{\ge 0}^l} \C (e - \eta(e))^{-(\mathbf{k}+\mathbf{1})}$ is an $\Ad(w_i)\overline{\mathfrak{n}}\cap\mathfrak{n}_0$-stable subspace of $S_{w_i,\eta}$.
Hence, we can define a $\C$-vector space isomorphism $\Phi\colon T_{w_i,\eta}(J_i)\to I'_i$ by
\[
	\Phi((e - \eta(e))^{-(\mathbf{k}+\mathbf{1})}\otimes T\otimes u')
	=
	\delta_i(T,f_\mathbf{k}\eta_i^{-1},u').
\]
We prove that $\Phi$ is a $\mathfrak{g}$-homomorphism.

Fix $X\in \mathfrak{g}$.
We prove that
\[
\Phi(X((e - \eta(e))^{-(\mathbf{k}+\mathbf{1})}\otimes T\otimes u'))
= X\Phi((e - \eta(e))^{-(\mathbf{k}+\mathbf{1})}\otimes T\otimes u').
\]
By Lemma~\ref{lem:Xe^-k}, we have
\begin{multline*}
	X((e - \eta(e))^{-(\mathbf{k}+\mathbf{1})}\otimes T\otimes u')\\
	= \sum_{p_s\ge 0}\binom{p_1 + k_1}{k_1}\dotsm \binom{p_l + k_l}{k_l}(e - \eta(e))^{-(\mathbf{k}+\mathbf{p}+\mathbf{1})}\otimes(\ad(e)^\mathbf{p}X)T\otimes u'.
\end{multline*}
where $\mathbf{p} = (p_1,\dots,p_l)$.
Hence, we have
\begin{multline*}
	\Phi(X((e - \eta(e))^{-(\mathbf{k}+\mathbf{1})}\otimes T\otimes u'))\\
	= \sum_{p_s\ge 0}\delta_i\left((\ad(e)^\mathbf{p}X)T,\left(\frac{(-x_1)^{k_1 + p_1}}{k_1!p_1!}\dotsm\frac{(-x_l)^{k_l + p_l}}{k_l!p_l!}\right)\eta_i^{-1}, u'\right).
\end{multline*}
By Lemma~\ref{lem:left2right}, we have
\begin{align*}
X\Phi((e - \eta(e))^{-(\mathbf{k} + 1)}\otimes T\otimes u')
& = X\delta_i(T,f_\mathbf{k}\eta_i^{-1},u')\\
& = \sum_{\mathbf{p}\in\Z_{\ge 0}^l}\delta_i((\ad(e)^{\mathbf{p}}X)T,f_\mathbf{k}f_\mathbf{p}\eta_i^{-1},u').
\end{align*}
Hence, we have the theorem.
\end{proof}

\section{The module $J^*_\eta(I(\sigma,\lambda))$}\label{sec:the module J^*_eta(I(sigma,lambda))}
Now we investigate a module $J^*_\eta(I(\sigma,\lambda))$.
For a finite-length Fr\'echet representation $V$ of $G$, put $J(V) = (\varprojlim_{k\to \infty}(V_{\text{$K$-finite}}/\mathfrak{n}_0^kV_{\text{$K$-finite}}))_{\text{$\mathfrak{a}$-finite}}$\newsym{$J(V)$}.
This is also called the Jacquet module of V~\cite{MR562655}.
Define a category $\mathcal{O}'_{P_0}$\newsym{$\mathcal{O}'_{P_0}$} by the full subcategory of finitely generated $\mathfrak{g}$-modules consisting an object $V$ satisfying the following conditions.
\begin{enumerate}
\item The action of $\mathfrak{p}_0$ is locally finite. (In particular, the action of $\mathfrak{n}_0$ is locally nilpotent.)
\item The module $V$ is $Z(\mathfrak{g})$-finite.
\item The group $M_0$ acts on $V$ and its differential coincides with the action of $\mathfrak{m}_0\subset \mathfrak{g}$.
\item For $\nu\in\mathfrak{a}_0^*$ let $V_\nu$ be the generalized $\mathfrak{a}_0$-weight space with weight $\nu$.
Then $V = \bigoplus_{\nu\in\mathfrak{a}_0^*}V_\nu$ and $\dim V_\nu < \infty$.
\end{enumerate}
We define the category $\mathcal{O}_{\overline{P_0}}'$ similarly.\newsym{$\mathcal{O}'_{\overline{P_0}}$}
Then for a finite-length Fr\'echet representation $V$ of $G$ we have $J(V)\in\mathcal{O}_{\overline{P_0}}'$ and $J^*(V)\in\mathcal{O}_{P_0}'$.
For a $U(\mathfrak{g})$-module $V$, put $D'(V) = (V^*)_{\text{$\mathfrak{h}$-finite}}$ and $C(V) = (D'(V))^*$.\newsym{$D'(V)$}\newsym{$C(V)$}
Denote a full-subcategory of $\mathfrak{g}$-modules consisting finitely-generated and locally $\mathfrak{h}\oplus\mathfrak{u}$-finite modules by $\mathcal{O}'$\newsym{$\mathcal{O}'$}.
If $V$ is an object of the category $\mathcal{O}'$ then $D'D'(V) \simeq V$.
The relation between $J^*$ and $J$ is as follows.
\begin{prop}\label{prop:relation J^* and J}
Let $V$ be a finite-length Fr\'echet representation of $G$.
Then we have $J^*(V) \simeq D'(J(V))$.
\end{prop}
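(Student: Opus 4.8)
The plan is to realize both $J^*(V)$ and $D'(J(V))$ as the direct limit $\varinjlim_k X_k^*$, where $X=V_{\text{$K$-finite}}$ and $X_k=X/\mathfrak{n}_0^kX$, and then to check that the resulting identification is $\mathfrak{g}$-equivariant. The point of the proposition is that $J(V)$ is (the $\mathfrak{a}$-finite part of) the $\mathfrak{n}_0$-adic completion of $X$, while $J^*(V)$ is the corresponding ``co-completion'', and these are dual to each other.

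I would begin with $J^*(V)$. Since $\mathfrak{n}_0$ is the nilradical of $\mathfrak{p}_0$, each $\mathfrak{n}_0^kX$ is a $\mathfrak{p}_0$-submodule, and $X_k$ is finite dimensional: $X/\mathfrak{n}_0X$ is finite dimensional because $X$ is finitely generated and $Z(\mathfrak{g})$-finite, and one then induces on $k$ (Casselman~\cite{MR562655}). Dualizing, $X_k^*=\{\xi\in X^*\mid\mathfrak{n}_0^k\xi=0\}$ for the contragredient action, so $\bigcup_kX_k^*$ is the $\mathfrak{n}_0$-locally nilpotent part of $X^*$. I would then check that this equals $J^*(V)$: one inclusion is immediate, and for the other, if $X^k\xi=0$ for every $X\in\mathfrak{n}_0$ with $k$ uniform, a short computation with a Poincar\'e--Birkhoff--Witt basis of $U(\mathfrak{n}_0)$ adapted to the lower central series shows $U(\mathfrak{n}_0)\xi$ is finite dimensional, whence $\mathfrak{n}_0^N\xi=0$ for some $N$. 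Because $\ad\mathfrak{n}_0$ is nilpotent on $\mathfrak{g}$, this subspace is $\mathfrak{g}$-stable, so $J^*(V)=\varinjlim_kX_k^*$ as $\mathfrak{g}$-modules.

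Next I would invoke Casselman's comparison theorem: the projections $J(V)\to X_k$ induce isomorphisms $J(V)/\mathfrak{n}_0^kJ(V)\xrightarrow{\ \sim\ }X_k$ for all $k$~\cite{MR562655}. Each $\xi\in X_k^*$ then extends canonically to a functional on $J(V)$ through $J(V)\twoheadrightarrow J(V)/\mathfrak{n}_0^kJ(V)\cong X_k$, yielding a map $J^*(V)=\varinjlim_kX_k^*\to(J(V))^*$. It is $\mathfrak{g}$-equivariant, the pairing $J^*(V)\times J(V)\to\C$ it defines being invariant: on $J^*(V)\times X$ it is the restriction of the invariant pairing $X^*\times X\to\C$, and it extends by continuity along the dense embedding $X\to\varprojlim_kX_k$ (using that the $\mathfrak{g}$-action on this completion is continuous). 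The map is injective because each $J(V)\to X_k$ is onto, and its image lies in $D'(J(V))=((J(V))^*)_{\text{$\mathfrak{h}$-finite}}$ since the functionals so obtained factor through the finite dimensional $\mathfrak{p}_0$-modules $X_k$ and are therefore $\mathfrak{h}$-finite. For surjectivity onto $D'(J(V))$ I would argue that the $\mathfrak{a}_0$-weights of $J(V)$, which is an object of $\mathcal{O}'_{\overline{P_0}}$, are bounded below and of finite multiplicity, while $\mathfrak{n}_0$ strictly raises $\mathfrak{a}_0$-weights, so $(\mathfrak{n}_0^kJ(V))_\nu=0$ for $k$ large, for each fixed $\nu$; an $\mathfrak{h}$-finite $\mu\in(J(V))^*$ is a finite sum of generalized $\mathfrak{h}$-weight functionals, each supported on a single generalized $\mathfrak{a}_0$-weight space, so $\mu$ annihilates $\mathfrak{n}_0^kJ(V)$ for $k$ large and factors through $J(V)/\mathfrak{n}_0^kJ(V)\cong X_k$, i.e.\ comes from $X_k^*\subseteq J^*(V)$. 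This gives $J^*(V)\simeq D'(J(V))$.

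The step I expect to be the main obstacle is the surjectivity in the last paragraph: reconciling the $\mathfrak{h}$-finiteness condition that defines $D'$ with the $\mathfrak{n}_0$-adic filtration. It rests on the finiteness properties packaged into the category $\mathcal{O}'_{\overline{P_0}}$ together with Casselman's comparison $J(V)/\mathfrak{n}_0^kJ(V)\cong X/\mathfrak{n}_0^kX$; once the standard finite dimensionality of $X/\mathfrak{n}_0^kX$ is granted, the remainder is essentially bookkeeping with direct and inverse limits and contragredients.
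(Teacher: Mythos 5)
Your proof is correct, and it follows the same route the paper itself relies on even though the paper states this proposition without proof: the identification $J^*(V)=\varinjlim_k(V_{\text{$K$-finite}}/\mathfrak{n}_0^kV_{\text{$K$-finite}})^*$ that anchors your argument is precisely the one invoked at the start of the proof of the subsequent ($\eta$-twisted) proposition, where the expression $J^*_\eta(V)=\varinjlim_{k,l}(I/\mathfrak{n}_\eta^l(\Ker\eta_0)^kI)^*$ reduces to yours when $\eta=0$. The remaining steps — Casselman's comparison $J(V)/\mathfrak{n}_0^kJ(V)\cong V_{\text{$K$-finite}}/\mathfrak{n}_0^kV_{\text{$K$-finite}}$ and the weight bookkeeping reconciling $\mathfrak{h}$-finiteness in $D'$ with the $\mathfrak{n}_0$-adic filtration via the boundedness and finite-multiplicity of $\mathfrak{a}_0$-weights in an object of $\mathcal{O}'_{\overline{P_0}}$ — are the standard argument, and you handle them soundly.
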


The character $\eta\colon \mathfrak{n}_0\to \C$ defines an algebra homomorphism $U(\mathfrak{n}_0)\to \C$ by the universality of the universal enveloping algebra.
Let $\Ker\eta$ be the kernel of this algebra homomorphism and put $\Gamma_\eta(V) = \{v\in V\mid \text{for some $k$, $(\Ker\eta)^kv = 0$}\}$.\newsym{$\Gamma_\eta(V)$}
First we prove the following proposition.
\begin{prop}\label{prop:relation J^*_eta and J_eta}
Let $V$ be a finite-length Fr\'echet representation of $G$.
Then we have $J^*_\eta(V) \simeq \Gamma_\eta(J(V)^*)$.
\end{prop}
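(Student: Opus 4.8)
The plan is to reduce the statement, by means of an automorphism of $U(\mathfrak{n}_0)$ and the already-proved case $\eta=0$, to a comparison of two completions of $V_{\text{$K$-finite}}$. First I would rewrite both sides in terms of $\Gamma_\eta$ applied to a full linear dual. Since $\eta$ annihilates $[\mathfrak{n}_0,\mathfrak{n}_0]$, the assignment $X\mapsto X-\eta(X)$ for $X\in\mathfrak{n}_0$ extends to an algebra automorphism $\tau$ of $U(\mathfrak{n}_0)$ carrying the augmentation ideal $U(\mathfrak{n}_0)\mathfrak{n}_0$ onto $\Ker\eta$, so that $\tau((U(\mathfrak{n}_0)\mathfrak{n}_0)^k)=(\Ker\eta)^k$ and $U(\mathfrak{n}_0)/(\Ker\eta)^k$ is finite-dimensional for every $k$. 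Twisting an $\mathfrak{n}_0$-module by $\tau$ turns the defining condition of $J^*_\eta$ (generalized nullity under each $X-\eta(X)$) into that of $J^*$, and turns ``$(\Ker\eta)^k$ annihilates it'' into ``$(U(\mathfrak{n}_0)\mathfrak{n}_0)^k$ annihilates it''; since $\mathfrak{n}_0$ is nilpotent these two conditions agree on the twisted module by the usual Engel/Poincar\'e--Birkhoff--Witt argument. Undoing the twist gives $J^*_\eta(V)=\Gamma_\eta((V_{\text{$K$-finite}})^*)$, and the same manipulation expresses $\Gamma_\eta$ of a dual through annihilators of the powers $(\Ker\eta)^nW$, i.e. $\Gamma_\eta(W^*)\simeq\bigl(\varprojlim_n W/(\Ker\eta)^nW\bigr)^*$ for any $U(\mathfrak{n}_0)$-module $W$ (up to the principal anti-automorphism $u\mapsto\check u$, which only changes $\eta$ to $-\eta$).

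Applying this with $W=V_{\text{$K$-finite}}$ and with $W=J(V)$, the assertion becomes $\varprojlim_n V_{\text{$K$-finite}}/(\Ker\eta)^nV_{\text{$K$-finite}}\simeq\varprojlim_n J(V)/(\Ker\eta)^nJ(V)$: the $\Ker\eta$-adic completions of $V_{\text{$K$-finite}}$ and of its Jacquet module should coincide. To see this I would use that $J(V)$ is obtained from $V_{\text{$K$-finite}}$ by first completing $U(\mathfrak{n}_0)\mathfrak{n}_0$-adically and then passing to the $\mathfrak{a}$-finite part, and that $U(\mathfrak{n}_0)\mathfrak{n}_0$ and $\Ker\eta$ are comaximal maximal ideals of $U(\mathfrak{n}_0)$: by the Chinese remainder theorem the $\Ker\eta$-adic completion does not see the $U(\mathfrak{n}_0)\mathfrak{n}_0$-primary part, so it is unaffected by the $U(\mathfrak{n}_0)\mathfrak{n}_0$-adic completion; and the membership $J(V)\in\mathcal{O}'_{\overline{P_0}}$ (weights bounded below, finite-dimensional $\mathfrak{a}_0$-weight spaces, $\overline{\mathfrak{n}_0}$ locally nilpotent) should show likewise that passage to the $\mathfrak{a}$-finite part is invisible to the $\Ker\eta$-adic completion. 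The case $\eta=0$ of all of this is precisely Proposition~\ref{prop:relation J^* and J}, which I would feed in at the end after transporting everything through $\tau$, to conclude $J^*_\eta(V)\simeq\Gamma_\eta(J(V)^*)$ naturally in $V$.

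The hard part will be this last comparison of completions: the $U(\mathfrak{n}_0)\mathfrak{n}_0$-adic and $\Ker\eta$-adic filtrations on $V_{\text{$K$-finite}}$ are genuinely incomparable when $\eta\neq 0$ (for instance $(\Ker\eta)^k\not\subseteq U(\mathfrak{n}_0)\mathfrak{n}_0$), so no purely formal functoriality suffices; one must argue, using the finite-dimensionality of $U(\mathfrak{n}_0)/(\Ker\eta)^k$ together with the $\mathfrak{a}_0$-weight decomposition available in $\mathcal{O}'_{\overline{P_0}}$, that the two constructions defining $J$ modify $V_{\text{$K$-finite}}$ only in a ``$U(\mathfrak{n}_0)\mathfrak{n}_0$-primary'' direction that the $\Ker\eta$-adic completion ignores. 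Once that compatibility is established, chaining together the identifications of the previous two paragraphs yields the proposition.
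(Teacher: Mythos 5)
Your rewriting of $J^*_\eta(V)$ as $\Gamma_\eta((V_{\text{$K$-finite}})^*)$, and the expression of $\Gamma_\eta(W^*)$ through the quotients $W/(\Ker\eta)^nW$ (modulo a finite-dimensionality check needed to identify a direct limit of duals with the dual of an inverse limit), are fine. The gap, which you flag yourself as the hard part, is the identification $\varprojlim_n V_{\text{$K$-finite}}/(\Ker\eta)^n V_{\text{$K$-finite}}\simeq\varprojlim_n J(V)/(\Ker\eta)^n J(V)$, and neither of the mechanisms you offer survives scrutiny. The claim that the $\Ker\eta$-adic completion is unaffected by a prior $U(\mathfrak{n}_0)\mathfrak{n}_0$-adic completion because the two ideals are comaximal is false already in commutative algebra: with $R=M=\Z$, $I=(2)$, $J=(3)$, completing $M$ at $I$ yields the ring of $2$-adic integers, in which $3$ is a unit, so the subsequent $J$-adic completion is $0$, while the $J$-adic completion of $M$ itself is the nonzero ring of $3$-adic integers. (There is also no canonical $U(\mathfrak{n}_0)$-map between $V_{\text{$K$-finite}}$ and $J(V)$ from which to induce the comparison.) Likewise, feeding in Proposition~\ref{prop:relation J^* and J} after transporting through $\tau$ cannot work: $\tau$ is an automorphism of $U(\mathfrak{n}_0)$ only, not of $U(\mathfrak{g})$, so the twist of $V$ by $\tau$ is not a $\mathfrak{g}$-module, let alone a $G$-representation, and that proposition does not apply to it. If the CRT reduction were valid it would yield a purely formal derivation of Matumoto's theorem~\cite[Theorem~4.9.2]{MR1047117} from the trivial-character case, which is implausible.

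The paper's proof supplies exactly the ingredients you are missing. It passes through the parabolic $\mathfrak{p}_\eta=\mathfrak{m}_\eta\oplus\mathfrak{a}_\eta\oplus\mathfrak{n}_\eta$ attached to $\supp\eta$, writing the projective system defining $J^*_\eta(V)$ as a double limit: first over powers of $\mathfrak{n}_\eta$, on which $\eta$ vanishes (so there the $\Ker\eta$-adic and $\mathfrak{n}_\eta$-adic filtrations agree), and then over powers of $\Ker\eta_0$ with $\eta_0=\eta|_{\mathfrak{m}_\eta\cap\mathfrak{n}_0}$, which is nondegenerate. The nondegenerate factor is handled by Matumoto's theorem applied to the Harish-Chandra modules $I/\mathfrak{n}_\eta^l I$ of $M_\eta A_\eta$, and the two limits are collapsed by checking $G_{M_\eta}(I/\mathfrak{n}_\eta^l I)=G(I)/\mathfrak{n}_\eta^l G(I)$ in the notation of the paper's proof. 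Both the intermediate parabolic $P_\eta$ and the appeal to Matumoto's nondegenerate result are essential, and both are absent from your argument.
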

\begin{proof}
Recall that $\mathfrak{p}_\eta = \mathfrak{m}_\eta\oplus\mathfrak{a}_\eta\oplus\mathfrak{n}_\eta$ is the complexification of the Lie algebra of the parabolic subgroup corresponding to $\supp\eta$ (Section~\ref{sec:Analytic continuation}).
If $\supp\eta = \Pi$, this proposition is proved by Matumoto~\cite[Theorem~4.9.2]{MR1047117}.

Put $I = V_{\text{$K$-finite}}$.
Let $\eta_0\colon U(\mathfrak{m}\cap \mathfrak{n}_0)\to \C$ be the restriction of $\eta$ on $U(\mathfrak{m}\cap \mathfrak{n}_0)$.
Then we have 
\[
J^*_\eta(V) = \varinjlim_{k,l}(I/\mathfrak{n}_\eta^l(\Ker\eta_0)^kI)^* = \varinjlim_{k,l}((I/\mathfrak{n}_\eta^lI)/(\Ker\eta_0)^k(I/\mathfrak{n}_\eta^lI))^*.
\]
For a $U(\mathfrak{g})$-module $V_0$, put $G(V_0) = (\varprojlim_k V_0/\mathfrak{n}_0^kV_0)_{\text{$\mathfrak{a}$-finite}}$.
For a $U(\mathfrak{m}_\eta\oplus\mathfrak{a}_\eta)$-module $V_1$, put $G_{M_\eta}(V_1) = (\varprojlim_k V_1/(\mathfrak{m}_\eta\cap\mathfrak{n}_0)^kV_1)_{\textrm{$(\mathfrak{m}\cap\mathfrak{a}_0)$-finite}}$.
Since $I/\mathfrak{n}_\eta^lI$ is a Harish-Chandra module of $\mathfrak{m}_\eta\oplus\mathfrak{a}_\eta$, $J^*_{\eta_0}(I/\mathfrak{n}_\eta^lI) = \Gamma_{\eta_0}(G_{M_\eta}(I/\mathfrak{n}_\eta^lI)^*)$ by the result of Matumoto.
Taking a subspace annihilated by $(\Ker\eta_0)^k$, we have 
\[
((I/\mathfrak{n}_\eta^lI)/(\Ker\eta_0)^k(I/\mathfrak{n}_\eta^lI))^* = (G_{M_\eta}(I/\mathfrak{n}_\eta^lI)/(\Ker\eta_0)^kG_{M_\eta}(I/\mathfrak{n}_\eta^lI))^*.
\]
Since $I$ is a finitely-generated $U(\mathfrak{n}_0)$-module, the left hand side is finite-dimensional.
Hence, we have 
\[
(I/\mathfrak{n}_\eta^lI)/(\Ker\eta_0)^k(I/\mathfrak{n}_\eta^lI) = G_{M_\eta}(I/\mathfrak{n}_\eta^lI)/(\Ker\eta_0)^kG_{M_\eta}(I/\mathfrak{n}_\eta^lI).
\]
It is sufficient to prove that $G_{M_\eta}(I/\mathfrak{n}_\eta^lI) = G(I)/\mathfrak{n}_\eta^lG(I)$.
We have 
\[
(I/\mathfrak{n}_\eta^lI)/(\mathfrak{m}_\eta\cap\mathfrak{n}_0)^k(I/\mathfrak{n}_\eta^lI) = I/(\mathfrak{m}_\eta\cap\mathfrak{n}_0)^k\mathfrak{n}_\eta^lI = G(I)/(\mathfrak{m}_\eta\cap\mathfrak{n}_0)^k\mathfrak{n}_\eta^lG(I).
\]
Taking the projective limit we have $G_{M_\eta}(I/\mathfrak{n}_\eta^lI) = G_{M_\eta}(G(I)/\mathfrak{n}_\eta^lG(I))$.
Since $G(I)/\mathfrak{n}_\eta^lG(I)\in \mathcal{O}'_{M_\eta\cap\overline{P_0}}$ we have $G_{M_\eta}(G(I)/\mathfrak{n}_\eta^lG(I)) = G(I)/\mathfrak{n}_\eta^lG(I)$.
\end{proof}

Combining Theorem~\ref{thm:structure of I_i/I_{i - 1}}, Proposition~\ref{prop:relation J^*_eta and J_eta} and the automatic continuation theorem~\cite[Theorem~4.8]{MR727854}, we have the following theorem.

\begin{thm}\label{thm:stucture of J^*(I(sigma,lambda))}
There exists a filtration $0 = \widetilde{I_1}\subset\cdots \subset \widetilde{I_r} = J^*_\eta(I(\sigma,\lambda))$ such that $\widetilde{I_i}/\widetilde{I_{i - 1}} \simeq \Gamma_\eta(C(T_{w_i}(U(\mathfrak{g})\otimes_{U(\mathfrak{p})} J^*(\sigma\otimes e^{\lambda+\rho}))))$.\newsym{$\widetilde{I_i}$}
\end{thm}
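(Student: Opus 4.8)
The plan is to reduce to the trivial character, where the Bruhat filtration of Theorem~\ref{thm:structure of I_i/I_{i - 1}} is available, and then to transport that filtration through the functors $C$ and $\Gamma_\eta$, combining Theorem~\ref{thm:structure of I_i/I_{i - 1}}, Proposition~\ref{prop:relation J^* and J}, Proposition~\ref{prop:relation J^*_eta and J_eta} and the automatic continuation theorem~\cite[Theorem~4.8]{MR727854}.

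First I would set up the $\eta=0$ picture. By the automatic continuation theorem, $J^*(I(\sigma,\lambda)) = J'(I(\sigma,\lambda))$ and $J^*(\sigma\otimes e^{\lambda+\rho}) = J'(\sigma\otimes e^{\lambda+\rho})$. For the trivial character the conditions of Lemma~\ref{lem:vanishing lemma}~(1)--(3) hold automatically: it is unitary, it vanishes on every subalgebra, and $J'(\sigma\otimes e^{\lambda+\rho})\ne 0$ because $\sigma$ is a nonzero Harish-Chandra module. Hence, by Theorem~\ref{thm:succ quot is I'_i} and Theorem~\ref{thm:structure of I_i/I_{i - 1}}, every step of the Bruhat filtration $0 = I_0\subset I_1\subset\dots\subset I_r = J^*(I(\sigma,\lambda))$ is nonzero and
\[
	I_i/I_{i - 1}\simeq T_{w_i}\bigl(U(\mathfrak{g})\otimes_{U(\mathfrak{p})}J^*(\sigma\otimes e^{\lambda+\rho})\bigr).
\]

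Next I would apply $C$. Every module occurring above lies in $\mathcal{O}'_{P_0}$, which is closed under subquotients and whose objects have finite-dimensional generalized $\mathfrak{h}$-weight spaces; splitting into weight spaces and using exactness of dualization of finite-dimensional spaces shows that $C$ is an exact (covariant) functor on $\mathcal{O}'_{P_0}$. Moreover $C(J^*(I(\sigma,\lambda))) = (D'D'(J(I(\sigma,\lambda))))^*\simeq J(I(\sigma,\lambda))^*$ by Proposition~\ref{prop:relation J^* and J} and $D'D'\simeq\id$. So $\{C(I_i)\}$ is a filtration of $J(I(\sigma,\lambda))^*$ with $C(I_i)/C(I_{i - 1})\simeq C(T_{w_i}(U(\mathfrak{g})\otimes_{U(\mathfrak{p})}J^*(\sigma\otimes e^{\lambda+\rho})))$.

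Finally I would set $\widetilde{I_i} = \Gamma_\eta(C(I_i))$; this is a filtration since $\Gamma_\eta$ and $C$ preserve inclusions, and by Proposition~\ref{prop:relation J^*_eta and J_eta} its top term is $\Gamma_\eta(J(I(\sigma,\lambda))^*) = J^*_\eta(I(\sigma,\lambda))$. The step I expect to be the main obstacle is that $\Gamma_\eta$ is only left exact, so to conclude $\widetilde{I_i}/\widetilde{I_{i - 1}}\simeq\Gamma_\eta(C(I_i/I_{i - 1})) = \Gamma_\eta(C(T_{w_i}(U(\mathfrak{g})\otimes_{U(\mathfrak{p})} J^*(\sigma\otimes e^{\lambda+\rho}))))$ — the asserted formula — one must verify that $\Gamma_\eta$ stays exact on the short exact sequences $0\to C(I_{i - 1})\to C(I_i)\to C(I_i/I_{i - 1})\to 0$, i.e.\ that every generalized $\eta$-eigenvector of $C(I_i/I_{i - 1})$ lifts to $C(I_i)$. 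The plan for this is to show the left-hand terms $C(I_{i - 1})$ are $\Gamma_\eta$-acyclic as $U(\mathfrak{n}_0)$-modules, using the completed-module structure of $C$ on $\mathcal{O}'_{P_0}$ (the dual of a suitably $U(\mathfrak{n}_0)$-flat module), equivalently the exactness of the composite Whittaker-type functor $M\mapsto\Gamma_\eta(C(M))$ on $\mathcal{O}'_{P_0}$, which on the image of $J^*$ recovers the exact functor $J^*_\eta$. Granting this, $\{\widetilde{I_i}\}$ has the required successive quotients.
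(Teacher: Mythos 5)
Your approach is essentially the same as the paper's: reduce to the trivial character, where the conditions of Lemma~\ref{lem:vanishing lemma} hold automatically, invoke the automatic continuation theorem to replace $J'$ by $J^*$, and transport the Bruhat filtration of Theorem~\ref{thm:succ quot is I'_i}/Theorem~\ref{thm:structure of I_i/I_{i - 1}} through $\Gamma_\eta\circ C$ and Proposition~\ref{prop:relation J^*_eta and J_eta}. This is exactly what the paper's one-line proof is asserting, and you have correctly assembled all the ingredients it cites (adding Proposition~\ref{prop:relation J^* and J} to identify $C(J^*(I(\sigma,\lambda)))$ with $J(I(\sigma,\lambda))^*$, which the paper leaves implicit).

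The exactness of $\Gamma_\eta$ on the short exact sequences $0\to C(I_{i-1})\to C(I_i)\to C(I_i/I_{i-1})\to 0$ that you flag as ``the main obstacle'' is a genuine point and the paper does not address it explicitly; but it is not a gap that threatens the argument, and it is cleaner to resolve than via the acyclicity route you sketch. Writing $C(I_j) = D'(I_j)^*$ with $D'(I_j)\in\mathcal{O}'_{\overline{P_0}}$, each $D'(I_j)$ is a finitely generated $U(\mathfrak{n}_0)$-module (because $\mathfrak{m}_0\oplus\mathfrak{a}_0\oplus\overline{\mathfrak{n}_0}$ acts locally finitely on a finite set of generators), $U(\mathfrak{n}_0)$ is Noetherian, and $J=\Ker(-\eta)$ is an ideal with $U(\mathfrak{n}_0)/J^k$ finite-dimensional; the Artin--Rees lemma then shows that for any submodule $W''\subset W$ in this category and any $k$ there is $k'$ with $W''\cap J^{k'}W\subset J^kW''$, so any $\phi\in\Gamma_\eta((W'')^*)$ (i.e.\ $\phi$ factoring through $W''/J^kW''$) extends to a functional on $W/J^{k'}W$, hence lifts to $\Gamma_\eta(W^*)$. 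This gives the required surjectivity directly, without appealing to flatness of $C$ or to recovering $J^*_\eta$ on the image of $J^*$ (which, as you note, would only cover $I_r$ itself). With that single lemma inserted, your filtration $\widetilde{I_i}=\Gamma_\eta(C(I_i))$ has the stated successive quotients.
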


\section{Whittaker vectors}\label{sec:Whittaker vectors}
In this section we study Whittaker vectors of $I(\sigma,\lambda)'$ and $(I(\sigma,\lambda)_{\text{$K$-finite}})^*$ (Definition~\ref{defn:Whittaker vectors}).

For $i$ such that $I_i/I_{i - 1}\ne 0$, we define some maps as follows.
Let $\gamma_1$ be the first projection with respect to the decomposition $U(\mathfrak{g}) = U(\mathfrak{l}_\eta)\oplus(\overline{\mathfrak{n}_\eta}U(\mathfrak{g}) + U(\mathfrak{g})\mathfrak{n}_\eta)$.
Notice that by Lemma~\ref{lem:vanishing lemma} if $I_i/I_{i - 1}\ne 0$ then we have $\mathfrak{l}_\eta\cap \Ad(w_i)\overline{\mathfrak{n}}\subset \mathfrak{n}_0$.
Define $\gamma_2$ by the first projection with respect to the decomposition $U(\mathfrak{l}_\eta) = U(\mathfrak{l}_\eta\cap \Ad(w_i)\mathfrak{p}) \oplus U(\mathfrak{l}_\eta)\Ker\eta|_{\mathfrak{l}_\eta\cap \Ad(w_i)\overline{\mathfrak{n}}}$.
Let $\gamma_3$ be the first projection with respect to the decomposition $U(\mathfrak{l}_\eta\cap \Ad(w_i)\mathfrak{p}) = U(\mathfrak{l}_\eta\cap \Ad(w_i)\mathfrak{l})\oplus (\mathfrak{l}_\eta\cap \Ad(w_i)\mathfrak{n})U(\mathfrak{l}_\eta\cap \Ad(w_i)\mathfrak{p})$.
Finally define $\gamma_4$ by the first projection with respect to the decomposition $U(\mathfrak{l}_\eta\cap \Ad(w_i)\mathfrak{l}) = U(\mathfrak{h})\oplus((\overline{\mathfrak{u}_0}\cap\mathfrak{l}_\eta\cap \Ad(w_i)\mathfrak{l})U(\mathfrak{l}_\eta\cap \Ad(w_i)\mathfrak{l}) + U(\mathfrak{l}_\eta\cap \Ad(w_i)\mathfrak{l})(\mathfrak{l}_\eta\cap \Ad(w_i)\mathfrak{l} \cap \mathfrak{u}_0))$.
Then the restriction of $\gamma_4\circ\gamma_3\circ\gamma_2\circ\gamma_1$ on $Z(\mathfrak{g})$ is the (non-shifted) Harish-Chandra homomorphism.
If $x\in \Wh_\eta(I_i/I_{i - 1})$ then $Tx = \gamma_2\gamma_1(T)x$ for $T\in Z(\mathfrak{g})$.
\begin{align*}
\gamma_1\colon & U(\mathfrak{g}) = U(\mathfrak{l}_\eta)\oplus(\overline{\mathfrak{n}_\eta}U(\mathfrak{g}) + U(\mathfrak{g})\mathfrak{n}_\eta)\to U(\mathfrak{l}_\eta),\\
\gamma_2\colon & U(\mathfrak{l}_\eta) = U(\mathfrak{l}_\eta\cap \Ad(w_i)\mathfrak{p}) \oplus U(\mathfrak{l}_\eta)\Ker\eta|_{\mathfrak{l}_\eta\cap \Ad(w_i)\overline{\mathfrak{n}}}\to U(\mathfrak{l}_\eta\cap \Ad(w_i)\mathfrak{p}),\\
\gamma_3\colon & U(\mathfrak{l}_\eta\cap \Ad(w_i)\mathfrak{p}) = U(\mathfrak{l}_\eta\cap \Ad(w_i)\mathfrak{l})\oplus (\mathfrak{l}_\eta\cap \Ad(w_i)\mathfrak{n})U(\mathfrak{l}_\eta\cap \Ad(w_i)\mathfrak{p})
\tag*{$\to U(\mathfrak{l}_\eta\cap \Ad(w_i)\mathfrak{l})$},\\
\gamma_4\colon & U(\mathfrak{l}_\eta\cap \Ad(w_i)\mathfrak{l}) 
= U(\mathfrak{h})\oplus((\overline{\mathfrak{u}_0}\cap\mathfrak{l}_\eta\cap \Ad(w_i)\mathfrak{l})U(\mathfrak{l}_\eta\cap \Ad(w_i)\mathfrak{l})
\tag*{$+ U(\mathfrak{l}_\eta\cap \Ad(w_i)\mathfrak{l})(\mathfrak{l}_\eta\cap \Ad(w_i)\mathfrak{l} \cap \mathfrak{u}_0))\to U(\mathfrak{h}).$}
\end{align*}
\newsym{$\gamma_1,\gamma_2,\gamma_3,\gamma_4$}
\begin{lem}\label{lem:lemma of infinitesimal character}
Let $V$ be a $U(\mathfrak{g})$-module with an infinitesimal character $\widetilde{\lambda}$, $\chi$ a character of $Z(\mathfrak{g})$ such that $z\in Z(\mathfrak{g})$ acts by $\chi(z)$ on $V$.
Take a nonzero element $v\in V$ such that $(\gamma_3\gamma_2\gamma_1(z) - \chi(z))v = 0$.
Moreover, assume that there exists $\mu\in\mathfrak{a}^*$ such that $Hv = (w_i\mu+\rho_0)(H)v$ for all $H\in \Ad(w_i)\mathfrak{a}$.
Then there exists $\widetilde{w}\in \widetilde{W}$ such that $\widetilde{w}\widetilde{\lambda}|_\mathfrak{a} = \mu$.
\end{lem}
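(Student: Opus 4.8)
The plan is to interpret the hypothesis as a statement about the reductive subalgebra $\mathfrak{q}:=\mathfrak{l}_\eta\cap\Ad(w_i)\mathfrak{l}$ and then invoke Harish-Chandra's theorem. First I note that $\mathfrak{q}$ has Cartan subalgebra $\mathfrak{h}$ and triangular decomposition $\mathfrak{q}=\mathfrak{n}_\mathfrak{q}^-\oplus\mathfrak{h}\oplus\mathfrak{n}_\mathfrak{q}^+$ with $\mathfrak{n}_\mathfrak{q}^+=\mathfrak{q}\cap\mathfrak{u}_0$ and $\mathfrak{n}_\mathfrak{q}^-=\mathfrak{q}\cap\overline{\mathfrak{u}_0}$; with respect to this decomposition the map $\gamma_4$ is exactly the non-shifted Harish-Chandra projection $U(\mathfrak{q})\to U(\mathfrak{h})$ of $\mathfrak{q}$, so $\gamma_4|_{Z(\mathfrak{q})}$ is the non-shifted Harish-Chandra homomorphism of $\mathfrak{q}$. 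Note also that $\Ad(w_i)\mathfrak{a}$ lies in the centre of $\mathfrak{q}$ and that $\widetilde{\rho}|_{\mathfrak{a}_0}=\rho_0$, since $\Delta^+$ is compatible with $\Sigma^+$.

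The key structural input is that $\gamma_3\circ\gamma_2\circ\gamma_1$ maps $Z(\mathfrak{g})$ into $Z(\mathfrak{q})$. This factors as the parabolic Harish-Chandra homomorphism $\gamma_1\colon Z(\mathfrak{g})\to Z(\mathfrak{l}_\eta)$ attached to $\mathfrak{p}_\eta$, followed by an $\eta$-twisted parabolic Harish-Chandra homomorphism $\gamma_3\circ\gamma_2\colon Z(\mathfrak{l}_\eta)\to Z(\mathfrak{q})$ for the parabolic subalgebra $\mathfrak{l}_\eta\cap\Ad(w_i)\mathfrak{p}$ of $\mathfrak{l}_\eta$ --- whose Levi part is $\mathfrak{q}$, whose nilradical is $\mathfrak{l}_\eta\cap\Ad(w_i)\mathfrak{n}$, and whose opposite nilradical $\mathfrak{l}_\eta\cap\Ad(w_i)\overline{\mathfrak{n}}$ lies in $\mathfrak{n}_0$, so that the twist by $\eta$ makes sense (cf.\ Matumoto~\cite{MR1047117} and the twisted Harish-Chandra homomorphism of Kostant and Lynch). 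Verifying this twisted step, i.e.\ that $\gamma_3\gamma_2$ is an algebra map on $Z(\mathfrak{l}_\eta)$ with image in $Z(\mathfrak{q})$, is the point I expect to be the main obstacle; the rest of the argument is a weight computation.

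Granting this, fix $z\in Z(\mathfrak{g})$ and set $\zeta_z=\gamma_3\gamma_2\gamma_1(z)\in Z(\mathfrak{q})$. The hypothesis $\zeta_z v=\chi(z)v$ together with the centrality of $\zeta_z$ in $U(\mathfrak{q})$ shows that $\zeta_z$ acts by the scalar $\chi(z)$ on the whole $\mathfrak{q}$-submodule $N=U(\mathfrak{q})v$. On $N$ the subalgebra $\mathfrak{n}_\mathfrak{q}^+$ acts locally nilpotently and $\mathfrak{h}$ locally finitely (in the situations where the lemma is used $V$ carries an $M_0$-action and $X-\eta(X)$ acts locally nilpotently for $X\in\mathfrak{n}_0$, which yields this), so $U(\mathfrak{h}\oplus\mathfrak{n}_\mathfrak{q}^+)v$ is finite-dimensional and, by Lie's theorem, contains a $\mathfrak{q}$-highest weight vector $v'$, say of weight $\widetilde{\nu}\in\mathfrak{h}^*$. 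Since $\Ad(w_i)\mathfrak{a}$ is central in $\mathfrak{q}$, every vector of $N$ --- in particular $v'$ --- has the same $\Ad(w_i)\mathfrak{a}$-weight as $v$, so $\widetilde{\nu}|_{\Ad(w_i)\mathfrak{a}}=(w_i\mu+\rho_0)|_{\Ad(w_i)\mathfrak{a}}$. On the other hand, the central element $\zeta_z$ acts on the $\mathfrak{q}$-highest weight vector $v'$ by the scalar $\gamma_4(\zeta_z)(\widetilde{\nu})$, and $\gamma_4(\zeta_z)=\gamma_4\gamma_3\gamma_2\gamma_1(z)$ is, by the identification recalled just before the lemma, the non-shifted Harish-Chandra homomorphism $\gamma(z)$ of $\mathfrak{g}$. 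Hence $\gamma(z)(\widetilde{\nu})=\chi(z)$ for every $z\in Z(\mathfrak{g})$.

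Finally, since $V$ has infinitesimal character $\widetilde{\lambda}$, Harish-Chandra's theorem gives $\widetilde{w}\in\widetilde{W}$ with $\widetilde{\nu}$ lying in the $\widetilde{W}$-orbit of $\widetilde{\lambda}$ for the relevant ($\rho$-shifted) action. Restricting that identity to $\mathfrak{a}$, transporting it by the lift $\iota(w_i)$ (which preserves $\mathfrak{a}_0^*\subset\mathfrak{h}^*$ and acts there as $w_i\in W$), and using $\widetilde{\rho}|_{\mathfrak{a}_0}=\rho_0$ to match normalisations --- the $\rho_0$ occurring in the $\Ad(w_i)\mathfrak{a}$-weight of $v$ absorbing the Harish-Chandra shift --- one arrives at $\widetilde{w}'\widetilde{\lambda}|_{\mathfrak{a}}=\mu$ for a suitable $\widetilde{w}'\in\widetilde{W}$, which is the assertion.
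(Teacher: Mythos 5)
Your proof takes a genuinely different route from the paper's, and it has a gap you flag as well as a deeper problem you do not. The paper never constructs a $\mathfrak{q}$-submodule $U(\mathfrak{q})v$ or a highest weight vector. Instead it considers the commutative subalgebra $Z = \gamma_3\gamma_2\gamma_1(Z(\mathfrak{g}))\cdot U(\Ad(w_i)\mathfrak{a})$, on which $v$ is a simultaneous eigenvector with eigenvalue character $\chi_0$ (both lemma hypotheses feed into this); by Harish-Chandra's finiteness theorem $\gamma_4|_Z$ is injective with $U(\mathfrak{h})$ integral over $\gamma_4(Z)$, so $\chi_0$ lifts to a point $\widetilde{\lambda_1}\in\mathfrak{h}^*$; one then reads $\mu$ off by restricting $\widetilde{\lambda_1}$ to $\Ad(w_i)\mathfrak{a}$ and using $\widetilde{\rho}|_{\mathfrak{a}_0}=\rho_0$. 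This needs no centrality of $\gamma_3\gamma_2\gamma_1(z)$ in $U(\mathfrak{q})$ and no structural hypotheses on $V$ beyond those stated.

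Your argument, by contrast, (a) imports hypotheses not in the lemma (local finiteness of $\mathfrak{h}$ and local nilpotence of $\mathfrak{n}_\mathfrak{q}^+$ on $U(\mathfrak{q})v$), so at best it proves a weaker statement; (b) rests on the unverified inclusion $\gamma_3\gamma_2\gamma_1(Z(\mathfrak{g}))\subset Z(\mathfrak{q})$, which you acknowledge; and (c) is actually wrong at the step where you declare $v'$ a $\mathfrak{q}$-highest weight vector. The subalgebra $\mathfrak{n}_\mathfrak{q}^+=\mathfrak{q}\cap\mathfrak{u}_0$ contains $\mathfrak{m}_\eta\cap\Ad(w_i)(\mathfrak{m}\cap\mathfrak{n}_0)\subset\mathfrak{n}_0$, on which $\eta$ is typically nonzero --- this is exactly the locus where $w_i^{-1}\eta$ lives --- and in the intended application $V=I_i'$ the algebra $\mathfrak{n}_0$ acts with generalized eigenvalue $\eta$, not $0$. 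The common eigenvector supplied by Lie's theorem is therefore an $\eta$-Whittaker vector for $\mathfrak{n}_\mathfrak{q}^+$, not a vector annihilated by $\mathfrak{n}_\mathfrak{q}^+$, and the action of a putative central $\zeta_z$ on it is given by an $\eta$-twisted projection rather than by $\gamma_4$. The identity $\gamma_4(\zeta_z)(\widetilde{\nu})=\chi(z)$ does not follow, and the proof breaks at that point. The paper's route via the eigenvalue character on $Z$ sidesteps all three issues.
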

\begin{proof}
Put $Z = (\gamma_3\gamma_2\gamma_1(Z(\mathfrak{g}))U(\Ad(w_i)\mathfrak{a}))$.
By the assumption, there exists a character $\chi_0$ of $Z$ such that $zv = \chi_0(z)v$ for all $z\in Z$.
By a theorem of Harsh-Chandra, $\gamma_4|_Z$ is injective and $\gamma_4(Z)\subset U(\mathfrak{h})$ is finite.
Hence there exists an element $\widetilde{\lambda_1}\in \mathfrak{h}^*$ such that $\widetilde{\lambda_1}\circ \gamma_4 = \chi_0$ where we denote the algebra homomorphism $U(\mathfrak{h})\to \C$ induced from $\widetilde{\lambda_1}$ by the same letter $\widetilde{\lambda_1}$.
Since $V$ has an infinitesimal character $\widetilde{\lambda}$, we have $\widetilde{\lambda_1}\in \widetilde{W}\widetilde{\lambda} + \widetilde{\rho}$.
Since $\gamma_4$ is trivial on $U(\Ad(w_i)\mathfrak{a})$, $\widetilde{\lambda_1}|_{\Ad(w_i)\mathfrak{a}} = (w_i\mu + \rho_0)|_{\Ad(w_i)\mathfrak{a}}$.
The restriction of $\widetilde{\rho}$ to $\mathfrak{a}_0$ is $\rho_0$.
Hence $\widetilde{\rho}|_{\Ad(w_i)\mathfrak{a}} = \rho_0|_{\Ad(w_i)\mathfrak{a}}$.
Then for some $\widetilde{w}\in\widetilde{W}$ we have $w_i\mu|_{\Ad(w_i)\mathfrak{a}} = \widetilde{w}\widetilde{\lambda}|_{\Ad(w_i)\mathfrak{a}}$.
We get the lemma.
\end{proof}

\begin{lem}\label{lem:to outside delta_i}
Let $X_1,\dots,X_n\in \mathfrak{g}$, $f_1\in C^\infty(O_i)$, $f_2\in C^\infty(U_i)$, $u'\in (\sigma\otimes e^{\lambda + \rho})'$.
Assume that $\widetilde{R'_i}(X_s)(f_2) = 0$ for all $s = 1,\dots,n$.
Then we have
\[
	\delta_i(X_1\dotsm X_n,f_1f_2,u') = \delta_i(X_1\dotsm X_n,f_1,u')f_2.
\]
\end{lem}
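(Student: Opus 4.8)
The plan is to unwind the definition of $\delta_i$ and reduce the asserted identity to the Leibniz rule for the operators $\widetilde{R}_i$. Fix $\varphi\in C_c^\infty(U_i,\mathcal{L})$, regard it as a $\sigma$-valued function on $w_i\overline{N}P$ as in Remark~\ref{rem:identify func on flag and G}, and let $\widetilde{f_2}$ be the right $P$-invariant extension of $f_2$ to $w_i\overline{N}P$, so that $f_2\varphi$ becomes $\widetilde{f_2}\varphi$. Then
\[
\langle\delta_i(X_1\dotsm X_n,f_1,u')f_2,\varphi\rangle = \int_{w_i\overline{N}w_i^{-1}\cap N_0}f_1(nw_i)\,u'\big((\widetilde{R}_i(X_1\dotsm X_n)(\widetilde{f_2}\varphi))(nw_i)\big)\,dn ,
\]
so it suffices to prove that $(\widetilde{R}_i(X_1\dotsm X_n)(\widetilde{f_2}\varphi))(nw_i) = f_2(nw_iP/P)\,(\widetilde{R}_i(X_1\dotsm X_n)\varphi)(nw_i)$ for every $n\in w_i\overline{N}w_i^{-1}\cap N_0$; feeding the right-hand side into the integral and using linearity of $u'$ produces exactly $\langle\delta_i(X_1\dotsm X_n,f_1f_2,u'),\varphi\rangle$.

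Since $\widetilde{R}_i$ is an algebra homomorphism, the (cocommutative) coproduct of $U(\mathfrak{g})$ gives $\widetilde{R}_i(X_1\dotsm X_n)(\widetilde{f_2}\varphi) = \sum_{S}(\widetilde{R}_i(X_S)\widetilde{f_2})(\widetilde{R}_i(X_{S^c})\varphi)$, where $S$ runs over subsets of $\{1,\dots,n\}$ and $X_S$ is the product of the $X_s$ with $s\in S$ taken in increasing order. The term $S=\emptyset$ equals $\widetilde{f_2}\cdot\widetilde{R}_i(X_1\dotsm X_n)\varphi$, whose value at $nw_i$ is the desired quantity, so I am reduced to showing $(\widetilde{R}_i(X_S)\widetilde{f_2})(nw_i)=0$ for every nonempty $S$. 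When $\lvert S\rvert=1$ this is immediate, since $(\widetilde{R}_i(X_s)\widetilde{f_2})(nw_i) = (\widetilde{R'_i}(X_s)f_2)(nw_iP/P)=0$ by hypothesis; in fact this shows $\widetilde{R}_i(X_s)\widetilde{f_2}$ vanishes on all of $w_i\overline{N}$, not only on $O_i$.

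For $\lvert S\rvert\ge 2$ I first simplify $X_1\dotsm X_n$ before expanding. Splitting each $X_s = A_s + P_s$ with $A_s\in\Ad(w_i)\overline{\mathfrak{n}}$ and $P_s\in\Ad(w_i)\mathfrak{p}$ (using $\mathfrak{g}=\Ad(w_i)\overline{\mathfrak{n}}\oplus\Ad(w_i)\mathfrak{p}$), expanding the product, and using that $\widetilde{R'_i}$ annihilates every element of $\Ad(w_i)\mathfrak{p}$ — because $m\exp(tP)w_iP/P = mw_iP/P$ for $P\in\Ad(w_i)\mathfrak{p}$ — so that by linearity of $\widetilde{R'_i}$ also $\widetilde{R'_i}(A_s)f_2=0$, I reduce (by induction on $n$, together with Lemma~\ref{lem:fundamental properties of delta_i}(1)--(2), which moves factors in $\Ad(w_i)\overline{\mathfrak{n}}\cap\mathfrak{n}_0$ to the left — absorbing them into $f_1$ through $R'_i$, and $R'_i$ of such a factor kills $f_2$ — and factors in $\Ad(w_i)\mathfrak{p}$ to the right into $u'$) to the case where all the $X_s$ lie in $\Ad(w_i)\overline{\mathfrak{n}}$. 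For such $X_s$ the one-parameter subgroups $t\mapsto\exp(t\,\Ad(w_i)^{-1}X_s)$ preserve $\overline{N}$, hence the slice $w_i\overline{N}$, so differentiating along them preserves the property of a function on $w_i\overline{N}P$ vanishing on $w_i\overline{N}$; an induction on $\lvert S\rvert$ then gives $\widetilde{R}_i(X_S)\widetilde{f_2}\big|_{w_i\overline{N}}=0$, in particular at $nw_i$.

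The step I expect to be the main obstacle is precisely this last reduction: one must keep track, across the commutations and the moves of Lemma~\ref{lem:fundamental properties of delta_i}, that the condition $\widetilde{R'_i}(X)f_2=0$ persists, so that one genuinely ends up in the case of ``downward'' directions $X_s\in\Ad(w_i)\overline{\mathfrak{n}}$ — here the linearity of $\widetilde{R'_i}$ and the fact that $\Ad(w_i)\mathfrak{p}$ lies automatically in its kernel are what make the splitting legitimate. Once the $X_s$ are downward the geometry is transparent (flows tangent to $w_i\overline{N}$, and a function flat along $w_i\overline{N}$ staying flat), and the remaining computation, together with the base case $n=1$ and the Leibniz expansion above, is routine bookkeeping.
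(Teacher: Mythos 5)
Your unwinding of $\delta_i$, the Leibniz expansion $\widetilde{R}_i(X_1\dotsm X_n)(\widetilde{f_2}\varphi) = \sum_S (\widetilde{R}_i(X_S)\varphi)(\widetilde{R}_i(X_{S^c})\widetilde{f_2})$, and the reduction to showing $(\widetilde{R}_i(X_{T})\widetilde{f_2})(nw_i)=0$ for nonempty $T$ are all correct and match the paper's setup. But the step you flag as "the main obstacle" is not bookkeeping — it is a genuine gap. Once you concede that the hypothesis only gives vanishing of $\widetilde{R}_i(X_s)\widetilde{f_2}$ on the slice $w_i\overline{N}\subset w_i\overline{N}P$ (not on all of $w_i\overline{N}P$), nothing prevents a mixed second derivative $\widetilde{R}_i(X_t)\widetilde{R}_i(X_s)\widetilde{f_2}$ from being nonzero at $nw_i$ when $\widetilde{R}_i(X_t)$ is transverse to the slice: a function that vanishes along a submanifold can have nonvanishing derivatives in transverse directions. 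Your proposed repair — decompose $X_s=A_s+P_s$ and push factors out with Lemma~\ref{lem:fundamental properties of delta_i}(1)--(2) by induction — produces commutators $[X_s,X_t]$ when moving factors past one another, and there is no reason $\widetilde{R'_i}([X_s,X_t])f_2=0$; indeed with $w_i=e$, $X_1\in\Ad(w_i)\mathfrak{p}$ and $X_2\in\Ad(w_i)\overline{\mathfrak{n}}$, the commutator $[X_1,X_2]$ can land in $\Ad(w_i)\overline{\mathfrak{n}}$ outside the kernel of the constraint, and then the resulting term does not satisfy the hypothesis. So the reduction does not close.

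The paper's one-line "by the assumption and Leibniz's rule" is read with a stronger hypothesis than you allow: namely $\widetilde{R}_i(X_s)\widetilde{f_2}=0$ \emph{identically} on $w_i\overline{N}P$ (with $\widetilde{f_2}$ the right $P$-invariant extension), not merely along the slice. This is exactly how the lemma is invoked in the proof of Lemma~\ref{lem:property of V(nu)}, where the input supplied is literally "$\widetilde{R_i}(X')\widetilde{\eta_i} = 0$". Under that reading everything is immediate and your whole reduction machinery is unnecessary: for any nonempty $T$, writing $t_0=\max T$, $\widetilde{R}_i(X_T)\widetilde{f_2}=\widetilde{R}_i(X_{T\setminus\{t_0\}})\bigl(\widetilde{R}_i(X_{t_0})\widetilde{f_2}\bigr)=0$ because the innermost operator already annihilates $\widetilde{f_2}$ as a function, so every term of the Leibniz sum with $S^c\ne\emptyset$ dies identically, and the identity $(\widetilde{R}_i(T)(\varphi\widetilde{f_2}))(nw_i)=\widetilde{f_2}(nw_i)(\widetilde{R}_i(T)\varphi)(nw_i)$ follows with no restriction to the slice, no splitting $X_s=A_s+P_s$, and no commutator tracking. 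Your draft misses this, which is why it ends up with a detour that is both longer than the paper's and not sound as written.
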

\begin{proof}
Put $T = X_1\dotsm X_n$.
By the assumption and Leibniz's rule, we have
\[
	f_2(nw_i)(\widetilde{R_i}(T)\varphi)(nw_i) = (\widetilde{R_i}(T)(\varphi f_2))(nw_i)
\]
Hence, by the definition, for $\varphi\in C^\infty_c(U_i,\mathcal{L})$, we have
\begin{align*}
	& \langle \delta_i(T,f_1f_2,u'),\varphi\rangle\\
	& =	\int_{w_i\overline{N}w_i^{-1}\cap N_0}f_1(nw_i)f_2(nw_i)(u'(\widetilde{R}_i(T)\varphi)(nw_i))dn\\
	& = \int_{w_i\overline{N}w_i^{-1}\cap N_0}f_1(nw_i)(u'(\widetilde{R}_i(T)(\varphi f_2))(nw_i))dn\\
	& = \langle\delta_i(T,f_1,u'),f_2\varphi\rangle\\
	& = \langle\delta_i(T,f_1,u')f_2,\varphi\rangle.
\end{align*}
We get the lemma.
\end{proof}

\begin{lem}\label{lem:property of V(nu)}
For $\nu\in \mathfrak{a}^*$ put
\[
	V(\nu) = \left\{\sum_s \delta_i(S_s,h_s\eta_i^{-1},v_s')\Biggm|
	\begin{array}{l}
	S_s\in U(\Ad(w_i)\overline{\mathfrak{n}}\cap \overline{\mathfrak{n}_0}),\ h_s\in\mathcal{P}(O_i),\\ v'_s\in J'_{w_i^{-1}\eta}(\sigma\otimes e^{\lambda+\rho}),\\ w_i^{-1}(\wt h_s + \wt S_s)|_\mathfrak{a} = \nu
	\end{array}
	\right\}.
\]
Here, $\wt h_s$ is an $\mathfrak{a}_0$-weight of $h_s$ with respect to $D_i$ (see page~\pageref{symbol:D_i}) and $\wt S_s$ is an $\mathfrak{a}_0$-weight of $S_s$ with respect to the adjoint action.
Define $\widetilde{\eta_i}\in C^\infty(U_i)$ by $\widetilde{\eta_i}(nn_0w_iP/P) = \eta_i(n)$ for $n\in w_i\overline{N}w_i^{-1}\cap N_0$ and $n_0\in w_i\overline{N}w_i^{-1}\cap \overline{N_0}$.
\begin{enumerate}
\item Let $X\in U(\mathfrak{l}_\eta\cap \Ad(w_i)\mathfrak{p})$.
Assume that $X$ is an $\mathfrak{a}_0$-weight vector.
For $\delta_i(T,f\eta_i^{-1},u')\in V(\nu)$, we have
\[
	X\delta_i(T,f\eta_i^{-1},u') - (X\delta_i(T,f\eta_i^{-1},u'))\widetilde{\eta_i}^{-1}\in\sum_{\nu' > \nu}V(\nu' + w_i^{-1}\wt T|_\mathfrak{a}).
\]
here, $\wt T$ is an $\mathfrak{a}_0$-weight of $T$ with respect to the adjoint action.
\item For $\delta_i(S_s,h_s\eta_i^{-1},v'_s)\in V(\nu)$, we have
\[
	\sum_s \delta_i(S_s,h_s,v'_s)\widetilde{\eta_i}^{-1}\not\in \sum_{\nu' > \nu}V(\nu').
\]
\end{enumerate}
\end{lem}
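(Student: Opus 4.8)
The plan is to reduce both parts to two ingredients: (a) an elementary weight computation for $\eta_i$, and (b) the fact that multiplication by the smooth function $\widetilde{\eta_i}$ intertwines $V(\mu)$ with its ``polynomial model'' $V_{\mathrm{pol}}(\mu)$, defined exactly as $V(\mu)$ but with each factor $h_s$ in place of $h_s\eta_i^{-1}$. For (a): realizing $O_i\cong\R^l$ via the coordinates $x_1,\dots,x_l$ dual to a root-vector basis $e_1,\dots,e_l$ of $\Ad(w_i)\overline{\mathfrak{n}}\cap\mathfrak{n}_0$ (so that $x_s$ has $D_i$-weight $-\alpha_s$), we have $\eta_i=\exp(\sum_s\eta(e_s)x_s)$, and $\eta(e_s)\ne 0$ forces $\alpha_s\in\supp\eta\subseteq\Pi$ and $e_s\in\Ad(w_i)\overline{\mathfrak{n}_0}\cap\mathfrak{n}_0$, i.e.\ $w_i^{-1}\alpha_s\in\Sigma^-$. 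Since this section runs under the hypothesis $I_i/I_{i-1}\ne 0$, Lemma~\ref{lem:vanishing lemma}(2) gives $\eta|_{\Ad(w_i)\mathfrak{n}\cap\mathfrak{n}_0}=0$, so such $e_s$ in fact lie in $\Ad(w_i)\overline{\mathfrak{n}}$, whence $w_i^{-1}\alpha_s\in\Sigma^-\setminus\Sigma_M^-$ and $-w_i^{-1}\alpha_s|_{\mathfrak{a}}>0$. Thus every non-constant monomial of $\eta_i^{\pm1}$ — equivalently of $\widetilde{\eta_i}^{\pm1}$, which on $O_i$ restricts to $\eta_i^{\pm1}$ and is constant in the transverse directions $\Ad(w_i)\overline{\mathfrak{n}}\cap\overline{\mathfrak{n}_0}$ — has $D_i$-weight $\gamma$ with $w_i^{-1}\gamma|_{\mathfrak{a}}>0$; hence multiplying a distribution of $V(\mu)$ by $\widetilde{\eta_i}^{\pm1}-1$ produces an element of $\sum_{\mu'>\mu}V(\mu')$. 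For (b): since $w_i\overline{N}w_i^{-1}\cap\overline{N_0}$ is a subgroup under right translation by which $\widetilde{\eta_i}$ is constant, $\widetilde{R}_i(X)\widetilde{\eta_i}=0$ for $X\in\Ad(w_i)\overline{\mathfrak{n}}\cap\overline{\mathfrak{n}_0}$, so Lemma~\ref{lem:to outside delta_i} yields
\[
	\delta_i(S,h,v')\,\widetilde{\eta_i}^{\mp1}=\delta_i(S,\,h\eta_i^{\mp1},\,v')\qquad\bigl(S\in U(\Ad(w_i)\overline{\mathfrak{n}}\cap\overline{\mathfrak{n}_0})\bigr);
\]
in particular $\cdot\,\widetilde{\eta_i}$ is a linear isomorphism $V(\mu)\xrightarrow{\sim}V_{\mathrm{pol}}(\mu)$ with inverse $\cdot\,\widetilde{\eta_i}^{-1}$, and $\sum_{\mu'>\mu}V(\mu')=\widetilde{\eta_i}^{-1}\cdot\sum_{\mu'>\mu}V_{\mathrm{pol}}(\mu')$.

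For part (1), write $\delta=\delta_i(T,f,u')\widetilde{\eta_i}^{-1}=\delta_i(T,f\eta_i^{-1},u')$ and note $X\delta-(X\delta)\widetilde{\eta_i}^{-1}=(X\delta)(1-\widetilde{\eta_i}^{-1})$. Since $1-\widetilde{\eta_i}^{-1}$ is, by the preceding paragraph, a sum of functions each strictly raising the $w_i^{-1}(\cdot)|_{\mathfrak{a}}$-grading, it suffices to place $X\delta$ in $V(\nu+w_i^{-1}(\wt T)|_{\mathfrak{a}})+\sum_{\mu>\nu+w_i^{-1}(\wt T)|_{\mathfrak{a}}}V(\mu)$. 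This is done by expanding the action of $X\in\mathfrak{l}_\eta\cap\Ad(w_i)\mathfrak{p}$ through Lemmas~\ref{lem:left2right}, \ref{lem:no delta part} and \ref{lem:left2right for I'_i}, exactly as in the proof of Lemma~\ref{lem:caluculation of Xdelta(1,f,u)}: the commutators $\ad(e)^{\mathbf{k}}(X)$, the operators $D_i$ and the modular term $w_i\rho_0-\rho_0$ only shift $D_i$-weights by non-positive $\Z$-combinations of positive roots, so none of them can lower the grading below the value dictated by the weights of $X$, $T$, $f$ and $u'$.

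For part (2), ingredient (b) shows that the displayed distribution is literally $\sum_s\delta_i(S_s,h_s\eta_i^{-1},v'_s)$, i.e.\ the given element of $V(\nu)$, which we may assume nonzero (otherwise there is nothing to prove). Applying $\cdot\,\widetilde{\eta_i}$, the assertion becomes $V_{\mathrm{pol}}(\nu)\cap\sum_{\nu'>\nu}V_{\mathrm{pol}}(\nu')=0$, and this follows once $\bigoplus_\nu V_{\mathrm{pol}}(\nu)$ is a direct sum. To prove directness, I would invoke the injectivity in Lemma~\ref{lem:fundamental properties of delta_i}(3): under $\delta_i(S,h,v')\leftrightarrow h\otimes S\otimes v'$ the space $\mathcal{P}(O_i)\otimes_{U(\Ad(w_i)\overline{\mathfrak{n}}\cap\mathfrak{n})}U(\mathfrak{g})\otimes_{U(\Ad(w_i)\mathfrak{p})}w_iJ'_{w_i^{-1}\eta}(\sigma\otimes e^{\lambda+\rho})$ — in which the subspace spanned by all $V(\nu)$ sits — is identified, via the PBW normal form with $\Ad(w_i)\overline{\mathfrak{n}}\cap\mathfrak{n}$ placed on the left and $\Ad(w_i)\mathfrak{p}$ on the right, with $\mathcal{P}(O_i)\otimes_\C U(\mathfrak{b})\otimes_\C w_iJ'_{w_i^{-1}\eta}(\sigma\otimes e^{\lambda+\rho})$ for a complement $\mathfrak{b}\supseteq\Ad(w_i)\overline{\mathfrak{n}}\cap\overline{\mathfrak{n}_0}$; there $V_{\mathrm{pol}}(\nu)$ is the span of homogeneous tensors $h\otimes S\otimes v'$ with $S\in U(\Ad(w_i)\overline{\mathfrak{n}}\cap\overline{\mathfrak{n}_0})$, $h\in\mathcal{P}(O_i)$ and $w_i^{-1}(\wt h+\wt S)|_{\mathfrak{a}}=\nu$, and the corresponding summands for distinct $\nu$ are disjoint.

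The main obstacle is precisely this last step: verifying that passing to the algebraic model of Lemma~\ref{lem:fundamental properties of delta_i}(3) respects the $w_i^{-1}(\cdot)|_{\mathfrak{a}}$-grading on the polynomial and normal-direction data, with $V_{\mathrm{pol}}(\nu)$ a union of graded pieces. This forces one to keep the intersections of $\Ad(w_i)\overline{\mathfrak{n}}$, $\mathfrak{n}$, $\mathfrak{m}$, $\mathfrak{n}_0$ and $\overline{\mathfrak{n}_0}$ carefully apart — they genuinely differ, because $w_i\in W(M)$ need not preserve $\Sigma_M$ — and to check that the $U(\Ad(w_i)\mathfrak{p})$- and $U(\Ad(w_i)\overline{\mathfrak{n}}\cap\mathfrak{n})$-relations never mix distinct $\nu$-levels. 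By comparison, the intertwining identity of the first paragraph and the weight bookkeeping in part (1) are routine consequences of Lemmas~\ref{lem:left2right}, \ref{lem:no delta part} and \ref{lem:to outside delta_i}.
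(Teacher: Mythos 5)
Your part (2) is essentially the paper's argument and is fine: by Lemma~\ref{lem:to outside delta_i}, $\sum_s\delta_i(S_s,h_s,v'_s)\widetilde{\eta_i}^{-1}=\sum_s\delta_i(S_s,h_s\eta_i^{-1},v'_s)\in V(\nu)$, so membership in $\sum_{\nu'>\nu}V(\nu')$ would force a nonzero element of $V(\nu)\cap\sum_{\nu'\ne\nu}V(\nu')$, which the paper rules out by the injectivity in Lemma~\ref{lem:fundamental properties of delta_i}(3). Your extra step of transporting everything to the polynomial model $V_{\mathrm{pol}}$ before invoking that injectivity is a genuine clarification of why the direct-sum statement follows from (3), since the injectivity there is stated for $\mathcal{P}(O_i)$-coefficients, not $h\eta_i^{-1}$-coefficients.

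Your treatment of part (1), however, has a real gap, and it stems from taking the printed formula at face value. The second term as printed is $(X\delta_i(T,f\eta_i^{-1},u'))\widetilde{\eta_i}^{-1}$, but the paper's own proof terminates with the identity $(X\delta_i(T,f,u'))\widetilde{\eta_i}^{-1}$ (with $f$, not $f\eta_i^{-1}$), and that is the form actually used in Proposition~\ref{prop:Whittaker vectors in a Bruaht cell} (terms $\delta_i(T_s,1,u'_s)\widetilde{\eta_i}^{-1}$); moreover the weight bound obtained in the proof is $w_i^{-1}\wt X|_\mathfrak{a}$, not $w_i^{-1}\wt T|_\mathfrak{a}$. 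Under your literal reading the claimed containment is false: for a generator $\delta_i(S,h\eta_i^{-1},v')$ of $V(\mu)$, multiplying by $1-\widetilde{\eta_i}^{-1}$ gives $\delta_i(S,h\eta_i^{-1},v')-\delta_i(S,h\eta_i^{-2},v')$, and the second summand carries an $\eta_i^{-2}$ factor, so it lies in \emph{no} $V(\mu')$. Expanding $1-\widetilde{\eta_i}^{-1}$ as a power series in the coordinates $x_s$ and multiplying term-by-term does not repair this: the series has infinitely many nonzero terms, while $\sum_{\nu'>\nu}V(\nu')$ is a finite algebraic sum, and nothing in your argument truncates the series. The paper's proof of (1) is structurally different and avoids the issue entirely: Lemma~\ref{lem:left2right} converts $X\delta_i(T,f\eta_i^{-1},u')$ into a \emph{finite} sum $\sum_{\mathbf{k}}\delta_i((\ad(e)^{\mathbf{k}}X)T,ff_\mathbf{k}\eta_i^{-1},u')$; each $(\ad(e)^{\mathbf{k}}X)T$ is PBW-factored as $a\,b\,c$ with $a\in U(\Ad(w_i)\overline{\mathfrak{n}}\cap\mathfrak{n}_0)$, $b\in U(\Ad(w_i)\overline{\mathfrak{n}}\cap\overline{\mathfrak{n}_0})$, $c\in U(\Ad(w_i)\mathfrak{p})$; the Leibniz rule is then applied to $R'_i(-a)(ff_\mathbf{k}\eta_i^{-1})$, and the cross-terms in which a nontrivial factor $a''$ hits $\eta_i^{-1}$ come with the scalar $\eta(a'')$, whose nonvanishing forces $-w_i^{-1}\wt a''|_\mathfrak{a}>0$ and places those terms in $\sum_{\nu'>\nu}V(\nu'+w_i^{-1}\wt X|_\mathfrak{a})$; the surviving finite sum reassembles via Lemma~\ref{lem:to outside delta_i} to exactly $(X\delta_i(T,f,u'))\widetilde{\eta_i}^{-1}$. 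The finiteness of this Leibniz expansion is the crux of (1), and your reduction ``it suffices to place $X\delta$ in $V(\nu+w_i^{-1}\wt T|_\mathfrak{a})+\sum_{\mu>\cdots}V(\mu)$'' does not substitute for it.
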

\begin{proof}
(1)
Fix a basis $\{e_1,e_2,\dots,e_l\}$ of $\mathfrak{u}_{0,w_i}$ such that each vector $e_i$ is a root vector and $\bigoplus_{s\le t-1}\C e_s$ is an ideal of $\bigoplus_{s\le t}\C e_s$.
Let $\alpha_s$ be the restricted root of $e_s$.
As in Section~\ref{sec:vanishing theorem}, for $\mathbf{k} = (k_1,\dots,k_l)\in\Z_{\ge 0}^l$ we denote $\ad(e_l)^{k_l}\dotsm \ad(e_1)^{k_1}$ by $\ad(e)^\mathbf{k}$ and $((-x_1)^{k_1}/k_1!)\dotsm((-x_l)^{k_l}/k_l!)$ by $f_\mathbf{k}$.
By Lemma~\ref{lem:left2right}, 
\[
	X\delta_i(T,f\eta_i^{-1},u') = \sum_{\mathbf{k}\in\Z_{\ge 0}^l}\delta_i((\ad(e)^\mathbf{k}X)T,ff_\mathbf{k}\eta_i^{-1},u').
\]
Take $a_\mathbf{k}^{(p)}\in U(\Ad(w_i)\overline{\mathfrak{n}}\cap\mathfrak{n}_0)$, $b_\mathbf{k}^{(p)}\in U(\Ad(w_i)\overline{\mathfrak{n}}\cap \overline{\mathfrak{n}_0})$ and $c_\mathbf{k}^{(p)}\in U(\Ad(w_i)\mathfrak{p})$ such that $(\ad(e)^\mathbf{k}X)T = \sum_p a_\mathbf{k}^{(p)}b_\mathbf{k}^{(p)}c_\mathbf{k}^{(p)}$ and $\wt((\ad(e)^\mathbf{k}X)T) = \wt a_\mathbf{k}^{(p)} + \wt b_\mathbf{k}^{(p)} + \wt c_\mathbf{k}^{(p)}$.
Then
\begin{align*}
&\delta_i((\ad(e)^\mathbf{k}X)T,ff_\mathbf{k}\eta_i^{-1},u')\\
& = \sum_p \delta_i(a_\mathbf{k}^{(p)}b_\mathbf{k}^{(p)}c_\mathbf{k}^{(p)},ff_\mathbf{k}\eta_i^{-1},u')\\
& = \sum_p \delta_i(b_\mathbf{k}^{(p)},R'_i(-a_\mathbf{k}^{(p)})(ff_\mathbf{k}\eta_i^{-1}),\Ad(w_i)^{-1}(c_\mathbf{k}^{(p)})u')
\end{align*}
By the Leibniz rule, there exists a subset $\mathcal{A}^{(p)}_\mathbf{k} \subset \{(a',a'')\in U(\Ad(w_i)\overline{\mathfrak{n}}\cap\mathfrak{n}_0)^2\mid \wt a' + \wt a'' = \wt a_\mathbf{k}^{(p)},\ a''\not\in\C\}$ such that 
\begin{align*}
	& \delta_i(b_\mathbf{k}^{(p)},R'_i(-a_\mathbf{k}^{(p)})(ff_\mathbf{k}\eta_i^{-1}) - R'_i(-a_\mathbf{k}^{(p)})(ff_\mathbf{k})\eta_i^{-1},\Ad(w_i)^{-1}(c_\mathbf{k}^{(p)})u')\\
	& = \sum_{(a',a'')\in\mathcal{A}^{(p)}_\mathbf{k}}\delta_i(b_\mathbf{k}^{(p)},R'_i(a')(ff_\mathbf{k})R'_i(a'')(\eta_i^{-1}),\Ad(w_i)^{-1}c_\mathbf{k}^{(p)}u')\\
	& = 
	\sum_{(a',a'')\in\mathcal{A}^{(p)}_\mathbf{k}}-\eta(a'')\delta_i(b_\mathbf{k}^{(p)},R'_i(a')(ff_\mathbf{k})\eta_i^{-1},\Ad(w_i)^{-1}c_\mathbf{k}^{(p)}u')
\end{align*}
By the Poincar\'e-Birkhoff-Witt theorem, we have a direct decomposition $U(\Ad(w_i)\mathfrak{p}) = U(\Ad(w_i)\mathfrak{p})(\Ad(w_i)\mathfrak{n})\oplus U(\Ad(w_i)\mathfrak{l})$.
Hence we may assume that $c_\mathbf{k}^{(p)} \in U(\Ad(w_i)\mathfrak{p})(\Ad(w_i)\mathfrak{n})$ or $c_\mathbf{k}^{(p)} \in U(\Ad(w_i)\mathfrak{l})$.
If $c_\mathbf{k}^{(p)} \in U(\Ad(w_i)\mathfrak{p})(\Ad(w_i)\mathfrak{n})$ then this sum is equal to $0$.
If $c_\mathbf{k}^{(p)} \in U(\Ad(w_i)\mathfrak{l})$ then $w_i^{-1}\wt c_\mathbf{k}^{(p)}|_{\mathfrak{a}} = 0$.
Hence,
\begin{align*}
&w_i^{-1}(\wt b_\mathbf{k}^{(p)} + \wt(R'_i(a')ff_\mathbf{k}))|_{\mathfrak{a}}\\
& = w_i^{-1}(\wt c_\mathbf{k}^{(p)} + \wt b_\mathbf{k}^{(p)} + \wt a' + \wt f + \wt f_\mathbf{k})|_{\mathfrak{a}}\\
& = w_i^{-1}(\wt ((\ad(e)^{\mathbf{k}}X)T) + \wt f + \wt f_\mathbf{k} - \wt a'')|_{\mathfrak{a}}\\
& = w_i^{-1}(\wt X + \wt T + \wt f - \wt a'')|_{\mathfrak{a}}\\
& = \nu + w_i^{-1}(\wt X - \wt a'')|_{\mathfrak{a}} > \nu + w_i^{-1}\wt X|_{\mathfrak{a}}.
\end{align*}
So we have
\begin{multline*}
\delta_i(b_\mathbf{k}^{(p)},R'_i(-a_\mathbf{k}^{(p)})(ff_\mathbf{k}\eta_i^{-1}) - R'_i(-a_\mathbf{k}^{(p)})(ff_\mathbf{k})\eta_i^{-1},\Ad(w_i)^{-1}(c_\mathbf{k}^{(p)})u')\\
\in \sum_{\nu' > \nu}V(\nu' + w_i^{-1}\wt X|_\mathfrak{a}).
\end{multline*}
By the definition of $\widetilde{\eta_i}$, we have $\widetilde{R_i}(X')\widetilde{\eta_i} = 0$ for $X'\in \Ad(w_i)\overline{\mathfrak{n}}\cap \overline{\mathfrak{n}_0}$.
Hence by Lemma~\ref{lem:to outside delta_i}, we have 
\begin{multline*}
	\delta_i(b_\mathbf{k}^{(p)},R'_i(-a_\mathbf{k}^{(p)})(ff_\mathbf{k})\eta_i^{-1},\Ad(w_i)^{-1}(c_\mathbf{k}^{(p)})u')\\ = \delta_i(b_\mathbf{k}^{(p)},R'_i(-a_\mathbf{k}^{(p)})(ff_\mathbf{k}),\Ad(w_i)^{-1}(c_\mathbf{k}^{(p)})u')\widetilde{\eta_i}^{-1}
\end{multline*}
Hence, we have
\begin{align*}
&\sum_{\mathbf{k},p}\delta_i(b_\mathbf{k}^{(p)},R'_i(-a_\mathbf{k}^{(p)})(ff_\mathbf{k})\eta_i^{-1},\Ad(w_i)^{-1}(c_\mathbf{k}^{(p)})u')\\
& = \sum_{\mathbf{k},p}\delta_i(b_\mathbf{k}^{(p)},R'_i(-a_\mathbf{k}^{(p)})(ff_\mathbf{k}),\Ad(w_i)^{-1}(c_\mathbf{k}^{(p)})u')\widetilde{\eta_i}^{-1}\\
& = \sum_{\mathbf{k},p}\delta_i(a_\mathbf{k}^{(p)}b_\mathbf{k}^{(p)}c_\mathbf{k}^{(p)},(ff_\mathbf{k}),u')\widetilde{\eta_i}^{-1}\\
& = (X\delta_i(T,f,u'))\widetilde{\eta_i}^{-1}.
\end{align*}
We get (1).

(2)
Take $T = 1$ in (1).
Then we have
\[
	\delta_i(T,f\eta_i^{-1},u') - (\delta_i(T,f\eta_i^{-1},u'))\widetilde{\eta_i}^{-1}\in\sum_{\nu' > \nu}V(\nu').
\]
Hence if
\[
	\sum_s \delta_i(S_s,h_s,v'_s)\widetilde{\eta_i}^{-1}\in \sum_{\nu' > \nu}V(\nu')
\]
then
\[
	\delta_i(T,f\eta_i^{-1},u')\in \sum_{\nu' > \nu}V(\nu')
\]
However, by Lemma~\ref{lem:fundamental properties of delta_i} (3), we have $V(\nu)\cap \sum_{\nu'\ne \nu}V(\nu') = 0$.
This is a contradiction.
\end{proof}

\begin{prop}\label{prop:Whittaker vectors in a Bruaht cell}
Let $\widetilde{\mu}\in (\mathfrak{h}\cap \mathfrak{m})^*$ be an infinitesimal character of $\sigma$.
Assume that $I_i/I_{i - 1}\ne 0$ and for all $\widetilde{w}\in \widetilde{W}$, 
\[
	\lambda - \widetilde{w}(\lambda + \widetilde{\mu})|_\mathfrak{a}\not\in \Z_{\le 0}((\Sigma^+\setminus\Sigma_M^+)\cap w_i^{-1}\Sigma^+)|_\mathfrak{a}\setminus\{0\}.
\]
Then
\[
	\Wh_\eta(I_i') = 
	\{(\eta_i^{-1}\otimes u')\delta_i\mid u'\in \Wh_{w_i^{-1}\eta}((\sigma\otimes e^{\lambda+\rho})')\}.
\]
\end{prop}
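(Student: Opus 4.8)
The plan is as follows. The inclusion $\supseteq$ is immediate: since $I_i/I_{i-1}\neq 0$, Lemma~\ref{lem:vanishing lemma}~(2) gives $\eta|_{\Ad(w_i)\mathfrak{n}\cap\mathfrak{n}_0}=0$, so by Lemma~\ref{lem:Whittaker vector, no differential part} every $(\eta_i^{-1}\otimes u')\delta_i$ with $u'\in\Wh_{w_i^{-1}\eta}((\sigma\otimes e^{\lambda+\rho})')$ is a Whittaker vector of $I_i'$. The work is the reverse inclusion. I would fix a nonzero $x\in\Wh_\eta(I_i')$ and first record two structural facts: (i) $I_i'\simeq I_i/I_{i-1}$ (Theorem~\ref{thm:succ quot is I'_i}) is a subquotient of $J'_\eta(I(\sigma,\lambda))$, so, since $J'_\eta$ intertwines the $Z(\mathfrak{g})$-actions and $I(\sigma,\lambda)$ has the infinitesimal character of $\lambda+\widetilde{\mu}$, so does $I_i'$; (ii) using the $\mathfrak{a}_0$-weight decompositions of $\mathcal{P}(O_i)$ (for $D_i$) and of $U(\Ad(w_i)\overline{\mathfrak{n}}\cap\overline{\mathfrak{n}_0})$ (for the adjoint action), the scalar action of $\mathfrak{a}$ on $J'_{w_i^{-1}\eta}(\sigma\otimes e^{\lambda+\rho})$, and the linear independence supplied by Lemma~\ref{lem:fundamental properties of delta_i}~(3) and Lemma~\ref{lem:property of V(nu)}~(2), one obtains a grading $I_i'=\bigoplus_\nu V(\nu)$ refining the generalized $\Ad(w_i)\mathfrak{a}$-eigenspace decomposition, normalized so that $(\eta_i^{-1}\otimes u')\delta_i\in V(0)$. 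Then I would write $x=\sum_{\nu\in\Xi}x_\nu$ with $x_\nu\in V(\nu)\setminus\{0\}$ and $\Xi$ finite, and reduce the proposition to showing $\Xi=\{0\}$ with $x_0=(\eta_i^{-1}\otimes u')\delta_i$ for some $u'\in\Wh_{w_i^{-1}\eta}((\sigma\otimes e^{\lambda+\rho})')$.

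The core is an extremal-component analysis. I would choose $\nu_0\in\Xi$ maximal for the usual partial order and project the Whittaker equations $(X-\eta(X))x=0$ onto $V(\nu_0)$, running $X$ over root vectors in the three summands of $\mathfrak{n}_0=(\Ad(w_i)\overline{\mathfrak{n}}\cap\mathfrak{n}_0)\oplus\Ad(w_i)(\mathfrak{m}\cap\mathfrak{n}_0)\oplus(\Ad(w_i)\mathfrak{n}\cap\mathfrak{n}_0)$, on the last of which $\eta$ vanishes. For $X$ in the first summand, Lemma~\ref{lem:caluculation of Xdelta(1,f,u)}, extended to $\delta_i(T,\cdot,\cdot)$ via the commutator estimate of Lemma~\ref{lem:bracket of n and bar_n}, shows $(X-\eta(X))$ strictly lowers the $V$-grading, so the $\nu_0$-part of the equation becomes an identity among elements of $V(\nu_0)$; for $X$ in the second summand, Lemma~\ref{lem:property of V(nu)}~(1) shows $(X-\eta(X))$ preserves $V(\nu_0)$ modulo the twist by $\widetilde{\eta_i}^{-1}$ and strictly higher terms, giving another such identity. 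I expect these identities to force $x_{\nu_0}$ (read through $y\mapsto y\widetilde{\eta_i}^{-1}$) to be a genuine Whittaker vector of the derivative-free subspace $V(\nu_0)\cap\{\sum_s(f_s\eta_i^{-1}\otimes v'_s)\delta_i\}$, whereupon Lemma~\ref{lem:Whittaker vector, no differential part} identifies it with $(\eta_i^{-1}\otimes u_0')\delta_i$, $u_0'\in\Wh_{w_i^{-1}\eta}((\sigma\otimes e^{\lambda+\rho})')$; since this lies in $V(0)$, this gives $\nu_0=0$.

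To eliminate the remaining weights I would use the infinitesimal character. The element $x_0=(\eta_i^{-1}\otimes u_0')\delta_i$ is a Whittaker vector, so $x-x_0\in\Wh_\eta(I_i')$ has $V(0)$-component zero and strictly smaller support; if it were nonzero the previous paragraph would again force its maximal weight to be $0$, a contradiction, so $x=x_0$. The point where the hypothesis enters is that, when one runs Lemma~\ref{lem:lemma of infinitesimal character} on the extremal component (using $zx=\gamma_2\gamma_1(z)x$ for $z\in Z(\mathfrak{g})$ together with the further projection $\gamma_3$, legitimate at the top weight), one gets $\widetilde{w}\in\widetilde{W}$ with $\widetilde{w}(\lambda+\widetilde{\mu})|_{\mathfrak{a}}$ equal to the $\Ad(w_i)\mathfrak{a}$-parameter of that component; the $V$-gradings that the transverse derivatives $U(\Ad(w_i)\overline{\mathfrak{n}}\cap\overline{\mathfrak{n}_0})$ can produce are exactly the nonzero elements of $\Z_{\le 0}((\Sigma^+\setminus\Sigma_M^+)\cap w_i^{-1}\Sigma^+)|_{\mathfrak{a}}$ (these roots, pulled back by $w_i^{-1}$ and restricted to $\mathfrak{a}$), and the assumed condition $\lambda-\widetilde{w}(\lambda+\widetilde{\mu})|_{\mathfrak{a}}\notin\Z_{\le 0}((\Sigma^+\setminus\Sigma_M^+)\cap w_i^{-1}\Sigma^+)|_{\mathfrak{a}}\setminus\{0\}$ is precisely what forbids such a contribution. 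Hence $\Xi=\{0\}$, $x=x_0$ has the stated form, and with $\supseteq$ this proves the proposition.

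The hard part will be the extremal-component step: showing that the full system of Whittaker equations, once the $\widetilde{\eta_i}^{-1}$-corrections of Lemma~\ref{lem:property of V(nu)}~(1) and the grading shifts of Lemmas~\ref{lem:bracket of n and bar_n} and~\ref{lem:caluculation of Xdelta(1,f,u)} are bookkept, genuinely traps $x_{\nu_0}$ in the derivative-free subspace with no residual lower-order contamination, and that the $\Ad(w_i)\mathfrak{a}$-weight accounting (including the contribution of $\eta_i$ itself) is exactly compatible with the normalization required to invoke Lemma~\ref{lem:lemma of infinitesimal character}.
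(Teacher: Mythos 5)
Your plan uses the right toolkit (the weight spaces $V(\nu)$, Lemma~\ref{lem:property of V(nu)}, Lemma~\ref{lem:Whittaker vector, no differential part}, and Lemma~\ref{lem:lemma of infinitesimal character}), and the $\supseteq$ inclusion is handled exactly as in the paper. But the extremal-component step at the heart of your reverse inclusion does not close, and the gap is structural, not just a matter of bookkeeping.

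You decompose $x$ with respect to the grading $V(\nu)$ of Lemma~\ref{lem:property of V(nu)}, where $\nu = w_i^{-1}(\wt f_s + \wt T_s)|_\mathfrak{a}$, and immediately take the \emph{maximal} $\nu_0$. This grading, however, has contributions of \emph{opposite signs}: the transverse-derivative part $T_s\in U(\Ad(w_i)\overline{\mathfrak{n}}\cap\overline{\mathfrak{n}_0})$ only contributes $w_i^{-1}\wt T_s|_\mathfrak{a}\in\Z_{\le 0}((\Sigma^+\setminus\Sigma_M^+)\cap w_i^{-1}\Sigma^+)|_\mathfrak{a}$ (as you note), but the polynomial part contributes $w_i^{-1}\wt f_s|_\mathfrak{a}\in\Z_{\ge 0}((\Sigma^+\setminus\Sigma_M^+)\cap w_i^{-1}\Sigma^-)|_\mathfrak{a}$, a $\emph{non-negative}$ quantity supported on a \emph{different} set of roots. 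So before any reduction the grades can be positive, and the maximal one can be realized by a term with nonconstant $f_s$ and nontrivial $T_s$ canceling each other. Your hypothesis only excludes the $T_s$-contribution, so running Lemma~\ref{lem:lemma of infinitesimal character} on that extremal component does not pin $\nu_0$ to $0$. Worse, for $X\in\Ad(w_i)\overline{\mathfrak{n}}\cap\mathfrak{n}_0$ the operator $(X-\eta(X))$ strictly \emph{lowers} $\nu$ (you observe this correctly), which makes its projection onto the maximal grade $V(\nu_0)$ vacuous ($0=0$), so those Whittaker equations impose no condition whatsoever on $x_{\nu_0}$; they cannot ``force $x_{\nu_0}$ into the derivative-free subspace.'' Lemma~\ref{lem:Whittaker vector, no differential part} is therefore invoked on a component that you have not shown to be derivative-free.

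The paper's proof resolves this in a different order. First, it uses the Whittaker equations for $X\in\Ad(w_i)\overline{\mathfrak{n}}\cap\mathfrak{n}_0$ \emph{before} introducing any grading: since $(X-\eta(X))x = \sum_s\delta_i(T_s,L_X(f_s)\eta_i^{-1},u_s')$ and the $T_s$ can be taken linearly independent, the injectivity of $\delta_i$ (Lemma~\ref{lem:fundamental properties of delta_i}~(3)) gives $L_X f_s = 0$ for all such $X$, hence $f_s\in\C$. This kills the polynomial contribution, so afterwards all grades are $\le 0$. Only then does it apply $z\in Z(\mathfrak{g})$, project onto the \emph{minimal} (most negative) grade $\nu$ using Lemma~\ref{lem:property of V(nu)} --- where the error terms go to strictly higher grades and hence do not contaminate $V(\nu)$ --- and feed the result into Lemma~\ref{lem:lemma of infinitesimal character}: the resulting constraint $\lambda - w_i^{-1}\widetilde{w}(\lambda+\widetilde{\mu})|_\mathfrak{a} = w_i^{-1}\wt T_s|_\mathfrak{a}$ together with the hypothesis forces $\wt T_s = 0$, hence $\nu = 0$, hence all $T_s\in\C$. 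The $\Wh_{w_i^{-1}\eta}$-condition on $u'$ then comes from the second summand $X\in\Ad(w_i)(\mathfrak{m}\cap\mathfrak{n}_0)$ via Lemma~\ref{lem:caluculation of Xdelta(1,f,u)}, and a final subtraction argument eliminates any residue. If you want to rescue your approach, the essential ingredient you are missing is the preliminary reduction $f_s\equiv 1$, without which the grading is two-sided and the hypothesis does not apply.
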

\begin{proof}
Let $x = \sum_s\delta_i(T_s,f_s\eta_i^{-1},u_s')$ be an element of $\Wh_\eta(I_i')$ where $T_s\in U(\Ad(w_i)\overline{\mathfrak{n}}\cap \overline{\mathfrak{n}_0})$, $f_s\in\mathcal{P}(O_i)$ and $u'_s\in J'_{w_i^{-1}\eta}(\sigma\otimes e^{\lambda+\rho})$.
For $X\in \Ad(w_i)\overline{\mathfrak{n}}\cap\mathfrak{n}_0$, we have $(X - \eta(X))x = \sum_s \delta_i(T_s,(L_X - \eta(X))(f_s\eta_i^{-1}),u'_s) = \sum_s \delta_i(T_s,L_X(f_s)\eta_i^{-1},u'_s)$ by Lemma~\ref{lem:caluculation of Xdelta(1,f,u)}.
Hence, we may assume $f_s = 1$.

Let $z\in Z(\mathfrak{g})$.
Since $J'_\eta(I(\sigma,\lambda))$ has an infinitesimal character $-(\lambda+\widetilde{\mu})$, $I_i'$ has the same character.
Let $\chi(z)$ be a complex number such that $z$ acts by $\chi(z)$ on $I'_i$.
Take $T_s$ and $u'_s$ such that $T_s$ are $\mathfrak{a}_0$-weight vectors and lineally independent.
Let $\nu = \min\{w_i^{-1}\wt T_s|_{\mathfrak{a}}\}_s$.
Then by Lemma~\ref{lem:property of V(nu)} (1), we have
\begin{multline*}
	\chi(z)x = zx = \gamma_2\gamma_1(z)x \\\in \left(\gamma_3\gamma_2\gamma_1(z)\sum_{w_o^{-1}\wt T_s|_{\mathfrak{a}} = \nu}\delta_i(T_s,1,u_s')\right)\widetilde{\eta_i}^{-1} + \sum_{\nu' > \nu}V(\nu').
\end{multline*}
By Lemma~\ref{lem:property of V(nu)} (1) ($T = 1$), we have
\[
	x \in \sum_{w_i^{-1}\wt T_s|_{\mathfrak{a}} = \nu}\delta_i(T_s,1,u'_s)\widetilde{\eta_i}^{-1} + \sum_{\nu' > \nu}V(\nu').
\]
Hence we have
\[
	\left((\chi(z) - \gamma_3\gamma_2\gamma_1(z))\left(\sum_{w_i^{-1}\wt T_s|_{\mathfrak{a}} = \nu}\delta_i(T_s,1,u'_s)\right)\right)\widetilde{\eta_i}^{-1}\in \sum_{\nu' > \nu}V(\nu').
\]
By Lemma~\ref{lem:property of V(nu)} (2), we have $(\chi(z) - \gamma_3\gamma_2\gamma_1(z))\delta_i(T_s,1,u_s') = 0$ for all $s$ such that $w_i^{-1}\wt T_s|_{\mathfrak{a}} = \nu$.
By the same calculation as that of the proof of Lemma~\ref{lem:no delta part}, $H\delta_i(T_s,1_u,u_s') = (-w_i\lambda + \wt T_s + \rho_0)(H)\delta_i(T_s,1_u,u_s')$ for $H\in \Ad(w_i)\mathfrak{a}$.
By Lemma~\ref{lem:lemma of infinitesimal character}, there exists a $\widetilde{w}\in\widetilde{W}$ such that $-\widetilde{w}(\lambda+\widetilde{\mu})|_{\Ad(w_i)\mathfrak{a}} = -w_i\lambda + \wt T_s$.
Then $\lambda - w_i^{-1}\widetilde{w}(\lambda+\widetilde{\mu})|_\mathfrak{a} = w_i^{-1}\wt T_s|_\mathfrak{a}\in\Z_{\le 0}((\Sigma^+\setminus\Sigma_M^+)\cap w_i^{-1}\Sigma^+)|_\mathfrak{a}$.
By the assumption, $\wt T_s = 0$, i.e., $T_s\in \C$.
Hence, we may assume that $x$ has a form $x = \delta_i(1,\eta_i^{-1},u') + \sum_{s\ge 2} \delta_i(T_s,\eta_i^{-1},u_s')$ where $\wt T_s \ne 0$ for all $s\ge 2$.

Take $X\in \mathfrak{n}_0\cap \Ad(w_i)\mathfrak{m}$.
Then by Lemma~\ref{lem:caluculation of Xdelta(1,f,u)} and the above claim,
\[
	0 = (X - \eta(X))x \in \delta_i(1,\eta_i^{-1},(\Ad(w_i)^{-1}X - \eta(X))u') + \sum_{\nu' > 0}V(\nu').
\]
By Lemma~\ref{lem:property of V(nu)}, we have $\delta_i(1,\eta_i^{-1},(\Ad(w_i)^{-1}X - \eta(X))u') = 0$.
Hence we have $u'\in \Wh_{w_i^{-1}\eta}((\sigma\otimes e^{\lambda + \rho})')$.
This implies that $x - \delta_i(1,\eta_i^{-1},u')\in \Wh_\eta(I_i')$.
If $x - \delta_i(1,\eta_i^{-1},u')\ne 0$, then by the above argument, we have $\min\{w_i^{-1}\wt T_s|_\mathfrak{a}\}_{s\ge 2} = 0$.
This is a contradiction.
\end{proof}

\begin{thm}\label{thm:dimension Whittaker vectors}
Assume that for all $w\in W(M)$ such that $\eta|_{wNw^{-1}\cap N_0} = 1$ the following two conditions hold:
\begin{enumerate}
\renewcommand*{\labelenumi}{(\alph{enumi})}
\item For each exponent $\nu$ of $\sigma$ and $\alpha\in \Sigma^+\setminus w^{-1}(\Sigma^+\cup\Sigma_\eta^-)$, we have $2\langle\alpha,\lambda+\nu\rangle/\lvert\alpha\rvert^2\not\in\Z_{\le 0}$.
\item For all $\widetilde{w}\in\widetilde{W}$ we have $\lambda - \widetilde{w}(\lambda + \widetilde{\mu})|_\mathfrak{a}\notin \Z_{\le 0}((\Sigma^+\setminus \Sigma_M^+)\cap w^{-1}\Sigma^+)|_\mathfrak{a}\setminus\{0\}$ where $\widetilde{\mu}$ is an infinitesimal character of $\sigma$.
\end{enumerate}
Moreover, assume that $\eta$ is unitary.
Then we have
\[
	\dim\Wh_\eta(I(\sigma,\lambda)') = \sum_{w\in W(M),\  w(\Sigma^+\setminus\Sigma^+_M)\cap \supp\eta = \emptyset}\dim \Wh_{w^{-1}\eta}((\sigma\otimes e^{\lambda+\rho})').
\]
\end{thm}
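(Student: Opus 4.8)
The plan is to extract both inequalities from the Bruhat filtration $\{I_i\}$ of $J'_\eta(I(\sigma,\lambda))$: the upper bound from left-exactness of $\Wh_\eta$ together with the computation of $\Wh_\eta$ on the successive quotients, and the matching lower bound by analytically continuing Jacquet integrals to produce enough honest Whittaker vectors.

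First I would observe that $\Wh_\eta(I(\sigma,\lambda)') = \Wh_\eta(J'_\eta(I(\sigma,\lambda)))$, since any vector annihilated by all $X-\eta(X)$ with $X\in\mathfrak{n}_0$ already lies in $J'_\eta(I(\sigma,\lambda))$. Because $\Wh_\eta(\cdot) = \Hom_{U(\mathfrak{n}_0)}(\C_\eta,\cdot)$ is left exact, the filtration $0 = I_0\subset I_1\subset\cdots\subset I_r = J'_\eta(I(\sigma,\lambda))$ gives $\dim\Wh_\eta(I(\sigma,\lambda)')\le\sum_{i=1}^r\dim\Wh_\eta(I_i/I_{i-1})$. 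When $I_i/I_{i-1}\ne 0$, Theorem~\ref{thm:succ quot is I'_i} and Lemma~\ref{lem:vanishing lemma} force $\eta$ to be trivial on $w_iNw_i^{-1}\cap N_0$, equivalently $w_i(\Sigma^+\setminus\Sigma^+_M)\cap\supp\eta = \emptyset$, and identify $I_i/I_{i-1}\simeq I'_i$; then hypotheses (a), (b) hold for $w = w_i$, so Proposition~\ref{prop:Whittaker vectors in a Bruaht cell} and the injectivity of $u'\mapsto(\eta_i^{-1}\otimes u')\delta_i$ from Lemma~\ref{lem:fundamental properties of delta_i}~(3) give $\dim\Wh_\eta(I_i/I_{i-1}) = \dim\Wh_{w_i^{-1}\eta}((\sigma\otimes e^{\lambda+\rho})')$. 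The remaining indices $i$ with $w_i(\Sigma^+\setminus\Sigma^+_M)\cap\supp\eta = \emptyset$ but $I_i/I_{i-1} = 0$ satisfy $J'_{w_i^{-1}\eta}(\sigma\otimes e^{\lambda+\rho}) = 0$, hence $\Wh_{w_i^{-1}\eta}((\sigma\otimes e^{\lambda+\rho})') = 0$ and contribute nothing to either side. This yields the inequality ``$\le$''.

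For the opposite inequality I would construct, for each $w_i$ with $w_i(\Sigma^+\setminus\Sigma^+_M)\cap\supp\eta = \emptyset$ and each $u'\in\Wh_{w_i^{-1}\eta}((\sigma\otimes e^{\lambda+\rho})')$, the Jacquet integral $\varphi_\mu\mapsto I_{1,u'}(\varphi_\mu) = \int_{w_i\overline{N}w_i^{-1}\cap N_0}u'(\varphi_\mu(nw_i))\eta(n)^{-1}\,dn$ from Proposition~\ref{prop:convergence and continuation}. For $\re\mu$ large it converges (part (1)), is $\mathfrak{n}_0$-equivariant against $\eta$ by the standard Jacquet-integral computation (this is where unitarity of $\eta$ enters, exactly as behind Lemma~\ref{lem:Whittaker vector, no differential part}), is supported in $\overline{O_i}\subset\bigcup_{j\le i}N_0w_jP/P$ hence lies in $I_i(\mu)$, and restricts on $U_i$ to $(\eta_i^{-1}\otimes u')\delta_i$. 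By condition (a) and Proposition~\ref{prop:convergence and continuation}~(4) this family is holomorphic at $\mu = \lambda$, so its value there is a Whittaker vector of $I(\sigma,\lambda)'$ restricting to $(\eta_i^{-1}\otimes u')\delta_i$ on $U_i$. These vectors are linearly independent: restricting a vanishing combination to $U_i$ with $i$ largest among the indices occurring kills all lower-index terms (they are supported in $\bigcup_{j<i}N_0w_jP/P$, which is disjoint from $U_i$) and no higher-index term occurs, leaving $\sum_{u'}c_{u'}(\eta_i^{-1}\otimes u')\delta_i = 0$, whence $c_{u'} = 0$ by Lemma~\ref{lem:fundamental properties of delta_i}~(3). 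Thus $\dim\Wh_\eta(I(\sigma,\lambda)')\ge\sum_{w\in W(M),\ w(\Sigma^+\setminus\Sigma^+_M)\cap\supp\eta = \emptyset}\dim\Wh_{w^{-1}\eta}((\sigma\otimes e^{\lambda+\rho})')$, and with ``$\le$'' the equality follows.

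The step I expect to be the real obstacle is bridging the local description $I'_i$ on the open cell $U_i$ with the global Jacquet integral: one must set up the meromorphic family $\mu\mapsto I_{1,u'}(\varphi_\mu)$ as a family of distributions on all of $G/P$ lying in $I_i(\mu)$, establish its $\mathfrak{n}_0$-equivariance for $\re\mu$ large, and verify that holomorphy at $\mu = \lambda$ --- which is precisely Proposition~\ref{prop:convergence and continuation}~(4) under condition (a) --- carries both the equivariance and the identification of the restriction to $U_i$ over to the value at $\mu = \lambda$. The rest, namely the left-exact dimension count along the Bruhat filtration and the bookkeeping with Proposition~\ref{prop:Whittaker vectors in a Bruaht cell}, is routine.
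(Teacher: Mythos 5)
Your proposal is correct and follows essentially the same approach as the paper: the upper bound comes from left-exactness of $\Wh_\eta$ along the Bruhat filtration together with Proposition~\ref{prop:Whittaker vectors in a Bruaht cell}, and the matching lower bound comes from analytically continuing the Jacquet integrals (Lemma~\ref{lem:meromorphic extension}, Proposition~\ref{prop:convergence and continuation}~(4) under condition~(a)) to lift each $(\eta_i^{-1}\otimes u')\delta_i$ to a genuine Whittaker vector of $I(\sigma,\lambda)'$. The paper phrases the lower bound as surjectivity of $\Wh_\eta(I_i)\to\Wh_\eta(I_i/I_{i-1})$ rather than as a direct linear-independence count, but the underlying construction and the use of the hypotheses are identical.
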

\begin{proof}
By the exact sequence $0\to I_{i - 1} \to I_i\to I_i/I_{i - 1}\to 0$, we have $0\to \Wh_\eta(I_{i - 1}) \to \Wh_\eta(I_i)\to \Wh_\eta(I_i/I_{i - 1})$.
By Lemma~\ref{prop:Whittaker vectors in a Bruaht cell}, it is sufficient to prove that the last map $\Wh_\eta(I_i)\to \Wh_\eta(I_i/I_{i - 1})$ is surjective.

Take $x\in \Wh_\eta(I'_i)\simeq \Wh_\eta(I_i/I_{i - 1})$.
Then $x$ is $(\eta_i\otimes u')\delta_i$ for some $u'\in \Wh_{w_i^{-1}\eta}(\sigma\otimes e^{\lambda+\rho})$.
By Lemma~\ref{lem:meromorphic extension}, there exists a distribution $x_t\in I_i(\lambda + t\rho)$ with meromorphic parameter $t$ such that $x_t|_{U_i}$ is holomorphic and $(x_t|_{U_i})|_{t = 0} = x$.
Moreover, $(X - \eta(X))x_t = 0$ for $X\in\mathfrak{n}_0$.
By Proposition~\ref{prop:convergence and continuation} and the condition (a), the distribution $x_t$ is holomorphic at $t = 0$.
Hence $x_0|_{U_i} = x$.
The map $\Wh_\eta(I_i)\to \Wh_\eta(I_i/I_{i - 1})$ is surjective.
\end{proof}

Next we consider the module $\Wh_\eta((I(\sigma,\lambda)_{\text{$K$-finite}})^*)$.
Take a filtration $\widetilde{I_i}\subset J^*_\eta(I(\sigma,\lambda))$ as in Theorem~\ref{thm:stucture of J^*(I(sigma,lambda))}.
\begin{lem}\label{lem:two step calc for Whittaker vector}
Let $V$ be an object of the category $\mathcal{O}'$.
Then we have $C(H^0(\mathfrak{n}_\eta,V)) = H^0(\mathfrak{n}_\eta,C(V))$ where $H^0(\mathfrak{n}_\eta,V) = \{v\in V\mid \mathfrak{n}_\eta v = 0\}$ is the $0$-th $\mathfrak{n}_\eta$-cohomology.
\end{lem}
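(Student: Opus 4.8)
The plan is to identify $C(H^0(\mathfrak{n}_\eta,V))$ with $H^0(\mathfrak{n}_\eta,C(V))$ by dualizing twice. Recall that $C(V)=(D'(V))^*$ where $D'(V)=(V^*)_{\text{$\mathfrak{h}$-finite}}$, and that for $V\in\mathcal O'$ we have $D'D'(V)\simeq V$. The first step is to translate $H^0(\mathfrak{n}_\eta,-)$, which picks out $\mathfrak n_\eta$-invariants, into its dual counterpart: on the level of $\mathfrak h$-finite duals, the functor $V\mapsto H^0(\mathfrak n_\eta,V)$ should correspond to $W\mapsto W/\mathfrak n_\eta W$ applied to $D'(V)$, because $\mathfrak n_\eta$ is a sum of negative-or-positive root spaces relative to $\mathfrak h$ (so it preserves $\mathfrak h$-weight space decompositions), and the annihilator of $\mathfrak n_\eta W$ inside $W^*$ is exactly $H^0(\mathfrak n_\eta,W^*)$. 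Concretely, I would first establish $D'(H^0(\mathfrak n_\eta,V))\simeq D'(V)/\mathfrak n_\eta D'(V)$ for $V\in\mathcal O'$, using that both sides are computed weight-space by weight-space and each $\mathfrak h$-weight space is finite-dimensional, so taking the $\mathfrak h$-finite dual is exact and turns the left-exact functor $H^0(\mathfrak n_\eta,-)$ into the right-exact functor $-/\mathfrak n_\eta(-)$.

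Next I would dualize again. Applying $(-)^*$ to the isomorphism $D'(H^0(\mathfrak n_\eta,V))\simeq D'(V)/\mathfrak n_\eta D'(V)$ gives
\[
C(H^0(\mathfrak n_\eta,V))=(D'(H^0(\mathfrak n_\eta,V)))^*\simeq (D'(V)/\mathfrak n_\eta D'(V))^*.
\]
Now $(D'(V)/\mathfrak n_\eta D'(V))^*$ is precisely the subspace of $(D'(V))^*=C(V)$ annihilated by $\mathfrak n_\eta$, i.e. $H^0(\mathfrak n_\eta,C(V))$. This last identification is the standard fact that for any module $M$ over a Lie subalgebra and any submodule $\mathfrak n_\eta M$, the dual of $M/\mathfrak n_\eta M$ is $\{f\in M^*\mid f(\mathfrak n_\eta M)=0\}=H^0(\mathfrak n_\eta,M^*)$, where the $\mathfrak n_\eta$-action on $M^*$ is the contragredient one; since $\mathfrak n_\eta\subset\mathfrak g$ is a genuine subalgebra this is immediate. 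Chaining the two isomorphisms yields $C(H^0(\mathfrak n_\eta,V))\simeq H^0(\mathfrak n_\eta,C(V))$, and one checks the resulting map is $\mathfrak g$-equivariant (or at least $\mathfrak l_\eta$-equivariant, which is what is needed in the application) because all the identifications above are natural in $V$.

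The main obstacle I anticipate is the bookkeeping needed to make the double-dual argument rigorous in the presence of infinite dimensionality: $V$ need not be finite-dimensional, so $V^*$ is huge and one must work consistently with the $\mathfrak h$-finite part $D'(V)$ throughout, verifying that $H^0(\mathfrak n_\eta,-)$ and $-/\mathfrak n_\eta(-)$ stay within the category $\mathcal O'$ (or a suitable variant) so that $D'D'\simeq\mathrm{id}$ can be applied at each stage. In particular I must confirm that $H^0(\mathfrak n_\eta,V)$ is again an object to which $D'$ behaves well — it is locally $\mathfrak h$-finite with finite-dimensional weight spaces since it is a submodule of $V$ — and that $D'(V)/\mathfrak n_\eta D'(V)$ likewise has finite-dimensional $\mathfrak h$-weight spaces, which follows from finiteness of the weight spaces of $D'(V)$ together with $\mathfrak n_\eta$ being finite-dimensional. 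Once these finiteness statements are in place, the argument is a routine diagram chase and the equivariance of the isomorphism is automatic from naturality.
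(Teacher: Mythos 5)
Your argument is essentially the same as the paper's. The paper writes the chain
\[
H^0(\mathfrak{n}_\eta,C(V)) = H^0(\mathfrak{n}_\eta,D'(V)^*) = (D'(V)/\mathfrak{n}_\eta D'(V))^* = CD'(D'(V)/\mathfrak{n}_\eta D'(V)) = C(H^0(\mathfrak{n}_\eta,D'D'(V))) = C(H^0(\mathfrak{n}_\eta,V)),
\]
which is exactly your double-dual argument read from the other end: the identification $(D'(V)/\mathfrak{n}_\eta D'(V))^* = H^0(\mathfrak{n}_\eta,D'(V)^*)$ is the invariants/coinvariants duality you invoke, and the middle step $D'(D'(V)/\mathfrak{n}_\eta D'(V)) \simeq H^0(\mathfrak{n}_\eta,D'D'(V))$ is the $D'$-image of your intermediate isomorphism $D'(H^0(\mathfrak{n}_\eta,V)) \simeq D'(V)/\mathfrak{n}_\eta D'(V)$, using $D'D'\simeq\mathrm{id}$ on $\mathcal{O}'$. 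Same ingredients, same finiteness bookkeeping, just written in the opposite order.
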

\begin{proof}
We get the lemma by the following equation.
\begin{multline*}
H^0(\mathfrak{n}_\eta,C(V))  = H^0(\mathfrak{n}_\eta,D'(V)^*) = (D'(V)/\mathfrak{n}_\eta D'(V))^*\\
= CD'(D'(V)/\mathfrak{n}_\eta D'(V)) = C(H^0(\mathfrak{n}_\eta,D'(V)^*)_{\text{$\mathfrak{h}$-finite}})\\
= C(H^0(\mathfrak{n}_\eta,D'D'(V))) = C(H^0(\mathfrak{n}_\eta,V)).
\end{multline*}
\end{proof}

\begin{lem}\label{lem:relation in S_w}
Let $e_1,\dots, e_l$ be a basis of $\Ad(w_i)\overline{\mathfrak{n}}\cap\mathfrak{n}_0$ such that each $e_s$ is a root vector and $\bigoplus_{s\le t - 1}\C e_s$ is an ideal of $\bigoplus_{s\le t}\C e_s$.
In $S_{w_i,0}$ where $0$ is the trivial representation, we have the following formulae.
\begin{enumerate}
\item For all $t = 1,\dots,l$, 
\begin{multline*}
e_t(e_1^{-1}\dotsm e_{t - 1}^{-1}e_t^{-(k_t + 1)}\dotsm e_l^{-(k_l + 1)})\\ = e_1^{-1}\dotsm e_{t - 1}^{-1}e_t^{-k_t}e_{t + 1}^{-(k_{t + 1} + 1)}\dotsm e_l^{-(k_l + 1)}
\end{multline*}
\item Fix $t \in \{1,\dots,l\}$ such that $e_t\in \mathfrak{n}_\eta$.
Assume that $k_s = 0$ for all $s < t$ such that $e_s\in \mathfrak{n}_\eta$.
Then 
\begin{multline*}
e_t(e_1^{-(k_1 + 1)}\dotsm e_l^{-(k_l + 1)})\\ = e_1^{-(k_1 + 1)}\dots e_{t - 1}^{-(k_{t - 1} + 1)}e_t^{-k_t}e_{t + 1}^{-(k_t + 1)}\dots e_l^{-(k_l + 1)}.
\end{multline*}
\item $X(e_1^{-1}\dotsm e_l^{-1}) = (e_1^{-1}\dotsm e_l^{-1})X$  for $X\in \Ad(w_i)\mathfrak{m}\cap \mathfrak{n}_0$.
\end{enumerate}
\end{lem}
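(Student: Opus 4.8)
The three formulae are identities in the bimodule $S_{w_i,0}\simeq S_{e_1-0}\otimes_{U(\mathfrak g)}\dotsb\otimes_{U(\mathfrak g)}S_{e_l-0}$ of Proposition~\ref{prop:1-dim decomposition of S_w}, and the plan is to prove all of them by the same mechanism: push the element $X$ (which is $e_t$ in (1) and (2), and the given element of $\Ad(w_i)\mathfrak m\cap\mathfrak n_0$ in (3)) from the left through the factors one slot at a time, each time rewriting $Xe_j^{-(k_j+1)}$ by Lemma~\ref{lem:Xe^-k} (with $c=0$) as $e_j^{-(k_j+1)}X$ plus correction terms $\binom{n+k_j}{k_j}e_j^{-(n+k_j+1)}\ad(e_j)^n(X)$ with $n\ge 1$. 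The main term simply carries $X$ one slot to the right across the relevant $\otimes_{U(\mathfrak g)}$; when $X$ reaches the $t$-th slot in (1) and (2) it produces $e_t\cdot e_t^{-(k_t+1)}=e_t^{-k_t}$, and in (3) it crosses all $l$ factors and emerges as right multiplication. So the whole content is that the correction terms vanish.

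For (1), since $\bigoplus_{s\le t-1}\C e_s$ is an ideal of $\bigoplus_{s\le t}\C e_s$ for every $t$, for $j<t$ one has $\ad(e_j)^n(e_t)\in\bigoplus_{s\le t-1}\C e_s$, so the corrections produced when $e_t$ crosses a slot with index $<t$ only involve $e_1,\dots,e_{t-1}$; as those slots carry exponent $-1$, when such a spawned vector $e_s$ is transported to its own slot it meets $e_s\cdot e_s^{-1}=1\equiv 0$ in $S_{e_s-0}$ and the term dies. One has to organise this as a terminating recursion—each correction spawns only finitely many vectors of strictly smaller index, again transported to exponent-$(-1)$ slots—which is the routine but slightly fiddly part. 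For (2) the same argument applies with $\mathfrak n_\eta$ in place of the small indices: because $\mathfrak n_\eta$ is an ideal of $\mathfrak n_0$ and $e_t\in\mathfrak n_\eta$, every $\ad(e_j)^n(e_t)$ lies in $\mathfrak u_{0,w_i}\cap\mathfrak n_\eta$, hence is a combination of the $e_s$ with $e_s\in\mathfrak n_\eta$; among those with $s<t$ the hypothesis forces $k_s=0$, so the corresponding slot again has exponent $-1$ and the term vanishes, while the $e_t$-slot contributes $e_t^{-k_t}$ and the remaining factors are untouched.

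For (3), note first that $X\in\Ad(w_i)\mathfrak m\cap\mathfrak n_0$ normalises $\mathfrak u_{0,w_i}=\Ad(w_i)\overline{\mathfrak n}\cap\mathfrak n_0$ (because $\mathfrak m$ normalises $\overline{\mathfrak n}$ and $X\in\mathfrak n_0$ normalises $\mathfrak n_0$), so $\ad(e_j)^n(X)\in\mathfrak u_{0,w_i}$ for $n\ge 1$ and every correction term is again governed by the $e_s$; one then checks, using the flag-of-ideals ordering together with the fact that all $l$ slots carry exponent $-1$, that every such term is eventually annihilated—a spawned $e_s$ either returns to its own slot, giving $e_se_s^{-1}=0$, or is commuted past the remaining factors and back by relations $[e_r,e_s]\in\mathfrak u_{0,w_i}$ whose root is too large to contribute, so that it collapses onto the $s$-th slot. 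Comparing what survives gives $X(e_1^{-1}\dotsm e_l^{-1})=(e_1^{-1}\dotsm e_l^{-1})X$. The hardest part is precisely this bookkeeping in (3): one must rule out a spawned vector drifting past its own slot without being killed, and making that rigorous—most cleanly by an induction on $l$ or on $\sum_s k_s$—is where the real work lies.
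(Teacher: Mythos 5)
Your use of Lemma~\ref{lem:Xe^-k} to push $e_t$ (resp.\ $X$) through the factors is a reasonable start, but the entire content of the lemma is the vanishing of the correction terms, and that is precisely what you do not establish; moreover the mechanism you propose for it is not valid as stated. When $e_t$ crosses the $j$-th slot with $n\ge 1$, the spawned vector $\ad(e_j)^n(e_t)$ can be a multiple of some $e_s$ with $s\le j$, i.e.\ it appears to the \emph{right} of ``its own slot''; to make it meet that slot you must commute it back leftwards, which spawns further vectors, and---more seriously---in that very correction term the exponent of the slot just crossed is no longer $-1$ but $-(n+1)$, so the ``$e_se_s^{-1}=1\equiv 0$'' mechanism need not apply when the vector finally arrives. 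The same objection hits (2), where the slots with $e_s\notin\mathfrak{n}_\eta$ carry arbitrary exponents $-(k_s+1)$, and you concede yourself that the bookkeeping in (3) is left open. So as written this is a genuine gap, not a routine fiddly detail: the correction terms do not die one slot at a time by the mechanism you describe.

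The paper closes the gap by a short weight argument which also repairs your computation term by term. After the push-through, every correction term has the form (pure monomial $e^{-(\mathbf{k}+\mathbf{1})}$) multiplied on the right by an element of $\bigoplus_{s\le t-1}\C e_s$ (resp.\ of $\mathfrak{u}_{0,w_i}$ in (3)); by the ideal condition this right action preserves the span $\bigoplus_{\mathbf{k}}\C e^{-(\mathbf{k}+\mathbf{1})}$, so the whole difference lies in that span. On the other hand the difference is an $\mathfrak{a}_0$-weight vector (an $\mathfrak{a}_\eta$-weight vector in case (2)) of weight $-\sum_s\alpha_s+\alpha_t$ (resp.\ $\sum_{e_s\in\mathfrak{n}_\eta,\,s<t}(-\alpha_s)+\alpha_t$, resp.\ $-\sum_s\alpha_s+\alpha$ with $\alpha$ the root of $X$), which is strictly larger than every weight $-\sum_s(k_s+1)\alpha_s$ occurring in the span, since $\alpha_t>0$ (resp.\ $\alpha>0$). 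Hence the difference is zero, with no transport bookkeeping at all. If you want to keep your inductive scheme, you should replace ``the spawned vector returns to its own slot'' by this weight comparison; otherwise the termination argument you postpone is exactly the hard (and, in the naive form, false) step.
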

\begin{proof}
Let $\alpha_s$ be a restricted root corresponding to $e_s$.

(1)
It is sufficient to prove $e_t(e_1^{-1}\dotsm e_{t - 1}^{-1}) = (e_1^{-1}\dotsm e_{t - 1}^{-1})e_t$ in $S_{e_1}\otimes_{U(\mathfrak{g})}\dots \otimes_{U(\mathfrak{g})}S_{e_{t - 1}}$.
Since $\bigoplus_{s = 1}^{t - 1}\C e_s$ is an ideal of $\bigoplus_{s = 1}^t \C e_s$, we have 
\[
	e_t(e_1^{-1}\dotsm e_{t - 1}^{-1}) - (e_1^{-1}\dotsm e_{t - 1}^{-1})e_t\in \bigoplus_{k_s\ge 0}\C e_1^{-(k_1 + 1)}\dotsm e_{t - 1}^{-(k_{t - 1} + 1)}.
\]
An $\mathfrak{a}_0$-weight of the left hand side is $-\alpha_1 - \dots - \alpha_{t - 1} + \alpha_t$.
However, the set of $\mathfrak{a}_0$-weights of the right hand side is $\{-(k_1 + 1)\alpha_1 - \dots - (k_{t - 1} + 1)\alpha_{t - 1}\mid k_s\in\Z_{\ge 0}\}$.
Hence each $\mathfrak{a}_0$-weight appearing in the right hand side is less than that of the left hand side.
This implies $e_t(e_1^{-1}\dots e_{t - 1}^{-1}) - (e_1^{-1}\dots e_{t - 1}^{-1})e_t = 0$.

(2)
We prove $e_t(e_1^{-(k_1 + 1)}\dotsm e_{t - 1}^{-(k_{t - 1} + 1)}) = (e_1^{-(k_1 + 1)}\dots e_{t - 1}^{-(k_{t - 1} + 1)})e_t$ in $S_{e_1}\otimes_{U(\mathfrak{g})}\dots \otimes_{U(\mathfrak{g})}S_{e_{t - 1}}$.
As in the proof of (1), we have 
\begin{multline*}
	e_t(e_1^{-(k_1 + 1)}\dotsm e_{t - 1}^{-(k_{t - 1} + 1)}) - (e_1^{-(k_1 + 1)}\dotsm e_{t - 1}^{-(k_{t - 1} + 1)})e_t\\\in \bigoplus_{k_s\ge 0}\C e_1^{-(k_1 + 1)}\dotsm e_{t - 1}^{-(k_{t - 1} + 1)}.
\end{multline*}
An $\mathfrak{a}_\eta$-weight of the left hand side is $\sum_{e_s\in \mathfrak{n}_\eta,\ s < t}-\alpha_s + \alpha_t$.
However, the set of $\mathfrak{a}_\eta$-weights of the right hand side is $\{\sum_{e_s\in\mathfrak{n}_\eta,\ s < t}-(k_s + 1)\alpha_s\mid k_s\in\Z_{\ge 0}\}$.
Hence each $\mathfrak{a}_\eta$-weight appearing in the right hand side is less than that of the left hand side.
This implies the lemma.

(3)
We may assume $X$ is a restricted root vector.
Let $\alpha$ be a restricted root of $X$.
Since $X$ normalizes $\Ad(w_i)\overline{\mathfrak{n}}\cap \mathfrak{n}_0$, we have
\[
	X(e_1^{-1}\dotsm e_l^{-1}) - (e_1^{-1}\dotsm e_l^{-1})X\in \bigoplus_{k_s\ge 0}\C e_1^{-(k_1 + 1)}\dotsm e_l^{-(k_l + 1)}.
\]
Then $X(e_1^{-1}\dotsm e_l^{-1}) - (e_1^{-1}\dotsm e_l^{-1})X$ has an $\mathfrak{a}_0$-weight $-(\alpha_1 + \dots + \alpha_s) + \alpha$.
However, $e_1^{-(k_1 + 1)}\dotsm e_l^{-(k_l + 1)}$ has a $\mathfrak{a}_0$-weight $-((k_1 + 1)\alpha_1 + \dots + (k_l + 1)\alpha_l) < -(\alpha_1 + \dots + \alpha_s) + \alpha$.
Hence $X(e_1^{-1}\dotsm e_l^{-1}) - (e_1^{-1}\dotsm e_l^{-1})X = 0$.
\end{proof}

\begin{lem}\label{lem:e^{-1}-part}
Let $e_1,\dots, e_l$ be a basis of $\Ad(w_i)\overline{\mathfrak{n}}\cap\mathfrak{n}_0$ such that $e_s$ is a root vector and $\bigoplus_{s\le t - 1}\C e_s$ is an ideal of $\bigoplus_{s\le t}\C e_s$.
Let $V$ be a $U(\mathfrak{m}\oplus\mathfrak{a})$-representation.
Regard $V$ as a $\mathfrak{p}$-representation by $\mathfrak{n}V = 0$.
By Lemma~\ref{lem:induction + twisting}, we have $T_{w_i}(U(\mathfrak{g})\otimes_{U(\mathfrak{p})}V) \simeq (\bigoplus_{k_s\ge 0}\C e_1^{-(k_1 + 1)}\dotsm e_l^{-(k_l + 1)})\otimes U(\Ad(w_i)\overline{\mathfrak{n}}\cap \overline{\mathfrak{n}_0})\otimes w_iV$.
Then we have $\{v\in e_1^{-1}\dotsm e_l^{-1}\otimes 1\otimes w_iV\mid \mathfrak{n}_\eta v = 0\} = e_1^{-1}\dotsm e_l^{-1}\otimes 1\otimes H^0(\Ad(w_i)\mathfrak{m}\cap \mathfrak{n}_\eta,w_iV)$.
\end{lem}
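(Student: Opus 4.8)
The plan is to describe the action of $\mathfrak{n}_\eta$ on the subspace $e_1^{-1}\cdots e_l^{-1}\otimes 1\otimes w_iV$ completely explicitly, using the presentation of $T_{w_i}(U(\mathfrak{g})\otimes_{U(\mathfrak{p})}V)$ from Lemma~\ref{lem:induction + twisting} together with the commutation formula of Lemma~\ref{lem:Xe^-k}. Write $\mathbf{1}=(1,\dots,1)$, $e^{-\mathbf{1}}=e_1^{-1}\cdots e_l^{-1}$, $e^{-(\mathbf{k}+\mathbf{1})}=e_1^{-(k_1+1)}\cdots e_l^{-(k_l+1)}$ for $\mathbf{k}\in\Z_{\ge 0}^l$, $\ad(e)^{\mathbf{k}}=\ad(e_l)^{k_l}\cdots\ad(e_1)^{k_1}$, and let $\alpha_s$ be the restricted root of $e_s$. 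By Lemma~\ref{lem:induction + twisting} the map $v'\mapsto e^{-\mathbf{1}}\otimes 1\otimes v'$ is a linear isomorphism of $w_iV$ onto $e^{-\mathbf{1}}\otimes 1\otimes w_iV$, the subspaces $e^{-(\mathbf{k}+\mathbf{1})}\otimes 1\otimes w_iV$ are linearly independent, and — exactly as in the proof of Theorem~\ref{thm:structure of I_i/I_{i - 1}}, via Lemma~\ref{lem:Xe^-k} — for every $X\in\mathfrak{g}$ one has, in the presentation $\big(\sum_{\mathbf{k}}\C e^{-(\mathbf{k}+\mathbf{1})}\big)\otimes_{U(\Ad(w_i)\overline{\mathfrak{n}}\cap\mathfrak{n}_0)}U(\mathfrak{g})\otimes_{U(\Ad(w_i)\mathfrak{p})}w_iV$,
\[
 X\cdot(e^{-\mathbf{1}}\otimes 1\otimes v')=\sum_{\mathbf{p}\in\Z_{\ge 0}^l}e^{-(\mathbf{p}+\mathbf{1})}\otimes(\ad(e)^{\mathbf{p}}X)\otimes v'.
\]

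Next I would use the decomposition $\mathfrak{n}_0=(\Ad(w_i)\overline{\mathfrak{n}}\cap\mathfrak{n}_0)\oplus(\Ad(w_i)\mathfrak{m}\cap\mathfrak{n}_0)\oplus(\Ad(w_i)\mathfrak{n}\cap\mathfrak{n}_0)$ with $\Ad(w_i)\overline{\mathfrak{n}}\cap\mathfrak{n}_0=\mathfrak{u}_{0,w_i}$: since $w_i$ normalizes $\mathfrak{a}_0$, all four summands of $\mathfrak{g}=\Ad(w_i)\overline{\mathfrak{n}}\oplus\Ad(w_i)\mathfrak{m}\oplus\Ad(w_i)\mathfrak{a}\oplus\Ad(w_i)\mathfrak{n}$ are sums of restricted root spaces, $\mathfrak{n}_0$ meets $\Ad(w_i)\mathfrak{a}$ trivially, and $\Ad(w_i)\overline{\mathfrak{n}}\cap\mathfrak{n}_0=\mathfrak{u}_{0,w_i}$ because $w_i\in W(M)$; the same decomposition restricts to $\mathfrak{n}_\eta$, which is a sum of restricted root spaces. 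Fix a restricted root vector $X\in\mathfrak{n}_\eta$, of root $\beta\in\Sigma^+\setminus\Sigma_\eta^+$. Since each $\alpha_s\in\Sigma^+$ and $\beta$ has a positive coefficient on some simple root outside $\supp\eta$, every $\ad(e)^{\mathbf{p}}X$ is again a restricted root vector lying in $\mathfrak{n}_\eta$, of root $\beta+\sum_s p_s\alpha_s$. Write $\ad(e)^{\mathbf{p}}X=B_{\mathbf{p}}+Y_{\mathbf{p}}+Z_{\mathbf{p}}$ along the three summands, so $B_{\mathbf{p}}\in\mathfrak{u}_{0,w_i}$, $Y_{\mathbf{p}}\in\Ad(w_i)\mathfrak{m}\cap\mathfrak{n}_\eta$, $Z_{\mathbf{p}}\in\Ad(w_i)\mathfrak{n}\cap\mathfrak{n}_\eta$, all restricted root vectors of that same root. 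Now simplify: as $Z_{\mathbf{p}}\in\Ad(w_i)\mathfrak{n}$, which acts by zero on $w_iV$ because $\mathfrak{n}V=0$, we get $Z_{\mathbf{p}}\otimes v'=1\otimes Z_{\mathbf{p}}v'=0$; as $Y_{\mathbf{p}}\in\Ad(w_i)\mathfrak{p}$ we get $Y_{\mathbf{p}}\otimes v'=1\otimes Y_{\mathbf{p}}v'$; and as $B_{\mathbf{p}}\in\Ad(w_i)\overline{\mathfrak{n}}\cap\mathfrak{n}_0$ it can be absorbed into the left-hand factor, so $e^{-(\mathbf{p}+\mathbf{1})}\otimes B_{\mathbf{p}}\otimes v'=(e^{-(\mathbf{p}+\mathbf{1})}B_{\mathbf{p}})\otimes 1\otimes v'$. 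The crucial point — and the one step that is not bookkeeping — is that $\sum_{\mathbf{p}}e^{-(\mathbf{p}+\mathbf{1})}B_{\mathbf{p}}=0$ in $S_{w_i,0}$; I would prove this by a weight count. For the action of $\mathfrak{a}_0$ on $S_{w_i,0}$ given by $H\cdot s=Hs-sH$, the element $e^{-(\mathbf{p}+\mathbf{1})}$ has weight $-\sum_s(p_s+1)\alpha_s$ and $B_{\mathbf{p}}$ has weight $\beta+\sum_s p_s\alpha_s$, so $e^{-(\mathbf{p}+\mathbf{1})}B_{\mathbf{p}}$ has weight $\beta-\sum_s\alpha_s$, independently of $\mathbf{p}$; but every $\mathfrak{a}_0$-weight occurring in $\sum_{\mathbf{k}}\C e^{-(\mathbf{k}+\mathbf{1})}$ has the form $-\sum_s\alpha_s-\gamma$ with $\gamma\in\Z_{\ge 0}\Sigma^+$, and since $\beta\in\Sigma^+\setminus\{0\}$ the weight $\beta-\sum_s\alpha_s$ is not of this form.

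Putting these together, $X\cdot(e^{-\mathbf{1}}\otimes 1\otimes v')=\sum_{\mathbf{p}}e^{-(\mathbf{p}+\mathbf{1})}\otimes 1\otimes Y_{\mathbf{p}}v'$, which by the linear independence of the pieces $e^{-(\mathbf{p}+\mathbf{1})}\otimes 1\otimes w_iV$ vanishes if and only if $Y_{\mathbf{p}}v'=0$ for every $\mathbf{p}$. Finally, as $X$ ranges over the restricted root vectors of $\mathfrak{n}_\eta$ and $\mathbf{p}$ over $\Z_{\ge 0}^l$, the elements $Y_{\mathbf{p}}$ range over a spanning set of $\Ad(w_i)\mathfrak{m}\cap\mathfrak{n}_\eta$ — they all lie in it, and the case $\mathbf{p}=0$ with $X\in\Ad(w_i)\mathfrak{m}\cap\mathfrak{n}_\eta$ recovers all of it. Hence $\mathfrak{n}_\eta$ annihilates $e^{-\mathbf{1}}\otimes 1\otimes v'$ exactly when $(\Ad(w_i)\mathfrak{m}\cap\mathfrak{n}_\eta)v'=0$, i.e. when $v'\in H^0(\Ad(w_i)\mathfrak{m}\cap\mathfrak{n}_\eta,w_iV)$, which is the asserted equality. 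The main obstacle throughout is the vanishing of $\sum_{\mathbf{p}}e^{-(\mathbf{p}+\mathbf{1})}B_{\mathbf{p}}$; once that is reduced to the $\mathfrak{a}_0$-weight computation above, everything else is careful tracking of the PBW-type decompositions and of which factor of the triple tensor product each piece belongs to.
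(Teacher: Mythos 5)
Your proof is correct, and it reaches the statement by a more computational route than the paper does. The paper's own proof is three lines: it quotes Lemma~\ref{lem:relation in S_w}~(3), namely $X(e_1^{-1}\dotsm e_l^{-1})=(e_1^{-1}\dotsm e_l^{-1})X$ in $S_{w_i,0}$ for $X\in\Ad(w_i)\mathfrak{m}\cap\mathfrak{n}_0$, so that $X(e_1^{-1}\dotsm e_l^{-1}\otimes 1\otimes v_0)=e_1^{-1}\dotsm e_l^{-1}\otimes 1\otimes Xv_0$ and vanishing forces $Xv_0=0$; only the inclusion of the left-hand side into the right-hand side is written out, that being what is used later in Lemma~\ref{lem:Whittaker vectors in a Bruhat cell, algebraic}. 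You instead expand the action of every restricted root vector $X\in\mathfrak{n}_\eta$ on the bottom layer via Lemma~\ref{lem:Xe^-k}, exactly as in the proof of Theorem~\ref{thm:structure of I_i/I_{i - 1}}, split each $\ad(e)^{\mathbf{p}}X$ along $\Ad(w_i)\overline{\mathfrak{n}}\oplus\Ad(w_i)\mathfrak{m}\oplus\Ad(w_i)\mathfrak{n}$, and kill the $\Ad(w_i)\overline{\mathfrak{n}}\cap\mathfrak{n}_0$-contributions by an $\mathfrak{a}_0$-weight comparison inside $\sum_{\mathbf{k}}\C e^{-(\mathbf{k}+\mathbf{1})}$. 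That weight comparison is precisely the mechanism by which the paper proves Lemma~\ref{lem:relation in S_w}, so the two arguments rest on the same underlying idea; yours amounts to proving a right-multiplication analogue of that lemma inline instead of citing it. What your route buys is the full equality asserted in the statement: since you obtain $X(e^{-\mathbf{1}}\otimes 1\otimes v')=\sum_{\mathbf{p}}e^{-(\mathbf{p}+\mathbf{1})}\otimes 1\otimes Y_{\mathbf{p}}v'$ with every $Y_{\mathbf{p}}\in\Ad(w_i)\mathfrak{m}\cap\mathfrak{n}_\eta$, the reverse inclusion comes for free, whereas the paper leaves it implicit. Two steps deserve to be flagged explicitly, though neither is a gap: the absorption $e^{-(\mathbf{p}+\mathbf{1})}\otimes B_{\mathbf{p}}\otimes v'=(e^{-(\mathbf{p}+\mathbf{1})}B_{\mathbf{p}})\otimes 1\otimes v'$ and the subsequent weight count use that $\sum_{\mathbf{k}}\C e^{-(\mathbf{k}+\mathbf{1})}$ is stable under right multiplication by $U(\Ad(w_i)\overline{\mathfrak{n}}\cap\mathfrak{n}_0)$ (the remark made in the proof of Theorem~\ref{thm:structure of I_i/I_{i - 1}}), and the conclusion that all $Y_{\mathbf{p}}v'$ vanish uses the directness of the decomposition $\bigoplus_{\mathbf{k}}\C e^{-(\mathbf{k}+\mathbf{1})}\otimes U(\Ad(w_i)\overline{\mathfrak{n}}\cap\overline{\mathfrak{n}_0})\otimes w_iV$ recalled in the statement of the lemma.
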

\begin{proof}
Take $v = e_1^{-1}\dotsm e_l^{-1}\otimes 1\otimes v_0\in H^0(\mathfrak{n}_\eta,T_{w_i}(U(\mathfrak{g})\otimes_{U(\mathfrak{p})}V)$.
Then for $X\in \Ad(w_i)\mathfrak{m}\cap \mathfrak{n}_\eta$ we have $X(e_1^{-1}\dotsm e_l^{-1}\otimes 1\otimes v_0) = 0$.
By Lemma~\ref{lem:relation in S_w}, we have $e_1^{-1}\dotsm e_l^{-1}\otimes 1\otimes Xv_0 = 0$.
Hence $Xv_0 = 0$.
\end{proof}

By the definition of the Harish-Chandra homomorphism, we get the following lemma.

\begin{lem}\label{lem:infinitesimal character of a space of partial highest weight vectors}
Let $\mathfrak{q}$ be a parabolic subalgebra of $\mathfrak{g}$ containing $\mathfrak{h}\oplus\mathfrak{u}_0$.
Take a Levi decomposition $\mathfrak{l}\oplus\mathfrak{u}_\mathfrak{q}$ of $\mathfrak{q}$ such that $\mathfrak{h}\subset \mathfrak{l}$.
Let $\widetilde{W_\mathfrak{l}}\subset \widetilde{W}$ be the Weyl group of $\mathfrak{l}$, $V$ an $\mathfrak{l}$-module with an infinitesimal character $\widetilde{\mu}$.
Put $V' = H^0(\mathfrak{u}_\mathfrak{q},V)$ and $\widetilde{\rho_{\mathfrak{u}_\mathfrak{q}}}(H) = (1/2)\Tr \ad(H)|_{\mathfrak{u}_\mathfrak{q}}$ for $H\in \mathfrak{h}$.
Then $V'$ is $\mathfrak{l}$-stable and $V' = \bigoplus_{\widetilde{w}\in \widetilde{W_\mathfrak{l}}\backslash\widetilde{W}}(V')_{[\widetilde{w}\widetilde{\mu} - \widetilde{\rho_{\mathfrak{u}_\mathfrak{q}}}]}$ where $(V')_{[\widetilde{w}\widetilde{\mu} - \widetilde{\rho_{\mathfrak{u}_\mathfrak{q}}}]}$ is the maximal $\mathfrak{l}$-submodule which has an infinitesimal character $\widetilde{w}\widetilde{\mu} - \widetilde{\rho_{\mathfrak{u}_\mathfrak{q}}}$.
In particular, for an $\mathfrak{l}$-submodule $V''$ of $V'$, a highest weight of $V'/V''$ belongs to $\{\widetilde{w}\widetilde{\mu} - \widetilde{\rho}\mid \widetilde{w}\in\widetilde{W}\}$.
\end{lem}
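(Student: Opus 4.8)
The plan is to combine the standard structure of the Harish-Chandra homomorphism (the maps $\gamma_1,\dots,\gamma_4$ introduced above) with the Casselman--Osborne lemma for the zeroth $\mathfrak{u}_\mathfrak{q}$-cohomology, and then to read off the possible highest weights of $V'/V''$ from the possible $Z(\mathfrak{l})$-infinitesimal characters of $V'$.

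First I would record that $V'=H^0(\mathfrak{u}_\mathfrak{q},V)$ is $\mathfrak{l}$-stable, since for $X\in\mathfrak{l}$, $Y\in\mathfrak{u}_\mathfrak{q}$, $v\in V'$ one has $Y(Xv)=X(Yv)+[Y,X]v=0$ because $[\mathfrak{l},\mathfrak{u}_\mathfrak{q}]\subset\mathfrak{u}_\mathfrak{q}$. Since $Z(\mathfrak{g})$ acts on $V$ through the infinitesimal character $\widetilde{\mu}$, it acts on $V'$ by the same scalars. The key point is that $Z(\mathfrak{g})$ acts on the $\mathfrak{l}$-module $V'$ through $Z(\mathfrak{l})$: the projection of $Z(\mathfrak{g})$ onto $U(\mathfrak{l})$ along $\theta(\mathfrak{u}_\mathfrak{q})U(\mathfrak{g})+U(\mathfrak{g})\mathfrak{u}_\mathfrak{q}$ lands in $Z(\mathfrak{l})$ (this is $\gamma_3\gamma_2\gamma_1$ restricted to $Z(\mathfrak{g})$ in the present notation, before applying $\gamma_4$), and by the Casselman--Osborne lemma every $z\in Z(\mathfrak{g})$ acts on $H^0(\mathfrak{u}_\mathfrak{q},V)$ exactly as its image in $Z(\mathfrak{l})$ does. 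I would cite this rather than reprove it; for the zeroth cohomology it also admits a direct argument by filtering $U(\mathfrak{g})\mathfrak{u}_\mathfrak{q}$ by $\mathfrak{u}_\mathfrak{q}$-degree and using the centrality of $z$.

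Next I decompose $V'$ as an $\mathfrak{l}$-module into its $Z(\mathfrak{l})$-infinitesimal character components, $V'=\bigoplus_{\widetilde{\lambda_1}}(V')_{[\widetilde{\lambda_1}]}$, a finite sum because $V'$ is $Z(\mathfrak{l})$-finite. By the previous paragraph a component $\widetilde{\lambda_1}$ can occur only when the image of $Z(\mathfrak{g})$ in $Z(\mathfrak{l})$ acts on it by the scalars coming from $\widetilde{\mu}$; applying $\gamma_4$ and using the compatibility of the Harish-Chandra homomorphisms of $\mathfrak{g}$ and of $\mathfrak{l}$ (which introduces the shift by $\widetilde{\rho_{\mathfrak{u}_\mathfrak{q}}}$, since $\widetilde{\rho}$ is the half-sum of all positive roots while $\widetilde{\rho}-\widetilde{\rho_{\mathfrak{u}_\mathfrak{q}}}$ is that for $\mathfrak{l}$), this forces $\widetilde{\lambda_1}$ to lie in the $\widetilde{W_\mathfrak{l}}$-orbit of $\widetilde{w}\widetilde{\mu}-\widetilde{\rho_{\mathfrak{u}_\mathfrak{q}}}$ for some $\widetilde{w}\in\widetilde{W}$. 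Since distinct $\widetilde{W_\mathfrak{l}}$-orbits give distinct infinitesimal characters of $\mathfrak{l}$, the occurring components are indexed by $\widetilde{W_\mathfrak{l}}\backslash\widetilde{W}$, which is the asserted decomposition.

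For the final assertion: $V''$, being an $\mathfrak{l}$-submodule, is in particular $Z(\mathfrak{l})$-stable, hence respects the decomposition above, so $V'/V''$ inherits it. A highest weight vector $\bar v$ of $V'/V''$ of weight $\chi$ is killed by $\mathfrak{l}\cap\mathfrak{u}_0$, hence by all of $\mathfrak{u}_0$ because $\mathfrak{u}_\mathfrak{q}$ annihilates $V'/V''$; it lies in a single summand, so $U(\mathfrak{l})\bar v$, being a quotient of the $\mathfrak{l}$-Verma module of highest weight $\chi$, has $Z(\mathfrak{l})$-infinitesimal character both the corresponding $\widetilde{\lambda_1}$ and the one determined by $\chi$. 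Comparing these, and using that $\widetilde{W_\mathfrak{l}}$ permutes the roots of the parabolic nilradical $\mathfrak{u}_\mathfrak{q}$ and hence fixes $\widetilde{\rho_{\mathfrak{u}_\mathfrak{q}}}$, together with $(\widetilde{\rho}-\widetilde{\rho_{\mathfrak{u}_\mathfrak{q}}})+\widetilde{\rho_{\mathfrak{u}_\mathfrak{q}}}=\widetilde{\rho}$, yields $\chi\in\{\widetilde{w}\widetilde{\mu}-\widetilde{\rho}\mid\widetilde{w}\in\widetilde{W}\}$. The main obstacle is the Casselman--Osborne step; once it is granted the rest is bookkeeping with $\rho$-shifts, and the only other care needed is to keep the normalization of ``infinitesimal character'' consistent with the definition of $\gamma_4\gamma_3\gamma_2\gamma_1$ used in this section, which is exactly what pins down the shift by $\widetilde{\rho}$ (rather than $\widetilde{\rho_{\mathfrak{u}_\mathfrak{q}}}$) in the last display.
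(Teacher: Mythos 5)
The paper states this lemma without proof, noting only that it follows from the definition of the Harish-Chandra homomorphism; your argument is correct and supplies exactly the content of that remark (and yes, ``$\mathfrak{l}$-module'' in the statement should be read as ``$\mathfrak{g}$-module,'' as your interpretation assumes). The two points you isolate---the Casselman--Osborne fact, which for $H^0$ is the direct PBW observation that $z\in Z(\mathfrak{g})$ differs from its projection to $Z(\mathfrak{l})$ by an element of $U(\mathfrak{g})\mathfrak{u}_\mathfrak{q}$, so that $Z(\mathfrak{g})$ acts on $H^0(\mathfrak{u}_\mathfrak{q},V)$ through $Z(\mathfrak{l})$, together with the factorization of the Harish-Chandra projection of $\mathfrak{g}$ through that of $\mathfrak{l}$, which is what makes the $\widetilde{\rho_{\mathfrak{u}_\mathfrak{q}}}$- versus $\widetilde{\rho}$-shifts come out as stated---are precisely the argument being invoked.
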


The following lemma is well-known.
\begin{lem}\label{lem:distribution of weight}
Let $V\in \mathcal{O}'$.
Assume that $V$ has an infinitesimal character $\widetilde{\lambda}\in \mathfrak{h}^*$.
Then a $\mathfrak{h}$-weight appearing in $V$ is contained in $\{\widetilde{w}\widetilde{\lambda} - \widetilde{\rho} - \alpha\mid \widetilde{w}\in\widetilde{W},\ \alpha\in\Z_{\ge 0}\Delta^+\}$.
\end{lem}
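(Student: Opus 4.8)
The plan is to combine two classical ingredients: for any prescribed weight of $V$ there is a $\mathfrak{u}_0$-singular weight dominating it, and Harish-Chandra's description of the infinitesimal character of a highest weight module.

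First I would fix an $\mathfrak{h}$-weight $\mu$ occurring in $V$ and produce a $\mathfrak{u}_0$-singular weight $\theta$ of $V$ with $\theta-\mu\in\Z_{\ge 0}\Delta^+$. Since $V$ is locally $\mathfrak{h}$-finite and $\mathfrak{h}$ is abelian, the $\mu$-weight space of $V$ contains a genuine weight vector $v$ (take a common eigenvector of a basis of $\mathfrak{h}$ acting on the finite-dimensional generalized weight space). Since $V$ is locally $\mathfrak{u}_0$-finite, $U(\mathfrak{u}_0)v$ is finite-dimensional and $\mathfrak{h}\oplus\mathfrak{u}_0$-stable; it is moreover a genuine weight module, being spanned by the vectors $X_1\cdots X_k v$ with each $X_j$ a root vector of $\mathfrak{u}_0$, so all its weights lie in $\mu+\Z_{\ge 0}\Delta^+$. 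Choosing $\theta$ maximal (for the dominance order) among these finitely many weights and a nonzero $w$ of weight $\theta$, each $\mathfrak{g}^\mathfrak{h}_\alpha w$ with $\alpha\in\Delta^+$ again lies in $U(\mathfrak{u}_0)v$ and has weight $\theta+\alpha$, which is strictly greater than $\theta$ and hence not a weight of $U(\mathfrak{u}_0)v$; therefore $\mathfrak{g}^\mathfrak{h}_\alpha w=0$ for every $\alpha\in\Delta^+$, i.e. $w$ is a nonzero $\mathfrak{u}_0$-singular vector of weight $\theta$, and $\theta\ge\mu$ by construction.

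Next I would identify $\theta$. The vector $w$ generates a highest weight $\mathfrak{g}$-module, namely a quotient of the Verma module $U(\mathfrak{g})\otimes_{U(\mathfrak{h}\oplus\mathfrak{u}_0)}\C_\theta$, so every $z\in Z(\mathfrak{g})$ acts on $w$ by the scalar attached to $\theta$ through the Harish-Chandra homomorphism; on the other hand $V$ has infinitesimal character $\widetilde{\lambda}$, so $z$ acts on $w$ by the scalar attached to $\widetilde{\lambda}$. Equating the two characters and invoking Harish-Chandra's theorem (this is the special case $\mathfrak{q}=\mathfrak{h}\oplus\mathfrak{u}_0$, $V''=0$ of Lemma~\ref{lem:infinitesimal character of a space of partial highest weight vectors}) gives $\theta\in\{\widetilde{w}\widetilde{\lambda}-\widetilde{\rho}\mid \widetilde{w}\in\widetilde{W}\}$. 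Writing $\mu=\theta-(\theta-\mu)$ with $\theta-\mu\in\Z_{\ge 0}\Delta^+$ then yields the assertion.

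There is no genuine obstacle here; the statement is, as noted, well known. The only points demanding a little care are the non-semisimplicity of the $\mathfrak{h}$-action allowed in $\mathcal{O}'$, so that one must pass from a generalized weight space to an honest eigenvector as above, and the bookkeeping of the $\widetilde{\rho}$-shift in the Harish-Chandra parametrization, which in this paper is normalized so that a module with infinitesimal character $\widetilde{\lambda}$ has its $\mathfrak{u}_0$-singular weights in $\widetilde{W}\widetilde{\lambda}-\widetilde{\rho}$.
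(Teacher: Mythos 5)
The paper offers no proof of this lemma; it is simply labelled well-known. Your argument is a correct, standard derivation, and it follows exactly the route one would expect: pass from a generalized $\mathfrak{h}$-weight to a genuine eigenvector (needed since $\mathcal{O}'$ does not demand semisimple $\mathfrak{h}$-action), use local $\mathfrak{u}_0$-finiteness to find a dominating $\mathfrak{u}_0$-singular weight $\theta$ with $\theta-\mu\in\Z_{\ge 0}\Delta^+$, and then read off $\theta\in\widetilde{W}\widetilde{\lambda}-\widetilde{\rho}$ from the Harish--Chandra parametrization, which is precisely the $\mathfrak{q}=\mathfrak{h}\oplus\mathfrak{u}_0$, $V''=0$ case of Lemma~\ref{lem:infinitesimal character of a space of partial highest weight vectors}. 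All the small points you flag --- the common-eigenvector step, the finiteness of $U(\mathfrak{u}_0)v$, maximality giving $\mathfrak{g}^\mathfrak{h}_\alpha w=0$, and the $\widetilde{\rho}$-shift matching the paper's normalization --- are handled correctly, so the proof is complete and consistent with the conventions of Lemma~\ref{lem:infinitesimal character of a space of partial highest weight vectors}.
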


Now we determine the dimension of the space of Whittaker vectors of $\widetilde{I_i}/\widetilde{I_{i - 1}}$ under some conditions.

\begin{lem}\label{lem:Whittaker vectors in a Bruhat cell, algebraic}
Let $\widetilde{\mu}$ be an infinitesimal character of $\sigma$.
Assume that for all $\widetilde{w}\in \widetilde{W}\setminus \widetilde{W_M}$, $(\lambda+\widetilde{\mu}) - \widetilde{w}(\lambda+\widetilde{\mu})\not\in \Z\Delta$.
Then we have $\dim\Wh_\eta(\widetilde{I_i}/\widetilde{I_{i - 1}}) = \dim \Wh_{w_i^{-1}\eta}((\sigma_{\text{\normalfont $M\cap K$-finite}})^*)$.
\end{lem}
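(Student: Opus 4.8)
The plan is to strip the functor $\Gamma_\eta\circ C\circ T_{w_i}$ of Theorem~\ref{thm:stucture of J^*(I(sigma,lambda))} one layer at a time, pushing the computation of $\Wh_\eta$ down to a Whittaker computation on the Levi $M$, where the genericity hypothesis forbids anything unexpected. Throughout put $W=T_{w_i}(U(\mathfrak{g})\otimes_{U(\mathfrak{p})}J^*(\sigma\otimes e^{\lambda+\rho}))$, with $\mathfrak{n}$ acting trivially on $J^*(\sigma\otimes e^{\lambda+\rho})$, and recall that $\widetilde{I_i}/\widetilde{I_{i-1}}\simeq \Gamma_\eta(C(W))$.

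First, since every $v\in\Wh_\eta(X)$ is killed by $\Ker\eta$, one has $\Wh_\eta(\Gamma_\eta(X))=\Wh_\eta(X)$ for any $\mathfrak{n}_0$-module $X$, so $\Wh_\eta(\widetilde{I_i}/\widetilde{I_{i-1}})=\Wh_\eta(C(W))$. Next I would use the vector-space decomposition $\mathfrak{n}_0=\mathfrak{n}_\eta\oplus(\mathfrak{m}_\eta\cap\mathfrak{n}_0)$, in which $\eta$ vanishes on $\mathfrak{n}_\eta$ and restricts to a non-degenerate character $\eta_0$ of $\mathfrak{m}_\eta\cap\mathfrak{n}_0$; this gives $\Wh_\eta(C(W))=\Wh_{\eta_0}(H^0(\mathfrak{n}_\eta,C(W)))$ as $\mathfrak{l}_\eta$-modules. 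The concrete description in Lemma~\ref{lem:induction + twisting} shows $W\in\mathcal{O}'$, so Lemma~\ref{lem:two step calc for Whittaker vector} applies and $H^0(\mathfrak{n}_\eta,C(W))=C(H^0(\mathfrak{n}_\eta,W))$. Thus everything reduces to computing $\Wh_{\eta_0}(C(H^0(\mathfrak{n}_\eta,W)))$.

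The heart of the proof is the analysis of $H^0(\mathfrak{n}_\eta,W)$. By Lemma~\ref{lem:induction + twisting} together with the Poincaré--Birkhoff--Witt theorem, $W$ has a $\C$-basis carried by $\bigl(\bigoplus_{\mathbf{k}\in\Z_{\ge0}^l}\C\,e_1^{-(k_1+1)}\dotsm e_l^{-(k_l+1)}\bigr)\otimes U(\Ad(w_i)\overline{\mathfrak{n}}\cap\overline{\mathfrak{n}_0})\otimes w_iJ^*(\sigma\otimes e^{\lambda+\rho})$. Filtering by the degree in $U(\Ad(w_i)\overline{\mathfrak{n}}\cap\overline{\mathfrak{n}_0})$ and by $\mathbf{k}$, and using the commutation formulas of Lemma~\ref{lem:relation in S_w} and Lemma~\ref{lem:Xe^-k} to track how $\mathfrak{n}_\eta$ moves among these pieces, I would show that the only subquotient of $H^0(\mathfrak{n}_\eta,W)$ which can contribute a nonzero class to $\Wh_{\eta_0}(C(H^0(\mathfrak{n}_\eta,W)))$ is the one carried by $e_1^{-1}\dotsm e_l^{-1}\otimes 1\otimes w_iJ^*(\sigma\otimes e^{\lambda+\rho})$: any other piece has $\mathfrak{h}$-weights (Lemma~\ref{lem:distribution of weight}) and partial-highest-weight infinitesimal-character behaviour (Lemma~\ref{lem:infinitesimal character of a space of partial highest weight vectors}) that, under the assumption $(\lambda+\widetilde{\mu})-\widetilde{w}(\lambda+\widetilde{\mu})\notin\Z\Delta$ for $\widetilde{w}\in\widetilde{W}\setminus\widetilde{W_M}$, cannot support an $\eta_0$-Whittaker vector of the dual. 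On the surviving piece, Lemma~\ref{lem:e^{-1}-part} identifies $H^0(\mathfrak{n}_\eta,-)$ with $e_1^{-1}\dotsm e_l^{-1}\otimes 1\otimes H^0(\Ad(w_i)\mathfrak{m}\cap\mathfrak{n}_\eta,\,w_iJ^*(\sigma\otimes e^{\lambda+\rho}))$.

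Finally, conjugating by $w_i\in W(M)$ identifies $\Ad(w_i)\mathfrak{m}\cap\mathfrak{n}_\eta$ with $\Ad(w_i)$ of the nilradical of the parabolic of $M$ attached to $\supp(w_i^{-1}\eta)$, so after untwisting this $H^0$ is a dual Jacquet module of $\sigma$ for that parabolic, and the remaining $\eta_0$-Whittaker-vector computation over $M_\eta$ is precisely the non-degenerate case (Lemma~\ref{lem:Whittaker vectors in non-degenerate case}, equivalently Matumoto~\cite[Theorem~4.9.2]{MR1047117}). Chasing the resulting isomorphism back through $C$ identifies $\Wh_{\eta_0}(C(H^0(\mathfrak{n}_\eta,W)))$ with $\Wh_{w_i^{-1}\eta}((\sigma_{\textnormal{$M\cap K$-finite}})^*)$, which gives the asserted dimension. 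The main obstacle is exactly the bookkeeping in the third paragraph: proving that the genericity hypothesis really does annihilate every piece of $H^0(\mathfrak{n}_\eta,W)$ except the top one, and that no contribution is lost when passing to $C$ and then to $\eta_0$-Whittaker vectors; this is where the weight estimates of Lemma~\ref{lem:distribution of weight} must be combined carefully with the action formulas of Lemma~\ref{lem:relation in S_w} and Lemma~\ref{lem:Xe^-k}.
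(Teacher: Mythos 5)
Your reductions in the first two paragraphs (stripping $\Gamma_\eta$, splitting $\mathfrak{n}_0=\mathfrak{n}_\eta\oplus(\mathfrak{m}_\eta\cap\mathfrak{n}_0)$, and commuting $H^0(\mathfrak{n}_\eta,-)$ with $C$ via Lemma~\ref{lem:two step calc for Whittaker vector}) agree with the paper, but the central claim of your third paragraph is false, and it is exactly where the content of the lemma lies. You assert that only the subquotient carried by $e_1^{-1}\dotsm e_l^{-1}\otimes 1\otimes w_iJ^*(\sigma\otimes e^{\lambda+\rho})$ can contribute to $\Wh_{\eta_0}(C(H^0(\mathfrak{n}_\eta,W)))$. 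By Lemma~\ref{lem:relation in S_w}~(1) with all $k_s=0$, every $e_t$ acts by zero on that bottom layer (the factor $e_t^{0}$ is the class of $1$, which vanishes in $S_{e_t}$); so whenever $\eta$ is nonzero on some $e_t\in\Ad(w_i)\overline{\mathfrak{n}}\cap\mathfrak{n}_0\cap\mathfrak{m}_\eta$, that single layer supports no $\eta_0$-Whittaker vector at all, while the lemma asserts a nonzero dimension. Such cells genuinely occur in this algebraic setting: for $G=SL(2,\R)$, $\eta$ non-degenerate and $w_i=w_0$, your localization gives $0$, whereas the correct answer is $1$. The true Whittaker vectors of $C(H^0(\mathfrak{n}_\eta,W))$ are infinite sums spread over all the layers $e^{-(\mathbf{k}+\mathbf{1})}\otimes U(\Ad(w_i)\overline{\mathfrak{n}}\cap\overline{\mathfrak{n}_0}\cap\mathfrak{m}_\eta)\otimes(\cdot)$ (the usual Kostant--Lynch phenomenon); they are not $\mathfrak{h}$-weight vectors, and the other layers differ from the bottom one by weights in $\Z\Delta$ coming from roots of $\mathfrak{m}_\eta$, so the hypothesis $(\lambda+\widetilde{\mu})-\widetilde{w}(\lambda+\widetilde{\mu})\notin\Z\Delta$ for $\widetilde{w}\notin\widetilde{W_M}$ imposes no obstruction on them. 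No weight or infinitesimal-character argument can confine the Whittaker vectors to one layer.

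The genericity hypothesis is needed for a different purpose, which your proposal skips: to compute $H^0(\mathfrak{n}_\eta,W)$ itself. One must show it equals the obvious subspace $V'=\bigoplus_{\mathbf{k}}e^{-(\mathbf{k}+\mathbf{1})}\otimes U(\Ad(w_i)\overline{\mathfrak{n}}\cap\overline{\mathfrak{n}_0}\cap\mathfrak{m}_\eta)\otimes H^0(\mathfrak{m}\cap\mathfrak{n}_\eta,w_iJ^*(\sigma\otimes e^{\lambda+\rho}))$ (sum over $\mathbf{k}$ vanishing in the $\mathfrak{n}_\eta$-directions), and the hypothesis is what rules out extra highest weight vectors, whose weights would force $\widetilde{w}(\lambda+\widetilde{\mu})-w_i\widetilde{w}'(\lambda+\widetilde{\mu})\in\Z\Delta$ with $\widetilde{w}\notin w_i\widetilde{W_M}$; Lemma~\ref{lem:e^{-1}-part} handles only the bottom layer and does not give this. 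After that, the dimension of $\Wh_{\eta_0}(C(V'))$ cannot be read off from Lemma~\ref{lem:Whittaker vectors in non-degenerate case}, which concerns the continuous dual of an induced representation, but is obtained from Lynch's theorems: exactness of $X\mapsto\Wh_{\eta_0}(X^*)$ and $\dim\Wh_{\eta_0}(M_{\mathfrak{m}_\eta}(\tau)^*)=\dim\tau$, applied to an expression of $\ch D'(V')$ as an integral combination of characters of generalized Verma modules $M_{\mathfrak{m}_\eta\oplus\mathfrak{a}_\eta}(\sigma_{M_0A_0,\widetilde{\lambda}})$; running the same character computation a second time for $(\mathfrak{m}_\eta\cap\Ad(w_i)\mathfrak{m})+\mathfrak{a}_0$ converts the resulting number $\sum_{\widetilde{\lambda}}c_{\widetilde{\lambda}}\dim\sigma_{M_0A_0,\widetilde{\lambda}}$ into $\dim\Wh_{w_i^{-1}\eta}((\sigma_{\text{$M\cap K$-finite}})^*)$. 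Without this exactness-plus-character bookkeeping, your final identification has no support.
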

\begin{proof}
Put $V = T_{w_i}(U(\mathfrak{g})\otimes_{U(\mathfrak{p})}J^*(\sigma\otimes e^{\lambda+\rho}))$.
By Theorem~\ref{thm:stucture of J^*(I(sigma,lambda))}, we have $\Wh_\eta(\widetilde{I_i}/\widetilde{I_{i - 1}}) = \Wh_\eta(C(V))$.
Let $e_1,\dots, e_l$ be a basis of $\Ad(w_i)\overline{\mathfrak{n}}\cap\mathfrak{n}_0$ such that $\bigoplus_{s\le t - 1}\C e_s$ is an ideal of $\bigoplus_{s\le t}\C e_s$.
Moreover, assume that each $e_i$ is a root vector.
For $\mathbf{k} = (k_1,\dots,k_l)\in \Z^l$, put $e^\mathbf{k} = e_1^{k_1}\dotsm e_l^{k_l}$.
Set $\mathbf{1} = (1,\dots,1)\in\Z^l$.
Then we have
\[
	V = \bigoplus_{k\in\Z_{\ge 0}^l}\C e^{-(\mathbf{k} + \mathbf{1})}\otimes U(\Ad(w_i)\overline{\mathfrak{n}}\cap\overline{\mathfrak{n}_0})\otimes w_iJ^*(\sigma\otimes e^{\lambda+\rho}).
\]
Put 
\[
	V' = \bigoplus_{\mathbf{k}\in \mathcal{A}}e^{-(\mathbf{k} + \mathbf{1})}\otimes U(\Ad(w_i)\overline{\mathfrak{n}}\cap\overline{\mathfrak{n}_0}\cap \mathfrak{m}_\eta)\otimes H^0(\mathfrak{m}\cap\mathfrak{n}_\eta,w_iJ^*(\sigma\otimes e^{\lambda+\rho}))
\]
where $\mathcal{A} = \{(k_1,\dots,k_l)\in \Z^l_{\ge 0}\mid \text{if $e_i\in \mathfrak{n}_\eta$ then $k_i = 0$}\}$.
It is easy to see that $V'$ is an $\mathfrak{m}_\eta\oplus \mathfrak{a}_\eta$-stable and $V'\subset H^0(\mathfrak{n}_\eta,V)$.
We prove that $V' = H^0(\mathfrak{n}_\eta,V)$.

To prove $V' = H^0(\mathfrak{n}_\eta,V)$, it is sufficient to prove that there exists no highest weight vector in $H^0(\mathfrak{n}_\eta,V)/ V'$.
Let $v\in H^0(\mathfrak{n}_\eta,V)$ such that $(\mathfrak{m}_\eta\cap \mathfrak{u})v \in V'$.

First, we prove that $v\in e^{-\mathbf{1}}\otimes U(\Ad(w_i)\overline{\mathfrak{n}}\cap \overline{\mathfrak{n}_0})\otimes J^*(\sigma\otimes e^{\lambda + \rho}) + V'$.
Take $y_\mathbf{k}\in U(\Ad(w_i)\overline{\mathfrak{n}}\cap \overline{\mathfrak{n}_0})\otimes J^*(\sigma\otimes e^{\lambda + \rho})$ such that $v = \sum_{\mathbf{k}} e^{-(\mathbf{k} + \mathbf{1})}\otimes y_\mathbf{k}$.
We prove that if $k_t\ne 0$ and $e_t\in \mathfrak{n}_\eta$ then $y_\mathbf{k} = 0$ by induction on $t$ where $\mathbf{k} = (k_1,\dots,k_l)$.
Put $\mathbf{1}_t = (\delta_{st})_{1\le s\le l}\in \Z^l$ ($\delta_{st}$ is Kronecker's delta).
By inductive hypothesis, for $s < t$ such that $e_s\in\mathfrak{n}_\eta$, if $y_\mathbf{k}\ne 0$ then $k_s = 0$.
By Lemma~\ref{lem:relation in S_w} (2), we have $e_tv = \sum_{\mathbf{k}\in\Z_{\ge 0}^l}e^{-(\mathbf{k} + \mathbf{1}) + \mathbf{1}_t}\otimes y_\mathbf{k}$.
Since $v\in H^0(\mathfrak{n}_\eta,V)$, we have $e_tv = 0$.
Hence if $e^{-(\mathbf{k} + \mathbf{1}) + \mathbf{1}_t}\ne 0$ then $y_\mathbf{k} = 0$.
Since $e^{-(\mathbf{k} + \mathbf{1}) + \mathbf{1}_t} = 0$ if and only if $k_t = 0$, $k_t \ne 0$ implies $y_\mathbf{k} = 0$.
We prove that if $k_t\ne 0$ then $e^{-(\mathbf{k} + \mathbf{1})}\otimes y_\mathbf{k}\in V'$ by induction on $t$.
If $e_t\in \mathfrak{n}_\eta$ then this claim is already proved.
We may assume that $e_t\in \mathfrak{m}_\eta$.
Hence $e_tV'\subset V'$.
By inductive hypothesis, if $k_s\ne 0$ for some $s < t$ then $e^{-(\mathbf{k} + \mathbf{1})}\otimes y_\mathbf{k} \in V'$.
Then we have $e_tv\in \sum_{\mathbf{k}\in\Z_{\ge 0}^l}e^{-(\mathbf{k} + \mathbf{1}) + \mathbf{1}_t}\otimes y_\mathbf{k} + V'$ by Lemma~\ref{lem:relation in S_w} (1).
Since $e_tv\in V'$, we have $\sum_{\mathbf{k}\in\Z_{\ge 0}^l}e^{-(\mathbf{k} + \mathbf{1}) + \mathbf{1}_t}\otimes y_\mathbf{k} \in V'$.
By the definition of $V'$, if $e^{-(\mathbf{k} + \mathbf{1}) + \mathbf{1}_t}\ne 0$ then $e^{-(\mathbf{k} + \mathbf{1})}\otimes y_\mathbf{k}\in V'$.
Hence we get the claim.

We may assume that $v$ is a weight vector with respect to $\mathfrak{h}$.
We can take $\widetilde{w}\in \widetilde{W}$ such that $-\widetilde{w}(\lambda+\widetilde{\mu}) - \widetilde{\rho}$ is a $\mathfrak{h}$-weight of $v$ by Lemma~\ref{lem:infinitesimal character of a space of partial highest weight vectors}.
Put $\widetilde{\rho_M} = \sum_{\alpha\in\Delta_M^+}(1/2)\alpha$.
Since $J^*(\sigma\otimes e^{\lambda + \rho})$ has an infinitesimal character $-(\lambda + \widetilde{\mu} + \rho)$, a $\mathfrak{h}$-weight appearing in $J^*(\sigma\otimes e^{\lambda + \rho})$ is contained in $\{-\widetilde{w}(\lambda + \widetilde{\mu} + \rho) - \widetilde{\rho_M} + \alpha\mid \widetilde{w}\in \widetilde{W_M},\ \alpha\in\Z\Delta_M\}$ by Lemma~\ref{lem:distribution of weight}.
Since $-\rho\in\mathfrak{a}^*$, we have $\widetilde{w}\rho = \rho$ for $\widetilde{w}\in \widetilde{W_M}$.
Hence we have $- \widetilde{w}\rho - \widetilde{\rho_M} = -\rho - \widetilde{\rho_M} = -\widetilde{\rho}$.
Notice that $w_i\widetilde{\rho} - \widetilde{\rho}\in \Z\Delta$.
Therefore a $\mathfrak{h}$-weight appearing in $V$ is contained in
\begin{multline*}
-w_i\widetilde{W_M}(\lambda + \widetilde{\mu}) - w_i\widetilde{\rho} + w_i\Z\Delta_M + \Z_{\ge 0}(w_i\Delta^-\cap \Delta^-) - \Z_{\ge 1}(w_i\Delta^-\cap \Delta^+)\\
\subset -w_i\widetilde{W_M}(\lambda + \widetilde{\mu}) - \widetilde{\rho} + \Z\Delta.
\end{multline*}
This implies that for some $\widetilde{w'}\in\widetilde{W_M}$, we have $\widetilde{w}(\lambda + \widetilde{\mu}) - w_i\widetilde{w'}(\lambda +\widetilde{\mu})\in\Z\Delta$.
By the assumption we have $\widetilde{w}\in w_i\widetilde{W_M}$.
This implies $(\wt v)(\Ad(w_i)H) = -(\lambda(H) + w_i^{-1}\widetilde{\rho}(H))$ for all $H\in \mathfrak{a}$ where $\wt v$ is a $\mathfrak{h}$-weight of $v$.

Take $T_p\in U(\Ad(w_i)\overline{\mathfrak{n}}\cap \overline{\mathfrak{n}_0})$ and $x_p\in w_iJ^*(\sigma\otimes e^{\lambda + \rho})$ such that $v \in \sum_p e^{-\mathbf{1}}\otimes T_p\otimes x_p + V'$.
We may assume that $T_p$ (resp.\ $x_p$) is a $\mathfrak{h}$-weight vector with respect to the adjoint action (resp.\ the action induced from $\sigma\otimes e^{\lambda + \rho}$).
We denote its $\mathfrak{h}$-weight by $\wt T_p$ and $\wt x_p$.
Fix $H\in \mathfrak{a}$.
Then $\alpha(H) = 0$ for all $\alpha\in\Delta_M$.
Since $\wt x_p\in -w_i(\widetilde{W_M}(\lambda + \widetilde{\mu}) + \widetilde{\rho} + \Z\Delta_M)$, $(\wt x_p)(\Ad(w_i)H) = -(\lambda + \widetilde{\rho})(H)$.
Hence
\begin{align*}
&(\wt v)(\Ad(w_i)H)\\
& = (\wt(e^{-\mathbf{1}}) + \wt(T_p) + \wt(x_p))(\Ad(w_i)(H))\\
& = (\wt(e^{-\mathbf{1}})(\Ad(w_i)H) + (\wt T_p)(\Ad(w_i)H) - (\lambda + \widetilde{\rho})(H)\\
& = (\wt(e^{-\mathbf{1}})(\Ad(w_i)H) + (\wt T_p)(\Ad(w_i)H) - (\lambda + \widetilde{\rho})(H).
\end{align*}
We calculate $\wt(e^{-\mathbf{1}})(\Ad(w_i)H)$.
By the definition, $\wt(e^{-\mathbf{1}})(\Ad(w_i)H) = \Tr\ad(\Ad(w_i)H)|_{\Ad(w_i)\overline{\mathfrak{n}}\cap \mathfrak{n}_0}$.
Since we have $\Ad(w_i)\overline{\mathfrak{n}}\cap \mathfrak{n}_0 = \Ad(w_i)\overline{\mathfrak{n}_0}\cap \mathfrak{n}_0$, we have
\begin{multline*}
\Tr\ad(\Ad(w_i)H)|_{\Ad(w_i)\overline{\mathfrak{n}}\cap \mathfrak{n}_0}
= \Tr\ad(\Ad(w_i)H)|_{\Ad(w_i)\overline{\mathfrak{n}_0}\cap \mathfrak{n}_0}\\
~ \Tr\ad(H)|_{\Ad(w_i)^{-1}\mathfrak{n}_0\cap \overline{\mathfrak{n}_0}}
= (-\widetilde{\rho} + w_i^{-1}\widetilde{\rho})(H).
\end{multline*}
Hence we get
\[
	(\wt v)(\Ad(w_i)H) = (\wt T_p)(\Ad(w_i)H) - (\lambda + w_i^{-1}\widetilde{\rho})(H).
\]
We have already proved that $(\wt v)(\Ad(w_i)H) = -(\lambda + w_i^{-1}\widetilde{\rho})(H)$.
Therefore we get $(\wt T_p)(\Ad(w_i)H) = 0$ for all $H\in\mathfrak{a}$.
Since $T_p\in U(\Ad(w_i)\mathfrak{n})$, this implies $T_p\in \C$, i.e., there exist $v'\in e_1^{-1}\dotsm e_l^{-1}\otimes 1\otimes w_iJ^*(\sigma\otimes e^{\lambda+\rho})$ and $v''\in V'$ such that $v = v' + v''$.
Therefore $\mathfrak{n}_\eta(v') = \mathfrak{n}_\eta(v - v'') = 0$.
Hence, $v'\in V'$ by Lemma~\ref{lem:e^{-1}-part}.
Therefore $H^0(\mathfrak{n}_\eta,V) = V'$.

For an $\mathfrak{m}_0\oplus\mathfrak{a}_0$-module $\tau$ and a subalgebra $\mathfrak{c}$ of $\mathfrak{g}$ containing $\mathfrak{m}_0\oplus\mathfrak{a}_0$, put $M_{\mathfrak{c}}(\tau) = U(\mathfrak{c})\otimes_{U(\mathfrak{c}\cap\overline{\mathfrak{p}_0})}(\tau\otimes\rho')$ where $\overline{\mathfrak{n}_0}\cap \mathfrak{c}$ acts on $\tau$ trivially and $\rho'(H) = (\Tr(\ad(H)|_{\mathfrak{c}\cap\overline{\mathfrak{n}_0}}))/2$ for $H\in\mathfrak{a}_0$.

For $\widetilde{\lambda}\in \mathfrak{h}^*$ such that $\widetilde{\lambda}|_{\mathfrak{m}_0}$ is regular dominant integral, let $\sigma_{M_0A_0,\widetilde{\lambda}}$ be the finite-dimensional representation of $M_0A_0$ with an infinitesimal character $\widetilde{\lambda}$.
Let $\ch M$ be the character of $M$.
We can take integers $c_{\widetilde{\lambda}}$ such that 
\begin{multline*}
	\ch D' H^0(\mathfrak{n}_\eta\cap \Ad(w_i)\mathfrak{m},w_iJ^*(\sigma\otimes e^{\lambda+\rho}))\\ = \sum_{\widetilde{\lambda}}c_{\widetilde{\lambda}}\ch M_{(\mathfrak{m}_\eta\cap\Ad(w_i)\mathfrak{m}) + \mathfrak{a}_0}(\sigma_{M_0A_0,\widetilde{\lambda}})
\end{multline*}
Then we have $\ch D'V' = \sum_{\widetilde{\lambda}}c_{\widetilde{\lambda}}\ch M_{\mathfrak{m}_\eta\oplus\mathfrak{a}_\eta}(\sigma_{M_0A_0,\widetilde{\lambda}})$.
The functor $X\mapsto \Wh_{\eta|_{\mathfrak{m}_\eta\cap\mathfrak{n}_0}}(X^*)$ is an exact functor by a result of Lynch~\cite{lynch-whittaker}.
Hence, we have $\dim\Wh_{\eta|_{\mathfrak{m}_\eta\cap\mathfrak{n}_0}}(C(V')) = \sum_{\widetilde{\lambda}}c_{\widetilde{\lambda}}\dim\Wh_{\eta|_{\mathfrak{m}_\eta\cap\mathfrak{n}_0}}(M_{\mathfrak{m}_\eta\oplus\mathfrak{a}_\eta}(\sigma_{M_0A_0,\widetilde{\lambda}})^*)$.
Lynch also proves $\dim\Wh_{\eta|_{\mathfrak{m}_\eta\cap\mathfrak{n}_0}}(M_{\mathfrak{m}_\eta}(\sigma_{M_0A_0,\widetilde{\lambda}})^*) = \dim\sigma_{M_0A_0,\widetilde{\lambda}}$.
Therefore, by Lemma~\ref{lem:two step calc for Whittaker vector}, we have $\dim \Wh_\eta(\widetilde{I_i}/\widetilde{I_{i - 1}}) = \dim\Wh_{\eta|_{\mathfrak{m}_\eta\cap\mathfrak{n}_0}}(C(V')) = \sum_{\widetilde{\lambda}} c_{\widetilde{\lambda}}\dim\sigma_{M_0A_0,\widetilde{\lambda}}$.
By the same argument we have 
\begin{align*}
&\sum_{\widetilde{\lambda}} c_{\widetilde{\lambda}}\dim\sigma_{M_0A_0,\widetilde{\lambda}}\\
& = \sum_{\widetilde{\lambda}} c_{\widetilde{\lambda}}\dim \Wh_{\eta|_{\mathfrak{m}_\eta\cap\Ad(w_i)\mathfrak{m}\cap\mathfrak{n}_0}}(M_{(\mathfrak{m}_\eta\cap\Ad(w_i)\mathfrak{m}) + \mathfrak{a}_0}(\sigma_{M_0A_0,\widetilde{\lambda}})^*) \\
& = \dim\Wh_{\eta|_{\mathfrak{m}_\eta\cap\Ad(w_i)\mathfrak{m}\cap\mathfrak{n}_0}}(CH^0(\mathfrak{n}_\eta\cap \Ad(w_i)\mathfrak{m},w_iJ^*(\sigma\otimes e^{\lambda+\rho})))\\
& = \dim \Wh_{\eta|_{\Ad(w_i)\mathfrak{m}\cap\mathfrak{n}_0}}(C(w_iJ^*(\sigma\otimes e^{\lambda+\rho})))\\
& = \dim\Wh_{w_i^{-1}\eta}(C(J^*(\sigma\otimes e^{\lambda+\rho})))\\
& = \dim \Wh_{w_i^{-1}\eta}((\sigma_{\text{$M\cap K$-finite}})^*).
\end{align*}
This implies the lemma.
\end{proof}

\begin{thm}\label{thm:dimension Whittaker vectors, algebraic}
Let $\widetilde{\mu}$ be an infinitesimal character of $\sigma$.
Assume that for all $\widetilde{w}\in \widetilde{W}\setminus \widetilde{W_M}$, $(\lambda+\widetilde{\mu}) - \widetilde{w}(\lambda+\widetilde{\mu})\not\in \Z\Delta$.
Then we have
\[
	\dim\Wh_\eta((I(\sigma,\lambda)_{\text{\normalfont $K$-finite}})^*) = \sum_{w\in W(M)}\dim \Wh_{w^{-1}\eta}((\sigma_{\text{\normalfont $M\cap K$-finite}})^*).
\]
\end{thm}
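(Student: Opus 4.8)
The plan is to imitate the proof of Theorem~\ref{thm:dimension Whittaker vectors}, using the filtration $0=\widetilde{I_1}\subset\dots\subset\widetilde{I_r}=J^*_\eta(I(\sigma,\lambda))$ of Theorem~\ref{thm:stucture of J^*(I(sigma,lambda))}, but replacing the analytic‑continuation step (which provided surjectivity of the connecting maps there) by a splitting argument driven by the hypothesis on $\widetilde{\mu}$. First I would observe that $\Wh_\eta((I(\sigma,\lambda)_{\text{$K$-finite}})^*)=\Wh_\eta(J^*_\eta(I(\sigma,\lambda)))$: every $\mathfrak{n}_0$-eigenvector for $\eta$ already satisfies the defining condition of $J^*_\eta$ in Definition~\ref{defn:Jacquet modules}, so the inclusion $J^*_\eta(I(\sigma,\lambda))\subset(I(\sigma,\lambda)_{\text{$K$-finite}})^*$ is an equality after applying $\Wh_\eta$. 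Since $\Wh_\eta(-)=\Hom_{U(\mathfrak{n}_0)}(\C_\eta,-)$ is left exact, feeding the short exact sequences $0\to\widetilde{I_{i-1}}\to\widetilde{I_i}\to\widetilde{I_i}/\widetilde{I_{i-1}}\to 0$ into it and inducting on $i$ gives
\[
\dim\Wh_\eta(J^*_\eta(I(\sigma,\lambda)))\le\sum_{i=1}^r\dim\Wh_\eta(\widetilde{I_i}/\widetilde{I_{i-1}}),
\]
and Lemma~\ref{lem:Whittaker vectors in a Bruhat cell, algebraic} identifies each summand with $\dim\Wh_{w_i^{-1}\eta}((\sigma_{\text{$M\cap K$-finite}})^*)$. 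This already yields the inequality ``$\le$'' in the theorem.

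For the reverse inequality it suffices to prove that each connecting map $\Wh_\eta(\widetilde{I_i})\to\Wh_\eta(\widetilde{I_i}/\widetilde{I_{i-1}})$ is surjective, and my plan is to deduce this from the fact that, under the hypothesis, the filtration $\{\widetilde{I_i}\}$ splits as a filtration of $\mathfrak{g}$-modules. The graded pieces are $\widetilde{I_i}/\widetilde{I_{i-1}}\simeq\Gamma_\eta(C(T_{w_i}(U(\mathfrak{g})\otimes_{U(\mathfrak{p})}J^*(\sigma\otimes e^{\lambda+\rho}))))$. These all carry one and the same generalized $Z(\mathfrak{g})$-infinitesimal character (parabolic induction, the twisting functor $T_{w_i}$, $C$ and $\Gamma_\eta$ all preserve it), so the separation has to come from finer data: using Lemma~\ref{lem:distribution of weight}, Lemma~\ref{lem:infinitesimal character of a space of partial highest weight vectors}, and the effect of the $w_i$-twist built into $T_{w_i}$ on $\mathfrak{h}$-weights, the $\mathfrak{h}$-weights occurring in the $i$-th piece lie in a single coset of $\Z\Delta$, namely $-\iota(w_i)(\lambda+\widetilde{\mu})+\Z\Delta$ up to an $i$-independent (``$\widetilde{\rho}$-type'', hence $\Z\Delta$-integral) shift. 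For $i\ne j$ we have
\[
\iota(w_i)(\lambda+\widetilde{\mu})-\iota(w_j)(\lambda+\widetilde{\mu})=\iota(w_j)\bigl((\iota(w_j)^{-1}\iota(w_i))(\lambda+\widetilde{\mu})-(\lambda+\widetilde{\mu})\bigr),
\]
and $\iota(w_j)^{-1}\iota(w_i)\in W\setminus W_M\subset\widetilde{W}\setminus\widetilde{W_M}$, because $W(M)$ is a set of distinct representatives for $W/W_M$ and distinct elements of $W(M)$ lie in distinct $\widetilde{W_M}$-cosets of $\widetilde{W}$ (as $W\cap\widetilde{W_M}=W_M$ via $\iota$); hence the hypothesis gives $(\iota(w_j)^{-1}\iota(w_i))(\lambda+\widetilde{\mu})-(\lambda+\widetilde{\mu})\notin\Z\Delta$, and $\Z\Delta$ is $\widetilde{W}$-stable, so the cosets of the $i$-th and $j$-th pieces are distinct. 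Consequently there are no nonzero homomorphisms or extensions between $\widetilde{I_i}/\widetilde{I_{i-1}}$ and $\widetilde{I_j}/\widetilde{I_{j-1}}$ for $i\neq j$, so the filtration splits, $J^*_\eta(I(\sigma,\lambda))\simeq\bigoplus_i\widetilde{I_i}/\widetilde{I_{i-1}}$, whence $\dim\Wh_\eta$ is additive; combined with Lemma~\ref{lem:Whittaker vectors in a Bruhat cell, algebraic} this finishes the proof.

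The main obstacle is the splitting step. Since the graded pieces are not objects of category $\mathcal{O}$ but images under $C$ of such objects, one must first make precise the sense in which they are ``graded by $\Z\Delta$-cosets of $\mathfrak{h}$-weights'' and check that this grading is respected by $\mathfrak{g}$. The correct finiteness to exploit is the one established in the proof of Proposition~\ref{prop:relation J^*_eta and J_eta}: $J^*_\eta(I(\sigma,\lambda))$ is locally $\mathfrak{n}_0$-nilpotent modulo $\eta$ and $\mathfrak{a}_\eta$-finite, so each piece is, after passing to $\mathfrak{n}_\eta$-coinvariants, a finite-length $\mathfrak{l}_\eta$-module, and the weight-coset argument can be recast as a relative-$Z(\mathfrak{l}_\eta)$-infinitesimal-character argument; one then has to verify that distinct cosets genuinely force the $\mathfrak{g}$-filtration to split. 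The remaining ingredients—identifying the $i$-th coset explicitly by tracking the twisting functor on weights, and the Weyl-group bookkeeping through the embedding $\iota\colon W\hookrightarrow\widetilde{W}$—are routine once this is in place.
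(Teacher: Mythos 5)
Your route is the same as the paper's: take the filtration of Theorem~\ref{thm:stucture of J^*(I(sigma,lambda))}, show the hypothesis on $\lambda+\widetilde{\mu}$ forces it to split, and then sum the contributions computed in Lemma~\ref{lem:Whittaker vectors in a Bruhat cell, algebraic}. Your coset bookkeeping is essentially the paper's as well, with one small imprecision: the $\mathfrak{h}$-weights of the $i$-th piece do not lie in a single coset $-\iota(w_i)(\lambda+\widetilde{\mu})+\Z\Delta$ but in the union $\bigcup_{\widetilde{w}\in\widetilde{W_M}}\bigl(-w_i\widetilde{w}(\lambda+\widetilde{\mu})-\widetilde{\rho}+\Z\Delta\bigr)$, since $\widetilde{w}(\lambda+\widetilde{\mu})-(\lambda+\widetilde{\mu})$ need not be in $\Z\Delta$ for $\widetilde{w}\in\widetilde{W_M}$; this costs nothing, because for $i\ne j$ and any $\widetilde{w},\widetilde{w}'\in\widetilde{W_M}$ the element $\widetilde{w}'^{-1}\iota(w_j)^{-1}\iota(w_i)\widetilde{w}$ still lies in $\widetilde{W}\setminus\widetilde{W_M}$ and $\Z\Delta$ is $\widetilde{W}$-stable, so the hypothesis gives disjointness of the two weight sets.

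The step you leave open, and correctly flag as the main obstacle, is exactly where your execution differs from (and is weaker than) the paper's: you try to split the filtration $\{\widetilde{I_i}\}$ itself, but its subquotients $\Gamma_\eta(C(\cdots))$ are not $\mathfrak{h}$-weight modules, so the principle ``disjoint weight supports imply no extensions'' does not apply to them directly, and the proposed detour through $\mathfrak{n}_\eta$-coinvariants and $Z(\mathfrak{l}_\eta)$ is not carried out (splitting of a filtration of $\mathfrak{g}$-modules cannot in general be detected on coinvariants). The paper resolves this by splitting one step earlier: the modules $T_{w_i}(U(\mathfrak{g})\otimes_{U(\mathfrak{p})}J^*(\sigma\otimes e^{\lambda+\rho}))$ do carry $\mathfrak{h}$-weight space decompositions, their weights lie in the sets above (Lemma~\ref{lem:distribution of weight} together with tracking the twist, as in the proof of Lemma~\ref{lem:Whittaker vectors in a Bruhat cell, algebraic}), so the filtration having these as subquotients splits; applying $\Gamma_\eta\circ C$, which commutes with finite direct sums, then gives $J^*_\eta(I(\sigma,\lambda))\simeq\bigoplus_i\Gamma_\eta(C(T_{w_i}(U(\mathfrak{g})\otimes_{U(\mathfrak{p})}J^*(\sigma\otimes e^{\lambda+\rho}))))$, and the dimension count is immediate from Lemma~\ref{lem:Whittaker vectors in a Bruhat cell, algebraic}. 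Once you have this direct sum you also do not need your separate ``$\le$'' argument via left exactness of $\Wh_\eta$: $\Wh_\eta$ of a finite direct sum is the direct sum of the $\Wh_\eta$'s. So I would not call your approach different from the paper's; rather, the decisive splitting should be moved to the pre-$C$, pre-$\Gamma_\eta$ level, where your weight argument becomes rigorous.
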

\begin{proof}
Since a $\mathfrak{h}$-weight appearing in $T_{w_i}(U(\mathfrak{g})\otimes_{U(\mathfrak{p})}J^*(\sigma\otimes e^{\lambda+\rho}))$ belongs to $\{-w_i\widetilde{w}(\lambda+\widetilde{\mu}) - \widetilde{\rho} +  \alpha\mid \widetilde{w}\in\widetilde{W_M},\ \alpha\in\Delta\}$, the exact sequence $0\to I_{i - 1} \to I_i\to T_{w_i}(U(\mathfrak{g})\otimes_{U(\mathfrak{p})}J^*(\sigma\otimes e^{\lambda+\rho}))\to 0$ splits.
Hence, we have $J^*_\eta(I(\sigma,\lambda)) = \bigoplus_i\Gamma_\eta(C(T_{w_i}(U(\mathfrak{g})\otimes_{U(\mathfrak{p})}J^*(\sigma\otimes e^{\lambda+\rho}))))$.
Therefore the theorem follows from Lemma~\ref{lem:Whittaker vectors in a Bruhat cell, algebraic}.
\end{proof}

Finally we study the case of $\sigma$ is finite-dimensional.
If $\sigma$ is finite-dimensional, then $\mathfrak{m}\cap\mathfrak{n}_0$ acts on $\sigma$ nilpotently.
Hence $\Wh_{w_i^{-1}\eta}(\sigma^*)\ne 0$ if and only if $w_i^{-1}\eta = 0$ on $\mathfrak{m}\cap\mathfrak{n}_0$.

\begin{defn}
Let $\Theta,\Theta_1,\Theta_2$ be subsets of $\Pi$.
\begin{enumerate}
\item Put $W(\Theta) = \{w\in W\mid w(\Theta)\subset \Sigma^+\}$ and $\Sigma_\Theta = \Z\Theta\cap \Sigma$.\newsym{$W(\Theta)$}
\item Put $W(\Theta_1,\Theta_2) = \{w\in W(\Theta_1)\cap W(\Theta_2)^{-1}\mid w(\Sigma_{\Theta_1})\cap \Sigma_{\Theta_2} = \emptyset\}$.\newsym{$W(\Theta_1,\Theta_2)$}
\item Let $W_\Theta$ be the Weyl group of $\Sigma_\Theta$.\newsym{$W_\Theta$}
\end{enumerate}
\end{defn}

\begin{lem}\label{lem:coparison oshima}
Let $\Theta$ be a subset of $\Pi$ corresponding to $P$.
\begin{enumerate}
\item We have $\#W(\supp\eta,\Theta) = \#\{w\in W(M)\mid w(\Sigma^+)\cap \Sigma^+_\eta = \emptyset\}$.
\item We have $\#W(\supp\eta,\Theta)\times\# W_{\supp\eta} = \#\{w\in W(M)\mid \supp\eta\cap w(\Sigma_M^+) = \emptyset\}$.
\end{enumerate}
\end{lem}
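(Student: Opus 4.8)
The lemma is a purely combinatorial assertion about the little Weyl group $W$, and the plan is to deduce it from the standard theory of parabolic (double) cosets.

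I would begin by normalizing. Since $\Sigma_M^+=\Sigma_\Theta^+$, and $w(\Sigma_\Theta^+)\subset\Sigma^+$ holds exactly when $w(\Theta)\subset\Sigma^+$, we have $W(M)=W(\Theta)$, the set of minimal-length representatives of $W/W_\Theta$; replacing $w$ by $u=w^{-1}$ identifies this with the minimal-length representatives of $W_\Theta\backslash W$. For such $w$ one has $w(\Sigma_\Theta^+)\subset\Sigma^+$ and hence $w(\Sigma_\Theta^-)\subset\Sigma^-$. Using this together with $\Sigma^+_\eta=\Z_{\ge 0}\,\supp\eta\cap\Sigma$, one checks that for $w\in W(M)$ the condition $w(\Sigma^+)\cap\Sigma^+_\eta=\emptyset$ is equivalent to $w^{-1}(\supp\eta)\subset\Sigma^-$ (and then automatically to $w^{-1}(\supp\eta)\subset\Sigma^-\setminus\Sigma_\Theta$), while $\supp\eta\cap w(\Sigma_M^+)=\emptyset$ is equivalent to $w^{-1}(\supp\eta)\cap\Sigma_\Theta=\emptyset$. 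Passing to $u=w^{-1}$, the first right-hand side then corresponds to the set $A=\{u\mid u^{-1}(\Theta)\subset\Sigma^+,\ u(\supp\eta)\subset\Sigma^-\setminus\Sigma_\Theta\}$ and the second to $B=\{u\mid u^{-1}(\Theta)\subset\Sigma^+,\ u(\Sigma_{\supp\eta})\cap\Sigma_\Theta=\emptyset\}$, where I have rewritten the condition defining $B$ using $\Sigma_{\supp\eta}$ in place of $\supp\eta$ (see the last paragraph); note $A\subset B$.

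The main work is then to analyze $A$ and $B$ under the right action of $W_{\supp\eta}$, using its standard invariance properties: $W_{\supp\eta}$ stabilizes $\Sigma_{\supp\eta}$ and permutes $\Sigma^+\setminus\Sigma_{\supp\eta}$, and its longest element $w_{\supp\eta,0}$ interchanges $\Sigma_{\supp\eta}^+$ and $\Sigma_{\supp\eta}^-$. The set $B$ is a disjoint union of full cosets $uW_{\supp\eta}$: for $u\in B$ one has $u^{-1}(\Theta)\subset\Sigma^+\setminus\Sigma_{\supp\eta}$ (because $u(\Sigma_{\supp\eta})\cap\Sigma_\Theta=\emptyset$), a set preserved by $W_{\supp\eta}$, and $us(\Sigma_{\supp\eta})=u(\Sigma_{\supp\eta})$. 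Each such coset contains a unique $u_0$ with $u_0(\supp\eta)\subset\Sigma^+$; then $u_0\in W(\supp\eta)\cap W(\Theta)^{-1}$ and $u_0(\Sigma_{\supp\eta})\cap\Sigma_\Theta=\emptyset$, i.e.\ $u_0\in W(\supp\eta,\Theta)$, and conversely every element of $W(\supp\eta,\Theta)$ arises this way. Hence the number of $W_{\supp\eta}$-cosets inside $B$ equals $\#W(\supp\eta,\Theta)$, so $\#B=\#W(\supp\eta,\Theta)\cdot\#W_{\supp\eta}$, which is (2). For (1), one checks that $A=\{u\in B\mid u(\Sigma_{\supp\eta}^+)\subset\Sigma^-\}$, and that inside a coset $u_0W_{\supp\eta}$ the unique element with $u(\Sigma_{\supp\eta}^+)\subset\Sigma^-$ is $u_0w_{\supp\eta,0}$; thus $A$ meets each $W_{\supp\eta}$-coset of $B$ in exactly one point, giving $\#A=\#W(\supp\eta,\Theta)$.

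The step I expect to be the genuine obstacle, and the only one that is not routine bookkeeping with the invariance of $W_{\supp\eta}$, is the rewriting of the condition defining $B$: for a minimal coset representative $w\in W(\Theta)$, the triviality $w^{-1}(\supp\eta)\cap\Sigma_\Theta=\emptyset$ must be shown to imply $w^{-1}(\Sigma_{\supp\eta})\cap\Sigma_\Theta=\emptyset$ (the converse is trivial). This is precisely the standard ``freeness'' criterion for the double coset $W_\Theta wW_{\supp\eta}$, namely that $wW_{\supp\eta}w^{-1}\cap W_\Theta=\{e\}$ if and only if $w(\Sigma_{\supp\eta})\cap\Sigma_\Theta=\emptyset$, which may simply be invoked. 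If one wants a self-contained argument, the natural route is by minimal height: if some $\gamma\in\Sigma_\Theta^+$ had $w(\gamma)\in\Sigma_{\supp\eta}^+$, pick $\gamma$ of least height, use $w(\Sigma_\Theta^+)\subset\Sigma^+$ together with the fact that a sum of positive roots lying in $\Sigma_{\supp\eta}$ has all of its summands in $\Sigma_{\supp\eta}$ to force $\gamma$ to be simple, and then derive $w(\gamma)\in\supp\eta$, so that $\gamma\in w^{-1}(\supp\eta)\cap\Sigma_\Theta$. Once this equivalence is in place, the rest is the elementary verification sketched above.
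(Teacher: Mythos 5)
Your treatment of part (1) is sound, and it is in substance the paper's own argument: your bijection sending $u\in A$ to the minimal representative $u_0=uw_{\supp\eta,0}$ of its coset $uW_{\supp\eta}$ is exactly the paper's map $w\mapsto (w_{\eta,0}w)^{-1}$, with $w_{\eta,0}$ the longest element of $W_{\supp\eta}$; likewise your count of $B$ by full $W_{\supp\eta}$-cosets with minimal representatives in $W(\supp\eta,\Theta)$ is the same mechanism as the paper's map $(w_1,w_2)\mapsto w_2w_1^{-1}$ in part (2). Those portions are correct.

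The genuine gap is the step you yourself flag in part (2): for $w\in W(\Theta)$, the implication from $w^{-1}(\supp\eta)\cap\Sigma_\Theta=\emptyset$ to $w^{-1}(\Sigma_{\supp\eta})\cap\Sigma_\Theta=\emptyset$ is false, and neither of your justifications repairs it. Take $\Sigma$ of type $A_2$ with simple roots $\alpha_1,\alpha_2$, $\Theta=\{\alpha_1\}$, $\supp\eta=\{\alpha_1,\alpha_2\}$, $w=s_{\alpha_2}$. Then $w(\alpha_1)=\alpha_1+\alpha_2>0$, so $w\in W(\Theta)=W(M)$, and $w^{-1}(\supp\eta)=\{\alpha_1+\alpha_2,\,-\alpha_2\}$ misses $\Sigma_\Theta=\{\pm\alpha_1\}$, yet $w^{-1}(\Sigma_{\supp\eta})=\Sigma\supset\Sigma_\Theta$. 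The standard result you invoke, $W_\Theta\cap uW_{\supp\eta}u^{-1}=W_{\Theta\cap u(\supp\eta)}$, requires $u$ to be of minimal length in the two-sided coset $W_\Theta uW_{\supp\eta}$ (so also $u(\supp\eta)\subset\Sigma^+$), which is exactly what fails here; and your height-induction sketch breaks at its last step, since from $\gamma$ simple and $w(\gamma)\in\Sigma^+_{\supp\eta}$ you cannot conclude $w(\gamma)\in\supp\eta$ (above, $w(\alpha_1)=\alpha_1+\alpha_2$ is not simple). Note that in this example the two sides of (2) as literally stated are $0$ and $1$, so no argument can close the gap: what your coset count actually proves is the identity with the stronger condition $w(\Sigma_M^+)\cap\Sigma_\eta^+=\emptyset$ (equivalently $w^{-1}(\Sigma_{\supp\eta})\cap\Sigma_\Theta=\emptyset$) on the right-hand side. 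Be aware that the paper's own proof of (2) makes the same silent replacement (its surjectivity step uses $w(\Sigma_M^+)\subset\Sigma^+\setminus\Sigma^+_\eta$ for $w$ satisfying only $\supp\eta\cap w(\Sigma_M^+)=\emptyset$), so the difficulty is not an artifact of your route; but as a proof of part (2) as stated, your proposal has an unfixable gap at precisely this point, while part (1), which never uses that identification, stands.
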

\begin{proof}
(1)
Put $\mathcal{W} = \{w\in W(M)\mid w(\Sigma^+)\cap \Sigma^+_\eta = \emptyset\}$.
Let $w_{\eta,0}$ be the longest Weyl element of $W_{M_\eta}$.
We prove that the map $\mathcal{W}\to W(\supp\eta,\Theta)$ defined by $w\mapsto (w_{\eta,0}w)^{-1}$ is well-defined and bijective.

First we prove that the map is well-defined.
Let $w\in\mathcal{W}$.
The equation $w(\Sigma^+)\cap \Sigma_\eta^+ = \emptyset$ implies that $(w_{\eta,0}w)^{-1}(\Sigma^+_\eta)\subset \Sigma^+$.
Hence, $(w_{\eta,0}w)^{-1}\in W(\supp\eta)$.
Moreover, $w(\Sigma_M^+)\subset \Sigma^+$ and $w(\Sigma^+)\cap \Sigma_\eta^+ = \emptyset$ imply that $w(\Sigma_M^+)\subset \Sigma^+\cap (\Sigma\setminus\Sigma^+_\eta) = \Sigma^+\setminus\Sigma^+_\eta$.
Hence, $(w_{\eta,0}w)(\Sigma_M^+)\subset \Sigma^+\setminus\Sigma^+_\eta\subset \Sigma^+$.
We have $(w_{\eta,0}w)^{-1}\in W(\Theta)^{-1}$.
Finally $w(\Sigma_M^+)\subset \Sigma^+\setminus\Sigma_\eta^+$ implies $w(\Sigma)\subset \Sigma\setminus\Sigma_\eta$.
Hence we have $(w_{\eta,0}w)^{-1}\Sigma_\eta\cap\Sigma_M = w^{-1}\Sigma_\eta\cap \Sigma_M = \emptyset$.

Assume that $(w_{\eta,0}w)^{-1}\in W(\supp\eta,\Theta)$.
Then $(w_{\eta,0}w)^{-1}(\Sigma^+_\eta)\subset\Sigma^+$ implies that $w(\Sigma^+)\cap \Sigma^+_\eta=\emptyset$.
Since $(w_{\eta,0}w)^{-1}\Sigma_\eta\cap\Sigma_M = \emptyset$ we have $w(\Sigma_M)\cap \Sigma_\eta = \emptyset$.
By $(w_{\eta,0}w)(\Sigma_M^+)\subset\Sigma^+$  and $w(\Sigma^+)\cap \Sigma_\eta^+ = \emptyset$, we have $w(\Sigma_M^+)\subset ((\Sigma^+\setminus\Sigma_\eta^+)\cup \Sigma_\eta^-)\cap (\Sigma\setminus\Sigma^-_\eta) = (\Sigma^+\setminus\Sigma^+_\eta)$.
Consequently we have $w\in W(M)$.

(2)
Put $\mathcal{W} = \{w\in W(M)\mid \supp\eta\cap w(\Sigma_M^+) = \emptyset\}$.
Define the map $\varphi\colon W(\supp\eta,\Theta)\times W_{\supp\eta} \to \mathcal{W}$ by $(w_1,w_2)\mapsto w_2w_1^{-1}$.
This map is injective since $W(\supp\eta,\Theta)\subset W(\supp\eta)$.
We prove that $\varphi$ is well-defined and surjective.
Since $w_1^{-1}(\Sigma_M^+) = w_1^{-1}(\Sigma_M^+)\cap \Sigma^+\subset \Sigma^+\setminus \Sigma_\eta^+$, we have $w_2w_1^{-1}(\Sigma_M^+)\subset \Sigma^+\setminus\Sigma_\eta^+$.
Hence, $\varphi$ is well-defined.
Next let $w\in\mathcal{W}$.
Let $w_1\in W(\supp\eta)^{-1}$ and $w_2\in W_{\supp\eta}$ such that $w = w_2w_1^{-1}$.
Then $w_1^{-1}(\Sigma_M^+) = w_2^{-1}w(\Sigma_M^+)\subset w_2^{-1}(\Sigma^+\setminus\Sigma_\eta^+) = \Sigma^+\setminus\Sigma^+_\eta$.
This implies $w_1\in W(\supp\eta,\Theta)$.
\end{proof}

\begin{lem}\label{lem:whittaker vectors of finite-dimensional representation}
Assume that $\sigma$ is irreducible and finite-dimensional.
Let $\widetilde{\mu}$ be the highest weight of $\sigma$ and $V$ the irreducible finite-dimensional representation of $M_0A_0$ with highest weight $\lambda+\widetilde{\mu}$.
Then we have $\sigma/(\mathfrak{m}\cap\mathfrak{n}_0)\sigma\simeq V$ as an $M_0A_0$-module.
In particular, $\dim\Wh_0(\sigma') = \dim V$.
\end{lem}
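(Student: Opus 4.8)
The plan is to reduce everything to the classical structure of the coinvariants of a finite‑dimensional irreducible representation by the nilradical of a minimal parabolic. Since $\sigma$ is finite‑dimensional, $\sigma'=\sigma^{*}$ is the ordinary linear dual, so $\Wh_{0}(\sigma')=(\sigma^{*})^{\mathfrak{m}\cap\mathfrak{n}_{0}}=(\sigma/(\mathfrak{m}\cap\mathfrak{n}_{0})\sigma)^{*}$; thus the final ``in particular'' follows from the first assertion by taking dimensions, and it is enough to analyse $\sigma/(\mathfrak{m}\cap\mathfrak{n}_{0})\sigma$. Observe that $\mathfrak{m}\cap\mathfrak{n}_{0}=\bigoplus_{\alpha\in\Sigma_{M}^{+}}\mathfrak{g}_{\alpha}$ is the nilradical of the minimal parabolic subalgebra $\mathfrak{q}=\mathfrak{l}\oplus(\mathfrak{m}\cap\mathfrak{n}_{0})$ of $\mathfrak{m}$, with Levi factor $\mathfrak{l}=\mathfrak{m}_{0}\oplus(\mathfrak{a}_{0}\cap\mathfrak{m})=\mathfrak{h}\oplus\bigoplus_{\beta\in\Delta_{M_{0}}}\mathfrak{g}^{\mathfrak{h}}_{\beta}$, the complexified Lie algebra of $M_{0}(M\cap A_{0})$, and opposite nilradical $\mathfrak{m}\cap\overline{\mathfrak{n}_{0}}$.

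Next I would invoke the standard fact that for a finite‑dimensional module over a reductive Lie algebra and a parabolic the composite $\sigma^{\mathfrak{m}\cap\overline{\mathfrak{n}_{0}}}\hookrightarrow\sigma\twoheadrightarrow\sigma/(\mathfrak{m}\cap\mathfrak{n}_{0})\sigma$ is an isomorphism of $\mathfrak{l}$‑modules (injective by a weight argument for a $\mathfrak{q}$‑dominant element of the centre of $\mathfrak{l}$, and both sides are irreducible over $\mathfrak{l}$ since $\sigma$ is $\mathfrak{m}$‑irreducible). An $\mathfrak{l}$‑highest weight vector of $\sigma^{\mathfrak{m}\cap\overline{\mathfrak{n}_{0}}}$ is annihilated by $(\mathfrak{m}\cap\overline{\mathfrak{n}_{0}})\oplus\bigoplus_{\beta\in\Delta_{M_{0}}^{+}}\mathfrak{g}^{\mathfrak{h}}_{\beta}$, the nilradical of a Borel subalgebra of $\mathfrak{m}$ whose positive system is obtained from $\Delta_{M}^{+}$ by reversing the roots with nonzero restriction to $\mathfrak{a}_{0}$; by (the proof of) Lemma~\ref{lem:good lift for W} applied to $M$, this positive system is $w(\Delta_{M}^{+})$ for a representative $w\in N_{K\cap M}(\mathfrak{h})$ of the longest element $w_{M,0}$ of $W_{M}$ with $\Ad(w)(\Delta_{M_{0}}^{+})=\Delta_{M_{0}}^{+}$. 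Hence $\sigma/(\mathfrak{m}\cap\mathfrak{n}_{0})\sigma$ is the irreducible $\mathfrak{l}$‑module of highest weight $\Ad(w)\widetilde{\mu}$; its $\mathfrak{m}_{0}$‑ and $(\mathfrak{a}_{0}\cap\mathfrak{m})$‑actions integrate to $M_{0}$ and $M\cap A_{0}$, and together with the $A$‑character carried along from $\sigma\otimes e^{\lambda+\rho}$ this identifies the $M_{0}A_{0}$‑module $\sigma/(\mathfrak{m}\cap\mathfrak{n}_{0})\sigma$ with $V$.

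For the dimension, $\dim\Wh_{0}(\sigma')=\dim\bigl(\sigma/(\mathfrak{m}\cap\mathfrak{n}_{0})\sigma\bigr)$ equals the dimension of the irreducible $M_{0}$‑module of highest weight $\Ad(w)\widetilde{\mu}|_{\mathfrak{t}_{0}}$; since $\Ad(w)|_{\mathfrak{t}_{0}}$ preserves $\mathfrak{t}_{0}$, $\Delta_{M_{0}}$ and $\Delta_{M_{0}}^{+}$, it induces a based automorphism of the root system of $\mathfrak{m}_{0}$, under which Weyl's dimension formula is invariant, so this dimension equals that of the $M_{0}$‑module of highest weight $\widetilde{\mu}|_{\mathfrak{t}_{0}}$, namely $\dim_{M}(\lambda+\widetilde{\mu})=\dim V$ (the abelian factor $A_{0}$ contributing nothing). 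The step I expect to be the main obstacle is the identification of the $M_{0}A_{0}$‑module structure (equivalently, of the highest weights) in the previous paragraph, since passing from $\mathfrak{m}\cap\mathfrak{n}_{0}$ to its opposite brings in the longest element $w_{M,0}$ of the little Weyl group and one has to keep careful track of the lift $w$ and of the $\rho$‑normalization built into $I(\sigma,\lambda)$; the dimension count, which is what the applications in \S\ref{sec:Whittaker vectors} require, is insensitive to these choices.
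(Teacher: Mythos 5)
Your proposal is correct in substance and takes essentially the paper's route, only in expanded form: the paper's own proof is two lines, dualizing to the claim $\Wh_0(\sigma^*)\simeq V^*$ and observing that both sides are generated by a one-dimensional extreme weight space of weight $-\widetilde{w}_{M,0}(\widetilde{\mu}+\lambda)$, with $\widetilde{w}_{M,0}$ the longest element of $\widetilde{W_M}$; this is exactly your identification of $\sigma/(\mathfrak{m}\cap\mathfrak{n}_0)\sigma$ with the extreme $\mathfrak{l}$-constituent of $\sigma$ (Kostant's theorem plus the grading by a central element of $\mathfrak{l}$). What you add beyond the paper is the last step: passing through the good lift of $w_{M,0}$ (Lemma~\ref{lem:good lift for W} applied to $M$) and invoking the invariance of Weyl's dimension formula under the induced based automorphism of $(\Delta_{M_0},\Delta_{M_0}^+)$ to get $\dim\Wh_0(\sigma')=\dim_M(\lambda+\widetilde{\nu})$; the paper skips this by asserting outright that $V^*$ also has the extreme weight $-\widetilde{w}_{M,0}(\widetilde{\mu}+\lambda)$. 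As you yourself anticipate in your closing paragraph, the literal $M_0A_0$-isomorphism $\sigma/(\mathfrak{m}\cap\mathfrak{n}_0)\sigma\simeq V$ is the delicate point: the $(\mathfrak{a}_0\cap\mathfrak{m})$-character of the left-hand side is $w_{M,0}(\widetilde{\mu}|_{\mathfrak{a}_0\cap\mathfrak{m}})$ rather than $\widetilde{\mu}|_{\mathfrak{a}_0\cap\mathfrak{m}}$ (already for $M$ locally isomorphic to $SL(2,\R)$ and $\widetilde{\mu}\neq 0$ these differ), so your sentence ``this identifies the $M_0A_0$-module with $V$'' is only valid after twisting by the lift $w$ — but the same imprecision is present in the lemma's statement and in the paper's own proof, and it is invisible in the dimension count, which is the only output used in \S\ref{sec:Whittaker vectors} (Theorem~\ref{thm:Whittaker vectors, finite-dimensional case}); that count your argument establishes completely, indeed more carefully than the paper does.
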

\begin{proof}
We prove that $\Wh_0(\sigma^*) \simeq V^*$.
Let $\widetilde{w}_{M,0}$ be the longest element of $\widetilde{W_M}$.
Then both sides have a highest weight $-\widetilde{w}_{M,0}(\widetilde{\mu} + \lambda)$ and the space of highest weight vectors are $1$-dimensional.
\end{proof}

As an Corollary of Theorem~\ref{thm:dimension Whittaker vectors} and Theorem~\ref{thm:dimension Whittaker vectors, algebraic}, we have the following theorem announced by T. Oshima.
Define $\widetilde{\rho_M}\in\mathfrak{h}^*$ by $\widetilde{\rho_M} = (1/2)\sum_{\alpha\in\Delta_M^+}\alpha$.
\begin{thm}\label{thm:Whittaker vectors, finite-dimensional case}
Assume that $\sigma$ is the irreducible finite-dimensional representation with highest weight $\widetilde{\nu}$.
Let $\dim_M(\lambda+\widetilde{\nu})$ be a dimension of the finite-dimensional irreducible representation of $M_0A_0$ with highest weight $\lambda+\widetilde{\nu}$.
\begin{enumerate}
\item Assume that for all $w\in W$ such that $\eta|_{wN_0w^{-1}\cap N_0} = 1$ the following two conditions hold:
\begin{enumerate}
\item For all $\alpha\in \Sigma^+\setminus w^{-1}(\Sigma^+_M\cup\Sigma_\eta^+)$ we have $2\langle\alpha,\lambda+w_0\widetilde{\nu}\rangle/\lvert\alpha\rvert^2\not\in\Z_{\le 0}$.
\item For all $\widetilde{w}\in\widetilde{W}$ we have $\lambda - \widetilde{w}(\lambda + \widetilde{\nu} + \widetilde{\rho_M})|_\mathfrak{a}\notin \Z_{\le 0}((\Sigma^+\setminus \Sigma_M^+)\cap w^{-1}\Sigma^+)|_\mathfrak{a}\setminus\{0\}$.
\end{enumerate}
Then we have
\[
	\dim \Wh_\eta(I(\sigma,\lambda)') = \# W(\supp\eta,\Theta)\times(\dim_M(\lambda+\widetilde{\nu}))
\]
\item Assume that for all $\widetilde{w}\in\widetilde{W}\setminus\widetilde{W_M}$, $(\lambda+\widetilde{\nu}) - \widetilde{w}(\lambda+\widetilde{\nu}) \not\in\Delta$. 
Then we have
\begin{multline*}
	\dim\Wh_\eta((I(\sigma,\lambda)_{\text{\normalfont $K$-finite}})^*)\\ = \# W(\supp\eta,\Theta)\times \#W_{\supp\eta}\times(\dim_M(\lambda+\widetilde{\nu}))
\end{multline*}
\end{enumerate}
\end{thm}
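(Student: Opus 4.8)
The plan is to deduce part~(1) from Theorem~\ref{thm:dimension Whittaker vectors} and part~(2) from Theorem~\ref{thm:dimension Whittaker vectors, algebraic}, specializing to a finite-dimensional irreducible $\sigma$. In each case there are two tasks: evaluate the right-hand side of the earlier theorem, and verify that its genericity hypotheses follow from those assumed here. (For part~(1) the character $\eta$ is read as unitary, in accordance with Theorem~\ref{thm:dimension Whittaker vectors}.)

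For the right-hand sides: since $\sigma$ is finite-dimensional, $\mathfrak{m}\cap\mathfrak{n}_0$ acts nilpotently on $\sigma$, so for $w\in W(M)$ the space $\Wh_{w^{-1}\eta}((\sigma\otimes e^{\lambda+\rho})')$ in part~(1), resp.\ $\Wh_{w^{-1}\eta}((\sigma_{\text{$M\cap K$-finite}})^{*})=\Wh_{w^{-1}\eta}(\sigma^{*})$ in part~(2), is nonzero if and only if $w^{-1}\eta$ vanishes on $\mathfrak{m}\cap\mathfrak{n}_0$, that is $\supp\eta\cap w(\Sigma_M^+)=\emptyset$, and then it has dimension $\dim_M(\lambda+\widetilde{\nu})$ by Lemma~\ref{lem:whittaker vectors of finite-dimensional representation}. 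In part~(1) the sum in Theorem~\ref{thm:dimension Whittaker vectors} runs over $w\in W(M)$ with $w(\Sigma^+\setminus\Sigma_M^+)\cap\supp\eta=\emptyset$; intersecting with the previous condition gives $w(\Sigma^+)\cap\Sigma_\eta^+=\emptyset$, and by Lemma~\ref{lem:coparison oshima}(1) the number of such $w$ is $\#W(\supp\eta,\Theta)$, so the sum equals $\#W(\supp\eta,\Theta)\times\dim_M(\lambda+\widetilde{\nu})$. In part~(2) the sum in Theorem~\ref{thm:dimension Whittaker vectors, algebraic} runs over all $w\in W(M)$, the surviving terms being those with $\supp\eta\cap w(\Sigma_M^+)=\emptyset$, so by Lemma~\ref{lem:coparison oshima}(2) it equals $\#W(\supp\eta,\Theta)\times\#W_{\supp\eta}\times\dim_M(\lambda+\widetilde{\nu})$. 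These match the claimed right-hand sides.

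For the hypotheses: $\sigma$ being finite-dimensional irreducible with highest weight $\widetilde{\nu}$, its infinitesimal character is $\widetilde{\mu}=\widetilde{\nu}+\widetilde{\rho_M}$, so condition~(b) of Theorem~\ref{thm:dimension Whittaker vectors} and the hypothesis of Theorem~\ref{thm:dimension Whittaker vectors, algebraic} become exactly condition~(b) of part~(1) and the hypothesis of part~(2). (For the latter I would also note that, $\sigma$ being finite-dimensional, the $\mathfrak{h}$-weights occurring in $J^*(\sigma\otimes e^{\lambda+\rho})$, hence those in $T_{w_i}(U(\mathfrak{g})\otimes_{U(\mathfrak{p})}J^*(\sigma\otimes e^{\lambda+\rho}))$, are $\widetilde{W_M}$-translates of $\lambda+\widetilde{\nu}$ up to a common shift, so the separation of weight sets making the filtration of $J^*_\eta(I(\sigma,\lambda))$ split is governed by $\lambda+\widetilde{\nu}$ directly.) For condition~(a) of Theorem~\ref{thm:dimension Whittaker vectors}, Lemma~\ref{lem:whittaker vectors of finite-dimensional representation} identifies $\sigma/(\mathfrak{m}\cap\mathfrak{n}_0)\sigma$ with the irreducible finite-dimensional $M_0A_0$-module of highest weight $\lambda+\widetilde{\nu}$, making the exponents of $\sigma$ explicit; the requirement $2\langle\alpha,\lambda+\nu\rangle/\lvert\alpha\rvert^2\notin\Z_{\le0}$ for every exponent $\nu$ then reduces, after conjugating by the longest element $w_0$ of $W$, to the single condition on $\lambda+w_0\widetilde{\nu}$ in part~(1)(a). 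One subtlety remains: Theorem~\ref{thm:dimension Whittaker vectors} requires (a) and (b) for all $w\in W(M)$ with $\eta|_{wNw^{-1}\cap N_0}=1$, whereas here we only quantify over $w$ with $\eta|_{wN_0w^{-1}\cap N_0}=1$. But for finite-dimensional $\sigma$ the Bruhat cells $I_i/I_{i-1}$ with $\supp\eta\cap w_i(\Sigma_M^+)\neq\emptyset$ vanish automatically: then $J'_{w_i^{-1}\eta}(\sigma\otimes e^{\lambda+\rho})=0$ by nilpotency of $\mathfrak{m}\cap\mathfrak{n}_0$, and Theorem~\ref{thm:succ quot is I'_i} gives $I_i/I_{i-1}=0$. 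Since the genericity in the proof of Theorem~\ref{thm:dimension Whittaker vectors} is invoked only on the nonvanishing cells, for which $\supp\eta\cap w_i(\Sigma^+)=\emptyset$, these are precisely the $w$ over which part~(1) quantifies; the analogous remark applies to Theorem~\ref{thm:dimension Whittaker vectors, algebraic}.

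With these two dictionaries in place the theorem follows formally from Theorems~\ref{thm:dimension Whittaker vectors} and~\ref{thm:dimension Whittaker vectors, algebraic} together with Lemma~\ref{lem:coparison oshima}. I expect the main obstacle to be exactly the hypothesis translation: confirming that the condition over all exponents of $\sigma$ is captured by the single condition on $\lambda+w_0\widetilde{\nu}$, and keeping track of the several index sets of Weyl-group elements so that the genericity we assume is precisely what the nonvanishing Bruhat cells require.
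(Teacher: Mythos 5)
Your proposal follows exactly the paper's intended route: the paper states this theorem as a corollary of Theorem~\ref{thm:dimension Whittaker vectors} and Theorem~\ref{thm:dimension Whittaker vectors, algebraic}, combined with the nilpotency of $\mathfrak{m}\cap\mathfrak{n}_0$ on finite-dimensional $\sigma$, Lemma~\ref{lem:whittaker vectors of finite-dimensional representation} for the dimension $\dim_M(\lambda+\widetilde{\nu})$, and Lemma~\ref{lem:coparison oshima} for the counting, which is precisely your argument. In fact you supply more detail than the paper does (notably the translation of the genericity hypotheses and the observation that they are only needed on the nonvanishing Bruhat cells), and these additions are consistent with the paper's framework.
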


\appendix
\section{$C^\infty$-function with values in Fr\'echet space}\label{sec:C^infty-function with values in Frechet space}
\subsection{$\mathcal{L}$-distributions and tempered $\mathcal{L}$-distributions}
Let $M$ be a $C^\infty$-manifold, $V$ a Fr\'echet space and $\mathcal{L}$ a vector bundle on $M$ with fibers $V$.
We define the sheaf of $\mathcal{L}$-distributions as follows.

First we assume that $\mathcal{L}$ is trivial on $M$.
Then the definition of $\mathcal{L}$-distributions is found in Kolk-Varadarajan~\cite{MR1621372} and it is easy to see that $\mathcal{L}$-distributions makes a sheaf on $M$.

In general, let $M = \bigcup_{\lambda\in\Lambda}U_\lambda$ be an open covering of $M$ such that on each $U_\lambda$ the vector bundle $\mathcal{L}$ is trivial.
For an arbitrary open subset $U$ of $M$, put 
\[
\mathcal{D}'(U,\mathcal{L}) = \left\{(x_\lambda)\in \prod_{\lambda\in\Lambda}\mathcal{D}'(U\cap U_\lambda,\mathcal{L})\Bigm| \text{$x_\lambda = x_{\lambda'}$ on $U_\lambda\cap U_{\lambda'}$}\right\}.
\newsym{$\mathcal{D}'(U,\mathcal{L})$}
\]
It is independent of the choice of an open covering $\{U_\lambda\}$ and defines the sheaf of $\mathcal{L}$-distributions on $M$.

Now assume that $M$ has a compactification $X$, i.e., $M$ is an open dense subset of a compact manifold $X$.
In this case, we define a subspace $\mathcal{T}(M,\mathcal{L})$ of $\mathcal{D}'(M,\mathcal{L})$ by 
\[
\mathcal{T}(M,\mathcal{L}) = \{x\in \mathcal{D}'(M,\mathcal{L})\mid \text{$x = z|_{M}$ for some $z\in \mathcal{D}'(X,\mathcal{L})$}\}.\newsym{$\mathcal{T}(M,\mathcal{L})$}
\]
An element of $\mathcal{T}(M.\mathcal{L})$ is called a tempered $\mathcal{L}$-distribution.

For a subset $M_0\subset M$, put $\mathcal{D}'_{M_0}(U,\mathcal{L}) = \{x\in \mathcal{D}'(U,\mathcal{L})\mid \supp x \subset M_0\}$ and $\mathcal{T}_{M_0}(M,\mathcal{L}) = \{x\in \mathcal{T}(M,\mathcal{L})\mid \supp x\subset M_0\}$.
Assume that $M_0$ is a closed submanifold of $M$.
Then dualizing the restriction map $C_c^\infty(M,\mathcal{L})\to C_c^\infty(M_0,\mathcal{L})$, we have an injective map $\mathcal{D}'(M_0,\mathcal{L})\to \mathcal{D}'_{M_0}(M,\mathcal{L})$.
This map also implies $\mathcal{T}(M_0,\mathcal{L})\to \mathcal{T}_{M_0}(M,\mathcal{L})$.
Using these maps, we regard $\mathcal{D}'(M_0,\mathcal{L})$ and $\mathcal{T}(M_0,\mathcal{L})$ as a subspace of $\mathcal{D}'_{M_0}(M,\mathcal{L})$ and $\mathcal{T}_{M_0}(M,\mathcal{L})$, respectively.

\subsection{$\mathcal{L}$-distributions with support in a subspace}
Let $M$ be a Euclidean space $\R^n = \{(x_1,\dots,x_n)\in\R^n\}$ and $M_0$ a subspace $\R^{n - m}$ of $M$ defined by the equation $x_1 = \dots = x_m = 0$.
Assume that $M$ has a compactification $X$.
Let $E_1,\dots,E_m$ be vector fields on $M$ such that:
\begin{enumerate}
\item for all $\varphi\in C^\infty(M)$ we have $(E_i\varphi)|_{M_0} = (\frac{\partial}{\partial x_i}\varphi)|_{M_0}$.
\item A space $\sum_{i = 1}^m \C E_i$ is a Lie algebra.
\end{enumerate}

Set $D_i = \frac{\partial}{\partial x_i}$.
The condition (1) implies that $D_iT = E_iT$ for all $T\in \mathcal{D}'(M_0,\mathcal{L})$.
Put $U_n(E_1,\dots,E_m) = \sum_{k_1 + \dots + k_m\le n}\C E_1^{k_1}\dotsm E_l^{k_l}$ and $U(E_1,\dots,E_m) = \sum_n U_n(E_1,\dots,E_m)$.
Then the algebra $U(E_1,\dots,E_m)$ is isomorphic to the universal enveloping algebra of $\sum_{i = 1}^m \C E_i$.
For $\alpha = (\alpha_1,\dots,\alpha_m)$, put $E^\alpha = E_1^{\alpha_1}\dotsm E_m^{\alpha_m}$ where $E_i^0 = 1$.

\begin{lem}\label{lem:diff op nanka do-demoii}
Let $E_1',\dots,E_m'$ be vector fields on $M$ which satisfy the same conditions of $E_1,\dots,E_m$.
We have 
\[
E^\alpha T \in (E')^\alpha T + U_{\lvert\alpha\rvert -1}(E_1',\dots,E_m')\mathcal{T}(M_0,\mathcal{L})
\]
for $T\in\mathcal{T}(M_0,\mathcal{L})$ and $\alpha\in\Z_{\ge 0}^m$.
\end{lem}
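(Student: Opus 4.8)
The plan is to prove the statement by induction on $|\alpha|$. For $|\alpha|=0$ both sides equal $T$, and for $|\alpha|=1$, say $\alpha=\mathbf 1_i$ (the $i$-th standard basis vector of $\Z^m$), one has $E^\alpha T=E_iT=D_iT$ and likewise $(E')^\alpha T=E'_iT=D_iT$ by the identity $D_iS=E_iS$ for $S\in\mathcal D'(M_0,\mathcal L)$ recorded before the lemma (applied to both families $\{E_i\}$ and $\{E'_i\}$, which satisfy the same hypotheses); hence the two sides agree.

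For $|\alpha|\ge 2$ let $j$ be the least index with $\alpha_j\ge 1$ and set $\beta=\alpha-\mathbf 1_j$, so that $E^\alpha=E_jE^\beta$ and $(E')^\alpha=E'_j(E')^\beta$ (the latter since $\beta_i=0$ for $i<j$). By the inductive hypothesis $E^\beta T=(E')^\beta T+\rho$ with $\rho\in U_{|\alpha|-2}(E'_1,\dots,E'_m)\mathcal T(M_0,\mathcal L)$, and applying $E_j$ gives
\[
E^\alpha T=(E')^\alpha T+(E_j-E'_j)(E')^\beta T+E_j\rho .
\]
Thus it remains to show that $(E_j-E'_j)(E')^\beta T$ and $E_j\rho$ both lie in $U_{|\alpha|-1}(E'_1,\dots,E'_m)\mathcal T(M_0,\mathcal L)$. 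The vector field $Z:=E_j-E'_j$ has all of its coefficients vanishing on $M_0$, because $E_j$ and $E'_j$ both restrict to $D_j$ there. So, writing $\rho$ as a finite sum of terms $(E')^\gamma S$ with $|\gamma|\le|\alpha|-2$ and $S\in\mathcal T(M_0,\mathcal L)$, both memberships follow from the following auxiliary claim, in which $U_\bullet$ abbreviates $U_\bullet(E'_1,\dots,E'_m)$.

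\emph{Auxiliary claim.} For every $p\ge 0$, $S\in\mathcal T(M_0,\mathcal L)$ and $\gamma\in\Z_{\ge 0}^m$ with $|\gamma|=p$: (b) $F(E')^\gamma S\in U_{p+1}\mathcal T(M_0,\mathcal L)$ for any vector field $F$ on $M$; and (a) $F(E')^\gamma S\in U_{p}\mathcal T(M_0,\mathcal L)$ if, in addition, every coefficient of $F$ vanishes on $M_0$. I would prove (a) and (b) by a single induction on $p$. For $p=0$: writing $F=\sum_l c_l D_l$, the tangential terms ($l>m$) contribute $c_lD_lS\in\mathcal T(M_0,\mathcal L)$ since $D_lS\in\mathcal T(M_0,\mathcal L)$, while for $l\le m$ one has $c_lS\in\mathcal D'(M_0,\mathcal L)$, hence $c_lD_lS=E'_l(c_lS)-(D_lc_l)S\in U_1\mathcal T(M_0,\mathcal L)$; this gives (b). If all $c_l$ vanish on $M_0$ then $c_lS=0$ (as $S$ is a pushforward from $M_0$), so $c_lD_lS=-(D_lc_l)S$, and expanding the $c_l$ by Hadamard's lemma and using $x_qS=0$ ($q\le m$) reduces this to a multiple of $S$ by a function smooth near the closure of $M_0$, which lies in $\mathcal T(M_0,\mathcal L)$; this gives (a). For the step, write $(E')^\gamma=E'_k(E')^\delta$ with $|\delta|=p-1$: then $F(E')^\gamma S=E'_k(F(E')^\delta S)+[F,E'_k](E')^\delta S$, and applying (b) at level $p-1$ to the vector fields $F$ and $[F,E'_k]$ yields (b) at level $p$; for (a), commute $Z$ to the far right of $(E')^\gamma=E'_{k_1}\cdots E'_{k_p}$, producing $(E')^\gamma ZS$ plus terms $E'_{k_1}\cdots E'_{k_{i-1}}[Z,E'_{k_i}]E'_{k_{i+1}}\cdots E'_{k_p}S$, where the first lies in $U_p\mathcal T(M_0,\mathcal L)$ by (a) at level $0$ and each of the others lies in $U_p\mathcal T(M_0,\mathcal L)$ by (b) at level $p-i$ applied to the vector field $[Z,E'_{k_i}]$ followed by the degree-$(i-1)$ operator $E'_{k_1}\cdots E'_{k_{i-1}}$.

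The main obstacle is exactly this order bookkeeping in the auxiliary claim: one has to check that commuting the non-$E'$ vector field $Z$ — or the single extra factor $E_j$ in the term $E_j\rho$ — past the $E'$-factors never creates a term of $E'$-filtration degree $\ge|\alpha|$ beyond the principal term $(E')^\alpha T$, and, at the base of the recursion, that multiplication by the coefficient functions of the $E_i$ sends $\mathcal T(M_0,\mathcal L)$ into itself; the latter is where the compactification $X$ enters, and it holds in the situations to which the lemma is applied, where the $E_i$ extend to vector fields on $X$.
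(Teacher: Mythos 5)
Your argument is correct, and it reaches the lemma by a route organized differently from the paper's. The paper's proof pivots through the flat transverse derivations $D_1,\dots,D_m$: it first records (without proof) that any differential operator of order $\le k$ maps $\mathcal{T}(M_0,\mathcal{L})$ into $U_k(D_1,\dots,D_m)\mathcal{T}(M_0,\mathcal{L})$, deduces from the identity $E_iPT=[E_i-D_i,P]T+D_iPT$ that $E^\alpha T\in D^\alpha T+U_{\lvert\alpha\rvert-1}(D_1,\dots,D_m)\mathcal{T}(M_0,\mathcal{L})$, proves $U_k(E_1,\dots,E_m)\mathcal{T}(M_0,\mathcal{L})=U_k(D_1,\dots,D_m)\mathcal{T}(M_0,\mathcal{L})$ by induction on $k$, and then concludes by applying the same statements to the primed family and converting the error terms. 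You instead avoid the intermediate $D$-filtration in the error terms altogether: you peel off one factor, and your auxiliary claim (a)/(b) redoes by explicit commutator bookkeeping, entirely inside the $E'$-filtration, what the paper's opening remark plus the filtration equality accomplish in one stroke; your observation that a vector field whose coefficients vanish on $M_0$ does not raise the filtration degree (the $cS=0$ trick at the base, and conjugation past the $E'$-factors) is exactly the mechanism behind the paper's order count for $[E_i-D_i,P]$. Both arguments rest on the same two inputs: $E_iS=D_iS=E_i'S$ for pushforwards $S$, and stability of $\mathcal{T}(M_0,\mathcal{L})$ under tangential derivatives and under multiplication by the coefficient functions that arise; the paper buries the latter in its unproved remark (which, taken literally for arbitrary operators, would need a growth/extendability hypothesis on the coefficients), whereas you isolate it and correctly note that it is exactly where the compactification $X$ enters and that it holds in the situations where the lemma is applied. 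What each approach buys: the paper's detour through $U_k(D)$ is shorter and yields the byproduct $U_k(E)\mathcal{T}(M_0,\mathcal{L})=U_k(D)\mathcal{T}(M_0,\mathcal{L})=U_k(E')\mathcal{T}(M_0,\mathcal{L})$, which is also what the proof of Proposition~\ref{prop:structure theorem of tempered distributions whose support is contained in submanifold} leans on; your version trades that economy for more explicit, self-contained bookkeeping (note only that your reductions such as $E'_kU_p\subset U_{p+1}$ silently use condition (2), that the $E'_i$ span a Lie algebra, so reordering stays in the span of ordered monomials — worth one sentence if you write this up).
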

\begin{proof}
First we remark that if the order of differential operator $P$ is less than or equal to $k$, then we have $P(\mathcal{T}(M_0,\mathcal{L}))\subset U_k(D_1,\dots,D_m)\mathcal{T}(M_0,\mathcal{L})$.
Take $P\in U_{k - 1}(E_1,\dots,E_m)$.
Then we have 
\begin{multline*}
E_iPT = [E_i,P]T + PE_iT = [E_i,P]T + PD_iT = [E_i - D_i,P]T + D_iPT\\ \in D_iPT + U_{k - 1}(D_1,\dots,D_m)\mathcal{T}(M_0,\mathcal{L})
\end{multline*}
since the order of $[E_i - D_i,P]$ is less than or equal to $k$.
Hence, using induction on $\lvert\alpha\rvert$, we have $E^\alpha T \in D^\alpha T + U_{\lvert\alpha\rvert - 1}(D_1,\dots,D_m)\mathcal{T}(M_0,\mathcal{L})$.

Hence we have $U_k(E_1,\dots,E_m)\mathcal{T}(M_0,\mathcal{L}) \subset U_k(D_1,\dots,D_m)\mathcal{T}(M_0,\mathcal{L})$.
We prove $U_k(E_1,\dots,E_m)\mathcal{T}(M_0,\mathcal{L}) = U_k(D_1,\dots,D_m)\mathcal{T}(M_0,\mathcal{L})$ by induction on $k$.
If $k = 0$ then the claim is obvious.
Assume that $k > 0$ then the above equation and inductive hypothesis imply that if $\lvert\alpha\rvert = k$, then we have $D^\alpha T \in E^\alpha T + U_{k - 1}(E_1,\dots,E_m)\mathcal{T}(M_0,\mathcal{L})$.
Hence we have $U_k(D_1,\dots,D_m)\mathcal{T}(M_0,\mathcal{L})\subset U_k(E_1,\dots,E_m)\mathcal{T}(M_0,\mathcal{L})$.

The same formulas hold for $E_1',\dots,E_m'$.
Hence, we have 
\begin{multline*}
E^\alpha T\in
D^\alpha T + U_{\lvert\alpha\rvert - 1}(D_1,\dots,D_m)\mathcal{T}(M_0,\mathcal{L})\\
= (E')^\alpha T + U_{\lvert\alpha\rvert - 1}(D_1,\dots,D_m)\mathcal{T}(M_0,\mathcal{L})\\
= (E')^\alpha T + U_{\lvert\alpha\rvert - 1}(E_1',\dots,E_m')\mathcal{T}(M_0,\mathcal{L}).
\end{multline*}
\end{proof}

\begin{prop}\label{prop:structure theorem of tempered distributions whose support is contained in submanifold}
A map $\Phi\colon U(E_1,\dots,E_m)\otimes \mathcal{T}(M_0,\mathcal{L})\to\mathcal{T}_{M_0}(M,\mathcal{L})$ defined by $P\otimes T \mapsto PT$ is isomorphic.
\end{prop}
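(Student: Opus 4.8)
The plan is to reduce the proposition to the classical structure theorem for distributions with support in a closed submanifold, using Lemma~\ref{lem:diff op nanka do-demoii} as the bridge between the given vector fields $E_1,\dots,E_m$ and the coordinate fields $D_i=\partial/\partial x_i$. Write $D^\alpha=D_1^{\alpha_1}\dotsm D_m^{\alpha_m}$ for $\alpha\in\Z_{\ge 0}^m$ and abbreviate $x^\alpha=x_1^{\alpha_1}\dotsm x_m^{\alpha_m}$. First note that $\Phi$ really does land in $\mathcal{T}_{M_0}(M,\mathcal{L})$: for $P\in U_k(E_1,\dots,E_m)$ and $T\in\mathcal{T}(M_0,\mathcal{L})$, Lemma~\ref{lem:diff op nanka do-demoii} (applied with $E_i'=D_i$, which trivially satisfy the two conditions) shows $PT\in U_k(D_1,\dots,D_m)\mathcal{T}(M_0,\mathcal{L})$, and such an element is tempered and supported in $M_0$.

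For surjectivity, let $x\in\mathcal{T}_{M_0}(M,\mathcal{L})$. The tempered form of the structure theorem for a distribution with support in the linear subspace $M_0=\{x_1=\dots=x_m=0\}$ writes $x=\sum_{|\alpha|\le N}D^\alpha T_\alpha$ with $T_\alpha\in\mathcal{T}(M_0,\mathcal{L})$, so $x\in\sum_k U_k(D_1,\dots,D_m)\mathcal{T}(M_0,\mathcal{L})$. By Lemma~\ref{lem:diff op nanka do-demoii} (used in both directions) this space coincides with $\sum_k U_k(E_1,\dots,E_m)\mathcal{T}(M_0,\mathcal{L})=\operatorname{im}\Phi$, so $\Phi$ is onto.

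For injectivity I would first use the uniqueness half of the structure theorem: the assignment $(T_\alpha)\mapsto\sum_\alpha D^\alpha T_\alpha$, over finitely many $\alpha$ with $T_\alpha\in\mathcal{D}'(M_0,\mathcal{L})$, is injective. This is checked by recovering the coefficients: if $\widetilde\chi$ extends a test section $\chi$ on $M_0$ and is constant in $x_1,\dots,x_m$ in a neighbourhood of $M_0$, then, since $(D^\beta x^\alpha)|_{M_0}=\alpha!\,\delta_{\alpha\beta}$, pairing $x=\sum_\beta D^\beta T_\beta$ against $x^\alpha\widetilde\chi$ gives $\langle x,x^\alpha\widetilde\chi\rangle=(-1)^{|\alpha|}\alpha!\,\langle T_\alpha,\chi\rangle$. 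Now suppose $\sum_\alpha E^\alpha T_\alpha=0$ with $T_\alpha\in\mathcal{T}(M_0,\mathcal{L})$ not all zero, and let $N$ be the largest degree with $T_\alpha\ne 0$. By Lemma~\ref{lem:diff op nanka do-demoii}, $E^\alpha T_\alpha\in D^\alpha T_\alpha+U_{|\alpha|-1}(D_1,\dots,D_m)\mathcal{T}(M_0,\mathcal{L})$, hence $\sum_{|\alpha|=N}D^\alpha T_\alpha\in U_{N-1}(D_1,\dots,D_m)\mathcal{T}(M_0,\mathcal{L})$, and the uniqueness just established forces $T_\alpha=0$ for all $|\alpha|=N$, contradicting the choice of $N$. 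Thus $\Phi$ is injective; equivalently one runs this as a downward induction on the top degree.

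The only step requiring genuine care is the tempered refinement of the structure theorem invoked above — that the coefficients $T_\alpha$ of a tempered $x$ may be chosen in $\mathcal{T}(M_0,\mathcal{L})$, and conversely that $\sum_\alpha D^\alpha T_\alpha$ is tempered whenever $T_\alpha\in\mathcal{T}(M_0,\mathcal{L})$. This is where the compactification $X$ of $M$ must be matched with the closure $\overline{M_0}$ of $M_0$ in $X$: choosing $\widetilde\chi$ supported in a neighbourhood of $\overline{M_0}$ on which the transverse polynomials $x^\alpha$ and the extension operator $\chi\mapsto\widetilde\chi$ are controlled lets one transfer the extension $z\in\mathcal{D}'(X,\mathcal{L})$ of $x$ to extensions of the $T_\alpha$, while a partition of unity reduces the converse to the obvious local statement. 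I expect this bookkeeping to be the main obstacle; granting it, the proposition is formal from Lemma~\ref{lem:diff op nanka do-demoii}.
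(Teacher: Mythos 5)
Your injectivity argument is essentially the paper's: isolate the top-degree coefficients via Lemma~\ref{lem:diff op nanka do-demoii} and recover them by pairing against $x^\alpha$ times an extension of a test section on $M_0$; that half is fine. The problem is surjectivity. You invoke ``the tempered form of the structure theorem'' --- that every $x\in\mathcal{T}_{M_0}(M,\mathcal{L})$ can be written $\sum_{\lvert\alpha\rvert\le N}D^\alpha T_\alpha$ with $T_\alpha\in\mathcal{T}(M_0,\mathcal{L})$ --- and this is precisely the nontrivial analytic content of the proposition; you concede it yourself (``the main obstacle'') and only sketch it. Moreover the sketch is doubtful as it stands: to transfer the extension $z\in\mathcal{D}'(X,\mathcal{L})$ of $x$ into extensions of the $T_\alpha$ you pair against $x^\alpha\widetilde\chi$, but the transverse coordinate functions $x^\alpha$ are defined only on the chart $M\simeq\R^n$ and blow up at the boundary $X\setminus M$, so $x^\alpha\widetilde\chi$ is not a test section on $X$ and there is no evident way to ``control'' it near $\overline{M_0}\cap(X\setminus M)$; this is exactly where the bookkeeping breaks down rather than being routine.

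The paper avoids this global argument altogether: it takes a finite cover of the compact manifold $X$ by charts $U_\lambda$ that are Euclidean and in which $\overline{M_0}\cap U_\lambda$ is a linear subspace, and applies the local structure theorem of Kolk--Varadarajan~\cite{MR1621372} in each chart. Working in charts of $X$ (not of $M$) makes temperedness of the coefficients automatic --- they are distributions on $\overline{M_0}\cap U_\lambda$ by construction --- and no control of polynomial weights at infinity is needed; the local decompositions are then compared with the $E^\alpha$-decomposition via Lemma~\ref{lem:diff op nanka do-demoii}, and they glue because the coefficients are unique (your injectivity step). So the missing idea in your proposal is this localization on the compactification: either supply it (or an equivalent proof of the tempered refinement), or your surjectivity rests on an unproven assertion.
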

\begin{proof}
First we prove that $\Phi$ is injective.
Let $\sum_{\alpha\in\Z_{\ge 0}^m}E^\alpha\otimes T_\alpha$ (finite sum) be an element of $U(E_1,\dots,E_m)\otimes \mathcal{T}(M_0,\mathcal{L}|_{M_0})$.
Set $T = \sum_{\alpha\in\Z_{\ge 0}^m}E^\alpha T_\alpha$ and assume that $T = 0$.
Put $k = \max\{\lvert\alpha\rvert\mid T_\alpha \ne 0\}$.
We prove that $k = -\infty$.
Assume that $k\ge 0$.
By Lemma~\ref{lem:diff op nanka do-demoii}, if $\lvert\alpha\rvert = k$ then $E^\alpha T_\alpha \in D^\alpha T_\alpha + U_{k - 1}(D_1,\dots,D_m)\mathcal{T}(M_0,\mathcal{L})$.
There exist $T'_\alpha$ such that $\sum_{\alpha\in\Z_{\ge 0}^m}E^\alpha T_\alpha = \sum_{\alpha\in\Z_{\ge 0}^m,\ \lvert\alpha\rvert < k}D^\alpha T'_\alpha + \sum_{\alpha\in\Z_{\ge 0}^m,\ \lvert\alpha\rvert = k}D^\alpha T_\alpha$.
Fix $\beta \in \Z_{\ge 0}^m$ such that $\lvert\beta\rvert = k$ and $f\in C^\infty(M_0)$ with values in $\mathcal{L}$.
Define a function $\varphi$ on $M$ by $\varphi(x_1,\dots,x_n) = x_1^{\beta_1}\dotsm x_m^{\beta_m}f(0,\dots,0,x_{m + 1},\dots,x_n)$.
Then we have $0 = \langle T,\varphi\rangle = \beta_1!\dotsm \beta_m! \langle T_\beta,f\rangle$.
Since $f$ is arbitrary, we have $T_\beta = 0$ for all $\beta$ such that $\lvert\beta\rvert = k$.
This is a contradiction.

We prove that $\Phi$ is surjective.
Take an open covering $X = \bigcup_{\lambda\in \Lambda}U_\lambda$ such that $U_\lambda$ is isomorphic to a Euclidean space and $\overline{M_0}\cap U_\lambda$ is a subspace of $U_\lambda$ where $\overline{M_0}$ is a closure of $M_0$ in $X$.
Since $X$ is compact, we may assume this is a finite covering.
It is sufficient to prove that the map $\Phi$ is surjective on $U_\lambda\cap M$.
However the surjectivity of $\Phi$ on $U_\lambda\cap M$ follows from \cite[(2.8)]{MR1621372}.
\end{proof}

\subsection{Distributions on a nilpotent Lie group}\label{subsec:Distributions on a nilpotent Lie group}
Let $N$ be a connected, simply connected nilpotent Lie group.
Put $\mathfrak{n} = \Lie(N)_\C$.
Then the exponential map $\exp\colon \Lie(N)\to N$ is a diffeomorphism.
A structure of a vector space on $N$ is defined by the exponential map.
Let $\mathcal{P}(N)$ be a ring of polynomials with respect to this vector space structure (cf.\ Corwin and Greenleaf~\cite[\S1.2]{MR1070979}).

Let $\mathcal{L}$ be a vector bundle on $N$ whose fiber is $V$ and assume that $\mathcal{L}$ is trivial on $N$, i.e., $\mathcal{L} = N\times V$.
Fix a Haar measure $dn$ on $N$.
For $F\in C^\infty(N,V')$, we define a distribution $F\delta$ by $\langle F\delta,\varphi\rangle = \int_N F(n)(\varphi(n))dn$.
Then we regard $C^\infty(N,V')$ as a subspace of $\mathcal{D}'(N,\mathcal{L})$.
Let $\mathcal{P}_k(N)$ be the space of polynomials whose degree is less than or equal to $k$.
Put $\mathcal{P}(N) = \bigoplus_k\mathcal{P}_k(N)$.

Take a character $\eta$ of $\mathfrak{n}$.
Then $\eta$ can be extended to the $\C$-algebra homomorphism $U(\mathfrak{n})\to \C$ where $U(\mathfrak{n})$ is the universal enveloping algebra of $\mathfrak{n}$.
We denote this $\C$-algebra homomorphism by the same letter $\eta$.
Let $\Ker\eta$ be the kernel of the $\C$-algebra homomorphism $\eta$.
For $X\in \mathfrak{n}$ and $C^\infty$-function $\psi$, put $(X\psi)(n) = \frac{d}{dt}\psi(\exp(-tX)n)|_{t = 0}$.

The algebraic tensor product $C_c^\infty(N)\otimes V$ is canonically identified with a linear subspace of $C^\infty_c(N,\mathcal{L})$ via $\varphi\otimes v\mapsto (x\mapsto \varphi(x)v)$.
As in \cite[(2.1)]{MR1621372}, this is a dense subspace of $C^\infty_c(N,\mathcal{L})$.

\begin{prop}\label{prop:killed by some power of n is polynomial}
For all $k\in \Z_{>0}$, there exists a positive integer $l$ such that, if $T\in \mathcal{D}'(N,\mathcal{L})$ satisfies $(\Ker\eta)^kT = 0$ then $T\in (\mathcal{P}_l(N)\otimes V')\delta$.

Conversely, for all $l\in\Z_{>0}$ there exists a positive integer $k$ such that $(\Ker\eta)^k(\mathcal{P}_l(N)\otimes V')\delta = 0$.
\end{prop}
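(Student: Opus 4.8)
\emph{Reduction to $\eta=0$.} Since $N$ is connected, simply connected and nilpotent, $\eta\colon\mathfrak{n}\to\C$ integrates to a character $\chi$ of $N$ with $\chi(\exp Y)=e^{\eta(Y)}$, and a direct computation from the definition of the $\mathfrak{n}$-action on $\mathcal{D}'(N,\mathcal{L})$ gives $X(\chi T)=\chi\bigl((X-\eta(X))T\bigr)$ for all $X\in\mathfrak{n}$; hence $u(\chi T)=\chi\bigl(\tau_\eta(u)T\bigr)$ for $u\in U(\mathfrak{n})$, where $\tau_\eta$ is the algebra automorphism $X\mapsto X-\eta(X)$ of $U(\mathfrak{n})$. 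Because $\tau_\eta$ carries the augmentation ideal $U(\mathfrak{n})_+$ onto $\Ker\eta$ and $\chi$ vanishes nowhere, multiplication by $\chi$ turns the condition $(\Ker\eta)^kT=0$ into $(U(\mathfrak{n})_+)^k(\chi T)=0$, and turns the assertion to be proved into the corresponding statement for $\chi T$. Thus I would first reduce to $\eta=0$, where $\Ker\eta=U(\mathfrak{n})_+$; note that $(U(\mathfrak{n})_+)^kT=0$ forces $\mathfrak{n}^kT=0$, i.e.\ every product of $k$ elements of $\mathfrak{n}$ kills $T$. Triviality of $\mathcal{L}$ lets me argue with $V'$-valued distributions directly, componentwise.

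\emph{First assertion.} I would prove by induction on $k$ that $\mathfrak{n}^kT=0$ implies $T\in(\mathcal{P}_{l}(N)\otimes V')\delta$ for some $l=l(k,N)$. For $k=1$ the hypothesis says $XT=0$ for every $X\in\mathfrak{n}$, i.e.\ $T$ is invariant under left translations of $N$, so $T$ is a $V'$-multiple of Haar measure and lies in $(\mathcal{P}_0(N)\otimes V')\delta$. For the inductive step, fix $X\in\mathfrak{n}$; then $\mathfrak{n}^{k-1}(XT)=(\mathfrak{n}^{k-1}X)T\subseteq\mathfrak{n}^kT=0$, so by induction $XT=P_X\delta$ with $P_X$ a $V'$-valued polynomial of bounded degree. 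Now in exponential coordinates on $N$ the left-invariant vector fields attached to a basis of $\mathfrak{n}$ have polynomial coefficients and unit-determinant coefficient matrix (as $N$ is unimodular, with Lebesgue Haar measure in these coordinates); hence each $\partial/\partial x_j$ is a polynomial-coefficient combination of these vector fields, so every $\partial T/\partial x_j$ is of the form (polynomial)$\cdot\delta$ of controlled degree, and integrating coordinate by coordinate shows $T=P\delta$ with $P\in\mathcal{P}(N)\otimes V'$ of degree bounded in terms of $k$ and $N$.

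\emph{Converse.} Fix a basis of $\mathfrak{n}$ adapted to the lower central series $\mathfrak{n}=C^1\mathfrak{n}\supseteq C^2\mathfrak{n}\supseteq\cdots$, with $c$ the nilpotency class, and assign to a coordinate the weight $j$ when its basis vector lies in $C^j\mathfrak{n}\setminus C^{j+1}\mathfrak{n}$; this gives a weighted degree on $\mathcal{P}(N)$ under which an ordinary polynomial of degree $\le l$ has weighted degree $\le cl$. Using $[C^i\mathfrak{n},C^j\mathfrak{n}]\subseteq C^{i+j}\mathfrak{n}$ one checks that the left-invariant vector field attached to an element of $C^j\mathfrak{n}$ is weighted-homogeneous of weight $-j$, so it strictly lowers weighted degree; consequently $(U(\mathfrak{n})_+)^k$ lowers weighted degree by at least $k$, and for $k=cl+1$ it annihilates $(\mathcal{P}_l(N)\otimes V')\delta$. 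Undoing the twist of the first step returns the desired $k$ for the original $\eta$.

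\emph{Main obstacle.} The substantive step is the induction in the first assertion: upgrading ``$XT$ is a polynomial times $\delta$ for every invariant $X$'' to ``$T$ is a polynomial times $\delta$'' with degree controlled by $k$. This is exactly where the non-abelian structure of $N$ intervenes, and it rests on the (standard but essential) fact that in exponential coordinates the invariant vector fields form a frame with polynomial coefficients and constant Jacobian; everything else is degree bookkeeping, cleanest in the weighted grading above, together with the classification of translation-invariant distributions in the base case.
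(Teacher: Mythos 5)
Your route is genuinely different from the paper's, and the two proofs divide the labour in opposite ways. The paper's proof has only two ingredients: the scalar case ($V=\C$) of both assertions is simply quoted as well known, and all the actual work goes into reducing the Fr\'echet-space-valued case to the scalar one --- for $v\in V$ one forms the scalar distribution $T_v\colon\varphi\mapsto\langle T,\varphi\otimes v\rangle$, applies the scalar result, checks that $v\mapsto c_{v,\alpha_1,\dots,\alpha_n}$ is continuous linear so that it defines $v'_{\alpha_1,\dots,\alpha_n}\in V'$, and concludes by density of $C^\infty_c(N)\otimes V$ in $C^\infty_c(N,\mathcal{L})$. You do the reverse: you prove the scalar statements from scratch (induction on $k$ with the translation-invariance base case and the polynomial frame of invariant vector fields; weighted degrees along the lower central series for the converse), which is a nice self-contained supplement to the paper's citation, but you dispose of the vector-valued issue with the word ``componentwise''. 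For a general Fr\'echet space $V$ that has no meaning: already your base case (``a translation-invariant element of $\mathcal{D}'(N,\mathcal{L})$ is $v'\otimes{}$Haar'') and your coordinatewise integration step require exactly the paper's slicing-plus-density argument. So the one point the paper actually proves is the point you treat as free, and it must be written out.

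The second problem is the reduction to $\eta=0$. The identity $u(\chi T)=\chi(\tau_\eta(u)T)$ and $\tau_\eta(U(\mathfrak{n})_+^k)=(\Ker\eta)^k$ are fine, but multiplication by $\chi^{-1}$ does not transport the conclusion: from $(U(\mathfrak{n})_+)^k(\chi T)=0$ and the untwisted case you obtain $\chi T\in(\mathcal{P}_l(N)\otimes V')\delta$, hence $T\in(\chi^{-1}\mathcal{P}_l(N)\otimes V')\delta$, and $\chi^{-1}$ is not a polynomial unless $\eta=0$; likewise your converse yields $(\Ker\eta)^k(\chi^{-1}\mathcal{P}_l(N)\otimes V')\delta=0$ rather than $(\Ker\eta)^k(\mathcal{P}_l(N)\otimes V')\delta=0$. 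So what your argument proves is the character-twisted form of Proposition~\ref{prop:killed by some power of n is polynomial}, not the form printed. To be fair, with the action used in the paper's own proof of the second assertion (namely $X((f\otimes v')\delta)=((Xf)\otimes v')\delta$) the untwisted form cannot hold for $\eta\neq0$: on $N=\R$ with $\eta(X)=c\neq0$ the equation $(X-c)T=0$ forces $T$ to be an exponential times Haar measure, and $(X-c)^k$ never annihilates a nonzero polynomial; the version actually invoked in the body of the paper carries the factor $\eta_i^{-1}$ (see the conclusion $x_\xi\in(\mathcal{P}(O_i)\eta_i^{-1}\otimes(\sigma\otimes e^{\lambda+\rho})')\delta_i$ in the proof of Lemma~\ref{lem:succ quot is sub of I'}). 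So your twist is the right idea, but you should say explicitly that the output is $(\chi^{-1}\mathcal{P}_l(N)\otimes V')\delta$ instead of asserting that the assertion is simply carried over; as a proof of the statement verbatim there is a gap here, which is really a defect of the statement. Two smaller points: the vector fields realizing the action are (up to sign) the right-invariant ones, not the left-invariant ones, and for a non-graded $\mathfrak{n}$ they need not be weighted-homogeneous --- what is true, and all you need, is that a field attached to an element of the $j$-th term of the lower central series lowers the weighted degree by at least $j\geq1$.
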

\begin{proof}
Fix a basis $\{e_1,\dots,e_n\}$ of $\Lie(N)$.
The map $\R^n\to N$ defined by $(x_1,\dots,x_n)\mapsto \exp(x_1e_1 + \dots + x_me_m)$ is an isomorphism.
Using this map, we introduce a coordinate $(x_1,\dots,x_n)$ of $N$.
If $V = \C$ then this proposition is well-known.
Fix $v\in V$ and consider an ordinal distribution $T_v\colon \varphi\mapsto \langle T,\varphi\otimes v\rangle$ for $\varphi\in C^\infty_c(N)$.
If $T$ satisfies $(\Ker\eta)^kT = 0$, then $T_v$ satisfies $(\Ker\eta)^kT_v = 0$.
Hence for some $l$, $T_v = \sum_{\alpha_1 + \dots + \alpha_n\le l}(x_1^{\alpha_1}\dotsm x_n^{\alpha_n}\otimes c_{v,\alpha_1,\dots,\alpha_n})\delta$, where $c_{v,\alpha_1,\dots,\alpha_n}\in \C$.
The map $v\mapsto c_{v,\alpha_1,\dots,\alpha_n}$ is continuous linear.
Hence it defines an element of $V'$.
We denote this element by $v'_{\alpha_1,\dots,\alpha_n}$.
Then for $\varphi\in C_c^\infty(N)$ and $v\in V$ we have $\langle T,\varphi\otimes v\rangle = \langle (\sum_{\alpha_1 + \dots + \alpha_n\le l}x_1^{\alpha_1}\dotsm x_n^{\alpha_n}\otimes v'_{\alpha_1,\dots,\alpha_n})\delta,\varphi\otimes v\rangle$.
Since $C^\infty_c(N)\otimes V$ is dense in $C^\infty_c(N,\mathcal{L})$, we have $T = (\sum_{\alpha_1 + \dots + \alpha_n\le l}x_1^{\alpha_1}\dotsm x_n^{\alpha_n}\otimes v'_{\alpha_1,\dots,\alpha_n})\delta$.

We prove the second part of the proposition.
For $X\in \mathfrak{n}$, $f\in\mathcal{P}_l(N)$ and $v'\in V'$, we have $X((f\otimes v')\delta) = ((Xf)\otimes v')\delta$.
Hence we may assume that $V = \C$.
In this case, the claim is well-known.
\end{proof}

\begin{cor}\label{cor:polynomial by some power of n}
Let $T\in \mathcal{D}'(N,\mathcal{L})$.
Assume that there exists a positive integer $k$ such that $(\Ker\eta)^kT \in (\mathcal{P}(N)\otimes V')\delta$.
Then we have $T \in (\mathcal{P}(N)\otimes V')\delta$.
\end{cor}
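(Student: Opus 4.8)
The plan is to bootstrap directly from Proposition~\ref{prop:killed by some power of n is polynomial}, chaining together its two halves. First I would reduce to bounded polynomial degree: since the tensor product $\mathcal{P}(N)\otimes V'$ is algebraic, the hypothesis $(\Ker\eta)^kT\in(\mathcal{P}(N)\otimes V')\delta$ means $(\Ker\eta)^kT$ is a \emph{finite} sum $\sum_j(f_j\otimes v_j')\delta$ with $f_j\in\mathcal{P}(N)$, hence $(\Ker\eta)^kT\in(\mathcal{P}_l(N)\otimes V')\delta$ for $l=\max_j\deg f_j$.

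Next, apply the converse (second) part of Proposition~\ref{prop:killed by some power of n is polynomial} to this fixed $l$: there exists a positive integer $k'$ with $(\Ker\eta)^{k'}(\mathcal{P}_l(N)\otimes V')\delta=0$. Combining with the previous step gives $(\Ker\eta)^{k'}\bigl((\Ker\eta)^kT\bigr)=0$, that is, $(\Ker\eta)^{k+k'}T=0$. Now the first part of Proposition~\ref{prop:killed by some power of n is polynomial}, applied with the exponent $k+k'$, produces a positive integer $l'$ and the conclusion $T\in(\mathcal{P}_{l'}(N)\otimes V')\delta\subset(\mathcal{P}(N)\otimes V')\delta$, which is exactly the assertion of the corollary.

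There is essentially no obstacle: no estimates or constructions beyond Proposition~\ref{prop:killed by some power of n is polynomial} are needed. The only points requiring a word of care are the reduction to bounded degree in the first step (so that the second half of the proposition, whose hypothesis is framed for a fixed $l$, can be invoked) and the routine bookkeeping that the two quantified integers supplied by the proposition can be added to annihilate $T$ outright.
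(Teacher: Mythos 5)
Your proof is correct and follows essentially the same route as the paper: use the second half of Proposition~\ref{prop:killed by some power of n is polynomial} to annihilate $(\Ker\eta)^kT$ by a further power of $\Ker\eta$, so that $(\Ker\eta)^{k+k'}T=0$, and then invoke the first half to conclude $T\in(\mathcal{P}(N)\otimes V')\delta$. The only difference is that you spell out the (routine) reduction to bounded degree $\mathcal{P}_l(N)$, which the paper leaves implicit.
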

\begin{proof}
By the second part of Proposition~\ref{prop:killed by some power of n is polynomial}, there exists a positive integer $k'$ such that $(\Ker\eta)^kT = 0$.
Hence we have $T \in (\mathcal{P}(N)\otimes V')\delta$ by the first part of Proposition~\ref{prop:killed by some power of n is polynomial}.
\end{proof}

\def\cprime{$'$} \def\dbar{\leavevmode\hbox to 0pt{\hskip.2ex
  \accent"16\hss}d}\newcommand{\noop}[1]{}

\end{document}